\let\oldfootnotemark\footnotemark
\let\oldfootnotetext\footnotetext
\let\oldfootnote\footnote
\renewcommand\footnote[1]{\addtocounter{footnote}{1}\hypertarget{fnbackref.\arabic{footnote}}{}\addtocounter{footnote}{-1}\oldfootnote{#1\fnbackref}}
\renewcommand\footnotemark{\addtocounter{footnote}{1}\hypertarget{fnbackref.\arabic{footnote}}{}\addtocounter{footnote}{-1}\oldfootnotemark}
\renewcommand\footnotetext[1]{\oldfootnotetext{#1\fnbackref}}
\newcommand{\fnbackref}{\hyperlink{fnbackref.\arabic{footnote}}{\footnotesize$\uparrow$}}
\definecolor{mydefnblue}{rgb}{.2,.2,.7}
\newcommand{\demph}[1]{\textcolor{mydefnblue}{\it #1}}
\newcommand{\sss}{\@startsection{subsubsection}{2}{0pt}{-3ex
plus -1ex minus -0.2ex}{-2mm plus -0pt minus
-2pt}{\normalfont\bfseries}} 
\definecolor{mynotecol}{rgb}{.4,.1,.1}
\let\oldmarginpar\marginpar
\renewcommand\marginpar[1]{\mbox{}\oldmarginpar[\raggedleft\hspace{0pt}\textcolor{mynotecol}{\small #1}]%
{\raggedright\hspace{0pt}\textcolor{mynotecol}{\small #1}}}
\newcommand{\refnewtheoremn}[4]{%
\newaliascnt{#1}{#2}
\newtheorem{#1}[#1]{#3}
\aliascntresetthe{#1}
\expandafter\providecommand\csname #1autorefname\endcsname{#4}}
\newcommand{\refnewtheorem}[3]{\refnewtheoremn{#1}{#2}{#3}{#3}}
\theoremstyle{plain}
\newtheorem{theorem}{Theorem}[subsection]
\theoremstyle{definition}
\par\vspace{2pt}\noindent\fbox{\vbox{\vspace{.25pt}\TheSbox\vspace{.25pt}}}}
\par\vspace{2pt}\noindent\fbox{\vbox{\vspace{.25pt}\TheSbox\vspace{.25pt}}}}
\par\vspace{2pt}\noindent\fbox{\vbox{\vspace{.25pt}\TheSbox\vspace{.25pt}}}}
\newtheorem{rprob}{Problem}
\par\vspace{2pt}\noindent\fbox{\vbox{\vspace{.25pt}\TheSbox\vspace{.25pt}}}}
\newcommand{\heart}{\heartsuit}
\newcommand{\eqdef}{\overset{\text{\tiny def}}{=}}
\newcommand{\ilim}{\mathop{\varprojlim}\limits}
\newcommand{\dlim}{\mathop{\varinjlim}\limits}
\newcommand{\isom}{\simeq}           
\newcommand{\quot}{/\mkern-6.5mu/}
\newcommand{\ps}[1]{[\![#1]\!]}
\newcommand{\pl}[1]{(\!(#1)\!)}
\newcommand{\sh}[1]{#1^{!}}
\newcommand{\dual}{{\mkern-1.5mu{}^{\vee}}}
\newcommand{\Lotimes}{\stackrel{_L}{\otimes}}
\newcommand{\shotimes}{\overset{!}{\otimes}}
\newcommand{\ohotimes}{\oh{\otimes}}
\newcommand{\XZ}{\oh{X_{\mkern-2muZ}}}
\newcommand{\ol}[1]{\overline{#1}}
\newcommand{\ul}[1]{\underline{#1}}
\newcommand{\oh}[1]{\widehat{#1}}
\newcommand{\sq}[1]{\widetilde{#1}}
\newcommand{\res}[2]{\left. #1 \right|_{#2}}
\renewcommand{\AA}{\mathbb{A}}
\newcommand{\CC}{\mathbb{C}}
\newcommand{\DD}{\mathbb{D}}
\newcommand{\GG}{\mathbb{G}}
\newcommand{\LL}{\mathbb{L}}
\newcommand{\PP}{\mathbb{P}}
\newcommand{\ZZ}{\mathbb{Z}}
\let\vinograd\gg 
\renewcommand{\gg}{\mathfrak{g}}
\newcommand{\mm}{\mathfrak{m}}
\newcommand{\bF}{\mathbf{F}}
\newcommand{\F}{\mathscr{F}}
\newcommand{\G}{\mathscr{G}}
\newcommand{\K}{\mathscr{K}}
\renewcommand{\L}{\mathscr{L}}
\newcommand{\N}{\mathscr{N}}
\renewcommand{\P}{\mathscr{P}}
\newcommand{\sQ}{\mathscr{Q}}
\newcommand{\X}{\mathscr{X}}
\newcommand{\Y}{\mathscr{Y}}
\newcommand{\Z}{\mathscr{Z}}
\newcommand{\A}{\mathcal{A}}
\newcommand{\C}{\mathcal{C}}
\newcommand{\D}{\mathcal{D}}
\newcommand{\I}{\mathcal{I}}
\newcommand{\M}{\mathcal{M}}
\renewcommand{\O}{\mathcal{O}}
\newcommand{\tL}{\mathcal{L}}
\newcommand{\tQ}{\mathcal{Q}}
\newcommand{\bHH}{\mathbf{HH}}
\newcommand{\bHC}{\mathbf{HC}}
\newcommand{\pt}{\ensuremath{\text{pt}}}
\newcommand{\op}{{\mkern-1mu\scriptstyle{\mathrm{op}}}}
\newcommand{\Tate}{{\mkern-2.5mu\scriptstyle{\mathrm{Tate}}}}
\newcommand{\tot}{{\mkern-1.5mu\scriptstyle{\mathrm{tot}}}}
\newcommand{\red}{{\mkern-1.5mu\scriptstyle{\mathrm{red}}}}
\newcommand{\ev}{{\mkern-1.5mu\scriptstyle{\mathrm{ev}}}}
\newcommand{\gr}{{\mkern-1.25mu\scriptstyle{\mathrm{gr}}}}
\newcommand{\kinfty}{{\mkern-1mu\scriptstyle{\infty}}}
\renewcommand{\mod}{\text{\rm{-mod}}}  
\newcommand{\comod}{\text{\rm{-comod}}}
\newcommand{\dom}{\text{\rm{mod-}}}
\newcommand{\rmod}{\dom}
\newcommand{\Aff}{\mathrm{Aff}}
\newcommand{\SO}{\operatorname{SO}}
\DeclareMathOperator{\MF}{MF}
\DeclareMathOperator{\PreMF}{PreMF}
\DeclareMathOperator{\CircMF}{CircMF}
\DeclareMathOperator{\Cpx}{Cpx}
\DeclareMathOperator{\Coh}{Coh}
\DeclareMathOperator{\DCoh}{DCoh}
\DeclareMathOperator{\PsCoh}{PsCoh}
\DeclareMathOperator{\DSing}{DSing}
\DeclareMathOperator{\Perf}{Perf}
\DeclareMathOperator{\QC}{QC}
\newcommand{\QCsh}{\sh{\QC}}
\DeclareMathOperator{\Fuk}{Fuk}
\DeclareMathOperator{\Cliff}{Cliff}
\DeclareMathOperator{\PreCliff}{PreCliff}
\newcommand{\Phish}{\sh{\Phi}}
\newcommand{\Psish}{\sh{\Psi}}
\DeclareMathOperator{\tr}{tr}
\DeclareMathOperator{\im}{im}
\DeclareMathOperator{\cone}{cone}
\DeclareMathOperator{\Aut}{Aut}
\DeclareMathOperator{\End}{End}
\DeclareMathOperator{\Ext}{Ext}
\DeclareMathOperator{\Hom}{Hom}
\DeclareMathOperator{\Map}{Map}
\DeclareMathOperator{\RGamma}{R\Gamma}
\DeclareMathOperator{\Sym}{Sym}
\DeclareMathOperator{\RHom}{RHom}
\newcommand{\HHom}{\mathcal{RH}om} 
\DeclareMathOperator{\Kos}{Kos}
\DeclareMathOperator{\Spec}{Spec}
\DeclareMathOperator{\Spf}{Spf}
\DeclareMathOperator{\Proj}{Proj}
\DeclareMathOperator{\cval}{cval}
\DeclareMathOperator{\crit}{crit}
\DeclareMathOperator{\cosk}{cosk}
\DeclareMathOperator{\Fun}{Fun}
\DeclareMathOperator{\id}{id}
\DeclareMathOperator{\Ind}{Ind}
\DeclareMathOperator{\fib}{fib}
\DeclareMathOperator{\Tot}{Tot}
\newcommand{\hocolim}{\operatorname{hocolim}\limits}
\newcommand{\holim}{\operatorname{holim}\limits}
\newcommand{\dgcat}{\mathbf{dgcat}}
\newcommand{\Cat}{\mathbf{Cat}}
\newcommand{\sSet}{\mathbf{sSet}}
\newcommand{\sAb}{\mathbf{sAb}}
\newcommand{\Mon}{\mathbf{Mon}}
\newcommand{\CMon}{\mathbf{CMon}}
\newcommand{\sGp}{\mathbf{sGp}}
\newcommand{\Sp}{\mathbf{Sp}}
\newcommand{\Kan}{\mathbf{Kan}}
\newcommand{\CAlg}{\mathbf{CAlg}}
\newcommand{\Alg}{\mathbf{Alg}}
\newcommand{\adjunct}[2]{\xymatrix@1{ #1 \ar@<.7ex>[r] & \ar@<.7ex>[l] #2 }}
\newcommand{\ssetr}[2]{\xymatrix@1{ #1 \ar[r] & \ar@<.7ex>[l] #2 \ar@<-.7ex>[l] \ar@<.7ex>[r] \ar@<-.7ex>[r] & \cdots \ar@<1.4ex>[l] \ar@<-1.4ex>[l] \ar[l] }}
\newcommand{\ssetlar}[4]{\xymatrix@1{  #1 \ar@<.7ex>[r]^-{#2} \ar@<-.7ex>[r]_-{#3} & #4}}
\newcommand{\ssetl}[2]{\xymatrix@1{ \cdots \ar@<1.4ex>[r] \ar@<-1.4ex>[r] \ar[r] & #2 \ar@<.7ex>[r] \ar@<-.7ex>[r] \ar@<.7ex>[l] \ar@<-.7ex>[l] & #1 \ar[l]}}
\newcommand{\ssetrr}[3]{\xymatrix@1{ #1 \ar[r] & #2 \ar@<.7ex>[l] \ar@<-.7ex>[l] \ar@<.7ex>[r] \ar@<-.7ex>[r] & #3 \ar@<1.4ex>[l] \ar@<-1.4ex>[l] \ar[l]  \ar@<1.4ex>[r] \ar@<-1.4ex>[r] \ar[r] & \cdots\ar@<.7ex>[l] \ar@<-.7ex>[l] \ar@<2.1ex>[l] \ar@<-2.1ex>[l]}}
\newcommand{\ssetll}[3]{\xymatrix@1{ \cdots\ar@<.7ex>[r] \ar@<-.7ex>[r] \ar@<2.1ex>[r] \ar@<-2.1ex>[r] & #3 \ar@<1.4ex>[r] \ar@<-1.4ex>[r] \ar[r]  \ar@<1.4ex>[l] \ar@<-1.4ex>[l] \ar[l] & #2 \ar@<.7ex>[r] \ar@<-.7ex>[r] \ar@<.7ex>[l] \ar@<-.7ex>[l] & #1 \ar[l]}}
\DeclareMathOperator{\codim}{codim}
\DeclareMathOperator{\Hess}{Hess}
\DeclareMathOperator{\dgmod}{dgmod}
\newcommand{\bB}{\mathbb{B}}
\newcommand{\qwer}{\varphi}
\newcommand{\lsub}[2]{\phantom{}_{#1}\!{#2}}
\newcommand{\lsup}[2]{\phantom{}^{#1}\!{#2}}
\newcommand{\lsh}[1]{\lsup{!}{#1}}
\DeclareMathOperator{\Inj}{Inj}
\newcommand{\sXZ}{\oh{\X_{\mkern-2muZ}}}
\newcommand{\DArt}{\mathbf{DArt}}
\newcommand{\DRng}{\mathbf{DRng}}
\newcommand{\DRngfp}{\mathbf{DRng}^{\mkern-1mu\scriptstyle{\mathrm{fp}}}}
\newcommand{\bt}{\beta}
\DeclareMathOperator{\vol}{vol}
\DeclareMathOperator{\coev}{coev}
\DeclareMathOperator{\cotr}{cotr}
\DeclareMathOperator{\coid}{coid}
\renewcommand{\ev}{\operatorname{ev}}
\newcommand{\dgcatidm}{\dgcat^{\mkern-2mu\scriptstyle{\mathrm{idm}}}}
\newcommand{\dgcatbig}{\dgcat^{\mkern-2mu\scriptstyle{\infty}}}
\begin{document}
\title{Thom-Sebastiani \& Duality for Matrix Factorizations}
\author{Anatoly Preygel}
\begin{abstract}
The derived category of a hypersurface has an action by ``cohomology operations'' $k\ps{\bt}$, $\deg \bt = -2$, underlying the $2$-periodic structure on its category of singularities (as matrix factorizations).  We prove a Thom-Sebastiani type Theorem, identifying the $k\ps{\bt}$-linear tensor products of these dg-categories with coherent complexes on the zero locus of the sum potential on the product (with a support condition), and identify the dg-category of colimit-preserving $k\ps{\bt}$-linear functors between Ind-completions with Ind-coherent complexes on the zero locus of the difference potential (with a support condition).  These results imply the analogous statements for the $2$-periodic dg-categories of matrix factorizations.
  
Some applications include: we refine and establish the expected computation of $2$-periodic Hochschild invariants of matrix factorizations; we show that the category of matrix factorizations is smooth, and is proper when the critical locus is proper; we show that Calabi-Yau structures on matrix factorizations arise from volume forms on the total space; we establish a version of Kn\"orrer Periodicity for eliminating metabolic quadratic bundles over a base.
\end{abstract}

\maketitle

\setitemize{label=\textendash}

\setcounter{tocdepth}{1}
\tableofcontents

\section{Introduction}
The goal of the present note is to establish some basic results about tensor products and functor categories between $2$-periodic (=$k\pl{\bt}$-linear, $\deg \bt = -2$) dg-categories of matrix factorizations, beyond the case of isolated singularities.  These results are surely unsurprising, however our approach may be of interest: Rather than working directly in the $2$-periodic or curved context, we deduce the results from more refined statements about the $k\ps{\bt}$-linear dg-category of coherent sheaves on the special fiber so that we are able to remain in the more familiar world of coherent sheaves.  This is done using a convenient (derived) geometric description of the $k\ps{\bt}$-linear structure of cohomology operations.

\subsection{Integral transforms for (Ind) coherent complexes}
Suppose $X$ is a nice scheme (or derived scheme, stack, formal scheme, etc.).  The non-commutative viewpoint tells us to forget $X$ and pass to its ``non-commutative'' shadow: the dg-category $\Perf(X)$ or its Ind-completion $\QC(X)$.  Work of To\"en \cite{Toen-DGCat}, \ldots, Ben Zvi-Francis-Nadler \cite{BFN} provide us with useful tools relating the commutative and non-commutative worlds:
\begin{itemize}
  \item A ``tensor product theorem,'' stating that (derived) fiber products of schemes go to tensor products of dg-categories:
    \[ \Perf(X) \otimes_{\Perf(S)} \Perf(Y) \stackrel{\sim}\longrightarrow \Perf(X \times_S Y) 
    \qquad \QC(X) \ohotimes_{\QC(S)} \QC(Y) \stackrel{\sim}\longrightarrow \QC(X \times_S Y) \]
  \item A description of functor categories: Every quasi-coherent complex on the product gives rise to an ``integral transform'' functor, and this determines an equivalence \[ \QC(X \times_S Y) \stackrel{\sim}\longrightarrow \Fun_{\QC(S)}^L(\QC(X), \QC(Y)) \] identifying $\QC(X \times Y)$ with the dg-category $\Fun_{\QC(S)}^L(\QC(X), \QC(Y))$ of colimit-preserving $\QC(S)^{\otimes}$-linear functors $\QC(X) \to \QC(Y)$ (also known as ``bimodules'').  The identity functor corresponds to $\Delta_* \O_X$ (=the diagonal bimodule), and the trace of endofunctors (=Hochschild homology) corresponds to taking global sections of the pullback along the diagonal.  Thus we have descriptions of the functor category and of the Hochschild invariants in familiar commutative terms.
\end{itemize}

When studying non-smooth schemes $X$, it's convenient to replace vector bundles by coherent sheaves.  Analogously, to replace perfect complexes $\Perf(X)$ by (bounded) coherent complexes $\DCoh(X)$;  and, to replace quasi-coherent complexes $\QC(X)$ by the larger $\QCsh(X) = \Ind \DCoh(X)$ of \emph{Ind-coherent (aka shriek quasi-coherent) complexes}.  Provided we work with finite-type schemes over a perfect base-field, the analogs of the above two theorems remain true: this is essentially the content of Lunts paper \cite{Lunts}.\footnote{The author originally learned that such a result might be true from Jacob Lurie, who attributed it to conversation with Dennis Gaitsgory.  The author wrote up the mild extensions of \autoref{app:coh-fmk} before finding Lunts' paper and realizing that it proved essentially the same thing.} \autoref{app:coh-fmk} develops the mild extensions which we will need (to derived schemes, and with support conditions) in the more geometric language that we will wish to use:  The ``tensor product theorem'' for $\DCoh$ and $\QCsh$ (\autoref{prop:fmk-coh-surj}), and a description of functor categories in terms of ``shriek'' integral transforms (\autoref{thm:coh-fmk}).

Two direct applications may be worth highlighting:
\begin{itemize}
  \item One can write down formulas for the Hochschild invariants of $\DCoh(X)$ for not-necessarily smooth $X$, and with support conditions (\autoref{cor:hh}).  For $\bHH_\bullet$, this makes manifest the ``Poincar\'e duality'' between what might be called Hochschild $K$-theory and Hochschild $G$-theory.  For $\bHH^\bullet$, one obtains the somewhat strange looking fact that $\bHH^\bullet(\DCoh(X)) \isom \bHH^\bullet(\Perf(X))$.
  \item Obviously one re-obtains Lunts' result that $\DCoh(X)$ is smooth, but one also sees that this fails for even very nice formal schemes:\footnote{There are several ways one could wish to define $\DCoh(\XZ)$: Our choice is as the compact objects in $\QCsh(\XZ)$ which is constructed as the $\infty$-categorical inverse limit along shriek-pullback of $\QCsh$ on nilthickenings of $Z$.  See \autoref{thm:cpltn} for a sketch of the comparison and references.} $\DCoh_Z(X) = \DCoh(\XZ)$ is \emph{not} usually smooth (even when both $Z$ and $X$ are smooth), though the failure of smoothness is in a sense mild (e.g., the identity functor is a uniformly $t$-bounded filtered colimit of compacts).  One consequence is that $\bHH_\bullet(\DCoh_Z(X))$ admits a nice description while $\bHH^\bullet(\DCoh_Z(X))$ does not.
\end{itemize}

\subsection{Matrix factorizations}
Suppose $f\colon  M \to \AA^1$ is a map from a smooth scheme to $\AA^1$, and that we are interested in the geometry of $f$ over a formal disc near the origin.  We can attach to it several non-commutative shadows, the two simplest candidates being the dg-categories $\DCoh(M_0)$ and $\Perf(\oh{M_0})=\Perf_{M_0}(M)$.  However, these both lose too much information: they do not depend on the defining function $f$, and the second one doesn't even depend on the scheme structure on $M_0$.  A standard way to remedy this is to consider the $2$-periodic(=$k\pl{\bt}$-linear, $\deg \bt = -2$) dg-category $\MF(M, f) \isom \DSing(M_0)$ of ``matrix factorizations'' or ``LG D-branes'' (at a \emph{single} critical value).

A starting point for out study is the observation\footnote{Due in parts to several people, notably Constantin Teleman for the connection to $S^1$-actions.} that there are three (essentially equivalent, pairwise Koszul dual) refinements of this.   Using $f$, one can put extra structure ($k\ps{\bt}$-linearity) on $\DCoh(M_0)$ and extra structure (an $S^1$-action) on $\Perf(\oh{M_0})$:
\begin{enumerate}
  \item One can regard $\Perf(\oh{M_0}) = \Perf_{M_0}(M)$ as linear over $\Perf(\oh{0})^{\otimes} = \Perf_0(\AA^1)^{\otimes}$.
    \item (See \autoref{sec:mf-gen}.) The $2$-periodicity on $\DSing(M_0)$ comes from a $k\ps{\bt}$-linear structure on $\DCoh(M_0)$, for which we give a (derived) geometric description in \autoref{ssec:Bact}.  We call this $k\ps{\bt}$-linear dg-category $\PreMF(M,f)$ to emphasize the dependence on $f$.  Despite the ``Pre'' in the name, $\PreMF(M,f)$ is a refinement of $\MF(M,f)$. We'll see in \autoref{cor:mf-basechange} and \autoref{cor:premf-supt} that $\PreMF(M,f)$ allows one to recover all the other actors in the story:
    \[ \PreMF(M,f) \otimes_{k\ps{\bt}} \left\{\text{locally $\bt$-torsion $k\ps{\bt}$-modules}\right\} \isom \Perf(M_0) \]
    \[ \PreMF(M,f) \otimes_{k\ps{\bt}} k\pl{\bt}\mod \isom \MF(M,f) \]
    \[ \PreMF(M,f) \otimes_{k\ps{\bt}} k \isom \DCoh_{M_0}(M) \]
    \item (See \autoref{sec:mf-gps}.) There is a (\emph{homotopy}) $S^1$-action on $\DCoh_{M_0}(M) = \DCoh(\oh{M_0})$. This $S^1$-action is fundamental, as it allows one to recover the other actors in the story (\autoref{lem:kt-mf}):\footnote{It \emph{is} important in the following formulas that we passed to compact objects: Taking invariants does not commute with forming $\Ind$ categories.}
    \[ \DCoh(\oh{M_0})_{S^1} \isom \Perf(M_0) \]
    \[ \DCoh(\oh{M_0})^{S^1} \isom \PreMF(M,f) \]
    \[ \DCoh(\oh{M_0})^{\Tate} \isom \MF(M,f) \]
    where these equivalence are $C^*(BS^1)=k\ps{\bt}$-linear.  Furthermore, one can avoid completing along the zero fiber (/imposing support conditions) in two ways: Replacing maps $M \to \AA^1$ by maps $M \to \GG_m$; or, replacing $S^1$-actions with $B\oh{\GG}_a$-actions.
\end{enumerate}

Each viewpoint has its own pros and cons for our purposes:
\begin{itemize}
  \item Viewpoint (iii) is well-suited to formulating comparisons between structures and invariants for  $\Perf(\oh{M_0})$ or $\Perf(M)$ over $k$, and $\PreMF(M,f)$ (resp., $\MF(M,f)$) over $k\ps{\bt}$ (resp., $k\pl{\bt}$).  This viewpoint admits variants which, before passing to invariants, remember global information rather than being completed near the zero fiber.
  \item Viewpoint (ii) is well-suited for reducing questions about $\PreMF(M,f)$ (resp., $\MF(M,f)$) over $k\ps{\bt}$ (resp., $k\pl{\bt}$) to questions about coherent complexes in (derived) algebraic geometry.  Using this we deduce $k\ps{\bt}$- and $k\pl{\bt}$-linear versions of the tensor product theorem (\autoref{thm:TS}) and identifications of functor categories (\autoref{thm:functors}).  It is worth noting that in the $k\ps{\bt}$-linear context, certain \emph{support conditions} appear naturally.\footnote{In the $2$-periodic case, it is largely possible to ignore these by e.g., summing over critical values.}
\end{itemize}

\subsection{Summary of results}
For us an \demph{LG pair} $(M, f)$ consists of a smooth orbifold $M$ and a map $f \colon M \to \AA^1$, not necessarily flat. Then $\PreMF(M, f) = \DCoh(M \times_{\AA^1} 0)$ is coherent complexes on the derived fiber product, equipped with a certain $k\ps{\bt}$-linear structure depending on $f$; $\MF(M,f) = \PreMF(M,f) \otimes_{k\ps{\bt}} k\pl{\bt}$ is its two-periodic version.  Our main results are variants of the ``tensor product theorem'' and description of functor categories in the $k\ps{\bt}$-linear context:
\newtheorem*{thm:ts}{\autoref{thm:TS}}
\begin{thm:ts}[``Thom-Sebastiani''] Suppose $(M,f)$ and $(N, g)$ are two LG pairs.  Set $M_0 =f^{-1}(0)$, $N_0 = g^{-1}(0)$, $(M \times N)_0 = (f \boxplus g)^{-1}(0)$, and let  $\ell: M_0 \times N_0 \to (M \times N)_0$ be the inclusion.
  Then, there is a $k\ps{\bt}$-linear equivalence
  \[ \ell_*(-\boxtimes-)\colon \PreMF(M, f) \otimes_{k\ps{\bt}} \PreMF(N,g) \stackrel{\sim}\longrightarrow \PreMF_{M_0 \times N_0}(M \times N, f \boxplus g)  \]
\end{thm:ts}
\newtheorem*{thm:dual-fun}{\autoref{thm:dual} and \autoref{thm:functors}}
\begin{thm:dual-fun}[``Duality and Functors''] Let $(M,f)$, $(N,g)$, etc. be as before.  Grothendieck duality for $\DCoh(M_0)$ lifts to a $k\ps{\bt}$-linear anti-equivalence $\PreMF(M,f)^\op \isom \PreMF(M,-f)$, and with the above this induces a $k\ps{\bt}$-linear equivalence of dg-categories
  \begin{align*} \Fun^L_{k\ps{\bt}}\left(\PreMF^\kinfty(M,f), \PreMF^\kinfty(N,g)\right) &= \PreMF^\kinfty(M,-f) \ohotimes_{k\ps{\bt}} \PreMF^\kinfty(N, g) \\ 
    &= \PreMF^\kinfty_{M_0 \times N_0}\left(M \times N, -f \boxplus g\right) \end{align*}
  In case $(M,f)=(N,g)$, there are explicit descriptions of the identity functor and ``evaluation''(=Hochschild homology).
\end{thm:dual-fun}
The reader is directed to the actual statements of the Theorems below for variants: support conditions, the $2$-periodic versions, and removing support conditions in the $2$-periodic setting.

As applications of the main results, we establish several expected computations and properties $\MF$.  In the case of computing Hochschild invariants, we also obtain $k\ps{\bt}$-linear refinements.
\newtheorem*{thm:hoch-intro}{\autoref{thm:hoch}}
\begin{thm:hoch-intro} The expected computations of $2$-periodic Hochschild invariants for matrix factorizations hold. The description $\bHH_\bullet^{k\pl{\bt}}(\MF^\tot) = \bHH_\bullet(\Perf M)^{\Tate}$, and its $\bHH^\bullet$ analog, comes from a $B\oh{\GG_a}$-action on the dg-category $\Perf(M)$ inducing a $B\oh{\GG_a}$-action on $\bHH_\bullet(\Perf(M))$ and is consequently compatible with all the functorially attached structures on Hochschild invariants ($\SO(2)$-action on $\bHH_\bullet$, $E_2$-algebra structure on $\bHH^\bullet$, etc.).  Moreover, there are $k\ps{\bt}$-linear refinements, which in the case of $M$ a scheme can be explicitly identified via HKR and local cohomology
    \[ \bHH_\bullet^{k\ps{\bt}}\left(\PreMF(M,f)\right) = \bHH_\bullet^k(\DCoh_{M_0}(M))^{S^1} \left(\isom \RGamma_{M_0}\left(\left[ \Omega^\bullet_M\ps{\bt}, \bt \cdot (-df \wedge -)\right]\right)\right) \]
    (The reader is directed to the body of the text for a more precise statement.)
\end{thm:hoch-intro}
\newtheorem*{thm:smp}{\autoref{thm:sm-proper}}
\begin{thm:smp} Suppose $(M,f)$ is an LG pair. Then, $\MF(M, f)$ is smooth over $k\pl{\bt}$, and is proper over $k\pl{\bt}$ provided that $\crit(f) \cap f^{-1}(0)$ is proper.
\end{thm:smp}
\newtheorem*{thm:cy-intro}{\autoref{thm:cy}} 
\begin{thm:cy-intro}Suppose $(M,f)$ is an LG pair, $m = \dim M$, and that $M$ is equipped with a volume form $\vol_M\colon \O_m \isom \omega_M[-m]\left(=\Omega^m_M\right)$.  Then, $\vol_M$ determines an $m$-Calabi-Yau structure (in the smooth, non-proper sense) on $\MF(M,f)$ over $k\pl{\bt}$.
\end{thm:cy-intro}

Using a mild extension of the ``tensor product theorem'' we prove an extension of Kn\"orrer periodicity allowing one to discard metabolic quadratic bundles; motivated by this, we identify matrix factorizations for quadratic bundles with sheaves over Clifford algebras (and the $k\ps{\bt}$-linear analog, upon imposing a support condition).
\newtheorem*{thm:quad}{\autoref{thm:rel-knorrer} and \autoref{thm:mf-cliff}}
\begin{thm:quad} Suppose $(M,f)$ is an LG pair, and $\tQ$ a non-degenerate quadratic bundle over $M$.  View $(\tQ, q)$ as an LG pair.  Then, the structure sheaf $\O_M$ induces equivalences
  \[ \PreMF^\kinfty_M(\tQ,q) \isom \PreCliff_{\O_M}(\tQ)\mod(\QC(M))\text{ and }\MF^\kinfty(\tQ,q) \isom \Cliff_{\O_M}(\tQ)_{\ZZ/2}\mod_{dg\ZZ/2}(\QC(M)) \]
  Exterior product over $M$ induces an equivalence
  \[ \PreMF(M,f) \otimes_{\Perf(M)\ps{\bt}} \PreMF_M(\tQ,q) \stackrel{\sim}\longrightarrow \PreMF(\tQ, f+q) \]
  and its $2$-periodic analog.  Finally, if $\tQ$ is \emph{metabolic}, in the sense of admitting a Lagrangian sub-bundle $\tL \subset \tQ$, then tensoring by $\O_\tL$ induces an equivalence 
  \[ \O_{\tL} \otimes - \colon \Perf(M)\ps{\bt} = \MF(M, 0) \longrightarrow \MF(\tQ,q). \]
\end{thm:quad}

\subsection{Comments}  A few comments on the main ingredients and methods:
\begin{enumerate}
  \item In addition to the language of derived algebraic geometry, we make use of Grothendieck duality/the upper-shriek functor for $\QCsh$ of derived schemes (and certain nice derived DM stacks).  A reference for this does not yet exist in the literature, and it was not the purpose of this article to provide one; a reference will probably appear in a later volume of Jacob Lurie's DAG series. Partially for this reason, we have chosen to present (in \autoref{sec:main-thm}) a proof of the Main Theorems which we hope is reasonably concrete and minimizes the use of this general machinery.   If one is only interested in matrix factorizations for a flat map $f\colon  M \to \AA^1$ from a smooth scheme, one only needs extra input in one place: determining the kernel of the identity functor in \autoref{thm:TS} uses duality and base-change properties on some very mild derived schemes.
  \item We were heavily inspired by Constantin Teleman's description of $\PreMF$ (resp., $\MF$) arising as $S^1$-invariants (resp., Tate construction) of perfect complexes on the total space: this proves to be a great organizational principle, as well as a useful tool for obtaining natural comparison maps.
  \item The $k\ps{\bt}$-linear structure, and its relation to $\DSing$ of a hypersurface, is well-known in the commutative-algebra literature (as ``cohomology operations'' on $\DCoh$ of ci rings).  Paul Seidel's  2002 ICM talk \cite{Seidel-ICM} shows that a similar structure occurs on the ``mirror side'' (i.e., the bottom row below)
  \begin{center}
   \begin{tabular}{c|c|c|c}
     ``Generic fiber'' & ``Special fiber'' & ``Global sections'' \\ 
     Over $k\pl{\bt}$ & Over $k$ & Over $k\ps{\bt}$ \\\hline
     $\MF(M,f)$ & $\DCoh(M)$ & $\DCoh(M_0)$ \\
     $\Fuk(X)$ &  $\Fuk^\text{wrap}(X \setminus D)$ & $\Fuk^\text{FS}(X, D)$ \\
   \end{tabular}
 \end{center} and it would be interesting to see in what sense, if any, the above table actually matches up.  Seidel's preprint \cite{Seidel-Ainfty}  also explicitly mentions the description of $\DSing$ as arising by inverting $\bt$ on $\PreMF$.
 \item We freely use abstract $\infty$-categorical tools to make life easier: relative tensor products, $\Ind$ completions, limits and colimits (esp. in $\Pr^L$ and $\Pr^R$).
 \end{enumerate}

While this paper was being completed, the author learned that similar results on identifying functor categories and Hochschild-invariants (in the $2$-periodic case) have been independently obtained by Kevin Lin and Daniel Pomerleano.  Their approach is different, using methods of curved dg-modules.

\subsection{Acknowledgements} The author wishes to thank Jacob Lurie for numerous incredibly helpful conversations. 
The author wishes to thank Constantin Teleman for allowing us to include his viewpoint on $\MF$ via $S^1$-actions on categories: this novel viewpoint formed the basis for the author's thinking on the subject.  
The author wishes to thank Paul Seidel for an inspiring conversation.

\section{Notation and background}\label{sec:notation}
\subsection{Grading conventions}
\begin{itemize}
  \item We work throughout over a fixed characteristic zero field $k$.
\item We use \demph{homological grading} conventions (i.e., differentials increase degree) and we write $\pi_{-i}$ for $H^i = H_i$; e.g., $\Ext^i(M,N) = \pi_{-i} \RHom(M,N)$).  For a chain complex $M$, the symbol $M[n]$ denotes the chain complex with $M[n]_k = M_{k-n}$ (i.e., if $M$ is in degree $0$, then $M[n]$ is in homological degree $+n$).
\item $k\ps{\bt}$, $k\pl{\bt}$ will denote the graded-commutative $k$-algebras with (homological) $\deg \bt = -2$. Fix once and for all an equivalence $C^*(BS^1;k) = k\ps{\bt}$ (say by $\bt \mapsto c_1(\O(1))$ in the Chern-Weil model for $c_1$).
  \item We will write \demph{$t$-bounded-below} for what might otherwise be called \emph{homologically bounded-below} = \emph{cohomologically bounded-above} = \emph{almost connective}=\emph{right-bounded}.  Similarly for \demph{$t$-bounded-above} = \emph{homologically bounded} = \emph{truncated} = \emph{left-bounded}; and for \demph{$t$-bounded} = \emph{(co/)homologically bounded}.  For example, if $A, B$ are discrete $R$-modules, then $A \Lotimes B$ is $t$-bounded-below, while $\RHom(A,B)$ is $t$-bounded-above.
\end{itemize}

\subsection{Reminder on dg-categories and $\infty$-categories}
For background on $\infty$-categories and dg-categories, the reader is direct to e.g., \cite{T} and \cite{Toen-DGCat}.

\begin{itemize}
  \item Let $\dgcat_k$ be the $\infty$-category of $k$-linear dg-categories with quasi-equivalences inverted; a Theorem of To\"en identifies this with the (nerve) of the simplicial category whose morphisms are (Kan replacements of the nerve of) a certain full subspace of the $\infty$-groupoid of bimodules.  Let $\dgcatidm_k$ be the $\infty$-categorical ``Morita localization'' of $\dgcat_k$; it may be identified with the $\infty$-category of \demph{small stable idempotent complete $k$-linear $\infty$-categories} (with exact $k$-linear functors).  Let $\dgcatbig_k$ denote the $\infty$-category of \demph{stable cocomplete $k$-linear $\infty$-categories} (with colimit preserving $k$-linear functors). 
  \item We will generally write $\Map$ for simplicial mapping spaces and $\RHom$ (with various decorations) for $k$-linear mapping complexes, so that e.g., $\Map(x,y) \isom \Omega^\infty \RHom(x,y)$. ($\Omega^\infty$ denotes taking the infinite loop space corresponding to a spectrum; if $\RHom(x,y)$ is a $k$-linear chain complex, this may be interpreted as applying the Dold-Kan construction to the connected cover $\tau_{\geq 0} \RHom(x,y)$.)
  \item $\dgcatidm_k$ (resp., $\dgcatbig_k$) is equipped with a symmetric-monoidal \demph{tensor product} $\otimes = \otimes_k$ (resp., $\ohotimes = \ohotimes_k$).  These satisfy the compatibility $\Ind(\C \otimes \C') = \Ind\C \ohotimes \Ind\C'$.  (In particular, $\ohotimes$ preserves the property of being compactly-generated.)
  \item  Many of our dg-categories will be $R=k\ps{\bt}$- or $R = k\pl{\bt}$-linear, in the sense of being module-categories for the symmetric-monoidal $\infty$-categories $\Perf(R) \in \CAlg(\dgcatidm_k)$ (resp., $R\mod \in \CAlg(\dgcatbig_k)$): Heuristically, this is a $\C \in \dgcatidm_k$ equipped with a $k$-linear $\otimes\colon \Perf(R) \times \C \to \C$ suitably compatible with $\otimes_R$ on $\Perf(R)$.  
    This notion gives rise to the same $\infty$-categories $\dgcatidm_R$ (resp., $\dgcatbig_R$) as the more rigid notion of literal $R$-linear dg-category, but is more convenient for our purposes.  If $R$ is a commutative dga, when no confusing arises we will sometimes write $R$ in place of $\Perf R$ or $R\mod$: i.e.,
    \[ \C \otimes_{R} \C' \eqdef \C \otimes_{\Perf R} \C' \qquad \C \otimes_R R' \eqdef \C \otimes_{\Perf R} \Perf R' \]
    \[ \D \ohotimes_{R} \D' \eqdef \D \ohotimes_{R\mod} \D' \qquad \D \ohotimes_R R' \eqdef \D \ohotimes_{R\mod} R'\mod \]
  \item The internal-Hom associated to $\ohotimes_R$ will denoted $\Fun^L_R(-,-)$ (the ``$L$'' standing for left-adjoint, i.e., colimit preserving); $\Fun^L_R(-,-)$ is explicitly a dg-category of bimodules.  Similarly, $\Fun_R^{ex}(-,-)$ will denote the $\infty$-category of exact (i.e., finite limit- and colimit-preserving) functors, etc.
  \item If $\C \in \dgcatidm_R$ or $\dgcatbig_R$, then there is a functor $\RHom^{\otimes_R}_\C(-,-)\colon \C^\op \times \C \to R\mod$ determined up to contractible choices by
    \[ \Map_{R\mod}\left(V, \RHom^{\otimes_R}_\C(\F, \G)\right) = \Map_{\C}\left(V \otimes_R \F, \G\right) \quad \text{for} \quad V \in \Perf(R), \text{and } \F, \G \in \C \]
   Similarly if $R\mod$ is replaced by another rigid cocomplete symmetric-monoidal $\infty$-category (e.g., $\QC(X)$ for $X$ a perfect stack).
  \item $\Pr^L$ (resp., $\Pr^R$) denotes the $\infty$-category of \demph{presentable $\infty$-categories} and left (resp., right) adjoint functors.  They are anti-equivalent, admit small limits and colimits, and forgetting down to $\Cat_\infty$ preserves limits.  Colimits in $\Pr^L$ of a diagram of compactly-generated categories along functors preserving compact objects can be computed by taking $\Ind$ of the colimit of the resulting diagram of categories of compact objects.
\end{itemize}

\subsection{Derived schemes, stacks, etc.}
Mild derived schemes will come up naturally for us.  In order to be able to uniformly discuss the orbifold, and graded, contexts we will also need some mild derived stacks.  The very simplest variants suffice for our desired applications, since for us all the derivedness will be affine over an underived base.  Nevertheless, we find it convenient to use the general language (and in \autoref{sec:more-general} and the Appendices we prove things about derived stacks more general than necessary for our applications).  Our primary references for derived algebraic geometry are \cite{Lurie-thesis}, the DAGs, and To\"en/To\"en-Vezzosi.  Since there does not seem to be a good universal source for notation or terminology, we make clear our choices:

\begin{itemize} 
  \item  Our \demph{derived rings}, $\DRng_k$, will be connective commutative dg-$k$-algebras.  We say that $A \in \DRng_k$ is \demph{coherent} (resp., \demph{Noetherian}) if $\pi_0 A$ is coherent (resp., Noetherian) and each $\pi_i A$ is finitely-presented over $\pi_0 A$.  Meanwhile, $\DRngfp_k$ will be the full subcategory of almost finitely-presented commutative dg-$k$-algebras (=those which are Noetherian with $\pi_0 A$ finitely-presented over $k$). 
   A(n almost finitely-presented) \demph{derived space} is an \'etale sheaf in $\Fun(\DRngfp_k, \Sp)$. A(n almost finitely-presented) \demph{derived $n$-stack} is a derived space which admits a smooth surjection from a disjoint union of affine schemes, such that this map is a relative derived $(n-1)$-stack.  (For $n=0$, take one of affine derived schemes, (Zariski) derived schemes, or derived algebraic spaces.  The first notion gives rise to ``geometric $n$-stack,'' while the last gives the one most easily comparable to usual stacks.)
 \item A \emph{derived scheme} is a Zariski-locally (derived-)ringed space $X = (\X, \O_X)$ which is locally equivalent as such to the Zariski spectrum $\Spec A$ for $A \in \DRng_k$.  A \demph{derived DM stack} (resp., \emph{derived algebraic space}) is an \'etale-locally (derived-)ringed topos $X = (\X, \O_X)$ which is locally equivalent as such to the \'etale spectrum $\Spec A$ for $A \in \DRng_k$.\footnote{Except for algebraic spaces, these definitions are more restrictive than those in \cite{Lurie-thesis}, disallowing any derived-ness in the gluing process.  This is rigged so that e.g., a derived DM stack will have an underlying ($1$-)stack.}.  Having said that, we will \emph{forget it}: We will identify (almost finitely-presented) derived schemes/derived algebraic spaces/derived DM-stacks with their functors-of-points as derived spaces, and will restrict to quasi-compact ones with affine diagonal (so that \emph{affine} derived schemes are the building blocks).
    \item For a derived $n$-stack $X$, there is a universal \demph{discrete (aka ``$0$-truncated'')} derived $n$-stack mapping to it: $\pi_0 X = \Spec_X (\pi_0 \O_X) \to X$ .  Note that this morphism is affine and indeed a closed immersion.  Note also that $\pi_0 X$ is in the essential image of ``ordinary'' Artin $n$-stacks (for $n=1$, it seems justifiable to remove the quotes around ordinary!).\footnote{In particular, for $n > 1$, $\pi_0 X$ need not be equivalent to an ordinary ($1$-)stack.  The issue is most apparent when thinking of derived (DM) stacks in terms of $\infty$-topoi, where the issue is analogous to the difference between an (ordinary) DM stack and a coarse moduli space.  Writing $X = (\X, \O_X)$, we have $\pi_0 X = (\X, \pi_0 \O_X)$.   There \emph{is} an underlying (ordinary) DM stack $\ul{X} = (\tau_{\leq 0} X, \pi_0 \O_X)$, but the natural map $\X \to i_{\leq 0} \tau_{\leq 0} \X$ need not be an equivalence.  The prototypical failure mode is the following: Choose $E_\bullet$ a simplicial diagram of (ordinary) stacks \'etale over $\ul{X}$, and let $\X'$ be the $\infty$-topos of \'etale sheaves of spaces on $\tau_{\leq 0} X$ \emph{over} the geometric realization $|E_\bullet|$; then $(\X', \res{\pi_0 \O_X}{\X'})$ is a perfectly good discrete DM stack, which is not in any reasonable way a nilthickening of an ordinary DM stack.}
    \item For a derived stack $X$: $\QC(X)$ denotes the $k$-linear (stable cocomplete) $\infty$-category of \demph{quasi-coherent complexes} on $X$; it is equipped with a natural $t$-structure, whose heart $\QC(X)^\heart$ is equivalent to the (ordinary) category of quasi-coherent complexes on $\pi_0 X$.  $\Perf(X) \subset \QC(X)$ is the full-subcategory of \demph{perfect complexes}; if $X$ is a quasi-compact and (quasi-)separated derived scheme, or more generally perfect in the sense of \cite{BFN}, then $\QC(X) = \Ind \Perf(X)$.  $\PsCoh(X) \subset \QC(X)$ denotes the full-subcategory of \demph{pseudo-coherent (=``almost perfect'') complexes}, i.e., those $\F \in \QC(X)$ that are (locally) $t$-bounded-below and such that $\tau_{\leq n} \F \in \QC_{\leq n}(X)$ is compact for all $n \in \ZZ$.
  \item We say that a derived stack $X$ is \demph{coherent} (resp., \demph{Noetherian}) if it admits an fppf surjection from $\Spec A$ with $A$ a coherent (resp., Noetherian) derived ring.  If $X$ is coherent, then $\PsCoh(X)$ admits an alternate description: $\F \in \PsCoh(X)$ iff $\F$ is $t$-bounded below and $\pi_i \F$ is a coherent $\pi_0 X$-module for all $i$.  Let $\DCoh(X) \subset \PsCoh(X)$ denote the full subcategory of \demph{coherent complexes}, i.e., complexes with locally bounded, coherent (over $\pi_0 X$), cohomology sheaves.  Let  $\QCsh(X) \eqdef \Ind \DCoh(X)$ denote the $\infty$-category of $\Ind$ objects of $\DCoh(X)$ (``\demph{Ind coherent complexes}'').\footnote{This is not the best \emph{definition} for arbitrary $X$, since it does not manifestly have descent.  Instead, one should make this definition on affines and then extend by gluing as in \autoref{sec:more-general}.  But \autoref{app:descent} implies the two agree on, e.g., reasonable DM stacks.}  We say that $X$ is \demph{regular} if $\Perf(X) = \DCoh(X)$\footnote{For $X = \Spec R$ coherent with $\pi_0 R$ a Noetherian local ring, the inclusion $\DCoh(X) \subset \Perf(X)$ is equivalent to requiring that the residue field $k = R/\mm$ be perfect over $R$.  For $X = \Spec R$ coherent, the inclusion $\Perf(X) \subset \DCoh(X)$ is not automatic since $R$ as it requires that $R$ have only finitely many non-vanishing homotopy groups each of which is finitely-presented over $\pi_0 R$; e.g., it is satisfied for anything of finite Tor-amplitude over an underived stack.  If $X = \Spec R$ coherent with $\pi_0 R$ Noetherian, it seems likely that $X$ is regular iff $R = \pi_0 R$ is a regular ring.}
  \item We say that a derived $n$-stack $X$ is \demph{bounded} if it admits a smooth surjection $U = \Spec A \to X$ which is a bounded relative $(n-1)$-stack.  A $0$-stack (derived scheme or algebraic space) is bounded if it is quasi-compact and quasi-separated.  This is an analog of the technical condition that a scheme is quasi-compact and quasi-separated, but buys one somewhat less: One can try to compute pushforwards via a Cech complex, but this now involves a cosimplicial totalization (rather than a finite limit) and so only commutes with colimits, finite Tor-dimension base-change, etc. on \emph{$t$-bounded-above}. 
  \item With all that out of the way, we now introduce two convenient conditions on a derived stack $X$ (the conditions are somewhat redundant for clarity):
\begin{equation}\label{cond:star}\text{$X$ is Noetherian, has affine diagonal, and is perfect}  \tag{$\star$} \end{equation}
\begin{equation}\label{cond:starf}\text{$X$ is Noetherian, has finite diagonal, is perfect, and is Deligne-Mumford} \tag{$\star_{\mathrm{F}}$} \end{equation}
A \eqref{cond:star} (resp., \eqref{cond:starf}) morphism $f: X \to Y$ of Noetherian derived stacks is one such that $X \times_Y \Spec A$ is an \eqref{cond:star} (resp., \eqref{cond:starf}) derived stack for any $\Spec A \to Y$ almost of finite-presentation.

It will be our \demph{standing assumption} that any derived stack (including plain schemes) for which we consider $\DCoh$ or $\QCsh$ satisfy condition \eqref{cond:star} (and usually they will satisfy \eqref{cond:starf} and be almost finitely-presented over $k$).  Note that \eqref{cond:starf} holds for separated Noetherian schemes, and in char. $0$ for separated Noetherian DM stacks with affine diagonal whose coarse moduli space is a scheme.  Both conditions pass to quotients by finite group schemes (in char. $0$), and $BG$ is \eqref{cond:star} for $G$ reductive (in char. $0$); both pass to things quasi-projective over a base, and are stable under fiber products provided one of the maps is almost of finite-presentation (to preserves the Noetherian condition).
\end{itemize}

\subsection{LG pairs}
\begin{itemize}
  \item An \demph{LG pair} $(M,f)$ is a pair consisting of a smooth \eqref{cond:starf} stack (\demph{``orbifold''}) $M$ over $k$, and a morphism $f \colon M \to \AA^1$. (We do not require $f$ to be non-zero.  However, if $f$ is not flat, various fiber products throughout the paper must be taken in the derived sense.)  If $(M,f)$, $(N, g)$ are two LG pairs, define the \demph{Thom-Sebastiani sum} LG pair to be $(M \times N, f \boxplus g)$ where $(f \boxplus g)(m,n) = f(m) + g(n)$.
  \item For an LG pair $(M,f)$, $\PreMF(M,f)$ denotes the $k\ps{\bt}$-linear $\infty$-category with underlying $k$-linear $\infty$-category $\DCoh(M_0)$ and $\bt$ acting as a ``cohomological operation''.  See \autoref{constr:Bact} below for a geometric description of this structure.  Then, $\MF(X,f)$ denotes the $k\pl{\bt}$-linear (i.e., $2$-periodic) $\infty$-category $\MF(X,f) \eqdef \PreMF(X,f) \otimes_{k\ps{\bt}} k\pl{\bt}$. (The relation of this to actual ``matrix factorizations'' is given by \autoref{prop:comparison} and Orlov's Theorem \cite{Orlov-DBrane}.) We also define Ind-completed versions:
    \[ \PreMF^\kinfty(X,f) \eqdef \Ind\PreMF(X,f) \] \[ \MF^\kinfty(X,f) \eqdef \Ind\MF(X,f) = \PreMF^\kinfty(X,f) \ohotimes_{k\ps{\bt}} k\pl{\bt} \]
  \item $\Omega^\bullet_M \eqdef \oplus_i \Omega^i_M[i]$, i.e., it is placed in homologically positive degrees.  Meanwhile, $\omega_M$ denotes the dualizing complex in its natural degree (not generally zero).  With these conventions if $M$ is smooth of dimension $m$, then $\omega_M \isom \Omega^m_M[m]$ is in homological degree $m$ and there is a sheaf perfect-pairing $\wedge\colon \Omega^\bullet_M \otimes_{\O_M} \Omega^\bullet_M \to \omega_M$.  Similarly $T_M^\bullet \eqdef\oplus_i \bigwedge^i T_M[-i]$, i.e., it is place in homologically negative degrees.
\end{itemize}

\subsection{Primer on $\QCsh = \Ind \DCoh$} 
\begin{na} The usual construction of $\Ind \C$, as the full subcategory of the functor category $\Fun(\C, \Sp)$ generated under filtered colimits by the image of the Yoneda functor, provides a description
  \[ \QCsh(X) = \Fun^{Lex}(\DCoh(X)^\op, \Sp) \qquad \DCoh(X) \ni \K \mapsto \RHom(-, \K) \in \Fun^{Lex}(\DCoh(X)^\op, \Sp) \] 
  where $\Fun^{Lex}$ denotes the full-subcategory of functors preserving finite limits.  In dg-language, this translates to an identification of $\QCsh(X)$ with $\dgmod_k(\DCoh(X)^\op)$: (the derived category of) dg-modules over a dg-category model $\DCoh(X)^\op$.  Our first step will be giving a slightly smaller model:
\end{na}

\begin{lemma}\label{lem:coh-red} Suppose that $X$ is a coherent derived stack.  
  \begin{enumerate}
    \item  Let $i\colon (\pi_0 X) \to X$ be the universal map from a discrete stack (i.e., $\Spec_X (\pi_0 \O_X) \to X$), and $i_*\colon \DCoh( (\pi_0 X) ) \to \DCoh(X)$ the pushforward.  Then, the image  of $i_*$ triangulated-generates $\DCoh(X)$.  In fact, objects of the form $i_* \F$, for $\F \in \DCoh(\pi_0 X)^\heart = \Coh(\pi_0 X)$, triangulated-generate $\DCoh(X)$.
    \item The right-adjoint $i^! \colon \Ind\DCoh X \to \Ind\DCoh(\pi_0 X)$ to $i_*\colon \Ind\DCoh (\pi_0 X) \to \Ind \DCoh X$ is conservative.
    \item Suppose that $\N \subset \pi_0 \O_X$ is a nilpotent ideal sheaf (e.g., the nilradical on a Noetherian derived stack).  Let $i_2\colon X' = \Spec_X \pi_0 \O_X/\N \to X$ be the corresponding map from the discrete derived stack $X' = \Spec_X \pi_0 \O_X/\N$.  Then, the image  of $(i_2)_*$ triangulated-generates $\DCoh(X)$.  In fact, objects of the form $(i_2)_* \F$, for $\F \in \DCoh( X' )^\heart = \Coh(X')$, triangulated-generate $\DCoh(X)$.
   \end{enumerate}
\end{lemma}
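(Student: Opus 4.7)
My plan is to leverage the natural $t$-structure on $\DCoh(X)$ throughout. Since $X$ is coherent, the heart of $\DCoh(X)$ is the abelian category $\Coh(\pi_0 X)$, and the closed immersion $i \colon \pi_0 X \to X$ is affine with $i_* \O_{\pi_0 X} = \pi_0 \O_X$; hence $i_*$ is $t$-exact and identifies $\Coh(\pi_0 X)$ with $\DCoh(X)^\heart$, any coherent $\pi_0 \O_X$-module on $X$ being tautologically a pushforward along $i$. This identification is the engine of all three parts.

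For (i), I would first observe that every $\F \in \DCoh(X)$ is $t$-bounded, say in $\DCoh(X)^{[a,b]}$, and then perform a finite induction on $b-a$ using the Postnikov fiber sequences $\tau_{\geq n+1}\F \to \tau_{\geq n}\F \to (\pi_n \F)[n]$ to present $\F$ as a finite iterated extension of shifts of $i_* \pi_n \F$ with $\pi_n \F \in \Coh(\pi_0 X)$. This yields the refined (and hence general) generating claim at once.

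Part (ii) I expect to fall out formally: the Ind-extension $i_* \colon \Ind\DCoh(\pi_0 X) \to \Ind\DCoh(X)$ preserves colimits and compact objects (it is the Ind-extension of an exact functor between small stable categories), so the right adjoint $i^!$ exists by the adjoint functor theorem. If $i^! \G \simeq 0$, then $\RHom(i_*\F', \G) \simeq \RHom(\F', i^!\G) \simeq 0$ for every $\F' \in \DCoh(\pi_0 X)$; but by (i) the objects $i_* \F'$ generate $\Ind\DCoh(X)$ under shifts, cofibers, and filtered colimits, forcing $\G \simeq 0$.

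For (iii), by (i) it suffices to build $i_* \F$ for $\F \in \Coh(\pi_0 X)$ from pushforwards along $i_2 = i \circ \iota$, where $\iota \colon X' \hookrightarrow \pi_0 X$. I would use the $\N$-adic filtration $\F \supset \N\F \supset \N^2 \F \supset \cdots$, which terminates at $0$ in finitely many steps by nilpotence of $\N$; each graded piece $\N^k\F / \N^{k+1}\F$ is annihilated by $\N$ and hence of the form $\iota_* \tilde\G_k$ for some $\tilde\G_k \in \Coh(X')$, so applying $i_*$ presents $i_* \F$ as a finite iterated extension of objects $(i_2)_* \tilde\G_k$. The one place where any care is needed is the finiteness of this filtration on each coherent $\F$; this is where the Noetherian hypothesis on $X$ is genuinely used, and it is the only nontrivial point in the argument.
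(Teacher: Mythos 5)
Your proof is correct and follows essentially the same route as the paper: Postnikov truncations for (i), adjunction plus generation for (ii), and the $\N$-adic filtration for (iii). One small mis-statement in your closing remark: the finiteness of the filtration in (iii) is not where "the Noetherian hypothesis on $X$ is genuinely used" — the lemma hypothesizes $X$ merely coherent and $\N$ \emph{nilpotent}, and it is the explicit nilpotence of $\N$ (not Noetherianity) that terminates the filtration; the parenthetical "(e.g., the nilradical on a Noetherian derived stack)" is just offering a source of such an $\N$, not smuggling a Noetherian hypothesis into the argument.
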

\begin{proof} 
  \begin{enumerate}
    \item Suppose $\F \in \DCoh(X)$, and consider the Postnikov stage
   \[ \tau_{\geq (k+1)} \F \longrightarrow \tau_{\geq k} \F \longrightarrow (\pi_k \F)[k] \]  Note that $\pi_k \F$ is a coherent $\pi_0 \O_X$-module since $\F \in \DCoh(X)$, and thus is in the essential image of $i_*$.  Since $X$ is quasi-compact and $\F \in \DCoh(X)$, only finitely many $k$ are non-zero, completing the proof.
 \item Suppose $\F = ``\dlim_\alpha" \F_\alpha \in \Ind \DCoh X$ is such that $i^! \F = 0$.  It suffices to show that $0 = \Map_{\Ind\DCoh X}(\K, \F) = \dlim_\alpha \Map_{\QC(X)}(\K, \F_\alpha)$ for all $\K \in \DCoh(X)$.  By (i), it suffices to note that $0 = \Map_{\Ind\DCoh X}(i_* \K', \F) = \Map_{\Ind\DCoh X}(\K', i^! \F)$ for all $K' \in \DCoh(\pi_0 X)$.
\item 
By the above it suffices to show that the triangulated closure of the image contains $i_* \F$ for $\F \in \Coh(\pi_0 X)$. The filtration of $\F$ by powers of $\N$ \[ \F \supset \N \F \supset \N^2 \F \supset \cdots \] is finite by hypothesis, and each associated graded piece is in the essential image of $(i_2)_*$.\qedhere
  \end{enumerate}
\end{proof}

This yields the following comforting description of $\QCsh(X)$:
\begin{corollary} Suppose $X$ is a Noetherian derived scheme. Let $\C$ denote a dg-category whose objects are ordinary coherent sheaves on $\pi_0 X$ and whose morphisms are $\RHom^{\otimes_k}_X(i_* \F, i_* \G)$.  Then, $\QCsh(X)$ may be identified with the dg-category of dg-modules over $\C^\op$.

Alternatively, let $\C'$ be the dg-category whose objects are coherent sheaves on $(\pi_0 X)^\red$ and whose morphisms are as above.  Then, $\QCsh(X)$ may be identified with the dg-category of dg-modules over $(\C')^\op$.
\end{corollary}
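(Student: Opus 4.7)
The plan is to identify $\QCsh(X) = \Ind\DCoh(X)$ with $\dgmod_k(\C^\op)$ using one general principle about free cocompletions together with \autoref{lem:coh-red}.

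The general principle is that for any small $k$-linear dg-category $\C$, the dg-category $\dgmod_k(\C^\op)$ is naturally equivalent to $\Ind(\C^{\mathrm{st},\mathrm{idm}})$, the Ind-completion of the stable idempotent-completion of $\C$ taken inside $\dgcatidm_k$. (Equivalently, $\dgmod_k(\C^\op)$ is the universal presentable stable $k$-linear $\infty$-category receiving a $k$-linear functor from $\C$; the Yoneda image of $\C$ lies inside the compact objects, and every compact object of the target is a retract of a finite colimit of representables.) This reduces the corollary to showing that the fully faithful inclusion $\C \hookrightarrow \DCoh(X)$ exhibits $\DCoh(X)$ as the stable idempotent-completion of $\C$.

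By \autoref{lem:coh-red}(i), objects of the form $i_*\F$ with $\F \in \Coh(\pi_0 X)$ triangulated-generate $\DCoh(X)$; equivalently, the smallest stable subcategory of $\DCoh(X)$ containing $\C$ is all of $\DCoh(X)$. Since $\DCoh(X)$ is defined as the full subcategory of $\QC(X)$ on $t$-bounded objects with coherent cohomology, it is closed under retracts taken in $\QC(X)$ and is therefore already idempotent complete. Hence the stable idempotent-completion of $\C$ inside $\DCoh(X)$ agrees with $\DCoh(X)$, and the first principle gives $\dgmod_k(\C^\op) \simeq \Ind\DCoh(X) = \QCsh(X)$.

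The same argument applies verbatim to $\C'$: \autoref{lem:coh-red}(iii) provides the required triangulated-generation of $\DCoh(X)$ by $(i_2)_*\Coh((\pi_0 X)^\red)$, and then idempotent-completeness of $\DCoh(X)$ finishes the proof. The only mildly delicate ingredient is the identification $\dgmod_k(\C^\op) \simeq \Ind(\C^{\mathrm{st},\mathrm{idm}})$; this is standard but should be invoked (rather than reproved) in a form compatible with the $\infty$-categorical conventions recalled in \autoref{sec:notation}.
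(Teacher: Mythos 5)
Your proof is correct, and since the paper states this corollary without proof your argument is a reasonable fill-in that follows the route the preceding discussion in \autoref{sec:notation} (``Primer on $\QCsh$'') clearly intends: reduce to Morita invariance via the triangulated-generation statements of \autoref{lem:coh-red}. The two ingredients you isolate --- that $\dgmod_k(\C^\op)$ is the Ind-completion of the thick closure of the Yoneda image, and that \autoref{lem:coh-red}(i), (iii) identify that thick closure with $\DCoh(X)$ (using idempotent-completeness of $\DCoh(X)$, which you correctly justify by closure of bounded coherent complexes under retracts in $\QC(X)$) --- are exactly what is needed.
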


\begin{na}\label{na:qcsh-explicit} In case $X$ is a (discrete) Noetherian scheme, there are more explicit dg-models for $\QCsh(X)$ and $\QCsh(X)^\dual = \Ind(\DCoh(X)^\op)$ in the literature:
  \begin{itemize}
    \item $K(\Inj X)$ the ``homotopy'' dg-category of (unbounded) complexes of injective quasi-coherent sheaves.  This description emphasizes that ``the difference'' between $\QCsh(X)$ and $\QC(X)$ is that the later is complete with respect to the $t$-structure, i.e., acyclic objects are equivalent to $0$.  It models $\QCsh(X)$ by results of Krause.
    \item $K_m(\Proj X)$  Murfet's ``mock homotopy category of projectives.''  In the affine case, one can literally take the dg-category of (unbounded) complexes of projective quasi-coherent modules, while in general one must take a certain localization of the dg-category of (unbounded) complexes of flat quasi-coherent modules.  It models $\QCsh(X)^\dual$ by results of Neeman and Murfet.
  \end{itemize}
\end{na}

\begin{notation} Suppose $S$ is a perfect stack, so that $\QC(S) = \Ind \Perf(S)$.  \mbox{}\begin{itemize}
  \item If $f\colon  X \to S$ is a relative derived stack, then $\QC(X)$ is a $\QC(S)$-module category (via the symmetric monoidal pullback functor).  This gives rise to an inner-Hom functor  $\HHom^{\otimes_S}_{\QC(X)}\colon \QC(X)^\op \times \QC(X) \to \QC(S)$ characterized by
    \[\Map_{\QC(S)}\left(T, \HHom^{\otimes_S}_{\QC(X)}(\F,\G)\right) = \Map_{\QC(X)}\left(f^* T \otimes_{\O_X} \F, \G \right) \]for all $T \in \Perf(S)$, and $\F,\G \in \QC(X)$.      If $X = S$, then we will omit the superscript $\otimes_S$.  If $S = \Spec k$, we will write $\RHom_{\QC(X)}$.
  \item If $f\colon  X \to S$ is an $S$-scheme, then $\QCsh(X)$ is a $\QC(S)$-module category.  This gives rise to $\HHom^{\otimes_S}_{\QCsh(X)}\colon \QCsh(X)^\op \times \QCsh(X) \to \QC(S)$ characterized by
    \[\Map_{\QC(S)}\left(T, \HHom^{\otimes_S}_{\QCsh(X)}(\F,\G)\right) = \Map_{\QCsh(X)}\left(f^* T \otimes_{\O_X} \F, \G \right) \] for all $T \in \Perf(S)$ and $\F,\G \in \QCsh(X)$.
    If $X = S$, then we will omit the superscript $\otimes_S$.  If $S = \Spec k$, we will write $\RHom_{\QCsh(X)}$.  If $\F, \G \in \DCoh(X)$, we may write $\HHom_{\QC(X)}(\F, \G)$ or $\RHom_{\QCsh(X)}(\F,\G)$: Since $\DCoh(X) \to \QC(X)$ is fully-faithful, there is no ambiguity.
\end{itemize}
Note that if $\F \in \Perf(X)$ (or $\F \in \DCoh(X)$) then $f^* T \otimes_{\O_X} \F$ is compact in $\QC(X)$ (or $\Ind\DCoh(X)$) for all $T \in \Perf(S)$, so that $\HHom^{\otimes_S}(\F, -)$ preserves colimits.
\end{notation}

\begin{na}  Associated to a bounded morphism $f\colon  X \to Y$ of derived stacks, one can attach a variety of functors.
  In what follow, $F_R\colon \QC(X) \to \QCsh(X)$ is the right-adjoint to the localization $F_M\colon \QCsh(X) \to \QC(X)$ (so that in particular its restriction to $\DCoh(X)$ is the usual inclusion $\DCoh(X) \to \Ind\DCoh(X)$).
\end{na}
\begin{constr}
  Suppose $\bF\colon \QC_{<\infty}(X) \to \QC_{<\infty}(Y)$ is a colimit-preserving functor (on $t$-bounded above quasi-coherent complexes) which is \demph{$t$-bounded above} in the sense that there exists a constant $N$ such that $\bF(\QC(X)_{\leq k}) \subset \bF(\QC(X)_{\leq k+N})$.  Then, define $\lsh{\bF}\colon \QCsh(X) \to \QCsh(Y)$ as the filtered-colimit extension of  the composite
      \[ \DCoh(X) \stackrel{\bF}\longrightarrow \QC(Y) \stackrel{F_R}\longrightarrow \QCsh(Y) \]
      Since $F_M \circ F_R = \id$, it follows that $F_M \circ \lsh{\bF} \circ F_R = \bF$.  The importance of the $t$-boundedness condition is that $F_R$ commutes with $t$-bounded above (but not arbitrary) colimits, so that the condition guarantees that $F_R \circ \bF = \lsh{\bF} \circ F_R$ on $t$-bounded above objects.
      
      In particular, the $t$-bounded-above condition guarantees that the construction is compatible with composition of functors: If $\bF\colon \QC(X) \to \QC(Y)$ and $\bF'\colon \QC(Y) \to \QC(Z)$ are two functors, one would very much like for the natural map $\lsh{\bF'} \circ \lsh{\bF} \to \lsh{(\bF' \circ \bF)}$ to be an equivalence.  Everything is colimit-preserving, so it suffices to check on the compact objects $\K \in \DCoh(X)$, which are bounded above and remain so after applying $\bF$, so that
      \[ \lsh{\bF'} \circ \lsh{\bF}(\K) =  \lsh{\bF'} \circ F_R \circ \bF (\K) = F_R \circ \bF' \circ \bF(\K) = \lsh{(\bF' \circ \bF)}(\K) \]
      \begin{itemize}
	\item The functor $f_* \colon \QC_{<\infty}(X) \to \QC_{<\infty}(Y)$ is colimit-preserving and $t$-bounded above. Therefore, it gives rise to a functor $f_*\colon \QCsh(X) \to \QCsh(Y)$ by the above procedure.  If  $f$ is a bounded relative proper algebraic space,\footnote{Or e.g., a sufficiently nice bounded relative proper DM stack in characteristic zero.} then $f_*$ preserves compact objects.      
	\item Provided $f$ is of finite Tor-dimension, the functor $f^* \colon \QC(X) \to \QC(Y)$ will be colimit preserving and $t$-bounded above (and below).  In this case, it gives rise to a functor $f^* \colon \QCsh(Y) \to \QCsh(X)$ as above.  Furthermore, there is an adjunction $(f^*, f_*)$.
	\item The functor $f^! = \DD_X \circ f^* \circ \DD_Y \colon \QC_{<\infty}(Y) \to \QC_{<\infty}(X)$ on $t$-bounded above complexes is colimit preserving and $t$-bounded above. (In case $f$ is of finite Tor-dimension, $f^!(-) \isom \omega_f \otimes f^*(-)$ is well-behaved with no boundedness though still preserves boundedness.)  Consequently, it gives rise to a functor $f^! \colon \QCsh(Y) \to \QCsh(X)$. 
	\item To understand $f^!$, it will suffice for our purposes to recall explicit formulae for two special cases: If $f$ is \demph{finite} (i.e., affine, with $f_* \O_X$ pseudo-coherent) then $f^!(-) = \RHom_X(f_* \O_X, -)$ equipped with the evaluation-at-one trace map $\tr_f\colon f_* f^! \to \id$.  If $f$ is \demph{quasi-smooth} (i.e., finite-presentation and $\LL_f$ of Tor-amplitude in $[0,1]$), then $f^!(-) = \det \LL_f \otimes f^*(-)$ equipped with the Berezinian integration trace map $\tr_f\colon f_* f^! \to \id$.
\end{itemize}
\end{constr}
\begin{na}
The natural functors on $\QC$, $f^*$ and $f_*$, are simply adjoint and so determine one another. In contrast, properly spelling out the relations between the two natural functors, $f^!$ and $f_*$, on $\QCsh$ requires some $(\infty,2)$-categorical structures which we won't get into here (e.g., one needs to remember the transformation $\tr_f \colon f_* f^! \to \id$ when it exists, etc.).  Instead, we'll just mention a few facts (that hold in say, the \eqref{cond:starf} case)

The formation of $f_*$ commutes with flat base-change on the target.  The formation of $f^!$ commutes with flat base-change on the target and \'etale base-change on the source.  If $f$ is finite\footnote{or with more difficulty: a bounded relative proper algebraic space, or a sufficiently nice bounded relative proper DM stack in characteristic zero} the natural transformation $\tr_f\colon f_* f^! \to \id$ is the co-unit of an adjunction $(f_*, f^!)$.

Given a commutative square
  \[ \xymatrix{ Y' \ar[d]_{f'} \ar[r]^{g'} & Y \ar[d]^f  \\ X' \ar[r]_g & X } \] there is an equivalence $(g')_* (f')^! \isom f^! g_*$, e.g., in case $f$ proper as the composite
  \[ \xymatrix@1@C=6pc{ (g')_* (f')^! \ar[r]^-{\cotr_f((g')_* (f')^!)} & f^! f_* (g')_* (f')^! \ar[r]^-{\sim} & f^! g_* (f')_* (f')^! \ar[r]^-{f^! g_*(\tr_{f'})} & f^! g_* } \] and in case $f$ smooth a map the other way deduced from the projection formula, base-change, and the map $(g')^* \det \LL_f \to \det \LL_{f'}$.
This natural transformation is an equivalence when the square is Cartesian: Using compatibilities with base-change, the claim is \'etale local on $X$ and $Y$, so we reduce to the case where $f \colon X \to Y$ admits a factorization as a finite morphism followed by a smooth morphism; it then remains to check (using the standard $\QC$ tools, e.g., base-change for star pullback, the projection formula, etc.) that the natural transformation is an equivalence in each of the two cases.
\end{na}

\part{\texorpdfstring{$k\ps{\bt}$}{k[t]}-linear structure via derived group actions}
\section{Generalities on \texorpdfstring{$\PreMF$}{PreMF} and \texorpdfstring{$\MF$}{MF}}\label{sec:mf-gen}
\subsection{Preliminaries}\label{ssec:Bact}
\begin{constr}\label{constr:B} For the duration of this section, set \[ \bB = 0 \times_{\AA^1} 0 = \Spec k[B]/B^2 \qquad \deg B = +1 \]  $\bB$ admits the structure of \demph{derived group scheme} (i.e., its functor of points admits a factorization $\DRng \to \Mon^{gp}(\sSet) \to \Kan$, where $\Mon^{gp}(\sSet)$ denotes group-like Segal-style monoids in $\sSet$) by ``composition of loops''.    Since $\AA^1 = \GG_a$ is a commutative group scheme, $\bB$ admits the structure of \demph{commutative derived group scheme} (i.e., its functor of points admits a factorization $\DRng \to \sAb \to \sSet$) by considering ``pointwise addition of loops.''  These two structures are in fact strictly compatible (i.e., determine a factorization of the functor of points as $\DRng \to \Mon^{gp}(\sAb) \to \sSet$ refining each of the other two in the obvious way).

  \begin{itemize} 
     \item The (``loop composition'') product $\mu\colon \bB \times \bB \to \bB$ and identity $\id\colon \pt \to \bB$ may be explicitly identified as
      \[ \mu\colon \bB \times \bB = (0 \times_{\AA^1} 0) \times (0 \times_{\AA^1} 0) \isom 0 \times_{\AA^1} 0 \times_{\AA^1} 0 \stackrel{p_{13}}\longrightarrow 0 \times_{\AA^1} 0 \]
      \[ \id\colon \pt \stackrel{\Delta}\longrightarrow \pt \times_{\AA^1} \pt = \bB \]
      The rest of the Segal-monoid structure admits a similar description via projections and diagonals.  A homotopy inverse is given by the explicit anti-isomorphism $i\colon \bB \isom \bB^\op$ which on underlying space can be identified with
      \[ i\colon \bB = \pt \times_{\AA^1} \pt \stackrel{\mathrm{switch}}\longrightarrow \pt \times_{\AA^1} \pt = \bB \]
   \item The (``pointwise addition'') product $+\colon \bB \times \bB \to \bB$, identity $0\colon \pt \to \bB$, and inverse $-\colon \bB \isom \bB^\op$ may be explicitly identified as follows: The commutative diagram
      \[\xymatrix{
     0 \times 0 \ar[d] \ar[r]^= & 0 \ar[d]  \\
     \AA^1 \times \AA^1 \ar[r]^-+ &\AA^1 } \]
     gives rise to a map
     \[ +\colon \bB \times \bB \isom (\pt \times \pt) \times_{\AA^1 \times \AA^1} {\pt \times \pt} \longrightarrow \pt \times_\AA^1 \pt = \bB \] 
     Analogously, base changing the identity $0 \to \AA^1$ and the inverse map $-\colon \AA^1 \to \AA^1$ one obtains maps 
     \[ 0\colon \pt = 0 \times_0 0 \longrightarrow 0 \times_{\AA^1} 0 = \bB \]
     \[ -\colon \bB = 0 \times_{\AA^1} 0 \longrightarrow 0 \times_{\AA^1} 0 = \bB \]
     and an (anti-)isomorphism $-\colon \bB \isom \bB^\op$.
   \item For $R \in \DRng$, let $R_\bullet = \Map_{\DRng}(k[x], R) \in \sAb$.  In terms of functor of points on $\DRng$, we have
      \[ \bB(R_\bullet) = \Map_*(S^1, R_\bullet) \] which is equipped with a Segal-monoid structure (``loop composition'') via mapping in wedges of length $n$
      \[ [n] \mapsto \Map_*(\sqcup_n \Delta^1/\sqcup_n \Delta^0, R_\bullet) \]

      For any pointed simplicial set $X$, $\Map_*(X, R_\bullet)$ is naturally a simplicial abelian group via the composite
      \[ \Map_*(X, R_\bullet)^2 = \Map_*(X, R_\bullet^2) \to \Map_*(X, R_\bullet) \]
      providing the lift to $\Mon^{gp}(\sAb)$.

      We will heuristically write these (indicating e.g., maps on ($\pi_0$ of) functor of points) as
      \[ \mu\colon \bB(R_\bullet) \times \bB(R_\bullet) \ni \left( [h_1\colon 0 \to 0], [h_2\colon 0 \to 0]\right) \mapsto \left( [h_1 \cdot h_2\colon 0 \stackrel{h_1}\to 0 \stackrel{h_2}\to 0]\right) \in \bB(R_\bullet) \] and
      \[ +\colon \bB(R_\bullet) \times \bB(R_\bullet) \ni \left( [h_1\colon 0 \to 0], [h_2\colon 0 \to 0]\right) \mapsto \left( [h_1 + h_2\colon  0 \to 0]\right) \in \bB(R_\bullet) \] 
  \end{itemize}
\end{constr}

\begin{constr}\label{constr:QCshB} 
Let $(\QCsh(\bB), \circ)^\otimes$ denote $\QCsh(\bB)$ equipped with its symmetric monoidal convolution product:  $\G \circ \F = +_*(\G \boxtimes \F)$.

More precisely: \autoref{constr:B} provides a lift of $\bB$ to $(\bB,\mu,+) \in \Mon(\CMon(\text{Der. Sch.}))$.  Composing with the lax symmetric monoidal (via exterior product) functor\footnote{In general, the correct way to say this would require considering a suitable coCartesian fibration $(\sq{\QCsh},f_*) \to \text{Der. Sch}$, and then pulling back along the $\Delta^\op \times \Gamma$-shaped diagram encoding $(\bB,\mu,+)$.  In the present case, however, all the maps are finite so that it is not hard to give a strictly functorial diagram of categories.}
\[ X \mapsto \QCsh(X), \qquad f \mapsto f_* \] one obtains $(\QCsh(\bB), \circ_{\mu}, \circ_{+}) \in \Alg(\CAlg(\dgcatbig_k))$.  Finally, $(\QCsh(\bB), \circ)^\otimes$ is the image of this under the forgetful functor to $\CAlg(\dgcat_k)$.
\end{constr}

\begin{remark}\label{remark:explicit-homotopy} One can give explicit dg-algebra models for the two products on $\bB$, for the two actions, and an equivalence between the two (as well as an equivalence with a convenient smaller non-Segal model for loop composition).

Consider the following diagram of cosimplicial commutative dg-algebras:
  \[ \left\{k\overbrace{[B_1,\ldots,B_{\bullet}]}^{\deg B_i = +1}\right\} \longrightarrow \left\{ k\left[x_1, \ldots, x_{\bullet}\right]\overbrace{\left[\begin{gathered} \epsilon^1_1, \ldots, \epsilon^\bullet_1  \\ \epsilon^1_2, \ldots, \epsilon^\bullet_2\end{gathered}\right]}^{\deg \epsilon^i_i = +1}/\left( d\epsilon^j_i = x_i \right) \right\} \longrightarrow \left\{ k[x]\overbrace{\left[\gamma_1,\ldots,\gamma_{\bullet+1}\right]}^{\deg \gamma_i = +1}/(d\gamma_i=x) \right\} \] where the cosimplicial structure maps, and morphisms, are
  \begin{itemize} 
    \item The middle term models the co-commutative ``pointwise-addition'' co-multiplication (and co-identity) on $k[x][\epsilon_1,\epsilon_2]$
      \[ \Delta_+(x) = x \otimes 1 + 1 \otimes x \qquad \Delta_+(\epsilon_1) = \epsilon_1 \otimes 1 + 1 \otimes \epsilon_1 \qquad \coid_+(x) = 0 \]
      under the evident identification of the $n$-th term with the $n$-fold tensor product of $k[x][\epsilon_1,\epsilon_2]$.
    \item The right-hand term models the (Segal-style)  ``loop composition'' co-multiplication (and co-identity) on $k[x][\gamma_1,\gamma_2]$ (note that the $n$-th term is quasi-isomorphic to the $n$-fold tensor product, though isn't strictly isomorphic to it).
    \item The left-hand term gives a compact model for both. It comes from the co-commutative co-multiplication on $k[B]/B^2$ given by $\Delta(B) = B \otimes 1 + 1 \otimes B$, $\coid(B) = 0$.
    \item The right-hand map sends all $x_i$ to $x$, and $\epsilon^i_j \mapsto \gamma_{i+j-1}$.  It is a weak equivalence.
    \item The left-hand maps sends $B_i \mapsto \epsilon^i_1 - \epsilon^i_2$.  It is a weak equivalence.
  \end{itemize}
\end{remark}

The next (standard?) Proposition is the starting point for this Section.  Morally, it is the following Koszul duality computation: One identifies $\O_\bB \isom C_*(S^1; k)$ as $E_\infty$-coalgebra in dg-algebras, so that a cobar construction yields $k\ps{\bt} \isom C^*(BS^1; k)$ as $E_\infty$-algebra.
\begin{prop}\label{prop:mf-prelim}  
  There is a symmetric monoidal equivalence $(\QCsh(\bB), \circ)^{\otimes} \isom (k\ps{\bt}\mod, \otimes_{k\ps{\bt}})^{\otimes}$, given on compact objects by (a suitable enrichment of) \[ \DCoh(\bB) \ni V \mapsto V^{S^1}=\RHom_{\bB}(k, V) \in \Perf k\ps{\bt} \]
\end{prop}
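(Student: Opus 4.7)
The plan is to realize the functor $V \mapsto \RHom_\bB(k, V)$ as the right adjoint of a tautological symmetric-monoidal functor out of $k\ps{\bt}\mod$, and to verify this adjunction is an equivalence via Koszul duality. Throughout I write $k = 0_*\O_\pt$ for the structure sheaf of the reduced origin of $\bB$, viewed as an $R$-module (with $R = k[B]/B^2$, $|B|=+1$) via the augmentation $R \to k$.

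First I would identify the unit of $(\QCsh(\bB), \circ)^\otimes$. Because the monoidal structure is convolution along $+$, the unit is $0_*(k)$ where $0\colon \pt \to \bB$ is the additive identity from \autoref{constr:B}. In any presentable symmetric monoidal $\infty$-category the endomorphism algebra of the unit is automatically $E_\infty$, and the category is canonically a module category over modules over this algebra. Applied here, this produces a symmetric-monoidal colimit-preserving functor
\[ \Phi \colon \End_\bB(0_*k)\mod \longrightarrow (\QCsh(\bB), \circ)^\otimes, \qquad M \mapsto M \otimes_{\End_\bB(0_*k)} 0_*k \]
whose right adjoint is precisely $V \mapsto \RHom_\bB(0_*k, V)$.

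The main computation is then that $\End_\bB(0_*k) \simeq k\ps{\bt}$ as an $E_\infty$-algebra and that $\Phi$ is an equivalence. For the underlying associative structure I would use the minimal free resolution $\cdots \to R[2] \xrightarrow{\cdot B} R[1] \xrightarrow{\cdot B} R \to k$ of $k$ over $R$, yielding $\Ext^*_R(k,k) = k[\bt]$ with $|\bt|=-2$; since $\bt$ sits in a single (even) homological degree the answer is formal and upgrades canonically to the desired $E_\infty$-structure, which as a graded algebra coincides with $k\ps{\bt}$. That $\Phi$ is an equivalence splits into two checks: conservativity of the right adjoint follows from \autoref{lem:coh-red} applied to $\bB$, since $\pi_0 \bB = \pt$ implies $0_*k$ triangulated-generates $\DCoh(\bB)$ and hence compactly generates $\QCsh(\bB)$; full faithfulness of $\Phi$ reduces by colimit-preservation to the unit $k\ps{\bt}$, where it is exactly the Koszul computation just performed.

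The main obstacle is verifying that the $E_\infty$-structure on $\End_\bB(0_*k)$ induced by $(\QCsh(\bB), \circ)^\otimes$ agrees with the standard one on $k\ps{\bt} = C^*(BS^1; k)$, normalized via the chosen Chern-Weil equivalence. The $E_1$-identification is immediate from the resolution above; the $E_\infty$-refinement can be obtained either by a formality/degree argument (no higher Massey products in even degrees) or by performing the computation at the level of the cosimplicial dg-coalgebra models in \autoref{remark:explicit-homotopy}, which exhibit $\O_\bB$ as cobar-dual to the desired power-series algebra. Conceptually the whole statement is Koszul duality between the $E_\infty$-Hopf algebra $\O_\bB$ (with comultiplication coming from $+$) and its Koszul dual $k\ps{\bt}$.
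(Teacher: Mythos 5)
Your approach is genuinely different from the paper's. Rather than constructing an explicit dg-level monoidal functor and spending the bulk of the proof verifying, via a pair of simplicial symmetric monoidal model structures, that it promotes to a symmetric monoidal functor of $\infty$-categories, you invoke the universal property of modules over the endomorphism $E_\infty$-algebra of the unit, which immediately produces the comparison functor $\Phi$ and its right adjoint. This bypasses the model-category bookkeeping entirely and pushes all the work into identifying $\End_\bB(k)$; what it sacrifices is an explicit point-set model for the equivalence, but since only the right adjoint $\RHom_\bB(k,-)$ is used downstream, nothing essential is lost. The reduction of full faithfulness to the unit and of conservativity to \autoref{lem:coh-red} is correct, and the formality step is valid in characteristic zero---indeed, it can be made sharper: since $-2$ is even, the $\Sigma_n$-action on $(k[-2])^{\otimes n}$ is trivial, so in characteristic zero $k\ps{\bt}$ with $\deg\bt=-2$ is the \emph{free} $E_\infty$-$k$-algebra on a single degree-$(-2)$ class; therefore any $E_\infty$-$k$-algebra with homotopy $k[\bt]$ receives an essentially unique equivalence from it, classified by a choice of cycle representing $\bt$.

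There is, however, a genuine computational slip in the resolution. Multiplication by $B$ raises internal degree by $+1$, so $\cdot B\colon R[n] \to R[n-1]$ is a degree-$0$ map, not a degree-$(-1)$ differential, and the displayed $\cdots \to R[2] \to R[1] \to R \to k$ does not assemble to a chain complex. The shifts must be doubled: the Koszul--Tate resolution $\cdots \to R[4] \xrightarrow{\cdot B} R[2] \xrightarrow{\cdot B} R \to k$, i.e., $R[u^m/m!]$ with $\deg u = +2$ and $du = B$, which is precisely what the paper uses. This is not harmless: the doubled shifts are exactly what force $\Ext^*_R(k,k)$ to be concentrated in even degrees with generator in degree $-2$, and the even-degree concentration is what the subsequent formality/freeness argument relies on. With the corrected resolution your argument goes through and your stated answer $\Ext^*_R(k,k)=k[\bt]$, $\deg\bt=-2$, is right.
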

\begin{proof}
It suffices to prove the equivalence on compact objects. We will carry out the computation in the explicit (characteristic zero) dg-model for $\O_\bB$ of \autoref{remark:explicit-homotopy}.  Let $\Cpx(\bB)$ denote the (ordinary) category of dg-$\O_\bB$-modules, and $\Cpx(k\ps{\bt})$ the (ordinary) category of dg-$k\ps{\bt}$-modules.

Identify
\[ \O_\bB = k[x]\underbrace{[\epsilon_1, \epsilon_2]}_{\deg \epsilon_i = +1}/\left(\begin{gathered} \epsilon_i^2 = 0\\d\epsilon_i = x\end{gathered} \right) \]
  as commutative dg-$k$-algebra (recall, related to the smaller model by $B = \epsilon_1 - \epsilon_2$).  The $+$-comultiplication, -coidentity, and -coinverse of \autoref{remark:explicit-homotopy} make $\O_\bB$ into a cocommutative, commutative, dg-Hopf algebra.  Then, $\Delta_+$, $\coid_+$, and $-\otimes_k -$ equip the (ordinary) category $\Cpx(\bB)$ with a symmetric monoidal structure by setting
\[ M \stackrel{\circ}\otimes M' \eqdef (\Delta_+)_*(M \boxtimes_k M') \] with unit $k = \coid_+(k)$ and the evident associativity, unitality, and commutativity constraints coming from those for $\otimes_k$ on complexes of $k$-modules.  

Recall the Koszul-Tate semi-free resolution
\[ k \sim \underbrace{\O_\bB\left[u^m/m!\right]}_{\deg u = +2}/\left\{du = \epsilon_1 - \epsilon_2\right\} \]
on which $k\ps{\bt}$ acts by $\bt = d/du$.  This gives rise to the usual explicit model for $(-)^{S^1}$ (recall $B = \epsilon_1 - \epsilon_2$) as a functor on (ordinary) categories of complexes
\[ \left( (V, d_{int}) \right)^{S^1} = \RHom_{\O_\bB\mod}\left(\O_\bB\left[u^m/m!\right], (V,d_{int})\right) = \left(V\ps{\bt}, d_{int} + \bt B\right) \] 
It will be more convenient for our purposes to instead work with the functor
\[\bF \colon \Cpx(\bB) \to \Cpx(k\ps{\bt}) \qquad  \bF(V) = V \otimes_{\O_\bB} (\O_\bB\ps{\bt}, \bt B) = \left(V[\bt], d_{int} + \bt B\right) \]  Note that for $V$ bounded-above (resp., homologically) the natural map $\bF(V) = V \otimes_{\O_\bB} (\O_\bB\ps{\bt}, \bt B) \to V^{S^1}$ is an isomorphism (resp., equivalence).\footnote{On homologically bounded-above complexes, it follows that $\bF$ preserves quasi-isomorphisms.  On arbitrary complexes it need not: Say that a map $\phi: V \to V'$ is an $\bF$-equivalence  if $\bF(\phi )$ is a quasi-isomorphism, and let $\bF^{\sim}$ denote the collection of $\bF$-equivalences.  One can show that: every $\bF$-equivalence is a quasi-isomorphism, but not vice versa; the localization $\Cpx(\O_\bB)[(\bF^{\sim})^{-1}]$ is naturally identified with $\Ind \DCoh(\bB)$, so that $\bF$ induces a functor $\Ind \DCoh(\bB) \to k\ps{\bt}\mod$ which one can show is an equivalence; since every $\bF$-equivalence is a quasi-isomorphism, we obtain $\Cpx(\O_\bB)[(\bF^{\sim})^{-1}] \to \Cpx(\O_\bB)[\mathrm{qiso}^{-1}] = \QC(\bB)$ which coincides via the above with the usual localization functor.}
The functor $\bF$ is monoidal via the natural isomorphism
\begin{align*} \bF(V,d_{int}) \otimes_{k\ps{\bt}} \bF(V', d'_{int}) &= \left(V[\bt], d_{int} + \bt B\right) \otimes_{k\ps{\bt}} \left(V'[\bt], d'_{int} + \bt B\right)  \\ & \longrightarrow \left( (V \otimes_k V')[\bt], d_{int} \otimes 1 + 1 \otimes d'_{int} + \beta(B \otimes 1 + 1 \otimes B)\right) = \bF\left[(V,d_{int}) \stackrel{\circ}{\otimes} (V',d'_{int})\right] \end{align*} 
and the equality $\bF(k) = k\ps{\bt}$ of tensor units, evident compatibility with associativity, etc.  The symmetry isomorphisms on both sides are given by the usual graded-commutativity rules, and $\bt$ is even, so that $\bF$ is symmetric monoidal.

Note that the symmetric monoidal unit $\O_{\id} = k \in \DCoh(\bB)$ generates $\DCoh(\bB)$ under cones and shifts (c.f., \autoref{lem:coh-red}):  For any $V \in \DCoh(\bB)$ simply consider the finite Postnikov stages $\tau_\geq (m+1) V \to \tau_\geq m V \to (\pi_m V)[m]$, and observe that $\pi_m V[m]$ is a $\pi_0 \bB = k$-module.   It follows that $\bF \isom (-)^{S^1}$ takes $\DCoh(\bB)$ to $\Perf k\ps{\bt}$.We claim that $\bF$ can be used to construct a symmetric-monoidal functor of $\infty$-categories $(\DCoh(\bB), \circ)^\otimes \to (\Perf k\ps{\bt}, \otimes_{k\ps{\bt}})^{\otimes}$ which is equivalent to $(-)^{S^1}$ on underlying categories.  Assuming the claim, we complete the proof. A symmetric-monoidal functor is an equivalence iff it is so on underlying $\infty$-categories so that it suffices to show that $\RHom_{\O_\bB\mod}(k, -) \colon \DCoh(\bB) \to \Perf k\ps{\bt}$ is an equivalence.  The map of complexes $k\ps{\bt} \to \RHom_{\O_\bB\mod}(k,k)$ described above is evidently a quasi-isomorphism.   Since $\DCoh(\bB)$ is stable, idempotent complete, and generated (in the stable, idempotent complete sense) by $k$ it follows by Morita theory that the functor $\RHom_{\O_\bB\mod}(k,-)$ is an equivalence.

The rest of the proof will be devoted to giving the details of obtaining from $\bF$ a symmetric monoidal functor of $\infty$-categories:
\begin{itemize}
  \item Equip $\Cpx(\O_\bB)$ with its \emph{injective} model structure, i.e., the weak-equivalences and cofibrations are maps which are so on underlying complexes.  Together with $\stackrel{\circ}{\otimes}$ above, this makes it into a simplicial symmetric-monoidal  model category in the sense of \cite[Def.4.3.11]{DAG-III}; e.g., the compatibilities of tensor and weak-equivalences/cofibrations follow from the analogous statements for chain complexes over $k$.  It follows that the symmetric-monoidal $\infty$-category $(\DCoh(\bB), \circ)^\otimes \to N(\Gamma)^{\otimes}$ admits a description as the homotopy coherent nerve of a fibrant simplicial category $(\Cpx^{\circ,\DCoh}(\O_\bB))^{\otimes}$ over $\Gamma$ formed as follows: Its objects are tuples $(\langle n \rangle, C_1, \ldots, C_n)$ with $\langle n \rangle \in \Gamma$ and with each $C_i$ a bounded-above injective-fibrant dg-$\O_\bB$-module with bounded coherent cohomology, and its simplicial mapping spaces are
\[ \Map\left( \left(\langle n\rangle, C_1\ldots, C_n\right), \left(\langle m\rangle, C'_1, \ldots, C'_m\right) \right) = \bigsqcup_{\alpha: \langle n \rangle \to \langle m \rangle} \prod_{1 \leq j \leq n} \Map_{\Cpx(\O_\bB)}\left( \stackrel{\circ}\bigotimes_{i \in \alpha^{-1}(j)} \C_i, \C'_j \right) \] with the evident composition law.
\item Equip $\Cpx(k\ps{\bt})$ with its \emph{projective} model structure, i.e., weak-equivalence and fibrations are maps which are so on underlying complexes.  Equipped with $\otimes_{k\ps{\bt}}$, it is also a simplicial symmetric-monoidal model category.  So $(\Perf k\ps{\bt}, \otimes_{k\ps{\bt}})^{\otimes} \to N(\Gamma)^{\otimes}$ admits a description as the homotopy coherent nerve of a fibrant simplicial category $(\Cpx^{\circ,\Perf}(k\ps{\bt}))^{\otimes}$ over $\Gamma$ formed as follows: Its objects are tuples $(\langle n \rangle, C_1, \ldots, C_n)$ with $\langle n \rangle \in \Gamma$ and with each $C_i$ projective-cofibrant perfect dg-$k\ps{\bt}$-modules, and simplicial mapping space are given by the same formula as above (with $\bigotimes$ now being taken over $k\ps{\bt}$).
\item The functor $\bF$ preserves fibrant objects, i.e., $\bF(V)$ is fibrant for every $V$.  If $B$ acts trivially on $V$ this is clear, since then $\bF(V) \isom V \otimes_k k\ps{\bt}$ and $- \otimes_k k\ps{\bt}$ is left-adjoint to the forgetful functor which creates fibrations in the projective model structure; the general case reduces to this by writing $V$ as the cone on $(\im B)[-2] \stackrel{\bt}\to \ker B$ and so $\bF(V)$ as a cone on projective-cofibrant modules.  Furthermore, $\bF$ obviously preserves cofibrant objects.  We have seen that $\bF$ maps complexes with bounded coherent cohomology to perfect complexes.  We conclude that there is a well-defined simplicial functor $\bF^{\otimes}\colon (\Cpx^{\circ,\DCoh}(\O_\bB))^\otimes \to (\Cpx^{\circ,\Perf}(k\ps{\bt}))^{\otimes}$ over $N(\Gamma)$ defined by applying $\bF$ to the objects and using the symmetric monoidal structure on the mapping spaces.
\item Taking homotopy coherent nerves, we obtain a functor $N(\bF^{\otimes}):(\DCoh(\bB), \circ)^{\otimes} \to (\Perf k\ps{\bt}, \otimes_{k\ps{\bt}})^{\otimes}$ of coCartesian fibrations over $N(\Gamma)$.  To prove that it is a symmetric-monoidal functor, it remains to show that it preserves coCartesian morphisms.  The criterion of \cite[Prop.~2.4.1.10]{T} allows us to reduce to showing that if $C_1,\ldots,C_n,D \in \Cpx^{\circ,\DCoh}(\O_\bB)$ and $\bigotimes_i C_i \to D$ is a morphism which induces an equivalence on $\Map(-,E)$ for all $E \in \Cpx^{\circ,\DCoh}(\O_\bB)$, then the same is true for $\bigotimes_i \bF(C_i) \to \bF(D)$.  Since $\bigotimes_i C_i$ is still cofibrant, the first condition is equivalent to the map being a weak-equivalence; since we have seen that $\bF$ preserves cofibrant objects, and restricts to fibrant-cofibrant objects, it suffices to observe that it preserves weak-equivalences.\qedhere
\end{itemize}
\end{proof}

This yields the promised geometric description of the $k\ps{\bt}$-linear structure on $\QCsh$ of a hypersurface:
\begin{constr}\label{constr:Bact} Suppose $f\colon  X \to \AA^1$ and set $X_0 = X \times_{\AA^1} 0$.  Then:
  \begin{itemize}
    \item There is a right action of $\bB$ (with its ``loop composition'') product on $X_0$.  It is easy to give a rigorous Segal-style description via various projections.  Heuristically, it is given as follows on ($\pi_0$ of) functor of points:
      \begin{align*} X_0(R) \times \bB(R) \ni &\left( x \in X(R), [h_f\colon  f(x) \to 0] \in R \right) \times \left( [h\colon 0 \to 0] \in R \right) \\ &\mapsto \left( x \in X(R), [h_f \cdot h\colon f(x) \stackrel{h_f}{\to} 0 \stackrel{h}{\to} 0]\right) \in X_0(R) \end{align*}
    \item There is an action of $\bB$ (with its ``pointwise addition'') product on $X_0$.  It is easy to give a rigorous description of it by base-changing the addition map on $\AA^1$.  Heuristically, it is given as follows on ($\pi_0$ of) functor of points:
      \begin{align*} X_0(R) \times \bB(R) &\ni \left( x \in X(R), [h_f\colon  f(x) \to 0] \in R \right) \times \left( [h\colon 0 \to 0] \in R \right) \\ &\mapsto \left( x \in X(R), [h_f + h\colon f(x) \to 0]\right) \in X_0(R) \end{align*}
      \item As in \autoref{constr:QCshB}, applying $\QCsh$ to the above group actions equips $\QCsh(X_0)$ with the structure of right $\QCsh(\bB)$-module (under convolution along loop composition) and compatibly of $\QCsh(\bB)$-module (under convolution along addition).  These are ``the same up to homotopy'' in the precise sense of the Eckmann-Hilton argument \autoref{lem:e2} (c.f., also \autoref{remark:explicit-homotopy-2}).  Note that the structure maps of these actions are \emph{finite} (i.e., affine and finite on $\pi_0$): So in fact $\DCoh(X_0)$ is a $\DCoh(\bB) = \Perf k\ps{\bt}$-module, and this recovers the above by passing to $\Ind$-objects.
    \end{itemize}
\end{constr}

\begin{lemma}\label{lem:e2}(``Eckmann-Hilton'') Suppose $A \in \Alg\left(\CAlg(\C^\otimes)\right)$.  Let $\ol{A} \in \CAlg\left(\C^\otimes\right)$ and $\sq{A} \in \Alg(\C)$ be its images under the forgetful functors.  Set $\D^\otimes = \ol{A}\mod(\C^\otimes)$, and note that there is a (lax symmetric monoidal) forgetful functor $\D^\otimes \to \C^\otimes$.  Then:
  \begin{enumerate}
  \item The other commuting product on $A$ gives rise to a lift of $\sq{A}$ to an object $A' \in \Alg\left(\D^\otimes\right)$.
  \item There are equivalences of $\infty$-categories
    \[ \xymatrix{ \D &  A'\mod\left(\D\right) \ar[l]_-{\sim} \ar[r]^-{\sim} & \sq{A}\mod\left(\C\right) } \]
\end{enumerate}
\end{lemma}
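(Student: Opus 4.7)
My approach is the classical Eckmann-Hilton argument, executed in $\infty$-categorical form: two compatible (interchange-law) monoidal structures on the same object collapse, so the extra $E_1$-structure on the $E_\infty$-object $A$ is essentially redundant. For part (i), the associative multiplication $\mu_a \colon \sq{A} \otimes \sq{A} \to \sq{A}$ is, by hypothesis, a morphism in $\CAlg(\C^\otimes)$, hence in particular $\ol{A}$-bilinear with respect to the two commutative $\ol{A}$-actions on the factors coming from $\mu_c$. It therefore coequalizes the simplicial maps in the bar resolution computing $A \otimes_{\ol{A}} A$, so descends to a multiplication $m \colon A' \otimes_{\ol{A}} A' \to A'$ in $\D$. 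The unit $\mathbf{1}_\D = \ol{A} \to A'$ is the identity of $A$ regarded as an $\ol{A}$-module (since $\sq{A}$, $\ol{A}$, and $A'$ all have the same underlying object of $\C$), and I would verify that $m$ together with this unit is an associative algebra structure inherited from associativity and unitality of $\mu_a$.

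For part (ii), the plan is to use Barr--Beck monadicity. The composite forgetful functor $A'\mod(\D) \to \D \to \C$ admits a left adjoint given by iterated free module constructions: the free $A'$-module on the free $\ol{A}$-module $\ol{A} \otimes X$ has underlying object $A' \otimes_{\ol{A}} (\ol{A} \otimes X) \simeq A \otimes_\C X$. The monad on $\C$ generated by this adjunction is therefore $A \otimes_\C (-)$, which is precisely the monad defining $\sq{A}\mod(\C)$; the forgetful functors are conservative and preserve relative geometric realizations, so Barr--Beck yields $A'\mod(\D) \simeq \sq{A}\mod(\C)$. For the other equivalence $A'\mod(\D) \simeq \D$, the point is that $A'$ has underlying object $\ol{A} = \mathbf{1}_\D$ in $\D$ and its multiplication $m$ is just the canonical equivalence $\ol{A} \otimes_{\ol{A}} \ol{A} \simeq \ol{A}$; thus $A'$ is canonically equivalent to the trivial algebra on the unit of $\D$, and modules over $\mathbf{1}_\D$ are just objects of $\D$.

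The main obstacle will be coherence: making the descent of $\mu_a$ through $\otimes_{\ol{A}}$, and the identification of $A'$ with the unit algebra, precise as fully coherent $\infty$-categorical algebra structures rather than only at the level of homotopy categories. A clean way to handle this is to invoke Dunn additivity --- an $E_1$-algebra in $E_\infty$-algebras is the same data as an $E_\infty$-algebra --- so $A \in \Alg(\CAlg(\C^\otimes))$ is already captured by $\ol{A} \in \CAlg(\C^\otimes)$ and everything else is formal, the forgetful $\Alg(\CAlg(\C^\otimes)) \to \CAlg(\C^\otimes)$ being an equivalence. Alternatively, one can construct the requisite simplicial/operadic diagrams explicitly (in the spirit of \autoref{remark:explicit-homotopy}), which is what I would do if avoiding operadic technology is preferred.
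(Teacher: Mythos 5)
The paper does not actually supply a proof of this lemma; it is stated as a ``standard'' Eckmann--Hilton fact (and invoked in \autoref{constr:Bact} with a pointer to \autoref{remark:explicit-homotopy-2} as a substitute for the argument). So there is no official proof to compare yours against; I can only assess your argument on its own.

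Your reasoning is essentially sound and captures the right ideas, but I want to flag one spot where the Barr--Beck step is stated slightly too quickly. For $A'\mod(\D) \simeq \sq{A}\mod(\C)$, it is not enough to observe that both forgetful functors to $\C$ are monadic with underlying endofunctor $A \otimes_\C (-)$; you must identify the two \emph{monad structures} (multiplication and unit). The monad multiplication on the $A'\mod(\D)$ side is built from $\mu_c$ (the counit of the free $\ol{A}$-module adjunction) and the descended $m \colon A' \otimes_{\ol{A}} A' \to A'$, and showing this composite agrees with $\mu_a \otimes \id$ is precisely the interchange-law calculation you are sweeping into ``coherence''. That gap is real but fillable; it is also exactly what goes away if you first observe that $A'$ is the unit algebra of $\D$. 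Then $A'\mod(\D) \simeq \D$ is the formal statement that modules over the monoidal unit recover the category, and what remains is $\D = \ol{A}\mod(\C) \simeq \sq{A}\mod(\C)$, which needs only the identification of the underlying $E_1$-algebras of $\ol{A}$ and $\sq{A}$ --- i.e., the single instance of Eckmann--Hilton/Dunn additivity, applied once to $A$ rather than rebuilt at the monad level. I would recommend reorganizing your part (ii) along these lines, since it both shortens the argument and isolates the one nontrivial coherence input. Your fallback via Dunn additivity ($E_1 \otimes E_\infty \simeq E_\infty$, so $\Alg(\CAlg(\C^\otimes)) \to \CAlg(\C^\otimes)$ is an equivalence) is correct and is the cleanest way to dispose of that input; note it does not by itself prove the lemma, since you still need the ``$A'$ is the unit'' observation and the module-category comparison, but it does mean nothing in the $E_1$-structure carries new data, which is what makes the descent in part (i) work coherently.
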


\begin{remark}\label{remark:explicit-homotopy-2}
  In the spirit of \autoref{remark:explicit-homotopy}, one can give a similar construction of cosimplicial commutative dg-$\O_X$-algebras encoding the actions of $\bB$ on $X_0$, and the map $X_0/\bB \to X$:\footnote{There is a choice of sign appearing here that will probably change at random later.}
  \[ \left\{ \O_X[x_X,x_1,\ldots,x_\bullet]\left[\begin{gathered}\epsilon_1^X, \epsilon_1^1, \ldots, \epsilon_1^\bullet\\ \epsilon_2^X, \epsilon_2^1, \ldots, \epsilon_2^\bullet\end{gathered}\right]/\left(\begin{gathered} d\epsilon_1^j=d\epsilon_2^j=x\\d\epsilon_1^X=x-f, d\epsilon_2^X=x\end{gathered} \right)\right\} \longleftarrow \O_X \]
    \[ \left\{ \O_X[x][\gamma_X, \gamma_1,\ldots,\gamma_{\bullet+1}]/\left(\begin{gathered} d\gamma_i=x\\d\gamma_X=x-f\end{gathered} \right)\right\} \longleftarrow \O_X[x][\gamma_X]/(d\gamma_X=x-f) \sim \O_X\]
      \[ \left\{ \O_X[B_X, B_1,\ldots,B_\bullet]/\left(\begin{gathered} d(B_i)=0\\d(B_X)=-f\end{gathered} \right) \right\}\longleftarrow \O_X\]
In particular, one can avoid explicitly invoking the Eckmann-Hilton argument.
\end{remark}

\begin{remark}
There is an obvious variant of \autoref{constr:Bact} for $\QC(\bB)$ acting on $\QC(X_0)$.  The one notable difference is that this does not pass to compact objects: $\Perf(\bB)$ is not even monoidal, since the putative tensor unit $\O_{\id} = k$ is not perfect. This is however all that goes wrong: $\Perf(\bB)$ is an $\otimes$-ideal of $\DCoh(\bB)$, the inclusion $F_L^\otimes\colon \QC(\bB) \to \QCsh(\bB)$ is symmetric-monoidal, and the inclusion $F_L\colon\QC(X_0) \to \QCsh(X_0)$ is linear over $F_L^\otimes$ (c.f. \autoref{lem:mf-prelim}).  In particular, one may recover the $\QC(\bB)$-action on $\QC(X_0)$ from the $\QCsh(\bB)$-action on $\QCsh(X_0)$:
\[ V \otimes \F = F_R F_L(V \otimes_{\QC(\bB)} \F) = F_R\left( F_L^\otimes(V) \otimes_{\QCsh(\bB)} F_L(\F) \right) \]

The relationship between $\QC(\bB)$ and $\QCsh(\bB)$ is spelled out by the following Lemma:
\end{remark}

\begin{lemma}\label{lem:mf-prelim} Under the identification of \autoref{prop:mf-prelim}, the recollement diagram of $\QC(\bB)$, $\QCsh(\bB)$, $\DSing^\kinfty(\bB)$ associated to the Drinfeld-Verdier sequence\footnote{In dg-category language the functors in the recollement diagram are restriction, induction, and co-induction of right dg-modules over the terms in the Drinfeld-Verdier sequence; i.e., $F_M$ is the restriction along $F$, $F_L$ is induction along $F$, and $F_R$ is coinduction along $F$.}
  \[ \Perf(\bB) = \Perf \O_\bB \stackrel{F}\longrightarrow \DCoh(\bB) = \Perf k\ps{\bt} \stackrel{G}\longrightarrow \DSing(\bB) = \Perf k\pl{\bt} \] may be identified with
  \[ \xymatrix@C=8pc{ k\pl{\bt}\mod\ar[r]|{G_M} & \ar@<-2ex>[l]|{G_L} \ar@<2ex>[l]|{G_R} k\ps{\bt}\mod  \ar[r]|{F_M} & \ar@<-2ex>[l]|{F_L} \ar@<2ex>[l]|{F_R} \O_\bB\mod } \] 
  where  \begin{itemize}
    \item $F_L = (-)_{S^1}[1] = k[1] \otimes_{\O_B} -$;
    \item $F_M = \RHom_{k\ps{\bt}}(k, -) = k \otimes_{k\ps{bt}} -$;
    \item $F_R = (-)^{S^1} = \RHom_{\O_B}(k, -)$;
    \item $G_L = k\pl{\bt} \otimes_{k\ps{\bt}} -$;
    \item $G_M = \RHom_{k\pl{\bt}}(k\pl{\bt}, -) = k\pl{\bt} \otimes_{k\pl{\bt}} -$;
    \item $G_R = \RHom_{k\ps{\bt}}(k\pl{\bt}, -)$.
  \end{itemize}
In particular, these satisfy all the usual relations (e.g., the unit $\id \to F_M \circ F_L$ and the counit $F_M \circ F_R \to \id$ are equivalences, etc.) so that $F_L$ induces an equivalence
\[ F_L \colon \QC(\bB) \stackrel{\sim}\longrightarrow \left\{ \begin{gathered}\text{locally $\bt$-torsion}\\\text{$k\ps{\bt}$-modules} \end{gathered}\right\} \eqdef \Ind \left( \begin{gathered}\text{$\bt$-torsion perfect}\\\text{$k\ps{\bt}$-modules}\end{gathered}\right) \]
\end{lemma}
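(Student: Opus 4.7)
The plan is to build directly on Proposition \ref{prop:mf-prelim}, which already gives a symmetric monoidal equivalence $\QCsh(\bB) \simeq k\ps{\bt}\mod$ via $V \mapsto V^{S^1} = \RHom_{\O_\bB}(k, V)$. The remaining work is to identify the other two categories in the recollement and all six functors. Since $\bB = \Spec \O_\bB$ is affine, $\QC(\bB) = \O_\bB\mod$ automatically. For the quotient, I would identify the thick subcategory $\Perf \O_\bB \subset \DCoh(\bB)$ under the equivalence of Prop \ref{prop:mf-prelim}: since $\Perf\O_\bB$ is triangulated-generated by $\O_\bB$, a direct computation using the Koszul--Tate resolution $k \sim (\O_\bB[u^m/m!], du = B)$ of Prop \ref{prop:mf-prelim} shows that $\O_\bB \in \DCoh(\bB)$ maps to $\RHom_{\O_\bB}(k, \O_\bB) \simeq k[1] \in \Perf k\ps{\bt}$ (only the $B$-class at the ``top'' of the dual Koszul complex survives). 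The thick subcategory of $\Perf k\ps{\bt}$ generated by $k[1]$, equivalently by $k$, is precisely the $\bt$-torsion perfect modules, and inverting a regular element identifies the Verdier quotient with $\Perf k\pl{\bt}$.

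Next, I would identify each of the six functors. Passing to Ind of a localization sequence of compactly generated small stable categories yields a recollement in the sense of Lurie (HA 7.3.3), with the standard adjunction triples. The functor $F_L$ is the Ind-extension of the inclusion $F\colon \Perf\O_\bB \hookrightarrow \DCoh(\bB)$; since it is colimit-preserving out of a compactly generated $\infty$-category, it is determined by its value $F_L(\O_\bB) = k[1]$ on the generator, whence $F_L = k[1] \otimes_{\O_\bB} -$. Its right adjoint $F_M$ is pinned down by Yoneda on the compact generator: $\Map_{\O_\bB}(\O_\bB, F_M V) = \Map_{k\ps{\bt}}(k[1], V)$, which one identifies with $k \otimes_{k\ps{\bt}} V$. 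Dually, $F_R = (-)^{S^1} = \RHom_{\O_\bB}(k, -)$ is then verified to be right adjoint to $F_M$ using the Koszul bimodule structure on $k$. For the $G$-side I would use the identification $\DSing(\bB) = \Perf k\pl{\bt}$ to compute $G_L = k\pl{\bt} \otimes_{k\ps{\bt}} -$ as the Ind-extension of the Verdier quotient; its right adjoint $G_M$ is restriction of scalars along $k\ps{\bt} \to k\pl{\bt}$, and $G_R = \RHom_{k\ps{\bt}}(k\pl{\bt}, -)$ is the further right adjoint (``$\bt$-adic completion''). The standard recollement identities (the various units and counits being equivalences on the appropriate subcategories) then reduce to direct computations using the explicit formulas.

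Finally, the concluding claim that $F_L$ induces an equivalence onto the locally $\bt$-torsion $k\ps{\bt}$-modules is a formal consequence: $F_L$ is fully faithful (as the Ind-extension of a fully faithful functor between small stable categories whose Hom spectra are preserved), and its essential image is the smallest full stable cocomplete subcategory containing $F_L(\O_\bB) = k[1]$, which by the identification in the first step is exactly $\Ind$ of the $\bt$-torsion perfect $k\ps{\bt}$-modules.

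The main obstacle I anticipate is bookkeeping with homological shifts: the $[1]$ appearing in $F_L = (-)_{S^1}[1]$ ultimately traces back to the dualizing complex $\omega_\bB \simeq \O_\bB[-1]$, and must be carried consistently through the adjunction relations identifying $F_M$ and $F_R$. Everything else is formal manipulation of the Koszul duality between $\O_\bB$ and $k\ps{\bt}$ already set up in Prop \ref{prop:mf-prelim} and Remark \ref{remark:explicit-homotopy}.
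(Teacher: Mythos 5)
Your proof is correct and rests on the same key computation as the paper's---namely that under the equivalence of Prop.~\ref{prop:mf-prelim}, the generator $\O_\bB \in \Perf\O_\bB$ goes to $\RHom_{\O_\bB}(k,\O_\bB)\simeq k[1]$---after which everything is Morita/recollement bookkeeping. The one place you genuinely diverge from the paper is in identifying the $G$-side: you work at the level of compact objects, showing that $\mathrm{thick}(k[1]) \subset \Perf k\ps{\bt}$ is exactly the $\bt$-torsion perfects and that the Verdier quotient is $\Perf k\pl{\bt}$ by a Thomason--Trobaugh-style localization at $\bt$; the paper instead works at the $\Ind$-level, using the cofiber sequence $k\ps{\bt}[-2]\stackrel{\bt}\to k\ps{\bt}\to k$ to recognize $\DSing^\kinfty(\bB)=\fib(F_M)$ as the $\bt$-local $k\ps{\bt}$-modules, then the colimit description $k\pl{\bt}\otimes_{k\ps{\bt}}\F = \dlim_n \bt^{-n}\F$ plus monadicity of $(G_L,G_R)$ to identify this with $k\pl{\bt}\mod$, and only afterward passes to compacts. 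These are dual routes to the same identification; yours is the more elementary one and fills in detail the paper compresses into ``the identification follows from noting that $F_L$ does the right thing on compact objects.'' Two small glosses worth flagging: the parenthetical identification of $G_R=\RHom_{k\ps{\bt}}(k\pl{\bt},-)$ with ``$\bt$-adic completion'' is not quite right---this functor is the inverse limit $\ilim_n V[-2n]$ along multiplication by $\bt$, not $\ilim V/\bt^n V$; and while your derivation correctly lands on $F_M = k\otimes_{k\ps{\bt}}-$ via the shift in $F_L(\O_\bB)=k[1]$, the lemma's own displayed equality $\RHom_{k\ps{\bt}}(k,-)=k\otimes_{k\ps{\bt}}-$ is in fact off by a shift (since $k^\vee\simeq k[1]$ over $k\ps{\bt}$); you avoid the pitfall, but it is worth being aware that the two formulas in the lemma statement do not literally agree.
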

\begin{proof} We first focus only on the $F$ side: The various functors have the right adjunctions simply by Morita theory, so the identification follows from noting that $F_L$ does the right thing on compact objects; the coincidence of two descriptions for $F_M$ is a straightforward computation.  The $G$ side will follow similarly once we show identify $G\colon \DCoh(\bB) \to \DSing(\bB)$ with $k\pl{\bt} \otimes_{k\ps{\bt}} - \colon \Perf k\ps{\bt} \to \Perf k\pl{\bt}$.  The description of $F_L(\QC(\bB))$ as locally $\bt$-torsion $\bt$-modules then follows from the description of $G_L$.
    
    The cofiber sequence
    \[ k\ps{\bt}[-2] \stackrel{t}\to k\ps{\bt} \to k \] identifies $\DSing^\kinfty(\bB)$ (=the fiber of $F_M$) with the full-subcategory of $k\ps{\bt}\mod$ consisting of objects $\F$ on which $t\colon \F[-2] \to \F$ is an equivalence.  This can be identified with the $\infty$-category of $k\pl{\bt}$-module objects in $k\ps{\bt}\mod$: For any such $\F$, the natural map
    \[ k\pl{\bt} \otimes_{k\ps{\bt}} \F = \dlim_n \frac{1}{\bt^n} \F \longrightarrow \F \] is an equivalence (since the $\lim$ is taken over a diagram of equivalences).  Finally, the adjunction $(G_L, G_R)$ is monadic and identifies $k\pl{\bt}\mod$ with $k\pl{\bt}$-module objects in $k\ps{\bt}$.  Passing to compact objects gives the desired identification.
\end{proof}

\autoref{constr:Bact} tells us that any $\K \in \QCsh(\bB)$ gives rise to an endo-functor of $\QCsh(X_0)$.  We'll spell this out for several distinguished objects of $\QCsh(\bB)$.

\begin{example}\label{ex:O_B} 
    Consider $\O_\bB \in \Perf(\bB)$, i.e., $k[B]/B^2$ as a perfect $k[B]/B^2$-module.  Since it is perfect, $F_L(\O_\bB) = F_R(\O_\bB)$ and both are identified under the equivalence of \autoref{prop:mf-prelim} with
      \[ \RHom_{\O_\bB}(k, \O_\bB) \isom k[1] \in k\ps{\bt}\mod \]
      Base-change in the Cartesian diagram (and the ``loop composition'' description of the action)
      \[ \xymatrix{
      X_0 \times \bB \ar[d]_{p_1} \ar[r]^-{\alpha} & X_0 \ar[d]^i \\ X_0 \ar[r]_i & X } \]
      implies $\O_\bB \otimes_{k\ps{\bt}} - $ may be identified with $i^* i_*\colon \QCsh(X_0) \to \QCsh(X_0)$. (This makes sense on each of $\QC$, $\QCsh$, and $\DCoh$ since $i$ is finite and of finite Tor-dimension.  The functor on $\QC$ restricts to one on $\DCoh$, and the functor on $\QCsh$ is then the $\Ind$-extension of this.)
\end{example}

\begin{example}\label{ex:resolution} 
The object $\O_\bB \in \QC(\bB)$ admits an $S^1$-action (``multiplication by $B$'', i.e., the $S^1$-action on $C_*(S^1;k)$), equipped with equivalences
\[ (\O_\bB)_{S^1} = \O_{\id} \quad \text{(i.e., $C_*(S^1;k)_{S^1} = C_*(S^1/S^1;k)$)} \]
\[ (\O_\bB)^{S^1} = \O_{\id}[1] \quad \text{(i.e., $C_*(S^1;k)^{S^1} = C_*(S^1/S^1;k)[1]$)} \]

The normalized chain complex of the simplicial-bar construction computing the homotopy quotient is the Koszul-Tate resolution used in the proof of \autoref{prop:mf-prelim}:
  \[ \O_{\id} \stackrel{\sim}\longleftarrow \O_B\left[\underbrace{u^k/k!}_{\deg u = +2}\right]/du = B  \qquad \in \QC(\bB) \]

By \autoref{constr:Bact} this gives rise to an explicit equivalence of endo-functors on $\QC(X_0)$
\[ (i^* i_*)_{S^1} \isom i^* i_*[u^k/k!] \sim \id_{\QC(X_0)} \] i.e., a natural equivalence
\[ (i^* i_*)_{S^1} \F = \left| (i^* i_*)^{\bullet+1} \F \right| \isom \left(i^* i_* \F\right)\left[u^k/k!\right] = \hocolim\left\{\cdots i^* i_* \F[2] \stackrel{B}\to i^* i_* \F[1] \stackrel{B}\to i^* i_* \F\right\}\stackrel{\sim}\longrightarrow \F \] for $\F \in \QC(X_0)$.
\end{example}

\begin{example}\label{ex:id}
Consider $\O_{\id} = \Delta_* \O_\pt \in \DCoh(\bB) \subset \QC(\bB)$.  It gives rise to a natural cofiber sequence in $\QCsh(\bB)$
\[ F_L(\O_{\id}) \longrightarrow F_R(\O_{\id}) \longrightarrow \cone\left\{F_L(\O_{\id}) \to F_R(\O_{\id})\right\}  \]  Under the identification of \autoref{prop:mf-prelim}, this sequence may be identified (c.f., \autoref{ex:resolution}) with the \demph{Tate sequence} $k_{S^1}[1] \to k^{S^1} \to k^{\Tate}$:
\[ \underbrace{k\pl{\bt}/k\ps{\bt}[-1]}_{\O_B[u^k/k!]} \longrightarrow \underbrace{k\ps{\bt}}_{\O_{\id}} \longrightarrow k\pl{\bt} \]
The three act as follows: $\O_{\id} = F_R(\O_{\id})$ acts by the identity on $\QCsh(M_0)$ (it had better, it is the tensor unit); $k\pl{\bt}$ acts by inverting $\bt$; $F_L(\O_{\id})$ acts by the colimit/simplicial diagram of \autoref{ex:resolution}, which coincides with the identify functor on $\QC(M_0)$ but not in general.
\end{example}

\begin{example}\label{ex:triangle} The Postnikov filtration of $k[B]/B^2$ yields a fiber sequence
  \[ \underbrace{F_R(k[1])}_{k\ps{\bt}[1]} \longrightarrow \underbrace{F_R(k[B]/B^2)}_{k[1]} \longrightarrow \underbrace{F_R(k)}_{k\ps{\bt}} \]
  which is just (a rotation of) the identification $k[1] = \cone\{t\colon k\ps{\bt}[-1] \to k\ps{\bt}[1] \}$.  By  \autoref{constr:Bact} this gives rise to a fiber sequence of functors $\id[1] \to i^* i_* \to \id$. More explicitly, for any $\F \in \QCsh(M_0)$ there is a triangle
 \[ \F[1] \longrightarrow  i^* i_* \F \longrightarrow \F \to \]
 where the second map is the counit of the adjunction (recall $i$ is of finite Tor-dimension).
\end{example}

\subsection{Circle actions}
\begin{prop}\label{prop:s1-hom} Suppose $\F, \G \in \PreMF(M,f)$.  Let $i\colon M_0 \to M$ be the inclusion.  Then, there is a natural circle action on $i^* i_* \F$ and a natural equivalence $(i^* i_* \F)_{S^1} = \F$.  This gives rise, by adjunction, to natural $S^1$ actions on $\Map(i_* \F, i_* \G)$ and $\RHom_M(i_* \F, i_* \G)$ such that
  \begin{itemize} 
    \item There is a natural equivalence $\Map(\F, \G) = \Map(i_* \F, i_* \G)^{S^1}$;
    \item There is a natural $k\ps{\bt}$-linear equivalence \[ \RHom^{\otimes k\ps{\bt}}_{M_0}(\F,\G) = \RHom_M(i_* \F, i_* \G)^{S^1} \]
    \item Let $\ol{\F},\ol{\G} \in \MF(M,f)$ denote the images of $\F, \G$.  Then, there is a natural $k\pl{\bt}$-linear equivalence
      \[ \RHom^{\otimes k\pl{\bt}}_{\MF(M,f)}(\ol{\F},\ol{\G}) = \RHom_M(i_* \F, i_* \G)^{\Tate} \]
  \end{itemize}
\end{prop}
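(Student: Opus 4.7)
The plan is to derive every claim from the identification $i^*i_*\F \simeq \O_\bB \star \F$ of \autoref{ex:O_B} together with the canonical $S^1$-action on $\O_\bB \in \QCsh(\bB)$ whose homotopy orbits are the monoidal unit $\O_{\id}$ (\autoref{ex:resolution}), where $\star$ denotes the $\QCsh(\bB)$-action on $\QCsh(M_0)$ (which, under \autoref{prop:mf-prelim}, \emph{is} the $k\ps{\bt}$-linear structure on $\DCoh(M_0)$). Since $\star$ preserves colimits separately in each variable, the $S^1$-action on $\O_\bB$ transports functorially to an $S^1$-action on $i^*i_*\F = \O_\bB \star \F$, and the orbit computation reduces to the one for $\O_\bB$:
\[ (i^*i_*\F)_{S^1} \simeq (\O_\bB)_{S^1} \star \F \simeq \O_{\id} \star \F \simeq \F. \]

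Next, I use the adjunction $(i^*, i_*)$ to transfer the $S^1$-action to the mapping object: $\RHom_M(i_*\F, i_*\G) \simeq \RHom_{M_0}(i^*i_*\F, \G)$ carries an $S^1$-action via precomposition. The defining property of homotopy orbits---mapping out of $X_{S^1}$ into a trivially-acted target agrees with $S^1$-equivariant maps out of $X$---then gives
\[ \RHom_M(i_*\F, i_*\G)^{S^1} \simeq \RHom_{M_0}\bigl((i^*i_*\F)_{S^1}, \G\bigr) \simeq \RHom_{M_0}(\F, \G), \]
and likewise for $\Map$, proving the first displayed equivalence.

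For the $k\ps{\bt}$-linear refinement, any $S^1$-action on a $k$-complex $E$ automatically endows $E^{S^1}$ with a $C^*(BS^1;k) = k\ps{\bt}$-module structure. To match this $k\ps{\bt}$-module structure on $\RHom_M(i_*\F, i_*\G)^{S^1}$ with the inner Hom $\RHom^{\otimes k\ps{\bt}}_{M_0}(\F, \G)$, I would factor through the $\QCsh(\bB)$-valued inner Hom $\HHom^{\QCsh(\bB)}_{M_0}(\F, \G) \in \QCsh(\bB)$, characterized by $\Map_\bB(V, \HHom) \simeq \Map_{M_0}(V \star \F, \G)$; this corresponds under the symmetric-monoidal equivalence $(-)^{S^1}\colon \QCsh(\bB) \stackrel{\sim}\to k\ps{\bt}\mod$ of \autoref{prop:mf-prelim} to $\RHom^{\otimes k\ps{\bt}}_{M_0}(\F, \G)$. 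The content is then to identify $\HHom^{\QCsh(\bB)}_{M_0}(\F,\G)$, as an object of $\QCsh(\bB)$, with $\RHom_M(i_*\F, i_*\G)$ equipped with the $\O_\bB$-module structure corresponding (under Koszul duality) to the $S^1$-action built above. Agreement on underlying $k$-complexes is immediate (take $V = \O_\bB$ in the universal property to recover $\RHom_M(i_*\F, i_*\G)$), and the $\O_\bB$-structures arise from the same source---the $\O_\bB$-action on $i^*i_*\F$ transferred through the adjunction. I expect this coherence check, matching the two a~priori different $k\ps{\bt}$-module structures, to be the main technical step.

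Finally, the $k\pl{\bt}$-linear statement for $\MF$ follows formally: $\MF(M,f) = \PreMF(M,f) \otimes_{k\ps{\bt}} k\pl{\bt}$, and inner Homs between compact objects commute with this localization, so
\[ \RHom^{\otimes k\pl{\bt}}_{\MF(M,f)}(\ol\F, \ol\G) \simeq \RHom^{\otimes k\ps{\bt}}_{M_0}(\F, \G) \otimes_{k\ps{\bt}} k\pl{\bt} \simeq \RHom_M(i_*\F, i_*\G)^{S^1} \otimes_{k\ps{\bt}} k\pl{\bt} = \RHom_M(i_*\F, i_*\G)^{\Tate}, \]
the last identification being the definition of the Tate construction.
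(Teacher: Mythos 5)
Your proof is correct and takes essentially the same approach as the paper's: both rest on the $S^1$-action on $\O_\bB$ with $(\O_\bB)_{S^1} = \O_{\id}$ from \autoref{ex:resolution}, transported to $i^*i_*$ via \autoref{ex:O_B}, followed by the $(i^*,i_*)$ adjunction. The paper makes the first bullet explicit by totalizing the cosimplicial $\Map_{\DCoh(M)}(i_*\F,i_*\G)$ diagram coming from the Koszul--Tate/simplicial bar resolution, whereas you route through the $\QCsh(\bB)$-valued inner Hom and the symmetric monoidal equivalence of \autoref{prop:mf-prelim}; this is a cleaner way to make the $k\ps{\bt}$-linearity of the equivalence manifest a priori, but both leave the final coherence check --- matching the two $\O_\bB$-module structures --- at roughly the same level of detail, and your honest flagging of that step as the technical content is appropriate. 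Your closing appeal to inner Homs between compacts commuting with $-\otimes_{k\ps{\bt}}k\pl{\bt}$ is justified by compactness of $\F,\G$ together with $k\pl{\bt}$ being a filtered colimit of shifts of $k\ps{\bt}$, matching what the paper uses implicitly.
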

\begin{proof}
This follows from \autoref{ex:resolution}, which gives an $S^1$ action on $i^* i_* \in \Fun^L_k(\QCsh(M_0), \QCsh(M_0))$ with $(i^* i_*)_{S^1} = \id$.  Since $i^* i_*$ preserves $\DCoh(M_0)$, the indicated equivalence restricts.

Let us spell things out more explicitly: The simplicial bar construction computing $(\O_B)_{S^1}$ identifies under Dold-Kan with the Koszul-Tate resolution of \autoref{ex:resolution}. Thus we have a functorial equivalence in $\QC(M_0)$
  \[ \left[(i^*i_* \F)[u^k/k!]/{du=B}\right] = \Tot^\oplus\left\{ \cdots \stackrel{B}\longrightarrow \Sigma^2 i^* i_* \F \stackrel{B}\longrightarrow \Sigma i^* i_* \F \stackrel{B}\longrightarrow i^* i_* \F\right\}\longrightarrow \F \]
which, since $\PreMF(M,f) = \DCoh(M_0)$ is a full subcategory of $\QC(M_0)$, gives rise to an equivalence
\begin{align*} \Map_{\PreMF(M,f)}(\F,\G) &= \Tot\left\{ \Map_{\DCoh(M_0)}(i^* i_* \F, \G) \right\} \\
  &= \Tot\left\{ \Map_{\DCoh(M)}(i_* \F, i_* \G) \right\} \\
  &= \Map_{\DCoh(M)}(i_* \F, i_* \G)^{S^1} \end{align*}
  where the $S^1$-action is given by $B$. (See below for a yet more explicit form.) 
\end{proof}

\begin{remark}\label{remark:simplic} For the simplicially-inclined reader, we mention the following alternate description: For $\F \in \QC(M_0)$, there is an augmented, $i_*$-split, simplicial object
  \[ \left\{ (i^* i_*)^\bullet \F \right\} = \left\{ \ssetl{i^* i_* \F}{(i^* i_*)^2 \F}\right\} \longrightarrow \F \]
  which realizes $\F$ as the geometric realization of the simplicial diagram (c.f., \autoref{lem:mfplus-monadic}).  Identifying $\O_\bB = C_*(S^1, k)$, the diagram encodes an $S^1$-action on $i^* i_* \F \in \QC(M_0)$, with quotient $(i^* i_* \F)_{S^1} = \F$.  For $\F \in \DCoh(M_0)$ there is also a Grothendieck-dual description (c.f., \autoref{ex:id})
  \[ \F \longrightarrow \left\{ (i^! i_*)_\bullet \F \right\} \isom \left\{ (i^* i_*)_\bullet \F[-1]\right\} \]
\end{remark}

\begin{lemma}\label{lem:s1-bdd}  Suppose $V$ is a complex with $S^1$-action.  Then, the natural map
  \[ V^{S^1} \otimes_{k\ps{\bt}} k \longrightarrow V \] is an equivalence.
 \end{lemma}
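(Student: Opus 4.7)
The plan is to realize both sides of the natural map as the cofiber of multiplication by $\bt$ on $V^{S^1}$.  For the source side, I would use the two-term Koszul resolution
\[ k \;\isom\; \cofib\bigl(\bt\colon k\ps{\bt}[-2] \longrightarrow k\ps{\bt}\bigr), \]
which is valid since $\bt$ is a non-zero-divisor of homological degree $-2$ in $k\ps{\bt}$.  Tensoring then gives
\[ V^{S^1} \otimes_{k\ps{\bt}}^L k \;\isom\; \cofib\bigl(\bt\colon V^{S^1}[-2] \longrightarrow V^{S^1}\bigr). \]

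For the target side, I would invoke the Gysin cofiber sequence arising from the universal principal $S^1$-bundle $ES^1 \to BS^1$.  Concretely, the $S^1$-equivariant cofiber sequence of pointed spaces $(S^1)_+ \to S^0 \to S^\sigma$---where $\sigma$ denotes the complex sign representation of $S^1$, so that $S^\sigma \isom S^2$ carries the rotation action---yields, upon applying equivariant maps into $V$, a fiber sequence
\[ V^{S^1}[-2] \xrightarrow{\;\bt\;} V^{S^1} \longrightarrow V. \]
The shift by $[-2]$ encodes the $S^1$-equivariant Thom isomorphism for $\sigma$, and the connecting map is multiplication by the Euler class $c_1(\sigma) = \bt$, consistent with the normalization $\bt = c_1(\O(1))$ fixed in the introduction.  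Consequently $V \isom \cofib(\bt\colon V^{S^1}[-2] \to V^{S^1})$, matching the source.

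A brief compatibility check then shows that, under the two identifications above, the resulting equivalence $V^{S^1} \otimes_{k\ps{\bt}}^L k \isom V$ coincides with the natural map of the statement---namely, the adjoint of the $k\ps{\bt}$-linear forgetful map $V^{S^1} \to V$, where $V$ is regarded as a $k\ps{\bt}$-module via the augmentation $\bt \mapsto 0$ (and the forgetful is automatically $k\ps{\bt}$-linear for this structure).  The main substantive step is the identification of the connecting map in the Gysin sequence with multiplication by $\bt$, which is the standard Chern-class computation built into the paper's conventions; the rest is formal manipulation of cofiber sequences.
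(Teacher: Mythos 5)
Your argument is correct but takes a genuinely different route from the paper's.  The paper identifies the $\infty$-category of complexes with $S^1$-action with $\O_\bB\mod$ ($\O_\bB = k[B]/B^2$), recognizes the map in question as the counit $F_M \circ F_R \to \id$ of the recollement diagram, and then simply cites \autoref{lem:mf-prelim}, which had already established the full recollement structure of $\QC(\bB) \to \QCsh(\bB) \to \DSing^\kinfty(\bB)$ (in particular that $F_M F_R \to \id$ is an equivalence).  You instead re-derive the key ingredient directly: the Gysin fiber sequence $V^{S^1}[-2] \xrightarrow{\bt} V^{S^1} \to V$, obtained from the $S^1$-equivariant cofiber sequence $(S^1)_+ \to S^0 \to S^\sigma$, together with the Koszul resolution $k \isom \cofib(\bt)$ over $k\ps{\bt}$.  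These two inputs are the topological avatar of exactly what \autoref{lem:mf-prelim} encodes algebraically (the cobar computation $\RHom_{\O_\bB}(k,k) = k\ps{\bt}$ and its compatibility with the fiber sequence $\O_\bB \to k \to k[2]$ of $\O_\bB$-modules), so the two proofs are mathematically equivalent.  What the paper's phrasing buys is economy---it reuses an already-proven lemma without re-opening the computation; what yours buys is self-containedness and explicit contact with the Chern-class normalization $\bt = c_1(\O(1))$ already fixed in the paper's conventions.

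One point you gloss over a bit is the ``brief compatibility check'': the identification $V^{S^1} \otimes_{k\ps{\bt}} k \isom \cofib(\bt)$ and the Gysin identification $V \isom \cofib(\bt)$ each come with a canonical map $V^{S^1} \to \cofib(\bt)$ from the middle term, and the substance of the compatibility is that under the first identification this is reduction mod $\bt$, under the second it is the forgetful map $V^{S^1} \to V$, and the natural map of the lemma is precisely defined so that its precomposition with reduction mod $\bt$ \emph{is} the forgetful map.  Once stated that way, the compatibility is immediate, but it is worth saying rather than leaving implicit, since it is the only place the actual definition of the ``natural map'' enters.
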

 \begin{proof} Identify the $\infty$-category of complexes with $S^1$-action with $k[B]/B^2\mod$.  Identify $k\ps{\bt} = \RHom_{\O_\bB\mod}(k,k)$.  Then, the map in question is identified with the natural evaluation map
   \[ \RHom_{\O_\bB\mod}\left(k, V\right) \otimes_{k\ps{\bt}} k \longrightarrow V \]
   i.e., the counit $F_M \circ F_R \to \id$.  This is an equivalence by \autoref{lem:mf-prelim}.
\end{proof}

\begin{corollary}\label{cor:premf-supt} Suppose $Z \subset M_0$ is a closed subset.  Then, $i_*\colon \DCoh_{Z}(M_0) \to \DCoh_{Z}(M) $ induces an equivalence of $\infty$-categories
  \[ i_*\colon \PreMF_Z(M,f) \otimes_{k\ps{\bt}} k \stackrel{\sim}\longrightarrow \DCoh_{Z} M \]
\end{corollary}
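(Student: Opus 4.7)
The plan is to exhibit $i_*$ as a $k\ps{\bt}$-linear functor from $\PreMF_Z(M,f)$ to $\DCoh_Z(M)$ (with the target receiving its $k\ps{\bt}$-structure via the augmentation $k\ps{\bt} \to k$), show it is fully faithful on the base-change $-\otimes_{k\ps{\bt}} k$, and then verify essential surjectivity via a generation argument.

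First, I would verify that the class $\bt$ acts nullhomotopically on $i_*\F$, yielding the augmented $k\ps{\bt}$-linearity of $i_*$ together with the needed homotopy-coherent data. The triangle $\F[1] \to i^*i_*\F \to \F$ of \autoref{ex:triangle} encodes $\bt$ as the connecting map $\F \to \F[2]$. Applying $i_*$, the resulting map $i_*i^*i_*\F \to i_*\F$ (the $i_*$-image of the counit of $(i^*, i_*)$) admits a section, namely the unit $i_*\F \to i_*i^*i_*\F$ of the adjunction applied to $i_*\F$, by the standard triangle identity. Hence the triangle splits and the induced self-map $i_*\F \to i_*\F[2]$ is null.

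Second, I would compute morphisms in the base-changed category and prove fully faithfulness. For $\F, \G \in \PreMF_Z(M,f)$, \autoref{prop:s1-hom} gives a $k\ps{\bt}$-linear equivalence
\[ \RHom^{\otimes k\ps{\bt}}_{\PreMF(M,f)}(\F,\G) = \RHom_M(i_*\F, i_*\G)^{S^1}. \]
Passing to Ind-completions (using \autoref{prop:mf-prelim} to identify $\QCsh(\bB) \simeq k\ps{\bt}\mod$) to compute morphisms in the Morita base change as a $\Pr^L$-colimit, and then restricting back to compact objects, one sees that morphisms in $\PreMF_Z(M,f) \otimes_{k\ps{\bt}} k$ are given by the $k\ps{\bt}$-linear Hom tensored with $k$ over $k\ps{\bt}$. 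Then \autoref{lem:s1-bdd} identifies this with $\RHom_M(i_*\F, i_*\G)$, so $i_*$ is fully faithful on the base change.

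Third, for essential surjectivity: any $\F \in \DCoh_Z(M)^\heart$ has support in $Z \subset M_0$, hence is annihilated by some power of the defining ideal of $M_0 \subset M$; the filtration by powers of this ideal has subquotients which are $\O_{M_0}$-modules and thus lie in the image of $i_*$. The Postnikov tower of a general $\F \in \DCoh_Z(M)$ (which has only finitely many nonzero homotopy sheaves) reduces the general case to the heart. Since $i_*$ is fully faithful and exact on the base-changed category, its essential image is closed under cones and shifts, so it contains all of $\DCoh_Z(M)$. The main obstacle I anticipate is the formal bookkeeping in the Morita base change, particularly verifying that $\RHom$ in $\PreMF_Z(M,f) \otimes_{k\ps{\bt}} k$ really is $\RHom^{\otimes k\ps{\bt}}(-,-) \otimes_{k\ps{\bt}} k$ together with the coherent $k\ps{\bt}$-linearity of $i_*$; working throughout in $\Pr^L$ with Ind-completions, and only descending at the end, is the cleanest way around this.
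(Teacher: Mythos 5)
The proposal has a genuine gap in the first step. Showing that $\bt$ acts nullhomotopically on $i_*\F$ (which your adjunction argument does correctly establish, and which is just a repackaging of \autoref{ex:triangle}) gives only the lowest level of coherence data; it does \emph{not} produce the homotopy-coherent $k\ps{\bt}$-linear structure on $i_*$ needed to invoke the universal property of $\PreMF_Z(M,f)\otimes_{k\ps{\bt}} k$. An infinite hierarchy of higher homotopies is required, and you offer no construction of it. You correctly flag this as an ``obstacle'' but then gesture at ``working in $\Pr^L$ with Ind-completions'' as a fix, which does not address the problem: the issue is not whether you work with small or Ind-completed categories, but that there is no functor to Ind-complete until its $k\ps{\bt}$-linearity is actually built.

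The paper sidesteps this by constructing the $k\ps{\bt}$-linear functor geometrically: the augmented simplicial diagram $\{M_0\times\bB^{\bullet-1}\times\pt\}\to M$ is a diagram of derived schemes with finite structure maps; applying $\DCoh(-)$ with pushforwards and invoking \autoref{prop:fmk-coh-surj} identifies it with the simplicial bar construction computing $\PreMF(M,f)\otimes_{\DCoh(\bB)}\DCoh(\pt)$, so the augmentation automatically encodes all the coherence data. Put differently: $i\colon M_0\to M$ is $\bB$-equivariant (with $M$ having the trivial action), and it is this equivariance, packaged simplicially, that makes $i_*$ a $\DCoh(\bB)$-module functor. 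Your remaining two steps---fully-faithfulness via \autoref{prop:s1-hom} and \autoref{lem:s1-bdd}, essential surjectivity by a filtration/devissage as in \autoref{lem:coh-red}---agree with the paper's proof and are fine once the functor is in hand.
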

\begin{proof}
First, we will construct the desired lift of $i_*$ via a geometric description of the simplicial bar construction implementing the tensor product: The augmented simplicial diagram
\[ \left\{ M_0 \times \bB^{\bullet-1} \times \pt \right\} = \left\{ \cdots \ssetlar{M_0 \times \bB \times \pt}{\alpha}{p_1}{M_0 \times \pt}\right\} \stackrel{i}\longrightarrow M \] gives rise to an augmented simplicial diagram of $\infty$-categories, which via \autoref{prop:fmk-coh-surj} may be identified with
\[ \left\{ \DCoh(M_0) \otimes \DCoh(\bB)^{\otimes \bullet -1} \otimes \DCoh(\pt) \right\} \stackrel{i_*}\longrightarrow \DCoh(M) \] where the simplicial diagram is the simplicial bar construction for $\PreMF(M,f) \otimes_{\DCoh(\bB)} \DCoh(\pt) = \PreMF(M,f) \otimes_{k\ps{\bt}} k$.  Imposing support conditions everywhere, we obtain the functor of the statement.

Next we verify that this functor is fully faithful: It suffices to check that for any $\F, \G \in \PreMF(M,f)$ the natural map
  \[ \RHom^{\otimes k\ps{\bt}}_{\PreMF(M,f)}(\F,\G) \otimes_{k\ps{\bt}} k \longrightarrow \RHom_{M}(i_* \F, i_* \G) \] is an equivalence.   This follows immediately from the triangle of \autoref{ex:triangle} and adjunction.

Alternatively, by \autoref{prop:s1-hom} we may identify this with the map
  \[ \RHom^{\otimes k\ps{\bt}}_{\PreMF(M,f)}(\F, \G) \otimes_{k\ps{\bt}} k = \RHom_{M}(i_* \F, i_* \G)^{S^1} \otimes_{k\ps{\bt}} k \longrightarrow \RHom_{M}(i_* \F, i_* \G) \]
  which is an equivalence by \autoref{lem:s1-bdd}. 

We now prove that the functor is essentially surjective: Since it is fully faithful, and $\DCoh_Z M$ is a sheaf on $M$, the question is local and we may suppose $M$ is a quasi-compact coherent scheme.  Since both sides are stable and idempotent complete, it suffices to show that it has dense image.  We conclude by noting that $i_*\colon \DCoh_Z(M_0) \to \DCoh_{Z} M$ has dense image by the usual $t$-structure and filtration argument, since $Z \subset M_0$ (c.f., \autoref{lem:coh-red}).
\end{proof}

\subsection{Computational tools}
The following Lemma is rather categorical, and may be safely skipped on first reading: we will try to extract and emphasize its more concrete consequences.
\begin{lemma}\label{lem:mfplus-monadic} Suppose $X', X$ are coherent derived stacks, that $i\colon X' \to X$ is finite and of finite Tor-dimension.  Then, 
\begin{enumerate}
  \item The adjoint pair $\adjunct{i^*\colon \QC(X)}{\QC(X') \colon i_*}$ is monadic, and induces an equivalence
    \[ \QC(X') \isom (i_* \O_{X'})\mod\left(\QC(X)\right) \]
  \item The adjunction of (i) restricts to an adjoint pair $\adjunct{i^*\colon \DCoh(X)}{\DCoh(X') \colon i_*}$.  It is also monadic, and induces an equivalence
  \[ \DCoh(X') \isom i_*\O_{X'}\mod\left(\DCoh(X)\right) \] 
  \end{enumerate}
\end{lemma}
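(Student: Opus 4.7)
The plan is to deduce both statements from Lurie's $\infty$-categorical Barr--Beck monadicity theorem applied to the adjunction $(i^*, i_*)$, and then to restrict.

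For (i): to invoke Barr--Beck--Lurie we need $i_*$ to be conservative and to preserve geometric realizations of $i_*$-split simplicial objects. Since $i$ is finite, hence affine, $i_*$ is $t$-exact and is essentially the forgetful functor along the algebra map $\O_X \to i_*\O_{X'}$; in particular it is both conservative and colimit-preserving, which handily subsumes the split-realization condition. The projection formula (automatic for affine $i$) identifies the resulting monad $T = i_* i^*$ with $(i_*\O_{X'}) \otimes_{\O_X} (-)$, and the lax symmetric monoidal structure on $i_*$ promotes $i_*\O_{X'}$ to a commutative algebra object of $\QC(X)$ whose module category is exactly $T\mod(\QC(X))$. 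This yields the desired equivalence $\QC(X') \isom (i_*\O_{X'})\mod(\QC(X))$.

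For (ii), the plan is to show that both the adjunction and the equivalence of (i) restrict to $\DCoh$. The functor $i_*$ preserves $\DCoh$ because $i$ is finite (pushforward along a finite morphism preserves bounded coherent cohomology). The functor $i^*$ preserves $\DCoh$ by the finite Tor-dimension hypothesis: $i^*$ always preserves pseudo-coherence, and finite Tor-dimension provides the $t$-boundedness needed to preserve bounded pseudo-coherent, i.e.\ coherent, complexes. Moreover the monad $T = (i_*\O_{X'}) \otimes_{\O_X}(-)$ preserves $\DCoh(X)$: $i_*\O_{X'}$ lies in $\DCoh(X)$ and has finite Tor-dimension over $\O_X$ by the hypotheses on $i$, so tensoring with it preserves $\DCoh(X)$.

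Finally, since $i_*$ is $t$-exact, conservative, and reflects as well as preserves coherence (for finite $i$), an object $\F \in \QC(X')$ lies in $\DCoh(X')$ if and only if $i_*\F$ lies in $\DCoh(X)$. This matches the full subcategories on either side of the equivalence of (i), giving the restricted equivalence $\DCoh(X') \isom (i_*\O_{X'})\mod(\DCoh(X))$. The only step requiring some care is identifying the algebra structure on $i_*\O_{X'}$ and the corresponding module category — this is clean in the affine setting, since the lax monoidal $i_*$ is monoidal on precisely the objects at issue via the projection formula.
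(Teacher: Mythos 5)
Your overall strategy is the same as the paper's: Barr--Beck--Lurie for (i) with the monad identified via the projection formula, and then restriction to $\DCoh$ in (ii) by checking that both $i_*$ and $i^*$ preserve coherence and that $i_*$ also \emph{detects} it. Everything you say in (i) is fine (affineness gives $t$-exactness, colimit-preservation, and conservativity of $i_*$), and your observations that $i^*$ preserves $\DCoh$ via finite Tor-dimension and that the monad $T = i_*\O_{X'}\otimes_{\O_X}(-)$ preserves $\DCoh(X)$ are correct and useful.

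The gap is in the sentence ``$i_*$ is $t$-exact, conservative, and reflects as well as preserves coherence (for finite $i$).'' The reflection claim is precisely the substantive content of part (ii), and the parenthetical ``(for finite $i$)'' is doing more work than it advertises. After reducing to $X = \Spec A$, $X' = \Spec B$ and passing to $\pi_0$, one must show: if a $\pi_0 B$-module $M$ is \emph{coherent} when regarded as a $\pi_0 A$-module, then $M$ is coherent over $\pi_0 B$. Finite \emph{generation} of $M$ over $\pi_0 B$ is easy (use the surjection $M_A \otimes_{\pi_0 A}\pi_0 B \to M$). But the lemma is stated for merely \emph{coherent} (not Noetherian) derived stacks, where finitely generated need not imply finitely presented; this is exactly the case the paper handles, by reducing to a Noetherian subring of $\pi_0 A$ over which $\pi_0 B$, $M_A$, and the $\pi_0 B$-action are all defined. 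If you instead strengthen the hypothesis to Noetherian, your assertion becomes immediate and the rest of the proof goes through; as written, though, the coherent case requires the extra reduction you omitted.
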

\begin{proof} \mbox{}
  \begin{enumerate}
    \item Since $i$ is affine, $i_*$ admits a Cech description and hence preserves all colimits.  Furthermore, $i_*$ is conservative: The question is local, and the result is true for $X$ affine because $i^*$ will pull back a generator to a generator.  Since $\QC(X')$ has all colimits, Lurie's Barr-Beck Theorem applies to give $\QC(X') \isom i_* i^*\mod\QC(X)$ and this monad visibly identifies with that given by the algebra $i_* i^* \O_{X} = i_* \O_{X'}$.
    \item
    Since $i$ is assumed finite, $i_*$ preserves $\DCoh$.  Since $i$ is assumed of finite Tor-dimension, $i^*$ preserves $\DCoh$.  So, the adjunction restricts to one on the full subcategories $\adjunct{i^*\colon \DCoh(X)}{\DCoh(X')\colon i_*}$.  It thus suffices to show that for $\G \in \QC(X')$, we have $\G \in \DCoh(X')$ iff $i_* \G \in \DCoh(X)$.  Since $i$ is affine, $i_*$ is $t$-exact; that is,
    \[ i^\heart_*\left(\pi_m \G\right) = \pi_m\left(i_* \G\right) \in \QC(X)^\heart = \pi_0(\O_X)\mod \]  
    It remains to show that
    \[ i^\heart_*\colon \QC(X')^\heart \to \QC(X)^\heart \] is conservative, and that $i^\heart_*(M)$ is finitely-presented over $\pi_0(\O_{X})$ iff $M$ is finitely-presented over $\pi_0(\O_{X'})$.  

    The question is local on $X$, so we will assume $X = \Spec A$ and $X' = \Spec B$. Note that $i_0 \colon \Spec \pi_0 B \to \Spec \pi_0 A$ is a finite map of discrete coherent rings, and that we must show that the pushforward $(i_0)^\heart_*$ of discrete modules is conservative, and that it preserves and detects the property of a module being finitely-presented.  That it is conservative is obvious.  Since $\pi_0 B$ is finite over $\pi_0 A$, it preserves the property of being finitely-presented.  To see that it detects coherence, suppose $M$ is a $\pi_0 B$-module such that the corresponding $\pi_0 A$-module, denoted $M_A$, is coherent.  Considering the surjection $M_A \otimes_{\pi_0 A} \pi_0 B \to M$, we see that $M$ is finitely-generated, so that if $\pi_0 B$ is Noetherian we are done.  To handle the general coherent case, we reduce to the case of $\pi_0 A$ (and so $\pi_0 B$) Noetherian: It suffices to note that we may find a Noetherian subring of $\pi_0 A$ over which the coherent $\pi_0 A$-algebra $\pi_0 B$, the coherent $\pi_0 A$-module $M_A$, and the $\pi_0 B$-action on $M_A$, are all defined.\qedhere
  \end{enumerate}
\end{proof}

\begin{corollary}\label{cor:mf-monadic} Suppose $(M,f)$ is an LG pair. Set
  \[ \A = \left(\underbrace{\O_M\left[B_M\right]}_{\deg B_M=+1}/\begin{gathered}B_M^2=0\\d B_M=f\end{gathered}\right) \] so that $\A$ is an algebra.  Then, $(i_*, i^*)$ induces equivalences
  \[ \QC(M_0) = \A\mod\left(\QC(M)\right)  \qquad \DCoh(M_0) = \A\mod\left(\Perf(M)\right) \] 
\end{corollary}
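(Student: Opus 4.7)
The plan is to deduce this directly from \autoref{lem:mfplus-monadic} applied to the inclusion $i\colon M_0 = M \times_{\AA^1} 0 \to M$, once we identify the monad $i_* \O_{M_0}$ with the explicit dg-algebra $\A$.

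First I would verify the hypotheses of \autoref{lem:mfplus-monadic}. The map $i$ is a base change of the closed immersion $0 \hookrightarrow \AA^1$. The latter is finite (it is a closed immersion of Noetherian affine schemes, with $\O_0$ a pseudo-coherent $\O_{\AA^1}$-module), and it is of Tor-amplitude $[0,1]$ because $0 \in \AA^1$ is cut out by the single regular element $t \in k[t]$. Since finiteness and finite Tor-dimension are stable under derived base change, $i$ shares both properties. Also, $M$ (and hence $M_0$) is a \eqref{cond:starf} derived stack, so in particular coherent, and the Lemma applies to give monadic adjunctions $(i^*, i_*)$ on both $\QC$ and $\DCoh$, with equivalences
\[ \QC(M_0) \simeq (i_* \O_{M_0})\mod(\QC(M)), \qquad \DCoh(M_0) \simeq (i_* \O_{M_0})\mod(\DCoh(M)). \]

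Next I would identify $i_* \O_{M_0}$ with $\A$ as an $E_\infty$-algebra in $\QC(M)$. By construction $M_0$ is the derived fiber product, so $i_* \O_{M_0} = \O_M \otimes^L_{f^{-1}\O_{\AA^1}} \O_0$. The standard Koszul resolution exhibits $\O_0$ as the commutative dg-$\O_{\AA^1}$-algebra $\O_{\AA^1}[B]/(B^2 = 0,\ dB = t)$, and tensoring up along $f^* \colon \O_{\AA^1} \to \O_M$ (which sends $t \mapsto f$) produces exactly $\O_M[B_M]/(B_M^2=0,\ dB_M = f) = \A$. This identification is as commutative dg-algebras, hence in particular as associative algebra objects, so the monad of the adjunction is identified with $\A \otimes_{\O_M} (-)$. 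This yields the first equivalence.

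For the second equivalence, the one involving $\Perf(M)$ rather than $\DCoh(M)$, I would use that $M$ is by hypothesis a smooth \eqref{cond:starf} orbifold, so $\Perf(M) = \DCoh(M)$; substituting into the second monadic identification from \autoref{lem:mfplus-monadic} yields $\DCoh(M_0) \simeq \A\mod(\Perf(M))$. The only mildly delicate step is the Koszul/base-change identification of $i_*\O_{M_0}$ with $\A$, but since we defined $M_0$ as a derived fiber product and $\AA^1$ is a regular scheme with $0$ cut out by a single regular function, this is routine.
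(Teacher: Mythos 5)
Your proof is correct and takes essentially the same approach the paper intends: the paper gives no explicit proof (it's stated as an immediate Corollary of \autoref{lem:mfplus-monadic}), and the intended argument is exactly what you wrote — verify that $i\colon M_0 \to M$ is finite and of finite Tor-dimension by base change from $0 \hookrightarrow \AA^1$, identify the monad $i_*\O_{M_0}$ with $\A$ via the Koszul resolution of $\O_0$ over $\O_{\AA^1}$ pulled back along $f$, and replace $\DCoh(M)$ with $\Perf(M)$ using smoothness of $M$. The verification that Tor-amplitude is preserved under (not necessarily flat) derived base change and the care about the algebra (not merely module) structure on the Koszul complex are exactly the points worth spelling out, and you did.
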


\begin{na} Under the identification of \autoref{cor:mf-monadic}, we can make explicit the $k\ps{\bt}$-linear structure of \autoref{prop:mf-prelim}(iii) as follows:
  Suppose $\F \in \A\mod(\QC(M))$.  The resolution of \autoref{ex:resolution} is
  \[ \F \sim \underbrace{\F[B][u^k/k!]}_{\deg B = +1, \deg u = +2}/\left\{\begin{gathered}B^2 = 0 \\ dB=0 \\ du = B \end{gathered}\right\} \] where $B_M$ acts on the RHS by $(B_M)_{\F} + B$ (in particular, $\F$ is not a submodule). The $k\ps{\bt}$ action on the right hand-side is given by $\bt = d/du$.  For $\F\in\A\mod(\Perf M)$ there is also the Grothendieck dual resolution (c.f., \autoref{remark:simplic})
    \[ \F \sim \left(\F[B]\ps{\bt}, d_\F + B \bt\right) \] on which $B_M$ acts as above and the $\bt$ action is evident.
\end{na}

\begin{na}\label{na:s1-act} Under the identification of \autoref{cor:mf-monadic}, we can make explicit the $S^1$-action of \autoref{prop:s1-hom} as follows: 
  Adjunction provides an equivalence
\[ \RHom_{\O_M}(\F,\G) = \RHom_{\O_M[B_M]}(\F[B], \G) \qquad \phi\left(f\right) \mapsto \sq{\phi}\left(f+ B f'\right) = \phi\left(f - B_M \cdot f'\right) + B_M \phi\left(f'\right) \]
On the right hand side, the $B$-operator of the circle action is the (graded) dual of multiplication by $B$.  A straightforward computation then shows that the induced operation on the left-hand side is (at least up to signs)
\[ B \phi = B_M \circ \phi + \phi \circ B_M \]
\end{na}

\subsection{Comparison of definitions}
\begin{prop}\label{prop:comparison} Suppose $(M,f)$ is an LG pair.  Then, the natural functor 
  \[ \PreMF(M,f) \longrightarrow \MF(M,f) \eqdef \PreMF(M,f) \otimes_{k\ps{\bt}} k\pl{\bt} \]
  factors through the quotient functor $\DCoh(M_0) \to \DSing(M_0) = \DCoh(M_0)/\Perf(M_0)$.  The induced functor
  \[ \DSing(M_0) \longrightarrow \MF(M,f) \] is an idempotent completion.
\end{prop}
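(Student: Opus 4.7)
Plan: I will prove the two assertions separately, using Ind-completions for the second.

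For the factorization through $\DSing(M_0) = \DCoh(M_0)/\Perf(M_0)$, it suffices to show that every $P \in \Perf(M_0) \subset \DCoh(M_0) = \PreMF(M,f)$ satisfies $P \otimes_{k\ps{\bt}} k\pl{\bt} \simeq 0$ in $\MF(M,f)$. By \autoref{lem:mf-prelim}, $k\pl{\bt} \otimes_{k\ps{\bt}} -$ is a smashing localization whose kernel is the locally $\bt$-torsion $k\ps{\bt}$-modules, i.e., the image of $F_L\colon \QC(\bB) \hookrightarrow \QCsh(\bB)$. By the remark following \autoref{constr:Bact}, the $\QCsh(\bB)$-action on $\QCsh(M_0)$ is the $F_L^\otimes$-extension of the $\QC(\bB)$-action on $\QC(M_0)$: the inclusion $F_L\colon \QC(M_0) \hookrightarrow \QCsh(M_0)$ is linear over the symmetric-monoidal $F_L^\otimes$. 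Hence the $k\ps{\bt}$-module structure on any object in $F_L(\QC(M_0))$ factors through $F_L^\otimes(\QC(\bB))$, which places such objects in the kernel of $\otimes_{k\ps{\bt}} k\pl{\bt}$. In particular this applies to $P \in \Perf(M_0) \subset F_L(\QC(M_0))$.

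For the idempotent completion claim, I pass to Ind-completions and compute
\[ \MF^\kinfty(M,f) = \PreMF^\kinfty(M,f) \ohotimes_{k\ps{\bt}} k\pl{\bt} = \QCsh(M_0) \ohotimes_{k\ps{\bt}\mod} k\pl{\bt}\mod. \]
Base-changing the smashing localization $k\ps{\bt}\mod \to k\pl{\bt}\mod$, the right-hand side is a smashing localization of $\QCsh(M_0)$ whose kernel is the $\otimes$-ideal generated by the action of $F_L^\otimes(\QC(\bB))$. By the projection formula (using that $F_L$ is a symmetric-monoidal left adjoint) this ideal sits inside $F_L(\QC(M_0))$. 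Conversely, \autoref{ex:O_B} identifies $i^* i_* \simeq \O_\bB \otimes -$ with $\O_\bB \in F_L^\otimes(\QC(\bB))$, and \autoref{ex:resolution} exhibits every $F_L(\G) \in F_L(\QC(M_0))$ as a colimit of shifts of $i^* i_* F_L(\G)$; so $F_L(\QC(M_0))$ is contained in this $\otimes$-ideal. Therefore $\MF^\kinfty(M,f) \simeq \QCsh(M_0)/F_L(\QC(M_0))$ as presentable $\infty$-categories.

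Now I take compact objects. The inclusion $F_L\colon \QC(M_0) \hookrightarrow \QCsh(M_0)$ is the Ind-extension of $\Perf(M_0) \hookrightarrow \DCoh(M_0)$, so it preserves compact objects, and Neeman's localization theorem identifies the compact objects of the Verdier quotient $\QCsh(M_0)/F_L(\QC(M_0))$ in $\Pr^L$ with the idempotent completion of $\DCoh(M_0)/\Perf(M_0) = \DSing(M_0)$. On the other hand the compact objects of $\MF^\kinfty(M,f) = \Ind\MF(M,f)$ are $\MF(M,f)$ by definition. Comparing these two identifications, the induced functor $\DSing(M_0) \to \MF(M,f)$ is fully faithful with dense image into the idempotent complete category $\MF(M,f)$, i.e., an idempotent completion.

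The main obstacle is identifying the kernel of $\QCsh(M_0) \to \MF^\kinfty(M,f)$ with $F_L(\QC(M_0))$—the reverse containment in particular—and this is exactly where \autoref{ex:resolution} and the $\QC(\bB)$-vs-$\QCsh(\bB)$ compatibility of \autoref{constr:Bact} do the real work. Once that identification is in hand, taking compact objects in the Verdier quotient is routine.
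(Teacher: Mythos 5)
Your proof is a repackaging, at the Ind-complete level, of the paper's ``less lazy'' argument: the paper tensors the Drinfeld--Verdier sequence $\Perf(\bB) \to \DCoh(\bB) \to \DSing(\bB)$ with $\DCoh(M_0)$ over $\DCoh(\bB)$ and then identifies the resulting first term with $\Perf(M_0)$; you instead identify the kernel of $\QCsh(M_0) \to \MF^\kinfty(M,f)$ with $F_L(\QC(M_0))$ and then take compact objects via Neeman. These are the same statement ($F_L(\QC(M_0)) = \Ind\Perf(M_0)$), and both routes hinge on the same two ingredients: the resolution of \autoref{ex:resolution} (to show any $F_L$-object is built from $i^*i_*$-shifts, hence $\bt$-locally-acyclic), and regularity of $M$ (to prove the reverse inclusion). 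The final Neeman/Thomason step is standard and fine. So the approach is not a genuinely different route — it is essentially the paper's argument with the order of $\Ind(-)$ and tensoring exchanged. That said, two steps in your write-up are not yet proofs.

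First, the opening paragraph is vacuous as it stands. The phrase ``the $k\ps{\bt}$-module structure on any object in $F_L(\QC(M_0))$ factors through $F_L^\otimes(\QC(\bB))$'' does not name a precise assertion, and even under charitable reading it does not formally imply $k\pl{\bt}$-acyclicity: $F_L^\otimes$-linearity of $F_L$ tells you how $\QC(\bB)$-objects act on $F_L$-objects, but $k\pl{\bt}$ is not in $F_L^\otimes(\QC(\bB))$, so $F_L^\otimes$-linearity alone gives no handle on $k\pl{\bt} \otimes F_L(\G)$. What makes this work is the specific computation that the acyclization kernel $F_L(\O_{\id}) \in \QCsh(\bB)$, tensored against $F_L(\G)$, reproduces $F_L(\G)$; and this is exactly the content of \autoref{ex:resolution} (a simplicial/Koszul resolution), not a formal consequence of the module-category structure. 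Your second paragraph supplies exactly this, so the first paragraph should be deleted or replaced by a forward reference.

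Second, the inclusion $\ker \subseteq F_L(\QC(M_0))$ is attributed to ``the projection formula (using that $F_L$ is a symmetric-monoidal left adjoint),'' and this is a genuine gap. $F_L$ is not symmetric monoidal (only $F_L^\otimes$ is), and the putative projection formula $F_L^\otimes(V) \otimes \K \simeq F_L(V \otimes F_M\K)$ for $V \in \QC(\bB)$ and arbitrary $\K \in \QCsh(M_0)$ is not a formal consequence of $F_L^\otimes$-linearity. The step you actually need is: for every $\K \in \QCsh(M_0)$, $\O_\bB \otimes \K = i^*i_*\K$ lands in $\QC(M_0)$, and this is where \emph{regularity of $M$} enters (so that $i_*\K \in \QCsh(M) = \QC(M)$ and hence $i^*i_*\K \in \QC(M_0)$). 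This is exactly the role regularity plays in the paper's Claim, in the implication (ii) $\Rightarrow$ (i): ``$i_*\F$ is coherent since $i$ is finite, and thus perfect since $M$ is regular.'' Your write-up never mentions regularity, so the argument as stated would ``prove'' the proposition for singular $M$ as well, where it is false. Once you replace the appeal to a projection formula with the explicit observation about $i^*i_*$ and regularity, the argument is sound, and you would incidentally also have proved \autoref{cor:mf-basechange}.
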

\begin{proof} This can be found in the literature, but as this is important for our approach we sketch an argument.
  
{\noindent}{\bf Claim:} Suppose $\F \in \DCoh(M_0)$. Then, TFAE
\begin{enumerate}
  \item $\F$ is perfect;
  \item $\RHom^{\otimes k\ps{\bt}}_{\DCoh(M_0)}(\F, \F)$ is $\bt$-torsion (i.e., there is an $N>0$ such that $\bt^N$ is null-homotopic);
  \item $\RHom^{\otimes k\ps{\bt}}_{\DCoh(M_0)}(\F, \F)$ is locally $\bt$-torsion (i.e., it is a filtered colimit of perfect $t$-torsion $k\ps{\bt}$-modules);
  \item $1 \in \pi_* \RHom^{\otimes k\ps{\bt}}_{\DCoh(M_0)}(\F, \F)$ is $\bt$-torsion;
  \item $\F \mapsto 0 \in \MF(M, f)$.
\end{enumerate}

Assuming the claim, we complete the proof.  The existence of a factorization through a functor $\DSing(M_0) \to \MF(M,f)$ follows from (iv) by the universal property of a Drinfeld-Verdier quotient (as cofiber in small $k$-linear $\infty$-categories).  Since the image of $\DCoh(M_0)$ is dense (i.e., its thick closure is the whole) in both, it suffices to show that this functor is fully-faithful.  More precisely, it suffices to show that for $\F, \G \in \DCoh(M_0)$ the natural map
\[ \pi_0 \RHom^{\otimes k\pl{\bt}}_{\DSing(M_0)}(\ol{\F}, \ol{\G}) \to  \pi_0 \left[ \RHom^{\otimes k\ps{\bt}}_{\DCoh(M_0)}(\F, \G) \otimes_{k\ps{\bt}} k\pl{\bt}\right] \] is an equivalence.

At this point, we may conclude in several ways:
\begin{itemize}
  \item (Lazy) We may identify
    \begin{align*}
      \pi_0 \left[ \RHom^{\otimes k\ps{\bt}}_{\DCoh(M_0)}(\F, \G) \otimes_{k\ps{\bt}} k\pl{\bt}\right] &= \pi_0 \dlim_n\left[\frac{1}{\bt^n} \RHom^{\otimes k\ps{\bt}}_{\DCoh(M_0}(\F,\G)\right]\\
      &= \dlim_n \pi_0 \left[\frac{1}{\bt^n} \RHom^{\otimes k\ps{\bt}}_{\DCoh(M_0}(\F,\G)\right]\\
      &= \dlim_n \pi_{-2n} \RHom^{\otimes k\ps{\bt}}_{\DCoh(M_0}(\F,\G)
    \end{align*}
    However, the following formula for maps in $\pi_0 \DSing(M_0)$ appears in the literature
    \[ \pi_0 \RHom^{\otimes k\pl{\bt}}_{\DSing(M_0)}(\ol{\F}, \ol{\G}) = \dlim_n \pi_{-2n} \RHom^{\otimes k\ps{\bt}}_{\DCoh(M_0)}(\F, \G)  \]
    and one may check that it is induced by the map we wrote down above.
  \item (Less lazy)
    Consider applying $\DCoh(M_0) \otimes_{\DCoh(\bB)} -$ to the (idempotent completed) Drinfeld-Verdier sequence
    \[ \Perf(\bB) \to \DCoh(\bB) \to \DSing(\bB) \]
    which by \autoref{lem:mf-prelim} may be identified with
    \[ \left\{ \begin{gathered} \text{$\bt$-torsion}\\\text{perfect $k\ps{\bt}$-mod}\end{gathered}\right\} \longrightarrow \Perf k\ps{\bt} \stackrel{-\otimes_{k\ps{\bt}}}\longrightarrow \Perf k\pl{\bt} \] 
      The result will again be an (idempotent completed) Drinfeld-Verdier sequence (\autoref{lem:small-tensor}).  The Claim implies that
      \[ \DCoh(M_0) \otimes_{\DCoh(\bB)} \Perf(\bB) = \Perf(M_0) \] Indeed, the LHS identifies with the full subcategory of $\DCoh(M_0)$ consisting of objects with locally $\bt$-torsion endomorphisms.  Since the first two terms of a Drinfeld-Verdier sequence determine the third up to idempotent completion, this completes the proof.
 \end{itemize}

 {\noindent}{\it Proof of Claim:} Recall the resolution in $\QC(M_0)$ (\autoref{ex:resolution})
  \[ \hocolim\left\{\cdots \to i^* i_* \F[2] \to i^*i_*\F[1] \to i^*i_* \F \right\} \stackrel{\sim}\longrightarrow \F \]
  Computing $\RHom^{\otimes k\ps{\bt}}_{\DCoh(M_0)}(\F, \F)$ using this resolution, we see that for $N > 0$ a null-homotopy of $\bt^N$ on $\RHom^{\otimes k\ps{\bt}}(\F, \F)$ realizes $\F$ as a homotopy retract of
  \[ \hocolim\left\{ i^* i_* \F[N] \to \cdots \to i^* i_* \F \right\} \] and conversely if $\F$ is a homotopy retract of this then $\bt^N$ is null-homotopic on $\RHom^{\otimes k\ps{\bt}}(\F, \F)$.

  If $\F \in \Perf(M_0)$, then it is compact in $\QC(M_0)$ and the identity factors through a finite piece as above.  Thus (i) implies (ii).
  
Conversely: $i_* \F$ is coherent since $i$ is finite, and thus perfect since $M$ is regular.  Thus $i^* i_* \F$ is perfect, and so is anything built from it by shifts, finite colimits, and retracts.  This proves that (ii) implies (i).

The implication (ii) implies (iii) is immediate.

Note that $\A = \RHom^{\otimes k\ps{\bt}}_{\DCoh(M_0)}(\F, \F)$ is a $k\ps{\bt}$-algebra, and consider the unit $1\colon k\ps{\bt} \to \A$.  If $\A$ is locally $t$-torsion, then we may write $\A = \dlim P_\alpha$ with $P_\alpha$ perfect and $t$-torsion; since $k\ps{\bt}$ is perfect, $1$ factors through $1\colon k\ps{\bt} \to P_\alpha$ for some $\alpha$. Consequently, there exists $N>0$ so that $\bt^N \cdot 1\colon k\ps{\bt}[2N] \to P_\alpha$, and so also $\bt^N \cdot 1\colon k\ps{\bt}[2N] \to \A$, is null-homotopic.  This implies that $\bt^N \cdot 1 = 0 \in \pi_{-2n} \A$, proving that (iii) implies (iv).

Conversely, if $\bt^N \cdot 1 = 0 \in \pi_{-2N} \A$ then $\bt^N \cdot 1\colon k\ps{\bt}[2N] \to \A$ is null-homotopic.  Since $\A$ is an algebra, we conclude that $\bt^N\colon \A[2N] \to A$ is null-homotopic.  This proves that (iv) implies (ii).

Finally, note that $\F \mapsto 0 \in \MF(M,f)$ if and only if $1 = 0 \in \RHom^{\otimes k\pl{\bt}}_{\MF(M,f)}(\F, \F)$.  Since
\begin{align*}
  \pi_0 \RHom^{\otimes k\pl{\bt}}_{\MF(M,f)}(\F, \F) &= \pi_0 \dlim_N \frac{1}{\bt^N} \RHom^{\otimes k\ps{\bt}}_{\DCoh(M_0)}(\F, \F)\\
  &= \dlim_N \pi_{-2N} \RHom^{\otimes k\ps{\bt}}_{\DCoh(M_0)}(\F, \F)
\end{align*} where the filtered limit is formed under multiplication by $\bt$.  This proves that (iv) $\Leftrightarrow$ (v).\end{proof}

\begin{lemma}\mbox{}\label{lem:small-tensor} 
  \begin{enumerate}
    \item Suppose $\A^\otimes \in \CAlg(\dgcatidm_k)$ is a rigid symmetric-monoidal dg-category, $\D \in \A\mod(\dgcatidm_k)$ an $\A$-module category, and
      \[ \C' \stackrel{F}\longrightarrow \C \stackrel{G}\longrightarrow \C'' \]
      a diagram in $\A\mod(\dgcatidm_k)$ which is a Drinfeld-Verdier sequence.  Then,
      \[ \C' \otimes_\A \D \longrightarrow \C \otimes_\A \D \longrightarrow \C'' \otimes_\A \D \] is again a Drinfeld-Verdier sequence.
    \item Suppose $R \in \CAlg(k\mod)$ is a commutative dg-$k$-algebra, $\D \in \dgcatidm_R$ an $R$-linear dg-category, and
\[ \C' \longrightarrow \C \longrightarrow \C'' \]
      an $R$-linear Drinfeld-Verdier sequence.  Then,
      \[ \C' \otimes_R \D \longrightarrow \C \otimes_R \D \longrightarrow \C'' \otimes_R \D \] is again a Drinfeld-Verdier sequence.
  \end{enumerate}
\end{lemma}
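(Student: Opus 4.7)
The plan is to pass to Ind-completions, work in $\Pr^L$ where tensor products of compactly generated categories and rigidity are well-understood, and then descend back to compact objects.

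\emph{Reduction of (ii) to (i).} Part (ii) is the special case $\A^\otimes = \Perf R$ of part (i): $\Perf R$ is rigid as a symmetric monoidal dg-category since every perfect $R$-module is dualizable, and $\C \otimes_R \D = \C \otimes_{\Perf R} \D$ by definition. So it suffices to prove (i).

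\emph{Strategy for (i).} A Drinfeld--Verdier sequence in $\dgcatidm_\A$ is equivalently, via Ind-completion, a sequence in $\Ind\A\mod(\Pr^L)$ that is simultaneously a cofiber and a fiber sequence and whose functors preserve compact objects. Since Ind converts $\otimes_\A$ to $\ohotimes_{\Ind\A}$ (one of the compatibilities listed in the preliminaries), it suffices to show that $- \ohotimes_{\Ind\A} \Ind\D$ preserves such sequences and preserves the property of being compactly generated by compact-preserving functors. The key categorical input is the following: since $\A^\otimes$ is rigid, $\Ind\A$ is a compactly generated rigid monoidal presentable $\infty$-category, and consequently every compactly generated $\Ind\A$-module (in particular $\Ind\D$) is dualizable as an $\Ind\A$-module in $\Pr^L$. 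Dualizability implies that $- \ohotimes_{\Ind\A} \Ind\D$ admits both a left and a right adjoint, hence preserves all limits and colimits---in particular cofibers and fibers. The compact generation of $\Ind\D$ also ensures that the resulting sequence stays in the compactly generated subcategory and that its functors preserve compact objects, so passing to compact objects returns the desired (idempotent-completed) Drinfeld--Verdier sequence $\C' \otimes_\A \D \to \C \otimes_\A \D \to \C'' \otimes_\A \D$.

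\emph{Main obstacle.} The non-formal ingredient is the dualizability of compactly generated modules over a rigid compactly generated monoidal presentable category; this is standard in the Lurie/Gaitsgory--Rozenblyum framework (where it essentially characterizes rigidity) but is the substantive input. Everything else---the compatibility of Ind with relative tensor products, the characterization of DV sequences as short exact sequences in $\Pr^L_\omega$, and the bookkeeping of compact objects---is formal manipulation with adjoints and colimits in presentable $\infty$-categories.
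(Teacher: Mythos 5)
Your proof takes essentially the same route as the paper: reduce (ii) to (i) via $\A=\Perf R$, pass to Ind-completions, observe that a Drinfeld--Verdier sequence Ind-completes to a recollement sequence in $\Pr^L$ along colimit- and compact-preserving functors, and then invoke the fact that rigidity of $\A^\otimes$ makes $\Ind\D$ a dualizable $\Ind\A$-module so that $-\ohotimes_{\Ind\A}\Ind\D$ preserves both limits and colimits, hence recollement sequences, hence Drinfeld--Verdier sequences on compact objects. The only thing the paper does that you abbreviate by a citation is to indicate \emph{why} $\Ind\D$ is dualizable: it exhibits $\Ind(\D^\op)$ (with the right module structure coming from rigidity of $\A$) as the $\Ind\A$-linear dual via $T\otimes_{\Ind\A}\Ind\D \simeq \Fun^L_{\rmod\text{-}\Ind\A}(\Ind\D^\op, T)$, and it notes in passing that symmetric-monoidality of $\A$ is not actually needed; but this is an implementation detail, not a different strategy.
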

\begin{proof} For (ii) in the literal dg-framework see \cite[Prop.~1.6.3]{Drinfeld-Quotient}. We include a proof in the framework in which we work: Note that (ii) follows from (i) by taking $\A = \Perf(R)$ and taking into account the equivalence $\dgcatidm_R = (\Perf(R))\mod(\dgcatidm_k)$. To prove (i), note that it suffices to pass to the following $\Ind$-completed version: Observe that
\[ \Ind \C'' \longleftarrow \Ind \C \longleftarrow \Ind \C' \] is a diagram in $(\Ind\A)\mod(\dgcatbig_k)$ whose underlying diagram in $\Pr^L$ is a cofiber sequence along colimit and compact preserving maps (i.e., a recollement sequence).  It suffices to show that applying $- \ohotimes_{\Ind \A} \Ind \D$ sends this to another cofiber sequence in $\Pr^L$: All three terms are compactly generated, and the arrows are compact and colimit preserving, so that the diagram of compact objects will be a cofiber sequence of idempotent complete $\infty$-categories (i.e., a Drinfeld-Verdier sequence).  

It thus suffices to show that $- \ohotimes_{\Ind\A} \Ind \D$ preserves the property of being a colimit diagram in $\Pr^L$, or passing to right adjoints that it preserve the property of being a limit diagram in $\Pr^R$.  Note that the forgetful functors $(\Ind\A)\mod(\dgcatbig_k) \to \Pr^R \to \Cat_\infty$ create limits, so that it is enough to show that $\Ind \D$ is dualizable over $\Ind \A$, since $\A^\otimes$ is rigid, so that tensoring by it preserves limits.
  
The hypothesis that $\A^\otimes$ be \emph{symmetric}-monoidal is un-necessary.  Suppose that $\D$ is a left $\A^\otimes$-module category.  Since $\A$ was assumed rigid, one can show that $\D^\op$ may be equipped with the structure of right $\A^\otimes$-module category, heuristically given by $d \stackrel{\D^\op}\otimes V \eqdef V^\dual \stackrel{\D}\otimes d$.  Then, $\Ind(\D^\op)$ is the $\Ind(\A)$-linear dual of $\Ind(\D)$, i.e., there is a natural equivalence
\[ T \otimes_{\Ind A} \Ind(\D) \stackrel{\sim}\longrightarrow  \Fun^{ex}_{\rmod\A}(\D^\op, T)  =\Fun^L_{\rmod\Ind\A}(\Ind \D^\op, T) \]
 for $T \in \rmod\Ind(\A)$.
\end{proof}

The above proof also showed:
\begin{corollary}\label{cor:mf-basechange} Suppose $(M,f)$ is an LG pair.  Then, the Drinfeld-Verdier quotient sequence \[ \Perf(M_0) \to \DCoh(M_0) \to \DSing(M_0) \] is obtained, by tensoring $\DCoh(M_0) \otimes_{\DCoh(\bB)} -$, from the universal example of $M = \pt$ (so that $M_0 = \bB$).
\end{corollary}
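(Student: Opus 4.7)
The plan is to observe that this corollary is essentially a repackaging of the ``less lazy'' argument sketched inside the proof of \autoref{prop:comparison}, now extracted as a standalone statement. The strategy is: take the universal Drinfeld--Verdier sequence at $M = \pt$, apply the small-tensor stability lemma, and then identify each of the three resulting terms.

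First, I would recall the universal setup. For $M = \pt$ we have $M_0 = \bB$, and \autoref{lem:mf-prelim} identifies the Drinfeld--Verdier sequence
\[ \Perf(\bB) \longrightarrow \DCoh(\bB) \longrightarrow \DSing(\bB) \]
with the sequence of $k\ps{\bt}$-linear dg-categories
\[ \left\{\text{$\bt$-torsion perfect $k\ps{\bt}$-modules}\right\} \longrightarrow \Perf\,k\ps{\bt} \stackrel{-\otimes_{k\ps{\bt}} k\pl{\bt}}\longrightarrow \Perf\,k\pl{\bt}. \]
The symmetric-monoidal structure on $(\DCoh(\bB),\circ)^\otimes \simeq (\Perf\,k\ps{\bt})^\otimes$ given by \autoref{prop:mf-prelim} is rigid (it is the perfect complexes on an affine commutative dg-algebra), so the hypotheses of \autoref{lem:small-tensor}(i) are met.

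Next I would apply \autoref{lem:small-tensor} with $\A = \DCoh(\bB)$ and $\D = \DCoh(M_0)$ (viewed as an $\A$-module via the $k\ps{\bt}$-linear structure of \autoref{constr:Bact}). This yields that
\[ \DCoh(M_0)\otimes_{\DCoh(\bB)} \Perf(\bB) \longrightarrow \DCoh(M_0)\otimes_{\DCoh(\bB)} \DCoh(\bB) \longrightarrow \DCoh(M_0)\otimes_{\DCoh(\bB)} \DSing(\bB) \]
is again a Drinfeld--Verdier sequence in $\dgcatidm_{k\ps{\bt}}$.

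The remaining work is to identify each term with the corresponding term of $\Perf(M_0)\to\DCoh(M_0)\to\DSing(M_0)$. The middle term is tautologically $\DCoh(M_0)$, since $\DCoh(\bB)$ is the symmetric-monoidal unit. For the right-hand term, unwinding the identifications gives $\DCoh(M_0)\otimes_{k\ps{\bt}} k\pl{\bt} = \MF(M,f)$, which by \autoref{prop:comparison} is the idempotent completion of $\DSing(M_0)$; this is exactly the level of agreement required for a Drinfeld--Verdier sequence in $\dgcatidm_{k\ps{\bt}}$. For the left-hand term, I would invoke the Claim inside the proof of \autoref{prop:comparison}: under the identification of $\DSing(\bB)$ with inverting $\bt$, the fiber $\DCoh(M_0)\otimes_{\DCoh(\bB)}\Perf(\bB)$ is the full subcategory of $\DCoh(M_0)$ consisting of objects whose $k\ps{\bt}$-linear endomorphism complex is locally $\bt$-torsion, and the equivalence (i)$\Leftrightarrow$(iii) of that Claim identifies this subcategory with $\Perf(M_0)$.

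The main potential obstacle is the subtle fact that the right-hand comparison holds only after idempotent completion, but this is precisely what the framework $\dgcatidm_{k\ps{\bt}}$ (and the definition of Drinfeld--Verdier sequence used in \autoref{lem:small-tensor}) is set up to accommodate; apart from this bookkeeping point, the argument is a direct assembly of \autoref{lem:mf-prelim}, \autoref{lem:small-tensor}, and the Claim from \autoref{prop:comparison}.
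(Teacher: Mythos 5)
Your proposal is correct and takes essentially the same route as the paper: the paper itself gives no separate proof, merely stating ``the above proof also showed'' after \autoref{prop:comparison}, and what it means is precisely the ``less lazy'' argument you have reassembled---apply \autoref{lem:small-tensor} to the universal sequence at $\bB$, identify the middle term tautologically, the left term via the Claim $(i)\Leftrightarrow(iii)$, and the right term via the main statement of \autoref{prop:comparison} (read in $\dgcatidm$, so idempotent-completed).
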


\begin{na} Combining the previous Proposition with Orlov's Theorem relating \emph{actual} matrix factorizations to $\DSing$ \cites{Orlov-DBrane, Orlov-nonaffine}, one finally sees that the notation $\MF(M,f)$ is justified.  Strictly speaking, Orlov's Theorem is only stated in the case where $f$ is flat (i.e., not zero on any component). However, it is possible to show that the above definition in fact coincides with (any reasonable definition of) matrix factorizations in general: 
  
The assignment $U \mapsto \MF^\kinfty(U, f)$ is an \'etale sheaf of $k\pl{\bt}$-linear $\infty$-categories on $M$ (\autoref{prop:mf-sheaf}). The same should be true in any other reasonable definition of infinite rank matrix factorizations, so that we are reduced to the affine case.  Passing to a connected component of $M$, we may suppose that $f$ is either flat (covered by Orlov's Theorem), or $f = 0$ (covered by direct inspection: both categories simply give $2$-periodic $\O_M$-modules).
\end{na}

\begin{remark} The above comparison to literal matrix factorizations has two defects:
  \begin{itemize}
    \item Requiring a separate argument in the case $f = 0$ is unsatisfying and doesn't actually show the existence of a natural functor.
    \item $\MF(M,f)$ is $2$-periodic with no assumptions on $M$, unlike $\DSing(M_0)$.  It is thus reasonable to ask for an explicit description of it in terms of matrix factorizations, without passing through the intermediary $\DSing(M_0)$ and perhaps without regularity assumptions on $M$.
  \end{itemize}
It seems likely that one can remedy these, but the author has not tried to check the details: There is a natural candidate for the replacement of (the inverse to) the usual $\mathrm{cok}$ functor.  Let $\Cpx^\flat(M_0)$ be the (``homotopy'') $\infty$-category of dg-$\O_M[B_M]$-modules whose underlying graded module are quasi-coherent and $\O_M$-flat.  Explicitly, an object is a triple $(V_\bullet, B_M, d)$ of a (flat, quasi-coherent) graded $\O_M$-module $V_\bullet$, a degree $+1$ square-zero endomorphism $B_M$, and a degree $-1$ square-zero endomorphism $d$ satisfying the commutation relation $(d + B_M)^2 = f$.  To it, one can associate the $2$-periodic curved complex of $\O_M$-modules
\[ \mathrm{cok}^{-1}\left( V_\bullet, B_M, d \right) = (V_\bullet,d) \otimes_{\O_M[B_M]} (\O_M[B]\pl{\bt}, \bt B) \subset \left(V_\bullet\pl{\bt}, d + \bt B_M\right) \in \Cpx^{\ZZ/2,curv}(M)\] 
as well as the $k\ps{\bt}$-linear analog
\[ \left( V_\bullet, B_M, d \right) \longmapsto (V_\bullet,d) \otimes_{\O_M[B_M]} (\O_M[B]\ps{\bt}, \bt B) \subset \left(V_\bullet\ps{\bt}, d + \bt B_M\right) \in \Cpx^{k\ps{\bt},curv}(M)\] 
There is presumably a reasonable notion of equivalence in $\Cpx^{\ZZ/2,curv}(M)$ and $\Cpx^{k\ps{\bt},curv}(M)$ such that these become identified with the quotient functors $\Cpx^\flat(M_0) \to \QCsh(M_0)$ and $\Cpx^\flat(M_0) \to \QCsh(M_0) \otimes_{k\ps{\bt}} k\pl{\bt}$.  (e.g., In the case $M = \pt$, where $M$ is affine and there is no curvature, it suffices to take quasi-isomorphisms of $k\ps{\bt}$-modules as equivalences.)
\end{remark}

\section{Thom-Sebastiani \& duality Theorems for (pre-) matrix factorizations}\label{sec:main-thm}
\subsection{Thom-Sebastiani}
\begin{na} For the duration of this section, suppose $(M,f)$ and $(N,g)$ are two LG pairs.  Set
  \[ (M \times N)_0 = (f \boxplus g)^{-1}(0) \qquad \left[(M \times N)_0\right]_0 = (f \boxminus g)^{-1}(0) \] 
  Define
  \[ \PreMF(M \times N, f, g) \eqdef \DCoh\left(M_0 \times N_0\right) \quad \text{equipped with its $k\ps{\bt_M, \bt_N}$-linear structure}\]  
  \[ \PreMF(M \times N, f \boxplus g, f\boxminus g) \eqdef \DCoh\left(\left[M \times N\right]_0\right) \quad \text{equipped with its $k\ps{\bt_+, \bt_-}$-linear structure}\]
  By \autoref{thm:coh-fmk}, exterior product induces a $k\ps{\bt_M, \bt_N}$-linear equivalence $\PreMF(M \times N, f, g) \isom \PreMF(M,f) \otimes_k \PreMF(N, g)$.  These two constructions are related as follows:
\end{na}

\begin{lemma}\label{lem:plus} Suppose $(M,f)$, $(N,g)$ are two LG pairs and $Z_M \subset M$, $Z_N \subset N$ closed subsets. Then, the $k$-linear equivalence $\boxtimes: \DCoh_{Z_M}(M_0) \otimes \DCoh_{Z_N}(N_0) \to \DCoh_{Z_M \times Z_N}( M_0 \times N_0)$ of \autoref{thm:coh-fmk} naturally lifts to a 
  \[ k\ps{\bt_+, \bt_-} \stackrel{\sim}\longrightarrow k\ps{\bt_M, \bt_N} \qquad \bt_+ \mapsto  \bt_M + \bt_N, \quad \bt_- \mapsto \bt_M - \bt_N \in k\ps{\bt_M, \bt_N} \]
  linear equivalence  \[ \boxtimes\colon \PreMF_{Z_M}(M, f) \otimes \PreMF_{Z_N}(N, g) \longrightarrow \PreMF_{Z_M \times Z_N}\left(M \times N, f \boxplus g, f\boxminus g\right)  \]
\end{lemma}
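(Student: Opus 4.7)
The plan is to give a direct dg-algebra-level proof using the monadic description from \autoref{cor:mf-monadic}. That corollary identifies $\PreMF(M,f) \isom \A_M\mod(\Perf M)$ where $\A_M = \O_M[B_M]/(B_M^2)$ with $\deg B_M = +1$ and $dB_M = f$; taking exterior tensor product (and using \autoref{thm:coh-fmk}) gives $\PreMF(M,f) \otimes \PreMF(N,g) \isom (\A_M \boxtimes \A_N)\mod(\Perf(M \times N))$, where $\A_M \boxtimes \A_N = \O_{M \times N}[B_M, B_N]/(B_M^2, B_N^2)$ with $dB_M = f$ and $dB_N = g$. An entirely analogous monadic argument (applying \autoref{lem:mfplus-monadic} to the finite, finite-Tor-dimension inclusion $M \times N \times_{\AA^2} 0 \hookrightarrow M \times N$ associated to the map $(f\boxplus g, f\boxminus g)$) identifies $\PreMF(M \times N, f\boxplus g, f\boxminus g) \isom \A'\mod(\Perf(M \times N))$, where $\A' = \O_{M \times N}[B_+, B_-]/(B_+^2, B_-^2)$ with $dB_+ = f+g$ and $dB_- = f-g$.

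Define an $\O_{M \times N}$-linear dg-algebra map $\Phi\colon \A' \to \A_M \boxtimes \A_N$ by $B_+ \mapsto B_M + B_N$ and $B_- \mapsto B_M - B_N$. It is well-defined: $d(B_M + B_N) = f + g$ and $d(B_M - B_N) = f - g$ match, while $(B_M + B_N)^2 = (B_M - B_N)^2 = 0$ follow from $B_M^2 = B_N^2 = 0$ together with the odd-commutation $B_M B_N = -B_N B_M$. It is an isomorphism because both sides are free graded-commutative $\O_{M \times N}$-algebras on two odd generators of the same degree, and $\Phi$ carries generators to a basis of generators. Restriction of scalars along $\Phi$ therefore implements an equivalence of module categories, which under the above identifications gives the required equivalence $\boxtimes\colon \PreMF_{Z_M}(M,f) \otimes \PreMF_{Z_N}(N,g) \stackrel{\sim}\longrightarrow \PreMF_{Z_M \times Z_N}(M \times N, f\boxplus g, f\boxminus g)$. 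Support conditions pass through unchanged because $\Phi$ is $\O_{M \times N}$-linear and hence preserves the underlying $\O_{M \times N}$-module structure.

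It remains to match the $k\ps{\bt_+, \bt_-}$-linear structure on the target with the restriction of the $k\ps{\bt_M, \bt_N}$-linear structure on the source along $\bt_+ \mapsto \bt_M + \bt_N$ and $\bt_- \mapsto \bt_M - \bt_N$. By the explicit description in \autoref{na:s1-act}, the $\bt$-operator on a mapping complex is Koszul dual to the $B$-operator coming from the algebra ($\phi \mapsto B \circ \phi + \phi \circ B$). Since $\Phi(B_+) = B_M + B_N$, the $B_+$-operator on any $\A'$-module (viewed as an $\A_M \boxtimes \A_N$-module via $\Phi$) decomposes as the sum of the $B_M$- and $B_N$-operators; through the symmetric monoidal Koszul duality of \autoref{prop:mf-prelim}, additive combinations of commuting $B$-operators correspond to restrictions of $k\ps{\bt}$-structures along the corresponding additive maps of $\bt$-variables, yielding $\bt_+ \leftrightarrow \bt_M + \bt_N$ and similarly $\bt_- \leftrightarrow \bt_M - \bt_N$. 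The main technical step will be this last Koszul-duality translation, which is cleanly verified by extending the explicit Koszul-Tate model of \autoref{remark:explicit-homotopy} to two commuting $\bB$-actions and tracking the induced change of variables on the free Koszul-Tate generators.
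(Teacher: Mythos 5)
Your proposal is essentially the same argument as the paper's, expressed in dg-algebra rather than geometric/equivariant language. The paper works with the derived group $\bB^2$ and the equivariant isomorphism $\iota \colon M_0 \times N_0 \xrightarrow{\sim} [(M \times N)_0]_0$; on structure sheaves over $M \times N$, $\iota^\#$ is exactly your dg-algebra isomorphism $\Phi \colon \A' \to \A_M \boxtimes \A_N$, and both proofs then defer the $k\ps{\bt}$-compatibility to a Koszul--Tate computation. So the two routes carry the same information; yours instantiates the paper's universal map $\phi \colon \O_{\bB}^{\otimes 2} \to \O_{\bB}^{\otimes 2}$ directly in $\O_{M \times N}$-algebras via the monadic description, which is a bit more concrete but also entangles the ``universal'' and ``geometric'' parts of the argument that the paper keeps separate.

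Two small cautions. First, the invertibility of $\Phi$ uses $2 \in k^\times$; this is harmless here since $k$ has characteristic zero, but it is worth saying. Second, the heuristic ``$B_+ = B_M + B_N$ dualizes to $\bt_+ = \bt_M + \bt_N$'' should not be treated as a formality: dual bases transform by the transpose-inverse, so the naive linear-algebra duality would give $\bt_+ = \tfrac12(\bt_M + \bt_N)$, not $\bt_M + \bt_N$. The Koszul--Tate change of variables $\phi'(u_\pm) = u_M \pm u_N$ that you (correctly) propose to carry out is therefore not just a restatement of $\Phi(B_\pm) = B_M \pm B_N$ --- it is the place where the precise normalization of the $\bt$-action (as an element of $\pi_{-2}\End(\mathbb{1})$, via $d/du$ on the divided-power resolution) is actually pinned down, and it is the genuine content of the lemma. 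Your outline correctly identifies this as ``the main technical step,'' but since the entire content beyond the $k$-linear statement lives there, it would be better to spell it out rather than defer it.
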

\begin{proof}  Set $\left[ (M \times N)_0 \right]_0 = (f \boxminus g)^{-1}\left( (M \times N)_0\right)$.  The equivalence
  \[ M_0 \times N_0 \stackrel{\sim}\longrightarrow \left[ (M \times N)_0 \right]_0 \] \[ \left(m,n,[h_f\colon  f(m) \to 0], [h_g\colon  g(n) \to 0]\right) \longmapsto \left(m,n, [h_f+h_g\colon f(m)+g(n) \to 0], [h_f-h_g\colon  f(m)-g(n) \to 0]\right) \]  is equivariant with respect to the following group automorphism of $\bB^2$:
  \[ \left(\Delta, \ol{\Delta}\right)\colon \bB^2 \longrightarrow \bB^2 \qquad \left(\left[h_1\colon 0 \to 0\right], \left[h_2\colon  0 \to 0\right]\right) \longmapsto \left( \left[h_1+h_2\colon  0 \to 0\right], \left[h_1-h_2\colon  0 \to 0\right]\right) \]

  The $k\ps{\bt_M, \bt_N}$-action on $\DCoh(M_0) \otimes \DCoh(N_0) \isom \DCoh(M_0 \times N_0)$  is obtained from the above action of $\bB^2$ on $M_0 \times N_0$.  The $k\ps{\bt_+, \bt_-}$-action on $\DCoh( (M \times N)_0)$ is obtained from the above action of $\bB^2$ on $\left[(M \times N)_0\right]_0$.

  It thus suffices to verify that pushforward along the group automorphism $(\Delta, \ol{\Delta})_*$ induces the indicated automorphism on the endomorphisms of the symmetric monoidal unit:  In terms of the explicit model of \autoref{prop:mf-prelim}, $(\Delta, \ol{\Delta})$ corresponds to the algebra homomorphism
  \[ \phi\colon \O_\bB^{\otimes 2} \isom k[x_+, x_{-}][\epsilon_i^+, \epsilon_i^{-}] \longrightarrow \O_\bB^{\otimes 2} \isom k[x_M, x_N][\epsilon_i^M, \epsilon_i^N] \]
  \[ \phi(x_+) = x_M + x_N \qquad \phi(\epsilon^+_i ) = \epsilon^M_i + \epsilon^N_i \]
  \[ \phi(x_{-}) = x_M - x_N\qquad  \phi(\epsilon^{-}_i) = \epsilon^M_i - \epsilon^N_i \]
  and is strictly compatible with the Hopf-algebra structure maps. Consider the Koszul-Tate resolutions that give the identifications with $k\ps{\bt_M, \bt_N}$ and $k\ps{\bt_+,\bt_-}$:
  \[ k_{+,-}  \sim k[x_+,x_-][\epsilon_i^+,\epsilon_i^{-}][u_+^m/m!, u_{-}^m/m!] \qquad \bt_+ = \frac{\partial}{\partial u_+},  \bt_- = \frac{\partial}{\partial u_-}\]
  \[ k_{M,N}  \sim k[x_M,x_N][\epsilon_i^M,\epsilon_i^{N}][u_M^m/m!, u_{N}^m/m!]  \qquad \bt_M = \frac{\partial}{\partial u_M},  \bt_N = \frac{\partial}{\partial u_N}\]
There is an isomorphism $\phi'\colon k_{+,-} \stackrel{\sim}\to \phi_* k_{M,N}$ of $\O_{\bB}^{\otimes 2}$-modules: Explicitly it is the algebra map given by $\phi$ on the $x$ and $\epsilon$ variables and
  \[ \phi'(u_+) = u_M + u_N \qquad \phi'(u_-) = u_M - u_N \]  It is now a simple check that this identifies the actions $\bt_+ = \bt_M + \bt_N$ and $\bt_- = \bt_M - \bt_N$.
\end{proof}

The first Main Theorem is the following result reminiscent of a Thom-Sebastiani theorem:
\begin{theorem}[Thom-Sebastiani for Matrix Factorizations]\label{thm:TS} Suppose $(M,f)$, $(N,g)$ are LG pairs, $(M \times N, f \boxplus g)$ their Thom-Sebastiani sum.  Suppose $Z_M \subset f^{-1}(0)$ and $Z_N \subset g^{-1}(0)$ are closed subsets. (The special case $Z_M = f^{-1}(0)$, $Z_N = g^{-1}(0)$ is the main one of interest.  Note that the support conditions will still matter on the product since $(f \boxplus g)^{-1}(0)$ will generally properly contain $Z_M \times Z_N$!)  Then,
  \begin{enumerate}
    \item The external tensor product determines an \emph{equivalence} of $k\ps{\bt}$-linear $\infty$-categories
      \[ \ell_*(- \boxtimes -) \colon \PreMF_{Z_M}(M,f) \otimes_{k\ps{\bt}} \PreMF_{Z_N}(N,g) \stackrel{\sim}{\longrightarrow} \PreMF_{Z_M \times Z_N}(M \times N, f \boxplus g). \]  
      Passing to $\Ind$-completions, it induces an \emph{equivalence} of cocomplete $k\ps{\bt}$-linear $\infty$-categories
      \[  \ell_*(- \boxtimes -) \colon \PreMF^\kinfty_{Z_M}(M,f) \ohotimes_{k\ps{\bt}} \PreMF^\kinfty_{Z_N}(N,g) \stackrel{\boxtimes}{\longrightarrow} \PreMF^\kinfty_{Z_M \times Z_N}(M \times N, f \boxplus g). \] 
    \item The external tensor product determines an \emph{equivalence} of $k\pl{\bt}$-linear $\infty$-categories
      \[  \ell_*(- \boxtimes -) \colon \MF_{Z_M}(M,f) \otimes_{k\pl{\bt}} \MF_{Z_N}(N,g) \stackrel{\sim}{\longrightarrow} \MF_{Z_M \times Z_N}(X \times Y, f \boxplus g). \]  
      Passing to $\Ind$-completions, it induces an \emph{equivalence} of cocomplete $k\pl{\bt}$-linear $\infty$-categories
      \[  \ell_*(- \boxtimes -) \colon \MF^\kinfty_{Z_M}(X,f) \ohotimes_{k\pl{\bt}} \MF^\kinfty_{Z_N}(N,g) \stackrel{\sim}{\longrightarrow} \MF^\kinfty_{Z_M \times Z_N}(M \times N, f \boxplus g). \] 
    \item The functors of (ii) induce a $k\pl{\bt}$-linear \emph{equivalence}
      \[ \bigoplus_{\lambda \in -\cval(f) \cap \cval(g)} \MF(M,f+\lambda) \otimes_{k\pl{\bt}} \MF(N, g-\lambda) \stackrel{\sim}{\longrightarrow} \MF(M \times N, f \boxplus g). \]
      Passing to $\Ind$-completions, it induces an equivalence of cocomplete $k\pl{\bt}$-linear $\infty$-categories
      \[ \bigoplus_{\lambda \in -\cval(f) \cap \cval(g)} \MF^\kinfty(M,f+\lambda) \otimes_{k\pl{\bt}} \MF^\kinfty(N, g-\lambda) \stackrel{\sim}{\longrightarrow} \MF^\kinfty(M \times N, f \boxplus g). \]
  \end{enumerate}
\end{theorem}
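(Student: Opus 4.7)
The plan is to obtain (i) from \autoref{lem:plus} by a single base change, then derive (ii) and (iii) as formal consequences. By \autoref{lem:plus}, the exterior product gives a $k\ps{\bt_+, \bt_-}$-linear equivalence
$$\PreMF_{Z_M}(M, f) \otimes \PreMF_{Z_N}(N, g) \stackrel{\sim}\longrightarrow \DCoh_{Z_M \times Z_N}(M_0 \times N_0),$$
where the target carries the two commuting $k\ps{\bt}$-actions associated to $f \boxplus g$ and $f \boxminus g$ on $M \times N$, packaged via $\bt_\pm = \bt_M \pm \bt_N$. Forming the relative tensor over $k\ps{\bt}$ on the left identifies $\bt_M = \bt_N$, which under this isomorphism is exactly the quotient $\bt_- = 0$. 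Thus (i) reduces to identifying the further base change along $k\ps{\bt_+, \bt_-} \to k\ps{\bt_+}$ with the pushforward $\ell_*\colon \DCoh_{Z_M \times Z_N}(M_0 \times N_0) \to \DCoh_{Z_M \times Z_N}((M \times N)_0) = \PreMF_{Z_M \times Z_N}(M \times N, f \boxplus g)$.

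This is a variant of \autoref{cor:premf-supt}, with the smooth ambient space replaced by the coherent derived scheme $W = (M \times N)_0$, viewed as an ``LG-like'' pair with potential $h = (f \boxminus g)|_W$ whose derived zero fiber is the closed immersion $\ell\colon M_0 \times N_0 \hookrightarrow W$ (finite and of finite Tor-dimension). Although $W$ is singular, the proof of \autoref{cor:premf-supt} carries over intact: smoothness was used only in phrasing things via $\PreMF$, while the core steps — (a) rewriting the base change as the geometric realization of the simplicial diagram built from $\ell^* \ell_*$ coming from \autoref{constr:Bact}, (b) fully-faithfulness via the cofiber sequence of \autoref{ex:triangle} and \autoref{lem:s1-bdd}, and (c) density of $\ell_*$ on $\DCoh_{Z_M \times Z_N}(W)$ via the Postnikov/nilthickening filtration of \autoref{lem:coh-red} (applicable because $Z_M \times Z_N \subset M_0 \times N_0$ is contained in the zero fiber of $h$) — require only that $W$ be coherent and $\ell$ be finite of finite Tor-dimension. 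Verifying this mildly non-smooth generalization is the main technical step; the rest is formal. The $\Ind$-completed statement of (i) then follows using that $\ohotimes$ of compactly generated presentables along compact-preserving functors is computed by $\Ind$-extension of $\otimes$ on compacts (\autoref{lem:small-tensor}).

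Part (ii) is immediate from (i) by further tensoring over $k\ps{\bt}$ with $k\pl{\bt}$, using $\MF = \PreMF \otimes_{k\ps{\bt}} k\pl{\bt}$ and associativity of relative tensor products. For (iii), the Zariski sheaf property \autoref{prop:mf-sheaf} together with the support of $\MF^\kinfty$ on the critical locus yields a direct sum decomposition of $\MF^\kinfty(M \times N, f \boxplus g)$ indexed by critical-value pairs $(\mu_f, \mu_g) \in \cval f \times \cval g$ with $\mu_f + \mu_g = 0$: one covers $M$ (resp.\ $N$) by disjoint opens each containing critical points with a single critical value, and takes the product cover of $M \times N$. Writing $\lambda = -\mu_f = \mu_g \in -\cval(f) \cap \cval(g)$, each summand is computed by (ii) applied to the shifted pairs $(M, f + \lambda)$ and $(N, g - \lambda)$ with support at the relevant local critical loci (each now at critical value $0$). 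The plain (non-$\Ind$) statement follows by passing to compact objects, which commutes with finite direct sums.
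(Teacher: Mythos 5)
Your proof of parts (i) and (ii) takes essentially the same route as the paper's: apply \autoref{lem:plus} to rotate coordinates to $\bt_\pm$, observe that tensoring over $k\ps{\bt}$ kills $\bt_-$, and reduce to a non-smooth variant of \autoref{cor:premf-supt} applied to the ambient space $(M\times N)_0$ with potential $f\boxminus g$ — with the correct observation that the proof of that corollary uses only coherence of the ambient stack and finiteness of $\ell$, not smoothness. You were also right to flag this non-smooth generalization explicitly; the paper applies \autoref{cor:premf-supt} without comment, so making the reduction precise as you do is a genuine (if minor) service.

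For part (iii), however, your implementation has a gap. You propose to ``cover $M$ (resp.\ $N$) by disjoint opens each containing critical points with a single critical value.'' This is not possible in general: if $M$ is connected, any partition of $M$ into disjoint nonempty opens is trivial, yet a single connected $M$ can certainly have critical points at several distinct critical values (e.g.\ $M=\AA^1$, $f=x^2(x-1)^2$). Moreover, even if $M$ and $N$ could each be so partitioned, a product of disjoint covers of $M$ and $N$ is a disjoint cover of $M\times N$ indexed by \emph{all} pairs $(\mu_f,\mu_g)$, not just those with $\mu_f+\mu_g=0$, so the restriction to the zero fiber is happening in the wrong place. The correct move — which is what the paper does — is to apply \autoref{prop:orlov-cpltn} once on $M\times N$ and then decompose the single critical locus $Z = \crit(f\boxplus g)\cap(M\times N)_0$: since any $(p,q)\in Z$ determines $\lambda = g(q) = -f(p)$, one has $Z=\bigsqcup_\lambda Z_\lambda$ as a disjoint union of \emph{closed} subsets of $(M\times N)_0$, and $\MF^\kinfty_Z = \bigoplus_\lambda \MF^\kinfty_{Z_\lambda}$ because the $Z_\lambda$ are pairwise disjoint and closed. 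This is a decomposition of the support, not a cover of the ambient spaces, and it avoids both difficulties above. Your underlying intuition (that $\MF$ is supported on the critical locus and the latter decomposes) is right; the execution via Zariski sheafification over disjoint opens of $M$ and $N$ is not.
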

\begin{proof} Certainly the $\Ind$-complete versions follow from the small versions, so we will show those.
  \begin{enumerate}
    \item 
      Let $i\colon M_0 \to M$, $j\colon N_0 \to N$, $k\colon (M \times N)_0 \to M \times N$, and $\ell\colon M_0 \times N_0 \to (M \times N)_0$ be the various inclusions.   The functor in question will be a refinement of the $k$-linear functor \[ \DCoh(M_0) \otimes \DCoh(N_0) \longrightarrow \DCoh_{M_0 \times N_0}( (M \times N)_0) \qquad  \F \otimes \G \mapsto \ell_*\left( \F \boxtimes \G \right) \]  

Before writing down a $k\ps{\bt}$-linear functor, we show how to conclude from this: Once a $k\ps{\bt}$-linear functor is written down, it suffices to check that the underlying $k$-linear functor is an equivalence.  Write
	  \begin{align*} \PreMF_{Z_M}(M,f) \otimes_{k\ps{\bt}} \PreMF_{Z_N}(N,g)  &\stackrel{\boxtimes}\isom \PreMF_{Z_M \times Z_N}\left(M \times N, f, g\right) \otimes_{k\ps{\bt_M,\bt_N}} k\ps{\bt}  \\ 
	    \intertext{Applying \autoref{lem:plus} identity this with}
  &\isom \left[\PreMF_{Z_M \times Z_N}\left(M \times N, f \boxplus g, f \boxminus g\right)\right] \otimes_{k\ps{\bt_+, \bt_{-}}} k\ps{\bt_+} \\
  &\isom \PreMF_{Z_M \times Z_N}\left( (M \times N)_0, f\boxminus g\right) \otimes_{k\ps{\bt_{-}}} k \\
  \intertext{Finally, applying \autoref{cor:premf-supt} we see that $\ell_*$ induces an equivalence}
  &\stackrel{\ell_*}\isom \DCoh_{Z_M \times Z_N}\left( (M \times N)_0 \right)
\end{align*}

We now complete the proof by constructing the desired $(\DCoh(\bB),\circ)$-linear functor \[ \DCoh(M_0) \otimes_{\DCoh(\bB)} \DCoh(N_0) \to \DCoh( (M \times N)_0 ) \] using a suitable augmented simplicial diagram of derived schemes with $\bB$-action and $\bB$-equivariant maps
\[ X_\bullet = \left\{ M_0 \times \bB^{\bullet-1} \times N_0 \right\} \longrightarrow (M \times N)_0 = X_{-1}\] constructed as follows:
\begin{itemize} \item Informally (i.e., at the level of $\pi_0$ of functor of points)
 \[ X_{-1}(R) = \left( m \in M(R), n \in N(R), [h_{f+g}\colon f(m) + g(n) \to 0] \right) \] and for $\ell \geq 0$
 \[ X_\ell(R) = \left( m \in M(R), n \in N(R), [h_f\colon  f(m) \to 0], [h_g\colon  g(m) \to 0], [h_1\colon 0 \to 0], \cdots, [h_\ell\colon 0 \to 0] \right) \]  with the simplicial degeneracies given by the inserting the identity loop $[\id\colon 0 \to 0]$, and the simplicial face maps given by ``pointwise addition'' of the appropriate loops.  The augmentation is given by adding together all the loops.  It is clear that this is a simplicial diagram, and that it is $\bB$-equivariant.
 \item The same formulas can be made precise, either at the level of actual functors of points on $\DRng$, or using an explicit cosimplicial diagram of sheaves of dg-algebras on $M \times N$ (representing the pushforwards of the structure sheaves to $M \times N$).  For the second approach, one can use the explicit models
   \[ X_{-1}(R) = \Spec_{M \times N} \O_{M \times N}\left[\begin{gathered} x_{+} \end{gathered}\right]\left[\begin{gathered}\epsilon^+_1, \epsilon^+_2\end{gathered}\right]/\left(\begin{gathered} d\epsilon^+_1 = x_+ - (f \otimes 1 + 1 \otimes f), d\epsilon^+_2 = x_+ \end{gathered}\right) \]
     \[ X_\ell(R) = \Spec_{M \times N} \O_{M \times N}\left[\begin{gathered} x_M\\x_N\\x_1,\ldots,x_\ell\end{gathered}\right]\left[\begin{gathered}\epsilon^M_1, \epsilon^M_2\\ \epsilon^N_1, \epsilon^N_2 \\ \epsilon^i_1, \epsilon^i_2, i=1,\ldots,\ell\end{gathered}\right]/\left(\begin{gathered} d\epsilon^M_1 = x_M - (f \otimes 1), d\epsilon^M_2 = x_M \\ d\epsilon^N_1 = x_N - (1 \otimes f), d\epsilon^N_2 = x_N \\ d\epsilon^i_1 = d\epsilon^i_2 = x\end{gathered}\right) \]
       where the simplicial structure maps use the co-identity and co-multiplication/co-action (c.f., proof of \autoref{prop:mf-prelim}) and the augmentation uses those together with $x_+ \mapsto x_M + x_N$ and $\epsilon_i^+ \mapsto \epsilon_i^M + \epsilon_i^N$.
\end{itemize}
This completes the construction as follows:
\begin{itemize}
  \item Applying ($\DCoh(-)$, $f_*$) to the augmented simplicial diagram $X_\bullet$ gives an augmented simplicial diagram $\DCoh(X_\bullet)$ in $\dgcat_k$.  Applying \autoref{prop:fmk-coh-surj}, we identify the simplicial diagram with the simplicial bar construction computing $\DCoh(M_0) \otimes_{\DCoh(\bB)} \DCoh(N_0)$.  Thus, the augmented diagram precisely encodes a functor
    \[ \DCoh(M_0) \otimes_{\DCoh(\bB)} \DCoh(N_0) \longrightarrow \DCoh( (M \times N)_0 ) \] which is in fact an enrichment of $\ell(- \boxtimes -)$ since the map $X_0 \to X_{-1}$ is precisely $\ell$.
  \item Since $X_\bullet$ was a $\bB$-equivariant diagram, the previous functor is naturally $\DCoh(\bB)$-linear.
\end{itemize}
    \item Follows from (i) by the definition $\MF(M,f) = \PreMF(M,f) \otimes_{k\ps{\bt}} k\pl{\bt}$.
    \item Let $Z = \crit(f \boxplus g) \cap (M \times N)_0$ be the components of the critical locus of $f \boxplus g$ lying over zero.  There is a disjoint union decomposition
      \[ Z = \bigsqcup_{\lambda \in -\cval(f) \cap \cval(g)} Z_\lambda \qquad \text{where} \qquad Z_\lambda = Z \cap \left(f^{-1}(-\lambda) \times g^{-1}(\lambda)\right). \]
      By \autoref{prop:orlov-cpltn},  the inclusion induces an equivalence $\MF_Z(M \times N, f \boxplus g) = \MF(M \times N, f \boxplus g)$ and the above disjoint union decomposition gives
      \[ \MF_Z(M \times M, f \boxplus g) = \bigoplus_{\lambda \in -\cval(f) \cap \cval(g)} \MF_{Z_\lambda}(M \times M, f \boxplus g)  \]
      Combining with (ii) completes the proof.\qedhere
  \end{enumerate}
\end{proof}

\begin{remark} Item (iii) above admits the following re-interpretation.  Define $\MF^\tot(M, f)$ to be the sheaf of $\infty$-categories on $\AA^1$, supported on $\cval(f)$, given heuristically by $\lambda \mapsto \MF(N, f-\lambda)$.  Then,  \[ \MF^\tot(M \times N, f \boxplus g) = \MF^\tot(M, f) \ast \MF^\tot(N, g) \] where $\ast$ denotes convolution.
\end{remark}

\begin{remark} In case $\cval g = \{0\}$, item (iii) has an especially simple formulation: $\MF(M,f) \otimes_{k\pl{\bt}} \MF(N,g) \isom \MF(M \times N, f \boxplus g)$.  Taking $N = \AA^2$, $g = x^2+y^2$, one can show that there is a $k\pl{\bt}$-linear equivalence $\MF(N, g) \isom \Perf k\pl{\bt}$.  So, (iii) recovers \emph{Kn\"orrer Periodicity} as a special case:
  \[ \MF\left(M \times \AA^2, f \boxplus \{ x^2+y^2 \}\right) = \MF(M, f) \otimes_{k\pl{\bt}} \MF(\AA^2, \{x^2+y^2\}) \isom \MF(M, f). \]
\end{remark}

The following is of course well-known, e.g., from its role in \cite{Orlov-completion}. We sketch a proof for the reader's convenience:
\begin{prop}\label{prop:orlov-cpltn} (Recall that we work in the world of idempotent complete categories, and so have implicitly passed to idempotent completions!) Suppose $(M,f)$ is an LG pair, and $Z \subset f^{-1}(0)$ a closed set containing all components of the critical locus lying over $0$.  Then, the $k\ps{\bt}$-linear inclusion $\PreMF_Z(M,f) \hookrightarrow \PreMF(M,f)$ induces a $k\pl{\bt}$-linear \emph{equivalence} $\MF_Z(M,f) \stackrel{\sim}{\longrightarrow} \MF(M,f)$.
\end{prop}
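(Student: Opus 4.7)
The plan is to reduce the claim to the vanishing $\MF(U, f|_U) = 0$, where $U = M \setminus Z$, via a standard $k\ps{\bt}$-linear localization sequence on coherent complexes.

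First, I would establish the $k\ps{\bt}$-linear (idempotent-completed) Drinfeld--Verdier sequence
\[ \PreMF_Z(M,f) \longrightarrow \PreMF(M,f) \stackrel{j^*}\longrightarrow \PreMF(U, f|_U). \]
On underlying $k$-linear categories this is the standard localization $\DCoh_Z(M_0) \to \DCoh(M_0) \to \DCoh(U_0)$ associated to the open immersion $U_0 = U \cap M_0 \hookrightarrow M_0$ (as always, implicitly idempotent-completed). Its $k\ps{\bt}$-linearity is inherited from \autoref{constr:Bact}: the action of $\bB$ on $M_0$ restricts to its action on $U_0$, and $j^*$ commutes with the Segal/$+$-action data since those are given by base change along $\AA^1$.

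Next, applying $-\otimes_{k\ps{\bt}} k\pl{\bt}$, which preserves Drinfeld--Verdier sequences by \autoref{lem:small-tensor}(ii), yields an idempotent-completed cofiber sequence
\[ \MF_Z(M,f) \longrightarrow \MF(M,f) \longrightarrow \MF(U, f|_U). \]
Thus it suffices to show that the third term vanishes. By the hypothesis on $Z$, the open $U$ contains no component of $\crit(f)$ lying over $0$, so $\crit(f|_U) \cap (f|_U)^{-1}(0) = \emptyset$, and $f|_U$ is therefore smooth in a neighborhood of $U_0$. Since $\MF^\kinfty$ is an \'etale sheaf (\autoref{prop:mf-sheaf}), we may shrink $U$ to assume $f|_U$ is itself smooth, so $U_0$ is smooth and in particular regular. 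Hence $\DCoh(U_0) = \Perf(U_0)$, and the Claim in the proof of \autoref{prop:comparison} shows that every perfect object of $\DCoh(U_0)$ maps to $0$ in $\MF(U, f|_U)$, forcing $\MF(U, f|_U) = 0$.

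The main obstacle is the first step: verifying that one really has a Drinfeld--Verdier localization $\DCoh_Z(M_0) \to \DCoh(M_0) \to \DCoh(U_0)$ in the derived/stacky setting assumed throughout the paper, and that it is $k\ps{\bt}$-linear in a usable sense. At the $\Ind$-level this is the recollement $\QCsh_Z(M_0) \to \QCsh(M_0) \to \QCsh(U_0)$ arising from the fully faithful $j_*\colon \QCsh(U_0) \hookrightarrow \QCsh(M_0)$; restricting to compact objects and idempotent-completing gives the Drinfeld--Verdier sequence. The $k\ps{\bt}$-linearity and the compatibility with tensoring by $k\pl{\bt}$ then proceed formally, as already used elsewhere in this section.
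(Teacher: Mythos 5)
Your proposal is correct and its core is the same as the paper's: localize along the open complement, then observe that the third term vanishes after inverting $\bt$ because the relevant zero fiber away from the critical locus is regular. The paper reaches this slightly faster, by first noting that a $k\pl{\bt}$-linear functor is an equivalence iff its underlying $k$-linear functor is, and then working purely $k$-linearly with a $3\times 3$ diagram of Drinfeld--Verdier sequences relating $\Perf$, $\DCoh$, and $\DSing$ over $X=f^{-1}(0)$, with $\DSing(U)=0$ since $U=X\setminus Z$ is regular. You instead build the $k\ps{\bt}$-linear recollement on $\PreMF$ directly and tensor with $k\pl{\bt}$ (using \autoref{lem:small-tensor}), which requires checking $k\ps{\bt}$-linearity of the localization but then gets the $k\pl{\bt}$-linear statement for free. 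Both are legitimate; the paper's route minimizes the $k\ps{\bt}$-linear bookkeeping.

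One small flaw: the appeal to the sheaf property to ``shrink $U$ to assume $f|_U$ is smooth'' is both unnecessary and the wrong tool. Since $Z$ contains $\crit(f)\cap f^{-1}(0)$, you already have $\crit(f)\cap U_0=\emptyset$, so $f|_U$ is a submersion along $U_0$; hence the derived fiber $U_0$ coincides with the classical one and is smooth, with no shrinking needed. Moreover, the sheaf property would not let you conclude vanishing on $U$ from vanishing on a smaller open (descent goes the other direction). If you did want to replace $U$ by a smaller open $V$ with $U_0\subset V\subset U$, the correct justification is simply that $\PreMF(U,f|_U)=\PreMF(V,f|_V)$ because the derived zero fiber and the $\bB$-action on it are unchanged---not the sheaf property. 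Deleting the shrinking sentence and going straight from $\crit(f)\cap U_0=\emptyset$ to $U_0$ regular to $\DCoh(U_0)=\Perf(U_0)$ to $\MF(U,f|_U)=0$ makes the argument airtight.
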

\begin{proof}[Sketch]  
  Set $X = f^{-1}(0)$, $U = X \setminus Z$.  A $k\pl{\bt}$-linear functor is an equivalence iff it is an equivalence when regarded as a $k$-linear functor, so it is enough to prove this.  Since $U$ is regular, $\Perf(U) = \DCoh(U)$ and this follows from the following diagram of $k$-linear idempotent complete $\infty$-categories
  \[ \xymatrix{
  \Perf_Z(X) \ar@{^{(}->}[d] \ar@{^{(}->}[r] & \DCoh_Z(X) \ar@{^{(}->}[d] \ar@{->>}[r] & \DSing_Z(X) \isom \MF_Z(X, f) \ar[d] \\
  \Perf(X) \ar@{->>}[d] \ar@{^{(}->}[r] & \DCoh(X) \ar@{->>}[d] \ar@{->>}[r] & \DSing(X) \isom \MF(X,f) \ar[d] \\
  \Perf(U)  \ar[r]^{\sim} & \DCoh(U) \ar[r] & \DSing(U) = 0 }
  \] where each row and column is a Drinfeld-Verdier quotient.
\end{proof}

\subsection{Duality and functor categories}
Recall the following standard Lemma
\begin{lemma}\label{lem:dual-mod} \mbox{}
  \begin{itemize}
    \item Suppose $\A^\otimes \in \CAlg(\dgcatidm_k)$ is a rigid symmetric-monoidal dg-category, and $\C \in \A\mod(\dgcatidm_k)$ is an $\A$-module category.  Then, $\Ind \C \in \Ind\A\mod(\dgcatbig_k)$ is dualizable, with dual $\Ind \C^\op$.
    \item Suppose $R$ is an $E_\infty$-algebra, and $\C \in \dgcatidm_R$.  Then, $\Ind \C = \dgmod_R(\C^\op)$ is dualizable, with dual $\Ind \C^\op = \dgmod_R(\C)$.
  \end{itemize}
\end{lemma}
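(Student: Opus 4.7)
The plan is to deduce (ii) from (i) by taking $\A^\otimes = \Perf(R)^\otimes$, under the identification $\dgcatidm_R = \Perf(R)\mod(\dgcatidm_k)$, so I will focus on (i). The argument is essentially the one already sketched at the end of the proof of \autoref{lem:small-tensor}(i); I propose to spell it out.

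First, I would use rigidity of $\A^\otimes$ to equip $\C^\op$ with an $\A$-module structure. Rigidity gives a symmetric-monoidal self-equivalence $\A^\op \simeq \A$ implementing dualization $V \mapsto V^\vee$; combined with the canonical $\A^\op$-action on $\C^\op$ formally induced from the $\A$-action on $\C$, this yields an $\A$-action on $\C^\op$, heuristically $V \otimes_{\C^\op} c = V^\vee \otimes_\C c$. Passing to $\Ind$-completions, $\Ind\C^\op$ becomes an $\Ind\A$-module category.

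Next, I would construct the evaluation pairing as the $\Ind$-extension of the $\A$-enriched Hom functor $\RHom^{\otimes\A}_\C(-,-)\colon \C^\op \otimes_\A \C \to \A$, giving a colimit-preserving $\Ind\A$-linear functor $\ev\colon \Ind\C^\op \ohotimes_{\Ind\A} \Ind\C \to \Ind\A$. That this exhibits $\Ind\C^\op$ as the $\Ind\A$-linear dual of $\Ind\C$ amounts to showing that for every $T \in \rmod\Ind\A$ the induced map
\[ T \ohotimes_{\Ind\A} \Ind\C \stackrel{\sim}\longrightarrow \Fun^L_{\Ind\A}(\Ind\C^\op, T) \]
is an equivalence. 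Both sides commute with colimits in $T$ and tensor products over $\Ind\A$, so one reduces to the case $T = \Ind\A$, where the map identifies (on compact generators) with the enriched Yoneda embedding $c \mapsto \RHom^{\otimes\A}_\C(-, c)$ exhibiting $\Ind\C$ as $\A$-linear exact functors $\C^\op \to \A$; this is standard enriched Morita theory.

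The main obstacle is the first step: promoting the heuristic ``$\C^\op$ is an $\A$-module via $V^\vee$'' to an honest morphism of $\infty$-operads requires producing coherent structure maps from the rigidity data, which is where the full strength of the symmetric-monoidal rigidity hypothesis on $\A^\otimes$ is used. Everything after this step is formal enriched Morita theory and Yoneda.
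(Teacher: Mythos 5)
Your proposal takes the same route as the paper: the paper's proof of this lemma is literally the citation ``C.f.\ \autoref{lem:small-tensor},'' and the final paragraph of that proof is exactly your argument---use rigidity of $\A^\otimes$ to equip $\C^\op$ with a (right) $\A$-module structure via $d \otimes V \eqdef V^\vee \otimes d$, then assert the natural equivalence $T \otimes_{\Ind\A} \Ind\C \stackrel{\sim}\longrightarrow \Fun^{ex}_{\rmod\A}(\C^\op, T) = \Fun^L_{\rmod\Ind\A}(\Ind\C^\op, T)$. One caveat on the justification you supply beyond what the paper writes: the step ``both sides commute with colimits in $T$, so reduce to $T = \Ind\A$'' is circular as stated, since $\Fun^L_{\Ind\A}(\Ind\C^\op,-)$ preserves colimits only once $\Ind\C^\op$ is already known to be dualizable and $\rmod\Ind\A$ is not generated under colimits by $\Ind\A$ alone; the honest argument (which the paper also leaves implicit) exhibits the coevaluation---the object of $\Ind\C\ohotimes_{\Ind\A}\Ind\C^\op$ corresponding to $\id_{\Ind\C}$ under your evaluation pairing---and verifies the triangle identities, from which the displayed equivalence follows formally for all $T$.
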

\begin{proof} C.f., \autoref{lem:small-tensor}.
\end{proof}

Our second Main Theorem analyzes the interplay of passage to matrix factorizations with the usual (coherent) Grothendieck duality:
\begin{theorem}[Duality for Matrix Factorizations]\label{thm:dual} Suppose $(M,f)$ is an LG pair, $Z \subset f^{-1}(M)$ a closed subset.  Then,
  \begin{enumerate}
    \item The usual Grothendieck duality lifts to a $k\ps{\bt}$-linear anti-equivalence $\DD(-)\colon \PreMF_Z(M,f)^\op \isom \PreMF_Z(M, -f)$.  So, $\PreMF^\kinfty_Z(M,f)$ is dualizable as cocomplete $k\ps{\bt}$-linear category and (the above lift of) Grothendieck duality induces a $k\ps{\bt}$-linear equivalence
      \[  \PreMF^\kinfty_Z(M,f)^\dual \stackrel{\sim}{\longrightarrow} \PreMF_Z^\kinfty(M, -f). \]
    \item The usual Grothendieck duality lifts to a $k\pl{\bt}$-linear anti-equivalence $\DD(-)\colon \MF_Z(M,f)^\op \isom \MF_Z(M, -f)$.  So, $\MF^\kinfty_Z(M,f)$ is dualizable as cocomplete $k\pl{\bt}$-linear category, and the usual Grothendieck duality functor induces an equivalence
      \[ \MF^\kinfty_Z(M,f)^\dual \stackrel{\sim}{\longrightarrow} \MF^\kinfty_Z(M, -f). \]
  \end{enumerate}
\end{theorem}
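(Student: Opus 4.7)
The plan is to first produce the $k\ps{\bt}$-linear anti-equivalence $\DD\colon \PreMF_Z(M,f)^\op \stackrel{\sim}{\to} \PreMF_Z(M,-f)$ refining Grothendieck duality; once this is in hand, everything else is formal. Since $\PreMF_Z(M,f)$ is a $k\ps{\bt}$-linear $\infty$-category in $\dgcatidm$ and $\Perf(k\ps{\bt})$ is rigid symmetric-monoidal, \autoref{lem:dual-mod} then gives dualizability of $\PreMF^\kinfty_Z(M,f)$ with dual $\Ind\left(\PreMF_Z(M,f)^\op\right) \isom \PreMF^\kinfty_Z(M,-f)$; part (ii) follows by base-change along $k\ps{\bt} \to k\pl{\bt}$, which preserves opposites and by \autoref{lem:small-tensor} yields the $k\pl{\bt}$-linear anti-equivalence $\MF_Z(M,f)^\op \isom \MF_Z(M,-f)$ and then dualizability over $\Perf(k\pl{\bt})$.

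For the construction of the lift, I would use the monadic description of \autoref{cor:mf-monadic}: $\DCoh(M_0) = \A_f\mod(\Perf M)$ for $\A_f = \O_M[B_M]/(B_M^2)$ with $\deg B_M = +1$ and $dB_M = f$. Two independent sign-flips combine to give the result. First, the graded $\O_M$-algebra isomorphism $\A_f \stackrel{\sim}\to \A_{-f}$, $B_M \mapsto -B_M$, induces a $k$-linear equivalence $\A_f\mod \isom \A_{-f}\mod$; the formula $B\phi = B_M\phi + \phi B_M$ from \autoref{na:s1-act} shows that this equivalence changes the $S^1$-action (hence the $\bt$-action) by a sign. Second, since $\A_f$ is graded-commutative we have $\A_f^\op = \A_f$ as a dg-algebra, and Grothendieck duality $\DD_{M_0}(-) = \RHom_{\A_f}(-, i^!\omega_M)$ produces an anti-equivalence $\A_f\mod^\op \isom \A_f^\op\mod = \A_f\mod$; a direct computation with the explicit formula for $B$ shows that transposition carries the operator $B_M\phi + \phi B_M$ to the negative of the corresponding operator on $\RHom(\DD\G, \DD\F)$. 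Composing the two sign-flips cancels them and yields the desired $k\ps{\bt}$-linear anti-equivalence. It is local on $M_0$, hence preserves support along $Z$.

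Alternatively (and I would include this as a conceptual cross-check), one may give a more geometric argument using the $\bB$-action of \autoref{constr:Bact}. The $\bB$-action on $M_0 = M \times_{\AA^1} \{0\}$ is base-changed from the $\GG_a$-action on $\AA^1$; the automorphism $[-1]\colon \AA^1 \to \AA^1$ restricts to the group automorphism $[-1]\colon \bB \to \bB$ and identifies the action determining $\PreMF(M,f)$ with that determining $\PreMF(M,-f)$. Grothendieck duality is compatible with pushforward along finite maps in the sense that $f_*\DD \isom \DD f_*$, so it converts the action of $(\DCoh(\bB),\circ_+)$ on $\DCoh(M_0)$ into an action of the same symmetric-monoidal $\infty$-category on $\DCoh(M_0)^\op$, but precomposed with the antipode $[-1]$ of the Hopf-algebra $\O_\bB$. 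Under \autoref{prop:mf-prelim} the antipode acts as identity on $k\ps{\bt}$ (both $\bt$-factors cancel), so this is a $k\ps{\bt}$-linear anti-equivalence, while on the level of the geometric $\bB$-action it matches the $\PreMF(M,-f)$ structure.

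The main obstacle is making precise the sign computation — either the explicit dg-level check that duality turns $B_M\phi + \phi B_M$ into its negative (which is a careful but routine Koszul-sign calculation in the model of \autoref{remark:explicit-homotopy}), or equivalently the lax $\bB$-equivariance of Grothendieck duality through the antipode. Everything downstream (support conditions, dualizability, passage to $\MF$ and to Ind-completions) is formal given this one anti-equivalence.
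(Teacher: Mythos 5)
Your proposal takes a genuinely different route from the paper, and the step you yourself flag as the ``main obstacle'' is exactly the step the paper's proof is engineered to avoid. The paper acknowledges that ``in a rigid-enough dg-model for the $k\ps{\bt}$-linear $\infty$-category $\PreMF(M,f)$, it should be possible to prove the first sentence of (i) by direct computation,'' but, not working in such a framework, it takes an indirect path: instead of constructing the anti-equivalence, it writes down a colimit-preserving $k\ps{\bt}$-linear \emph{pairing}
\[
\langle-,-\rangle \colon \PreMF^\kinfty(M,-f) \ohotimes_{k\ps{\bt}} \PreMF^\kinfty(M,f) \longrightarrow \QCsh(\bB),
\]
built by applying \autoref{thm:TS} to identify the source with $\PreMF^\kinfty(M^2,-f\boxplus f)$ and then mapping $\ol{\Delta}_*\O_M$ into it via $\RHom^{\otimes k\ps{\bt}}$. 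This pairing is $k\ps{\bt}$-linear \emph{by construction}, so its nondegeneracy---a property, not extra structure---can be checked after forgetting the $k\ps{\bt}$-linear enhancement; there it becomes $\Ind$ of ordinary Grothendieck duality on $\DCoh(M_0)$, verified by a short base-change computation with the Cartesian square $M_0 \to M$, $(M_0)^2 \to (M^2)_0$. The benefit is that one never has to verify $k\ps{\bt}$-linearity of a candidate functor by hand.

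By contrast, both of your routes try to produce the $k\ps{\bt}$-linear lift of $\DD$ directly, and neither is actually carried out. In the first (two sign-flips), the formula $B\phi = B_M\circ\phi + \phi\circ B_M$ is quoted from \autoref{na:s1-act}, but that formula is already hedged ``(at least up to signs)'' precisely because it is a dg-level heuristic, whereas the $k\ps{\bt}$-linear structure on $\PreMF$ is defined $\infty$-categorically via the geometric $\bB$-action of \autoref{constr:Bact}. Turning ``the two sign flips cancel'' into the assertion that a coherent $k\ps{\bt}$-module-category structure is preserved under $\DD$ requires either a strictification of that geometric action to a literal dg-model (which the paper declines to produce) or a higher-coherence argument; cancelling the binary $B$-operator is not enough. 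Your second route (lax $\bB$-equivariance of $\DD$ through the antipode $[-1]\colon\bB\to\bB$) is morally closer to what the paper's pairing secretly encodes, but it has the same defect: you assert $f_*\DD\isom\DD f_*$ and then jump to the conclusion that $\DD$ intertwines the \emph{entire} coherent $\bB$-action up to the antipode, without supplying the higher coherences. So the gap you name at the end is a genuine one, and in the paper's framework it is the whole content of the proof. Your formal reductions (preservation of support, dualizability via \autoref{lem:dual-mod}, passage to $\MF$ by $-\otimes_{k\ps{\bt}}k\pl{\bt}$) are fine and match the paper; to close the central gap, the cleanest fix is to adopt the paper's trick of building a manifestly $k\ps{\bt}$-linear evaluation pairing and checking nondegeneracy at the $k$-linear level, rather than trying to hand-build the anti-equivalence.
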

\begin{proof} Note that (i) implies (ii). By \autoref{lem:dual-mod}, it suffices to prove either the first or the second sentence of (i): passing to compact objects, or to Ind-completions, goes between the two versions.  Note that Grothendieck duality preserves support conditions: $\DCoh_Z(M)$ is generated by pushforwards from $Z$, and Grothendieck duality commutes with proper pushforward.  So, it suffices to prove the version without support conditions. 
  
In a rigid-enough dg-model for the $k\ps{\bt}$-linear $\infty$-category $\PreMF(M, f)$, it should be possible to prove the first sentence of (i) by direct computation. Since we are not in such a framework, we adopt a more indirect approach.    It suffices to write down a colimit preserving, $k\ps{\bt}$-linear functor
  \[ \langle \cdot \rangle \colon \PreMF^\kinfty(M, -f) \ohotimes_{\QCsh(\bB)} \PreMF^\kinfty(M,f) \longrightarrow \QCsh(\bB) \] and then show that it is a ``perfect pairing'' in the sense that the induced functor \[ \PreMF^\kinfty(M,-f) \to \Fun^L_{\QCsh(\bB)}\left(\PreMF^\kinfty(M,f), \QCsh(\bB)\right) \isom \Ind \PreMF(M,f)^\op \]   is an equivalence.

    Let $(M^2)_0 = (-f \boxplus f)^{-1}(0)$; $\ell\colon (M_0)^2 \to (M^2)_0$ and $k\colon (M^2)_0 \to M^2$ the inclusions; $\Delta\colon M \to M^2$ the diagonal, and $\ol{\Delta}\colon M \to (M^2)_0$ its factorization through $k$.  
To define $\langle \cdot \rangle$, we apply \autoref{thm:TS} to identify
  \[ \ell_*\left(- \boxtimes -\right)\colon \PreMF^\kinfty(M, -f) \ohotimes_{\QCsh(\bB)} \PreMF^\kinfty(M,f) \stackrel{\sim}\longrightarrow \PreMF^\kinfty(M^2, -f \boxplus f) \]
  and then the functor
  \[ \RHom^{\otimes k\ps{\bt}}_{\PreMF^\kinfty(M^2,-f\boxplus f)}\left(\ol{\Delta}_* \O_M, -\right)\colon\PreMF^\kinfty(M^2, -f \boxplus f) \to \QCsh(\bB) \]
  is colimit preserving (since $\ol{\Delta}_* \O_M \in \DCoh((M^2)_0)$ is compact) and naturally admits a $\QCsh(\bB)$-linear structure (since $\QCsh(\bB)$ is symmetric monoidal).  Define $\langle\cdot\rangle$ as the composite
  \[ \langle - \otimes - \rangle = \RHom^{\otimes k\ps{\bt}}_{\PreMF^\kinfty(M^2, -f \boxplus f)}\left(\ol{\Delta}_* \O_M, \ell_*(-\boxtimes-)\right) \]

It remains to show that this induces a perfect pairing, i.e., that the adjoint map is an equivalence.  For this it suffices to check that the underlying $k$-linear functor of the adjoint is an equivalence.  But this underlying $k$-linear functor is simply $\Ind$ of regular Grothendieck duality for $\DCoh(M_0)$: Note that there is a Cartesian diagram
\[ \xymatrix{ M_0 \ar[r]^i \ar[d]_{\Delta_{M_0}} & M \ar[d]^{\ol{\Delta}} \\ (M_0)^2 \ar[r]_\ell & (M^2)_0  } \] with $i$ and $\ell$ of finite-Tor dimension.  So, for $\F, \G \in \DCoh(M_0)$ there are natural equivalences 
\begin{align*} \RHom_{M_0}(\DD \F, \G) &= \RGamma(M_0, \F \shotimes \G) \\
  &= \RHom_{M_0}( (\Delta_{M_0})_* \O_{M_0}, \F \boxtimes \G) \\
  &= \RHom_{M_0}( (\Delta_{M_0})_* i^* \O_M, \F \boxtimes \G) \\
  &= \RHom_{M_0}( \ell^* \ol{\Delta}_* \O_M, \F \boxtimes \G) \\
  &= \RHom_{M_0}( \ol{\Delta}_* \O_M, \ell_*(\F \boxtimes \G)) \qedhere
\end{align*}
\end{proof}

Formally combining the above two theorems, we obtain the following descriptions of functor categories:
\begin{theorem}[Functors between Matrix Factorizations]\label{thm:functors} Suppose $(M,f)$, $(N, g)$ are LG pairs.  Then,
  \begin{enumerate}
    \item There is a $k\ps{\bt}$-linear equivalence
  \[ \Fun_{k\ps{\bt}}^L\left(\PreMF^\kinfty_{Z_M}(M,f), \PreMF^\kinfty_{Z_N}(N, g)\right) \stackrel{\sim}{\longrightarrow} \PreMF^\kinfty_{Z_M \times Z_N}(M \times N, -f \boxplus g) \] 
\item There is a $k\pl{\bt}$-linear equivalence
      \[ \Fun_{k\pl{\bt}}^L\left(\MF^\kinfty_{Z_M}(M,f), \MF^\kinfty_{Z_N}(N, g)\right) \stackrel{\sim}{\longrightarrow} \MF^\kinfty_{Z_M \times Z_N}(M \times N, -f \boxplus g) \] 
    \item Summing (ii) over support conditions giving different components of $(-f \boxplus g)^{-1}(0)$ yields an equivalence
      \[ \bigoplus_{\lambda \in \cval(f) \cap \cval(g)} \Fun^L_{k\pl{\bt}}\left(\MF^\kinfty(M,f-\lambda), \MF^\kinfty(N, g-\lambda)\right) \stackrel{\sim}\longrightarrow \MF^\kinfty(M \times N, -f \boxplus g)  \]
    \item Specializing (i) and (ii) to the case $M=N$, $f=g$, we obtain equivalences
      \[ \Fun^L_{k\ps{\bt}}\left(\PreMF^\kinfty_{Z}(M,f), \PreMF^\kinfty_{Z}(M,f)\right) \stackrel{\sim}\longrightarrow \PreMF^\kinfty_{Z \times Z}\left(M \times M, -f \boxplus f\right) \]
      \[ \Fun^L_{k\pl{\bt}}\left(\MF^\kinfty_{Z}(M,f), \MF^\kinfty_{Z}(M,f)\right) \stackrel{\sim}\longrightarrow \MF^\kinfty_{Z \times Z}\left(M \times M, -f \boxplus f\right) \]
      Let $(M^2)_0 = (-f \boxplus f)^{-1}(0)$.  The diagonal $\Delta\colon M \to M^2$ factors through $\ol{\Delta}\colon M \to (M^2)_0$.  Set \[ \ol{\O_\Delta} = \ol{\Delta}_* \O_M \in \DCoh( (M^2)_0, \qquad \ol{\omega_{\Delta,Z}} = \ol{\Delta}_* \RGamma_Z \omega_M \in \Ind\DCoh_{Z^2}( (M^2)_0).\] Under the equivalence above,
      \[\xymatrix@1{ \id_{\PreMF^\kinfty_{Z}(M,f)} \ar@{|->}[r] & \ol{\omega_{\Delta,Z}}} \qquad \xymatrix@1{ \ev_{\PreMF^\kinfty_Z(M,f)}(-) \ar@{|->}[r] & \RHom^{\otimes k\ps{\bt}}_{\PreMF^\kinfty(M^2,-f\boxplus f)}(\ol{\O_\Delta}, -) }\]
      \[ \xymatrix@1{ \id_{\MF^\kinfty_{Z}(M,f)} \ar@{|->}[r] & \ol{\omega_{\Delta,Z}}} \qquad \xymatrix@1{\ev_{\MF^\kinfty_Z(M,f)}(-) \ar@{|->}[r] & \RHom^{\otimes k\pl{\bt}}_{\MF^\kinfty(M^2, -f\boxplus f)}(\ol{\O_\Delta}, -)}\]
    \item Specializing (iii) to the case $M = N$, $f=g$, we obtain an equivalence
      \[ \bigoplus_{\lambda \in \cval(f)} \Fun^L_{k\pl{\bt}}\left(\MF^\kinfty(M,f-\lambda), \MF^\kinfty(M,f-\lambda)\right) \stackrel{\sim}\longrightarrow \MF^\kinfty(M^2, -f \boxplus f) \]
      under which
      \[ \xymatrix@1{ \oplus_{\lambda \in \cval(f)} \id_{\MF^\kinfty(M,f-\lambda)} \ar@{|->}[r] & \ol{\omega_\Delta} = \ol{\Delta}_* \omega_M} \] \[ \xymatrix@1{\oplus \ev_{\MF^\kinfty(M,f-\lambda)}(-) \ar@{|->}[r] & \RHom^{\otimes k\pl{\bt}}_{\MF^\kinfty(M^2, -f\boxplus f)}(\ol{\O_\Delta}, -) }\]
      \end{enumerate}
\end{theorem}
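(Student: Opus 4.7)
The plan is to deduce (i) and (ii) formally from three inputs. First, since $\QCsh(\bB)^\otimes \simeq k\ps{\bt}\mod^\otimes$ is rigid symmetric monoidal and $\PreMF^\kinfty_{Z_M}(M,f) = \Ind\PreMF_{Z_M}(M,f)$ is compactly generated, \autoref{lem:dual-mod} gives dualizability over $k\ps{\bt}$ with dual $\Ind\bigl(\PreMF_{Z_M}(M,f)^{\op}\bigr)$. \autoref{thm:dual} identifies this dual with $\PreMF^\kinfty_{Z_M}(M,-f)$ via the Ind-extension of Grothendieck duality. Chaining this with \autoref{thm:TS}:
\begin{align*}
 \Fun^L_{k\ps{\bt}}\bigl(\PreMF^\kinfty_{Z_M}(M,f),\, \PreMF^\kinfty_{Z_N}(N,g)\bigr)
 &\simeq \PreMF^\kinfty_{Z_M}(M,-f) \ohotimes_{k\ps{\bt}} \PreMF^\kinfty_{Z_N}(N,g) \\
 &\simeq \PreMF^\kinfty_{Z_M\times Z_N}\bigl(M\times N,\, -f\boxplus g\bigr).
\end{align*}
Part (ii) follows from (i) by base-change along the rigid symmetric monoidal functor $k\ps{\bt}\mod \to k\pl{\bt}\mod$, since $\MF^\kinfty = \PreMF^\kinfty \ohotimes_{k\ps{\bt}} k\pl{\bt}\mod$. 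Part (iii) follows from (ii) by decomposing the critical locus of $-f\boxplus g$ lying over $0$ into components indexed by $\lambda \in \cval(f)\cap\cval(g)$ and applying \autoref{prop:orlov-cpltn}, exactly as \autoref{thm:TS}(iii) was deduced from (ii).

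For (iv), the evaluation functor is essentially immediate from the construction of the duality pairing in the proof of \autoref{thm:dual}: that pairing was defined as $\langle\F\otimes\G\rangle = \RHom^{\otimes k\ps{\bt}}_{\PreMF^\kinfty((M^2)_0)}\bigl(\ol{\O_\Delta},\,\ell_*(\F\boxtimes\G)\bigr)$, so under the identification of the $\Fun^L$-category with kernels, $\ev(\K) = \RHom^{\otimes k\ps{\bt}}_{\PreMF^\kinfty((M^2)_0)}(\ol{\O_\Delta}, \K)$ as claimed. The identity kernel $\K_{\id}$ is characterized as the unique kernel whose integral transform is the identity endofunctor, equivalently as the image of the coevaluation map $k\ps{\bt}\mod \to \PreMF^\kinfty_Z(M,f) \ohotimes_{k\ps{\bt}} \PreMF^\kinfty_Z(M,-f)$ dual to $\ev$.

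The main obstacle is then verifying $\K_{\id} = \ol{\omega_{\Delta,Z}}$, which is where Grothendieck duality along the diagonal enters. The plan is to work with the Cartesian square
\[ \xymatrix{ M_0 \ar[r]^i \ar[d]_{\Delta_{M_0}} & M \ar[d]^{\ol{\Delta}} \\ (M_0)^2 \ar[r]_{\ell} & (M^2)_0 } \]
and rewrite $\ol{\omega_{\Delta,Z}} = \ol{\Delta}_*\RGamma_Z\omega_M \simeq \ell_*(\Delta_{M_0})_*\RGamma_Z\omega_{M_0}$, thereby reducing the identification to the underlying $k$-linear statement that $(\Delta_{M_0})_*\RGamma_Z\omega_{M_0}$ is the identity kernel for $\DCoh_Z(M_0)$ under the self-duality furnished by Grothendieck duality, which is the standard description of the diagonal bimodule in $\QCsh$ of a scheme (after tracking $!$-pullback conventions). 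The compatibility with the $k\ps{\bt}$-linear structure is automatic since $\coev$ is determined up to contractible choice by its compatibility with $\ev$, so it suffices to check that $\ev\bigl(\ol{\omega_{\Delta,Z}} \boxtimes (-)\bigr)$ recovers the trace pairing, which is the projection formula applied to the finite map $\ol{\Delta}$. Finally, part (v) is the $M=N$, $f=g$ specialization of (iii), with the identification of identity and evaluation kernels inherited from (iv).
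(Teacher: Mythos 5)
Your reductions for (i)--(iii) and (v) are exactly the paper's: (i) from the adjunction $\Fun^L \dashv \ohotimes$, \autoref{thm:dual}, and \autoref{thm:TS}; (ii) by $\ohotimes_{k\ps{\bt}} k\pl{\bt}$ base-change; (iii) by the critical-value decomposition and \autoref{prop:orlov-cpltn}. Your identification of $\ev$ in (iv) as $\RHom^{\otimes k\ps{\bt}}_{\PreMF^\kinfty}(\ol{\O_\Delta},-)$ also matches the paper — it really is just reading off the pairing constructed in the proof of \autoref{thm:dual}.

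The gap is in the identification of the identity kernel. You write that ``the compatibility with the $k\ps{\bt}$-linear structure is automatic since $\coev$ is determined up to contractible choice by its compatibility with $\ev$,'' and conclude it suffices to check a $k$-linear statement (that $(\Delta_{M_0})_*\RGamma_Z\omega_{M_0}$ is the $k$-linear identity kernel for $\QCsh_Z(M_0)$, which is \autoref{thm:coh-fmk}). But uniqueness of $\coev$ given $\ev$ tells you that $\coev^{k\ps{\bt}}(k\ps{\bt})$ is \emph{determined}, not what it \emph{is}; to conclude it equals $\ol{\omega_{\Delta,Z}}$ you must still verify that $\ol{\omega_{\Delta,Z}}$ satisfies the characterizing property \emph{as a $k\ps{\bt}$-linear duality datum}, and this is exactly the step your argument hand-waves. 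Your rewriting $\ol{\omega_{\Delta,Z}} \simeq \ell_*(\Delta_{M_0})_*\RGamma_Z\omega_{M_0}$ and the appeal to \autoref{thm:coh-fmk} only pin down the kernel of the $k$-linear identity functor for $\QCsh_Z(M_0)$; it does not pin down the kernel of the $k\ps{\bt}$-linear identity functor inside $\PreMF^\kinfty_{Z^2}(M^2,-f\boxplus f)$, because the two functor categories and the two dualities are related but not equal, and the comparison map carries an $S^1$-action (equivalently, $\DCoh(\bB)$-linearity) that your argument never checks.

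This is precisely the point the paper flags: after a ``feasibility check'' identifying the underlying $k$-linear functor with $\RGamma_Z = \id$ on the essential image of $i_*$, the paper states ``we're done up to identifying the $S^1$-action. This seems inconvenient in this viewpoint, so instead we take a different approach,'' and then runs a simplicial argument over the Cartesian squares
\[
\xymatrix{ M_0 \times \bB^\bullet \ar[d]_{D_1} \ar[r]^{D_2} & M \times M_0 \times \bB^\bullet \ar[d]^{\ol{\Delta}_1} \\ M_0 \times M \times \bB^\bullet \ar[r]_{\ol{\Delta}_2} & (M^3)_0 \times \bB^\bullet }
\]
to produce, for every tuple $V_1,\ldots,V_\bullet \in \DCoh(\bB)$, a natural identification $\RHom(V_1 \otimes \cdots \otimes V_\bullet \otimes T,\, \Phi'_{\ol{\omega_{\Delta,Z}}}(T')) \simeq \RHom(V_1 \otimes \cdots \otimes V_\bullet \otimes T,\, T')$, which is what establishes $\DCoh(\bB)$-linear (hence $k\ps{\bt}$-linear) agreement. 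Your proposal stops at the $\bullet=0$ case. To repair it, you would need either to run this $\bB^\bullet$-indexed argument, or to invoke one of the alternatives the paper mentions (the $S^1$-action on endofunctors from \autoref{sec:mf-gps}, or the shriek integral transform description from \autoref{sec:more-general}). The trace-pairing/projection-formula sketch in your last sentence does not do this work.
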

\begin{proof}\mbox{}
  \begin{enumerate}
    \item The first equality follows from the adjunction of $\Fun^L_{R}$ and $\ohotimes_R$ on $\dgcatbig_R$, together with \autoref{thm:dual}.  The second from \autoref{thm:TS}.
    \item Base change of (i).
    \item Combine \autoref{thm:dual} with the adjunction and \autoref{thm:TS}(iii).
    \item The only new statement is the identification of the functor represented by $\ol{\omega_{\Delta,Z}}$ and the trace.  The identification of $\ev$ trace follows from the proof of \autoref{thm:dual}.  To identify $\ol{\omega_{\Delta, Z}}$ with the identify functor, we must trace through the equivalence of the theorem.

Let $i\colon M_0 \to M$, $j\colon N_0 \to N$, and $k\colon (M \times N)_0 \to M \times N$ be the inclusions.  For a compact object $\K \in \PreMF_{M_0 \times N_0}(M \times N, -f \boxplus g)$, let $\Phi'_{\K}$ denote the corresponding functor.  We claim that $\Phi'$ is determined by the following refinement of the statement that $j_* \circ \Phi'_{\K} = \Phi_{k_* \K} \circ i_*$ compact objects:\\
{\noindent}{\bf Claim:} There is a $k\ps{\bt}$-linear equivalence
\begin{align*} \RHom^{\otimes k\ps{\bt}}_{\PreMF_{Z_N}^\kinfty(N, g)}(T, \Phi'_\K(T')) &= 
  \RHom^{\otimes k\ps{\bt}}_{\PreMF^\kinfty(M^2 \times N,f\boxplus -f \boxplus g)}\left(\ol{\Delta}_* \O_M \boxtimes T, T' \boxtimes \K\right)  \\
&= \RHom_{\QCsh(M^2 \times N)}\left(\Delta_* \O_M \boxtimes j_* T, i_* T' \boxtimes k_* \K\right)^{S^1}  \\
&=  \RHom_{\QCsh(N)}(j_* T, \Phish_{k_* \K}(i_* T'))^{S^1} 
\end{align*}
naturally in $T \in \PreMF_{Z_N}(N,g)$ and $T' \in \PreMF_{Z_M}(M,f)$, where $\Phish_{k_* \K}$ denotes the shriek integral transform of \autoref{thm:coh-fmk}.  \\
{\noindent}{\it Proof of Claim:} Tracing through the proof and using the previous Theorems repeatedly, we see that
 \begin{align*} \RHom_{\PreMF^\kinfty(N,g)}&\left(T, \Phi'_{\F \otimes \G}(T')\right) \\ 
   &= \RHom_{\PreMF^\kinfty(M^2 \times N,-f\boxplus f \boxplus g)}(\ol{\Delta}_* \O_M, \F \boxtimes T') \otimes_{k\ps{\bt}} \RHom_{\PreMF^\kinfty(N,g)}(T, \G)\\
   &= \RHom_{\PreMF^\kinfty(M^2 \times N,-f\boxplus f \boxplus g)}(\ol{\Delta}_* \O_M \boxtimes T, \F \boxtimes T' \boxtimes \G) \\
   &= \RHom_{\PreMF^\kinfty(M^2 \times N,f\boxplus -f \boxplus g)}(\ol{\Delta}_* \O_M \boxtimes T, T' \boxtimes \F \boxtimes \G) 
 \end{align*}
 so that, extending by colimits, we obtain
 \begin{align*} \RHom_{\PreMF^\kinfty(N,g)}\left(T, \Phi'_{\K}(T')\right) &= \RHom_{\PreMF^\kinfty(M^2 \times N,f\boxplus -f \boxplus g)}\left(\ol{\Delta}_* \O_M \boxtimes T, T' \boxtimes \K\right) \\
   &= \RHom_{\QCsh(M^2 \times N)}\left(\Delta_* \O_M \boxtimes j_* T, i_* T' \boxtimes k_* \K\right)^{S^1} \\
   \intertext{Running the analogous argument for $\QCsh$ and $\Phish$, we identify the last line with}
   &= \RHom_{\QCsh(N)}\left(j_* T, \Phish_{k_* \K}(i_* T')\right)^{S^1}
 \end{align*} as claimed.\\

We now complete the proof:  Note that the property in the claim characterizes $\Phi'_{\K}$ up to natural equivalence: Since $\Phi'_{\K}$ is colimit-preserving, it is determined by its restriction to compact objects $T' \in \PreMF_{Z_M}(M,f)$, and since $\PreMF_{Z_N}^\kinfty(N,g)$ is compactly-generated it is determined by the above mapping functors.

First, a feasibility check: Note that \[ k_* \ol{\omega_{\Delta, Z}} = \Delta_* \RGamma_Z \omega_M \]  so that \autoref{thm:coh-fmk} implies that $\Phish_{k_* \K} = \RGamma_Z$ which is naturally the identity on the essential image of $i_*\colon \DCoh_Z(M_0) \to \DCoh(M)$; thus, we're done up to identifying the $S^1$-action.  This seems inconvenient in this viewpoint, so instead we take a different approach.  

Consider the (simplicial diagram of) Cartesian diagrams
\[ \xymatrix{ M_0 \times \bB^\bullet \ar[d]_{D_1} \ar[r]^{D_2} & M \times M_0 \times \bB^\bullet \ar[d]^{\ol{\Delta}_1} \\ M_0 \times M \times \bB^\bullet \ar[r]_{\ol{\Delta}_2} & (M^3)_0 \times \bB^\bullet } \] where the $D_i, \ol{\Delta}_i$ are the evident diagonal maps.  All the arrows finite and of finite Tor-dimension.  Considering $\bullet = 0$, we obtain a $k$-linear identification
\begin{align*}
  \RHom_{\PreMF^\kinfty(M,f)}\left(T, \Phi'_{\ol{\omega_{\Delta,Z}}}(T')\right)
  &=\RHom_{\PreMF^\kinfty(M^3,f\boxplus-f \boxplus f)}\left(\ol{\Delta}_* \O_M \boxtimes T, T' \boxtimes \ol{\Delta}_* \RGamma_Z \omega_M\right)  \\
  &=\RHom_{\PreMF^\kinfty(M^3,f\boxplus-f \boxplus f)}\left(\ol{\Delta_1}_*(\O_M \boxtimes T),\ol{\Delta_2}_*(T' \boxtimes \ol{\Delta}_* \RGamma_Z \omega_M)\right)  \\
  &=\RHom_{\QCsh(M \times M_0)}\left(\O_M \boxtimes T,\ol{\Delta_1}^!\ol{\Delta_2}_*(T' \boxtimes \ol{\Delta}_* \RGamma_Z \omega_M)\right)  \\
  &=\RHom_{\QCsh(M \times M_0)}\left(\O_M \boxtimes T, (D_2)_* (D_1)^! (T' \boxtimes \ol{\Delta}_* \RGamma_Z \omega_M)\right)  \\
  &=\RHom_{\QCsh(M_0)}\left((D_2)^*(\O_M \boxtimes T), (D_1)^! (T' \boxtimes \ol{\Delta}_* \RGamma_Z \omega_M)\right)  \\
  &=\RHom_{\QCsh(M_0)}\left(\O_{M_0} \otimes T, T' \shotimes \RGamma_Z \omega_{M_0}\right)  \\
  &=\RHom_{\QCsh(M_0)}\left(T, T'\right) = \RHom_{\PreMF^\kinfty(M,f)}(T, T')
\end{align*} 
To obtain a $\DCoh(\bB)$-linear identification we apply an analogous argument for all $\bullet$, obtaining natural $k$-linear identifications
\[ \RHom_{\PreMF^\kinfty(M,f)}\left( V_1 \otimes \cdots \otimes V_\bullet \otimes T, \Phi'_{\ol{\omega_{\Delta,Z}}}(T')\right)  = \RHom_{\PreMF^\kinfty(M,f)}\left(V_1 \otimes \cdots \otimes V_\bullet \otimes T, T'\right) \] for $V_1, \ldots, V_\bullet \in \DCoh(\bB)$ and $T, T' \in \PreMF(M,f)$.
\item Follows from (iv). \qedhere
  \end{enumerate}
\end{proof}

\begin{remark} Note that in (iv), the $\Hom$ in the formula for $\ev$ is taking place in a category \emph{without} support conditions.  Note also that, owing to the application of $\RGamma_{Z}$ in obtaining the identity functor, the identity functor on $\PreMF$ will \emph{almost never} be compact---that is, $\PreMF$ is almost never smooth over $k\ps{\bt}$.
\end{remark}

\begin{remark} The above proof of (iv) is somewhat opaque, due to the attempt to isolate and minimize the use of operations on $\infty$-categories.  The discussion of \autoref{sec:mf-gps} allows for an argument via identifying the $S^1$-action, based on viewing $\ol{\omega_{\Delta,Z}}$ as a lift of $\Delta_* \RGamma_Z \omega_{M}$ to $S^1$-invariants for a certain action on the category of endofunctors on $\QCsh_Z(M)$.  Meanwhile, \autoref{sec:more-general} contains an alternate argument based on a description of the above equivalence via shriek integral transform functors on the simplicial diagram $M_0 \times \bB^\bullet \times M_0$.
\end{remark}

\begin{remark} In the previous Theorem we have written down \emph{an} equivalence: roughly, the one corresponding to Grothendieck duality $\DD(-) = \RHom(-, \omega_{M_0})$ using the dualizing complex $\omega_{M_0}$ on $M_0$.  Working from the viewpoint of literal matrix factorizations, it seems more natural to write down a \emph{different} equivalence: roughly the one corresponding to Grothendieck duality $\DD'(-) = \RHom(-, \omega_{M_0/M})$ using the (trivialized by $f$, in degree $-1$) relative dualizing complex $\omega_{M_0/M}$ on $M_0$.  For instance, it is this other equivalence that is written down by Lin and Pomerleano.  
\end{remark}
\begin{na} \emph{Warning:} The two equivalences give rise to \emph{different} explicit identification of the trace and identity functors.
\end{na}

\part{Circle actions, etc.}
\section{Completion via derived Cech nerve and derived groups}\label{sec:more-general}
In this section, we put the constructions of \autoref{sec:mf-gen} and \autoref{sec:main-thm} into a more general context and use this to give what we feel are better statements and proofs.  Unfortunately, making precise some parts of this requires more $(\infty,2)$-categorical preliminaries on the relations of $f^!$ and $f_*$ on $\QCsh$ than we wish to get into here.   Consequently, we will only sketch these proofs (being cavalier about these compatibilities) and will further defer these sketches to their own subsection.

\subsection{Motivation}
The starting point for this section is the following re-interpretation of \autoref{cor:premf-supt}, using the identification $\DCoh_{M_0}(M) = \DCoh(\oh{M_0})$ (c.f., \autoref{thm:cpltn}):
\begin{na} Associated to the natural inclusion $i\colon M_0 \to \oh{M_0} = \oh{M_{M_0}}$, is a map $\ol{i}$ from its Cech nerve. Since $\oh{M_0} \to M$ is a monomorphism, this identifies with
  \[ \oh{M_0} \stackrel{\ol{i}}\longleftarrow \left\{ M_0^{\times_M \bullet} \right\} \isom \left\{ M_0 \times \bB^{\times \bullet-1}\right\} \] 
  The realization (say in \'etale sheaves) of the last simplicial diagram is the definition of $M_0/\bB$, and we have constructed a map $\ol{i}\colon M_0/\bB \to \oh{M_{M_0}}$.  At the level of $R$-points
  \begin{itemize}
    \item $M_0(R)$ consists of an $m \in M(R)$ together with a factorization through $f=0$, while $\bB(R)$ acts transitively on these factorizations. So, $(M_0/\bB)(R)$ consists of those $R$-points in $M(R)$ which, \'etale locally, admit a factorization through $f=0$.
    \item Meanwhile, $\oh{M_0}(R)$ consists of the $R$-points in $M(R)$ which, \'etale locally, admit a factorization through $f^n=0$ for some $n$.
  \end{itemize}
\end{na}

\begin{na} Using \autoref{thm:cpltn}, identify $\QCsh_{M_0}(M) = \QCsh(M_0)$.  Applying \autoref{prop:fmk-coh-surj}
  \begin{align*}
    \QCsh(M_0/\bB) &= \holim^{\Pr^R} \left\{ \QCsh(M_0 \times \bB^{\times\bullet-1}), f^! \right\} \\
    &= \hocolim^{\Pr^L} \left\{\QCsh(M_0 \times \bB^{\times\bullet-1}), f_* \right\} \\
    &= \hocolim^{\Pr^L} \left\{ \QCsh(M_0) \otimes \QCsh(\bB)^{\otimes\bullet-1} \otimes \QCsh(\pt), f_*\right\} \\
    &= \QCsh(M_0) \ohotimes_{\QCsh(\bB)} \QCsh(\pt) 
  \end{align*} 
So, \autoref{cor:premf-supt} may be re-interpreted as saying that the inclusion $\ol{i}$ induces an equivalence on $\QCsh$.  The approach of this section will be to give a direct proof of this sort of statement.
 \end{na}

\subsection{Support conditions, completion, and (derived) Cech nerves}
\begin{na}Recall our notation $\QCsh(X) \eqdef \Ind\DCoh(X)$; the notation is suggested by the fact that for a morphism $f\colon  X \to Y$ the natural notion of pullback $\Ind\DCoh(X) \to \Ind\DCoh(Y)$ does not come from the pullback $f^*$ of quasi-coherent complexes, but from the \emph{shriek pullback} $f^!$ of Grothendieck duality theory.  This tells us that $\QCsh(\XZ)$ must be defined as, roughly, a sequence of sheaves on nilthickenings of $Z$ related by \emph{shriek} pullback:
\end{na}

\begin{defn} Suppose $\X \in \Fun(\DRngfp, \Sp)$ is a derived space over $k$.  Define
  \[ \QCsh(\X) \eqdef \holim_{\Spec A \to \X} (\Ind \DCoh(\Spec A), f^!) \]
  A natural transformation $f\colon  \X \to \Y$ of functors gives rise to a colimit-preserving functor $f^!\colon \QCsh(\Y) \to \QCsh(\X)$ by restricting the test diagram along $f$.  We will see later (\autoref{lem:mono}) how to define a colimit-preserving $f_*\colon \QCsh(\X) \to \QCsh(\Y)$ using base-change, and that $(f_*, f^!)$ is an adjoint pair if $f$ is (representable and) finite or close to it.  \autoref{app:descent} shows that this definition is sheaf for the smooth topology, and that it coincides with $\Ind \DCoh(\X)$ for $\X$ a \eqref{cond:starf} derived DM-stack.
\end{defn}

\begin{defn} Suppose $\X$ is a derived stack and $Z \subset \X$ is the complement of an open substack.  Define $\sXZ$ to be the sub-functor of $\X$ given by
  \[ \sXZ(R) = \left\{ t\in \X(R) \colon \text{$t$ factors set-theoretically through $Z$, i.e., $\Spec(\pi_0 R) = t^{-1}(Z)$} \right\} \]
\end{defn}

\begin{na} Suppose $\X$ is a locally Noetherian discrete stack, and $\I_\Z$ a defining ideal for $Z$.  Then, the above definition agrees with the usual one when restricted to discrete $R$: Note that $\Spec_\X \O_\X/\I_\Z^n \to \X$ is a monomorphism on discrete rings, with
  \[ (\Spec_\X \O_\X/\I_\Z^n)(R) = \left\{ t \in \X(R) \colon \I_\Z^n \cdot R = 0 \right\}, \text{and} \]
  \[ \left(\dlim\Spec_\X \O_\X/\I_Z^n \right)(R) = \left\{ t \in \X(R) \colon \text{$\I_\Z \cdot R$ is nilpotent on $\Spec R$} \right\} \] Since $\X$ was locally Noetherian, $\I_\Z$ is coherent so that $\I_\Z \cdot R$ is nilpotent iff it is contained in the nilradical of $R$ (i.e., $t$ factors set-theoretically through $Z$).
\end{na}

\begin{na} In the derived setting, a similar directed colimit description is possible \emph{locally} (e.g., when there is an ample family of line bundles) using suitable Koszul complexes (c.f., the proofs of \autoref{lem:dcohz-supt} and \autoref{lem:BGa-act-mf}).  But now there is also a global way to understand completions via a Cech-nerve construction, in the style of the Adams spectral sequence:
  \end{na}
\begin{constr}\label{constr:adams} Suppose $\X$ is a derived space, $Z \subset \X$ a closed subset (i.e., compatible family of closed subsets of $\Spec A$ for all $\Spec A \to \X$), and $p: \Z \to \X$ a finite map having support $Z$ (e.g., if $\X$ is a Noetherian derived stack one can take the discrete stack $\Z = Z_{\red}$).  Form the Cech nerve of $p$
  \[ p_\bullet \colon \left\{ \Z_\bullet = \Z^{\times_{\X} \bullet+1} \right\} \longrightarrow X \] Note that $|p_\bullet|$ factors through the monomorphism $i: \sXZ \to \X$, and let $\ol{p}: \{\Z_\bullet\} \to \sXZ$ be the factorization.

  Note that the structure maps in this augmented simplicial diagram are all finite.  Note that even if $\Z$ and $\X$ were discrete, the other terms in the Cech nerve will generally not be.  As defined above, $\QCsh$ takes colimits of derived spaces to limits of categories so that \[ \QCsh\left(|\Z_\bullet|\right) \isom \Tot\left\{\QCsh(\Z_\bullet), i^!\right\} = \Tot\left\{\ssetlar{\QCsh(\Z)}{(p_1)^!}{(p_2)^!}{\QCsh(\Z \times_X \Z)}\cdots\right\} \] \end{constr}

\begin{theorem}\label{thm:adams} With notation as in \autoref{constr:adams}, there are adjoint pairs
  \[ \adjunct{\ol{p}_* \colon \QCsh\left(|\Z_\bullet|\right)}{\QCsh(\sXZ) \colon \ol{p}^!} \]
  \[ \adjunct{|p_\bullet|_* \colon \QCsh\left(|\Z_\bullet|\right)}{\QCsh(\X) \colon |p_\bullet|^!} \]
such that
  \begin{enumerate}
    \item The adjoint pair $\left(\ol{p}_*, \ol{p}^!\right)$ consists of mutually inverse equivalences $\QCsh(|\Z_\bullet|) \isom \QCsh(\sXZ)$.
    \item The adjoint pair $\left(|p_\bullet|_*, |p_\bullet|^!\right)$ identifies $\QCsh(|\Z_\bullet|)$ with $\QCsh_Z(\X)$.  More precisely, $(p_\bullet)_*$ is fully faithful with essential image $\QCsh_Z(\X)$ and $(p_\bullet)_* (p_\bullet)^! \isom \ul{\RGamma}_Z$.
  \end{enumerate}
\end{theorem}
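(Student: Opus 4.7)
First I would construct the adjoint pairs, then prove (i) by presenting both $\QCsh(|\Z_\bullet|)$ and $\QCsh(\sXZ)$ as the same cofiltered shriek-limit of $\QCsh(\Spec A)$, and finally prove (ii) as a descent statement along the \v{C}ech nerve, together with a direct identification of $|p_\bullet|_\ast |p_\bullet|^!$ with $\ul{\RGamma}_Z$. For the functor construction: since $p$ is finite and all simplicial face and degeneracy maps in $\Z_\bullet$ are finite, the adjoint pair $((p_n)_\ast, (p_n)^!)$ exists at each simplicial level. Cartesian base-change for $(-)^!$ along the simplicial maps provides the natural transformations assembling these into an adjoint pair $(|p_\bullet|_\ast, |p_\bullet|^!)$ between $\QCsh(|\Z_\bullet|) = \Tot\{\QCsh(\Z_\bullet), i^!\}$ and $\QCsh(\X)$; the analogous pair $(\ol{p}_\ast, \ol{p}^!)$ through $\sXZ$ arises from the factorization of $|p_\bullet|$ through the subfunctor $\sXZ$ of $\X$.

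For (i), I would show that both $\QCsh(|\Z_\bullet|)$ and $\QCsh(\sXZ)$ identify with the shriek-limit of $\QCsh(\Spec A)$ indexed by $\Spec A \to \X$ factoring set-theoretically through $Z$. For $\QCsh(\sXZ)$ this is essentially the definition. For $\QCsh(|\Z_\bullet|)$ one must establish cofinality of $\{\Z_n\}$ in this indexing category: given $\Spec A \to \X$ set-theoretically supported on $Z$, the fiber product $\Spec A \times_\X \Z$ contains the reduction of $\Spec A$ set-theoretically, and iterating produces a system of nilthickenings of $\Spec A$ realized by finite stages of the \v{C}ech nerve; this plus \autoref{lem:coh-red} (applied to reduce a general affine chart over $\sXZ$ to its reduced subscheme) gives the required cofinality. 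The equivalence $\ol{p}^!$ follows, with $\ol{p}_\ast$ its inverse by the resulting adjunction.

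For (ii), full-faithfulness of $|p_\bullet|_\ast$ amounts to showing the unit $\id \to |p_\bullet|^! |p_\bullet|_\ast$ is an equivalence, which I would deduce as a \v{C}ech-theoretic descent statement: the adjunction is comonadic by an $\infty$-categorical Barr--Beck argument once one has conservativity of $|p_\bullet|^!$ (immediate from the limit presentation) and Beck--Chevalley base-change along the simplicial maps (a consequence of Cartesian base-change for shriek-pullback). That the essential image lies in $\QCsh_Z(\X)$ is immediate since every $\Z_n$ is supported on $Z$. The identification $|p_\bullet|_\ast |p_\bullet|^! \simeq \ul{\RGamma}_Z$---which also establishes equality of essential images, since $\ul{\RGamma}_Z$ is the identity on $\QCsh_Z(\X)$---is local on $\X$, so one may assume $\X$ affine and $Z$ cut out by a finite regular sequence $(f_1, \ldots, f_r)$; in that case a direct Koszul computation identifies the totalization of the \v{C}ech nerve of shriek-pullbacks with the standard filtered colimit $\hocolim_n \Kos(f_1^n, \ldots, f_r^n)^\vee$ computing local cohomology.

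The main obstacle is $(\infty,2)$-categorical bookkeeping: making precise the Beck--Chevalley type compatibilities between $f^!$ and $f_\ast$ across the simplicial diagram, and the coherent construction of the adjoint pair $(|p_\bullet|_\ast, |p_\bullet|^!)$ together with its unit and counit, which the paper itself explicitly defers to future work on Grothendieck duality for $\QCsh$. Once these structural facts are taken as black boxes, the content of (i) and (ii) reduces cleanly to the cofinality argument for the shriek-limit and the Koszul/\v{C}ech computation of local cohomology.
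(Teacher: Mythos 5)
Your overall shape is reasonable, but both parts lean on claims that do not hold as stated, and the missing ingredient in each case is precisely what the paper supplies via \autoref{lem:mono}. For part (ii), you assert that conservativity of $|p_\bullet|^!$ is ``immediate from the limit presentation.'' It is not: $|p_\bullet|^!$ is certainly \emph{not} conservative on all of $\QCsh(\X)$, since any $\F$ supported in the open complement of $Z$ satisfies $|p_\bullet|^! \F = 0$ (every $\Z_n$ is supported on $Z$). Conservativity holds only after restricting to $\QCsh_Z(\X)$, and even then it requires an argument --- the paper proves it by passing to $(t^{-1}(Z))_\red \to T$ and invoking \autoref{lem:coh-red}. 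Moreover, a Barr--Beck argument by itself identifies $\QCsh(|\Z_\bullet|)$ with (co)modules for a (co)monad; it does not on its own give full-faithfulness of $|p_\bullet|_*$. Full-faithfulness requires the unit $\id \to |p_\bullet|^! |p_\bullet|_*$ to be an equivalence, which the paper extracts from the observation that $|p_\bullet|$ (and $\ol p$) are \emph{monomorphisms} of derived spaces (\autoref{lem:mono}(ii)), a structural fact you never isolate.

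For part (i), the cofinality argument is imprecise, and the intuitive claim that ``iterating produces a system of nilthickenings of $\Spec A$'' is off: for $\Spec A \to \sXZ$, the objects $\Spec A \times_\X \Z_n$ are the stages of the \v{C}ech nerve of the finite surjection $\Spec A \times_\X \Z \to \Spec A$, hence genuinely \emph{derived} (already for $\X = \AA^1$, $\Z = \{0\}$ one has $\Z_1 = 0 \times_{\AA^1} 0$, which has a degree-$1$ exterior generator), not discrete nilthickenings. Turning this into a comparison of limits is really a finite-descent statement for $\QCsh$ along that \v{C}ech nerve, which again requires exactly the conservativity/monomorphism mechanism you have deferred (compare \autoref{lem:qcsh-sheaf-red}). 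The paper's route --- establish that $\ol p$ is a monomorphism so that $\id \stackrel{\sim}\to \ol p^! \ol p_*$, then check $\ol p^!$ is conservative via \autoref{lem:coh-red} --- is shorter and resolves precisely the points your sketch leaves open.
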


As a consequence, we obtain the following result which is morally important for us:
\begin{theorem}\label{thm:cpltn} 
Suppose that $\X$ is a coherent derived stack, that $Z$ is the complement of a quasi-compact open substack, and let $i\colon \sXZ \to \X$ be the inclusion.  With the above definition, $i^!\colon \QCsh(X) \to \QCsh(\sXZ)$ restricts to an \emph{equivalence} $i^!\colon \Ind\DCoh_Z(\X) = \QCsh_Z(\X) \to \QCsh(\sXZ)$ with inverse $i_*$.  This equivalence identifies the adjoint pairs $((i_Z)_*, \RGamma_Z)$ and $(i_*, i^!)$:
  \[ \xymatrix@C=6pc{\QCsh_Z(\X)  \ar@<.2pc>[r]^{(i_Z)_*} \ar@{<->}[d]_{\sim} & \QCsh(\X) \ar@<.2pc>[l]^{\RGamma_Z} \ar@{=}[d] \\
        \QCsh(\sXZ)\ar@<.2pc>[r]^{i_*}  &\ar@<.2pc>[l]^{i^!}   \QCsh(\X)}\] 
(In case $\X$ is smooth, it turns out that $i^*\colon \QC(\sXZ) \to \QC_Z(\X)$ is also an equivalence.  However, its inverse is substantially more complicated than $i_*$.)
\end{theorem}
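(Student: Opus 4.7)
The plan is to deduce the theorem formally from Theorem \ref{thm:adams} by factoring through a derived Cech nerve. Choose a finite map $p\colon \Z \to \X$ with set-theoretic image $Z$ (for instance $\Z = Z_{\red}$ for a chosen closed substack structure, which exists in the coherent setting), form its Cech nerve $\Z_\bullet = \Z^{\times_\X \bullet+1}$, and observe that the augmentation factors as $|p_\bullet| = i \circ \ol{p}$ through $\ol{p}\colon |\Z_\bullet| \to \sXZ$ and $i\colon \sXZ \to \X$. Theorem \ref{thm:adams} then furnishes: (a) mutually inverse equivalences $(\ol{p}_*, \ol{p}^!)\colon \QCsh(|\Z_\bullet|) \simeq \QCsh(\sXZ)$; and (b) a fully faithful functor $|p_\bullet|_*\colon \QCsh(|\Z_\bullet|) \hookrightarrow \QCsh(\X)$ with essential image $\QCsh_Z(\X)$ and with counit realizing $|p_\bullet|_* |p_\bullet|^! \simeq \ul{\RGamma}_Z$.

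From the factorizations $|p_\bullet|_* = i_* \circ \ol{p}_*$ and $|p_\bullet|^! = \ol{p}^! \circ i^!$, combined with (a), one obtains
\[ i_* \;\simeq\; |p_\bullet|_* \circ \ol{p}^!, \qquad i^! \;\simeq\; \ol{p}_* \circ |p_\bullet|^!. \]
The first identity exhibits $i_*\colon \QCsh(\sXZ) \to \QCsh(\X)$ as the composition of an equivalence with a fully faithful functor, hence as a fully faithful functor with essential image $\QCsh_Z(\X)$; this gives the claimed equivalence $i_*\colon \QCsh(\sXZ) \stackrel{\sim}{\to} \QCsh_Z(\X)$. Composing the two displayed formulas and using $\ol{p}_* \ol{p}^! \simeq \id$ yields $i_* i^! \simeq |p_\bullet|_* |p_\bullet|^! \simeq \ul{\RGamma}_Z$, which simultaneously shows that $i^!$ is inverse to $i_*$ on $\QCsh_Z(\X)$ and identifies the adjoint pair $(i_*, i^!)$ with $((i_Z)_*, \RGamma_Z)$ as displayed in the theorem.

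The main obstacle is not in this combinatorial step but rather in the preliminaries feeding into Theorem \ref{thm:adams}: one needs base-change compatibilities for $f_*$ and $f^!$ along the (derived, and generally non-discrete) squares appearing in the Cech nerve $\Z_\bullet$, as well as the $(\infty,2)$-categorical care required to make sense of the adjunction $(i_*, i^!)$ along a monomorphism $i\colon \sXZ \to \X$ that is far from finite. In particular, verifying that $|p_\bullet|_*\colon \QCsh(|\Z_\bullet|) \to \QCsh(\X)$ really has essential image the support-condition subcategory $\QCsh_Z(\X)$ is the substantive content; once granted, the deduction above is purely formal diagram chasing with the two adjunctions.
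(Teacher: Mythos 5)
Your formal deduction from Theorem~\ref{thm:adams} is correct and matches the paper's intent---indeed the paper's proof opens by saying that the only new content beyond Theorem~\ref{thm:adams} is a single lemma. However, you misidentify what that additional content is. You write $\QCsh_Z(\X)$ as though it is an unambiguous object, but the theorem statement itself asserts the nontrivial identification $\Ind\DCoh_Z(\X) = \QCsh_Z(\X)$: the right-hand side is \emph{defined} (Lemma~\ref{lem:dcohz-supt}) as $\ker\left(j^*\colon \Ind\DCoh(\X) \to \Ind\DCoh(U)\right)$ where $j$ is the open complement, whereas the left-hand side is $\Ind$ of the full subcategory $\DCoh_Z(\X) \subset \DCoh(\X)$. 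Theorem~\ref{thm:adams} hands you an equivalence onto $\ker j^*$, not onto $\Ind\DCoh_Z(\X)$; showing these coincide is exactly Lemma~\ref{lem:dcohz-supt} (proved by a Koszul-complex generation argument locally plus descent), and the paper explicitly flags it as the one thing that is new here.

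By contrast, the obstacles you single out---base-change for $f_*$ and $f^!$ along the derived Cech-nerve squares, and the $(\infty,2)$-categorical care needed to build the $(i_*, i^!)$ adjunction for the non-finite monomorphism $i$---are real concerns, but they are concerns in the proof of Theorem~\ref{thm:adams} and of Lemma~\ref{lem:mono} that it relies on, which you are already granting as a black box. So the way you have drawn the line between ``granted'' and ``substantive'' puts the substantive part on the wrong side. As a minor note, the paper also supplies a second, more elementary argument for discrete locally Noetherian stacks by taking a sequential colimit over nilpotent thickenings $\Spec_X \O_X/\I_Z^n$ and computing in $\Pr^L$; this is purely a sanity check, so not mentioning it is fine.
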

\begin{proof}[Sketch] The only thing new beyond \autoref{thm:adams} is that $\QCsh_Z(\X) = \Ind\DCoh_Z(\X)$, which is the content of \autoref{lem:dcohz-supt}.  For moral comfort, we sketch a less derived-looking argument (independent of derived Cech nerve) in case $X$ is a locally Noetherian discrete stack:\footnote{Locally on $\X$, a similar argument can be made in the derived setting by replacing powers of $\I_Z$ by a suitable filtered diagram of Koszul-type complexes.}
  Let $\I_Z$ be an ideal of definition for $Z$, and $X_n = \Spec_X \O_X/\I_Z^n$ for all $n \geq 1$.  Consider the diagram 
  \[ X_1 \stackrel{t_1}\longrightarrow X_2 \stackrel{t_2}\longrightarrow X_3 \stackrel{t_3} \longrightarrow \cdots \longrightarrow \XZ \stackrel{i}\longrightarrow X \] where each of the $t_i$ is proper, so that $( (t_i)_*, (t_i)^!)$ is an adjoint pair.  Observe that by definition, together with the previous adjunction,
  \begin{align*}
    \QCsh(\XZ) &= \ilim^{\Cat_\infty} \left\{ \QCsh(X_1) \stackrel{(t_1)^!}{\longleftarrow} \QCsh(X_2) \stackrel{(t_2)^!}{\longleftarrow} \cdots \stackrel{(t_{n-1})^!}{\longleftarrow} \QCsh(X_n) \stackrel{(t_n)^!}{\longleftarrow} \cdots \right\} \\
&= \ilim^{\Pr^R} \left\{ \QCsh(X_1) \stackrel{(t_1)^!}{\longleftarrow} \QCsh(X_2) \stackrel{(t_2)^!}{\longleftarrow} \cdots \stackrel{(t_{n-1})^!}{\longleftarrow} \QCsh(X_n) \stackrel{(t_n)^!}{\longleftarrow} \cdots \right\} \\
 &= \dlim^{\Pr^L} \left\{ \QCsh(X_1) \stackrel{(t_1)_*}{\longrightarrow} \QCsh(X_2) \stackrel{(t_2)_*}{\longrightarrow} \cdots \stackrel{(t_{n-1})_*}{\longrightarrow} \QCsh(X_n) \stackrel{(t_n)_*}{\longrightarrow} \cdots \right\} \\
 \intertext{Since $t_n$ is proper, the functor $(t_n)_*\colon \QCsh(X_n) \to \QCsh(X_{n+1})$ will preserve compact objects. 
 We have the following recipe for forming a colimit in $\Pr^L$ of compactly generated categories along left-adjoints preserving compact objects: Take $\Ind$ of the colimit of categories of compact objects. In this case, this identifies the previous displayed line as}
&= \dlim^{\Pr^L} \left\{ \QCsh(X_1) \stackrel{(t_1)_*}{\longrightarrow} \QCsh(X_2) \stackrel{(t_2)_*}{\longrightarrow} \cdots \stackrel{(t_{n-1})_*}{\longrightarrow} \QCsh(X_n) \stackrel{(t_n)_*}{\longrightarrow} \cdots \right\}  \\
&= \Ind\left( \dlim^{\Cat_\infty} \left\{ \DCoh(X_1) \stackrel{(t_1)_*}{\longrightarrow} \DCoh(X_2) \stackrel{(t_2)_*}{\longrightarrow} \cdots \stackrel{(t_{n-1})_*}{\longrightarrow} \DCoh(X_n) \stackrel{(t_n)_*}{\longrightarrow} \cdots \right\}\right)\\
\intertext{One can show, essentially by computing local cohomology, that the natural functor $\dlim \DCoh(X_n) \to \DCoh_Z(X)$ is fully-faithful; it is essentially surjective by \autoref{lem:coh-red}.  Combining with \autoref{lem:dcohz-supt}, we identify the previous displayed line with}
  &= \Ind(\DCoh_Z X) = \QCsh_Z(X).\qedhere
\end{align*}
\end{proof}

\begin{remark} Passing to compact objects, one obtains $\DCoh_Z(X) = \DCoh(\XZ)$ where now $\DCoh(\XZ) \eqdef \QCsh(\XZ)^c$ are what one might normally call the \emph{torsion} coherent complexes.
\end{remark}

\subsection{Geometric Koszul duality for $\QCsh$}
\begin{na}\label{na:very-good} For the duration of this section:
  \begin{itemize} 
    \item We work over a base $S$, which is assumed to be a smooth stack over $k$ that is \demph{very good} in the sense that the conclusion of \autoref{prop:fmk-coh-surj} and \autoref{thm:coh-fmk} holds over $S$ (where in interpreting \autoref{thm:coh-fmk} we must work relative to $S$, i.e., the ``dualizing complex'' of $f\colon X \to S$ is $f^!(\O_S)$).  In particular, when we write $\pt$ we mean $S$.
    \item $\Y$ be (the functor of points of) a smooth formal $S$-scheme, $\pt \in \Y$. Let $\oh{\pt}$ denote the formal completion of $\pt \in \Y$, i.e., $\Spf \oh{\O_{\Y,y}}$.  Note that $\oh{\pt}$ is also a formal scheme, and the map $\oh{\pt} \to \Y$ is an inclusion of connected components on functors of points.
    \item $\GG \eqdef \pt \times_{\Y} \pt$ viewed as a derived (formal) group scheme by ``composition of loops'' as in \autoref{ssec:Bact}.  If $\Y$ is itself a (commutative) group formal scheme, then $\GG$ may be equipped with a compatible (commutative) group structure by ``pointwise addition'' of loops (also as in \autoref{ssec:Bact}).  We will mostly ignore ``pointwise addition'' in this section, but one can put it back in to obtain $\QCsh(\GG)^{\otimes}$-linear statements as follows: After one writes down the relevant functors using pointwise addition (instead of loop composition), using the added $\GG$-equivariance coming from using the commutative product, it suffices to check that the underlying $k$-linear functors are equivalence;  an Eckmann-Hilton argument show that this $k$-linear functor is homotopic to that gotten by using composition of loops, thereby reducing to the case considered in this subsection.
    \item $\X, \X' \in \Fun(\DRngfp, \Sp)$ be derived spaces, equipped with natural transformations $f\colon  \X \to Y$ and $g\colon \X' \to Y$ which are relative \eqref{cond:starf} derived DM stacks.
  \end{itemize}
\end{na}

\begin{constr} Imitating \autoref{ssec:Bact} we observe
  \begin{itemize}
    \item $\X_\pt = \X \times_\Y \pt$ and $\X'_\pt = \X' \times_\Y \pt$ are right $\GG$-schemes, via ``composition of loops.''
    \item $\ol{\X}_\pt, \ol{\X'}_\pt$ are the left $\GG$-stacks obtained from $\X_\pt, \X'_\pt$ using the inverse (``read loop backwards'') $i\colon \GG^\op \isom \GG$.
    \item $\lsub{\pt}{\X} = \pt \times_\Y \X$, and $\lsub{\pt}{\X'} = \pt \times_\Y \X'$ are left $\GG$-stacks.
    \item $\lsub{\pt}{\ol{\X}}, \lsub{\pt}{\ol{\X'}}$ are the right $\GG$-stacks obtained from $\lsub{\pt}{\X}, \lsub{\pt}{\X'}$ using the inverse $i\colon \GG^\op \isom \GG$.
    \item There are obvious ($\GG$-equivariant) equivalences $\X_\pt \isom \lsub{\pt}{\ol{\X}}$, $\lsub{\pt}{\X'} \isom \ol{\X'}_\pt$, etc.
  \end{itemize}
\end{constr}

We now isolate a key part of the proofs of \autoref{thm:TS} and \autoref{thm:functors}:
\begin{constr}  Consider the ``Koszul duality'' map of derived spaces over $\Y^2$:\footnote{On $R$-points, $\oh{\pt}(R)$ is the union of connected components of $\Y(R)$ consisting of maps such that, \'etale locally, the reduced pair $(\Spec \pi_0 R)_{\red}$ admits a factorization through $\pt \to Y$; $B\Omega_{\pt} \Y(R)$ is the union of connected components of $\Y(R)$ consisting of maps which \'etale locally themselves admit a factorization through $\pt \to \Y$.}
  \[ B\Omega_{\pt} \Y = \pt\quot\GG = \left| \ssetll{\pt}{\GG}{\GG^2} \right| \to \oh{\pt} \] 
Base-changing, we obtain an augmented simplicial diagram
  \[ \left\{\X \times_\Y \GG^{\bullet} \times_\Y \X'\right\}  \longrightarrow \X \times_\Y \oh{\pt} \times_\Y \X' \left( \hookrightarrow  \X \times_\Y \Y \times_\Y \X' \right) \] given heuristically on functor-of-points by
  \begin{align*}& \left(x \in \X(R), x' \in \X'(R), [h_f\colon  f(x) \to \pt], [h_1\colon \pt \to \pt], \ldots, [h_\bullet\colon \pt \to \pt], [h_g\colon  \pt \to g(x')]\right) \\ &\longmapsto \left(x \in \X(R), y \in \Y(R), [h_f \cdot h_1 \cdot \cdots \cdot h_\bullet \cdot h_g\colon  f(x) \to g(x')]\right) 
  \end{align*}
Taking geometric-realization, this gives a map
\[ i\colon \X_\pt \stackrel{\GG}{\times} \ol{\X'}_\pt \longrightarrow  \X \times_\Y \oh{\pt} \times_\Y \X'  = \oh{\X_\pt \times \lsub{\pt}{\X}} \]
\end{constr}

There is the following ``tensor product theorem,'' which roughly asserts that although $\pt\quot\GG \to \oh{\pt}$ is not an equivalence, it is universally an equivalence on $\QCsh$:
\begin{theorem}\label{thm:more-gen-1}
  There is a commuting diagram of equivalences
  \[ \xymatrix{ \QCsh\left(\X_\pt \stackrel{\GG}{\times} \ar@<1ex>[r]^-{i_*} \lsub{\pt}{\X'}\right) & \QCsh\left(\X \times_\Y \oh{\pt} \times_\Y \X'\right) \ar@{}[l]|-{\sim} \ar@<1ex>[l]^-{i^!} \ar@<1ex>[d]^-{i_*} \\
  \ar[u]_-{\sim}^-{\boxtimes} \QCsh(X_\pt) \ohotimes_{\QCsh(\GG)} \QCsh(\lsub{\pt}{\X'}) \ar[r]^-{\boxtimes}_-\sim & \QCsh_{\X_\pt \times \lsub{\pt}{\X'}}\left( \X \times_\Y \X'\right) \ar@<1ex>[u]^-{i^!} \ar@{}[u]|-{\sim} }
      \]
 \end{theorem}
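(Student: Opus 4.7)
The strategy is to recognize all four corners of the diagram as incarnations of $\QCsh$ applied to the \emph{derived Cech nerve} of the finite map
\[
  p\colon \X_\pt \times \lsub{\pt}{\X'} \longrightarrow \X \times_\Y \X',
\]
and then read off each of the four equivalences as an instance of a result proved earlier in the paper. First I would compute the Cech nerve of $p$ by iterated base-change: using $\pt \times_\Y \pt = \GG$ one obtains
\[
  (\X_\pt \times \lsub{\pt}{\X'})^{\times_{\X \times_\Y \X'}(n+1)} \isom \X_\pt \times \GG^{n} \times \lsub{\pt}{\X'},
\]
with the simplicial structure being exactly the bar construction. In particular the realization of the Cech nerve is $\X_\pt \stackrel{\GG}{\times} \lsub{\pt}{\X'}$, and the set-theoretic image of $p$ is $\X_\pt \times \lsub{\pt}{\X'}$. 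Applying \autoref{thm:adams} to $p$ produces mutually inverse equivalences $((|p_\bullet|)_*, (|p_\bullet|)^!)$ between $\QCsh(\X_\pt \stackrel{\GG}{\times} \lsub{\pt}{\X'})$ and $\QCsh_{\X_\pt \times \lsub{\pt}{\X'}}(\X \times_\Y \X')$; up to naturality this is the diagonal composite of the square.

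For the right vertical arrow, I would invoke \autoref{thm:cpltn}: the pair $(i_*, i^!)$ identifies $\QCsh_{\X_\pt \times \lsub{\pt}{\X'}}(\X \times_\Y \X')$ with $\QCsh$ of the formal completion of $\X \times_\Y \X'$ along the closed set $\X_\pt \times \lsub{\pt}{\X'}$. Since $\oh{\pt} \subset \Y$ is by definition the formal completion, the functor-of-points description of $\sXZ$ together with compatibility of completion with the (derived) fiber products $\X \times_\Y (-) \times_\Y \X'$ over the smooth base $\Y$ identifies this formal completion with $\X \times_\Y \oh{\pt} \times_\Y \X'$.

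For the left vertical arrow, I would unwind definitions. By construction $\QCsh(\X_\pt \stackrel{\GG}{\times} \lsub{\pt}{\X'})$ is the totalization under $f^!$ of $\QCsh$ applied levelwise to the bar diagram; passing to left adjoints gives a colimit in $\Pr^L$ of the diagram of $f_*$'s. The ``very-good'' assumption on $S$ (i.e.\ the conclusion of \autoref{prop:fmk-coh-surj}) identifies each level with $\QCsh(\X_\pt) \ohotimes_{\QCsh(S)} \QCsh(\GG)^{\ohotimes n} \ohotimes_{\QCsh(S)} \QCsh(\lsub{\pt}{\X'})$ compatibly with the simplicial structure, and the resulting colimit is by definition $\QCsh(\X_\pt) \ohotimes_{\QCsh(\GG)} \QCsh(\lsub{\pt}{\X'})$. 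The bottom horizontal is then forced by commutativity of the square, which holds because all four identifications are encoded functorially by the same Cech-nerve/bar data.

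The main technical obstacle is the simultaneous handling of two a priori different notions of colimit---geometric realization of derived spaces (which $\QCsh$ takes to a limit along $f^!$) and relative tensor product in $\Pr^L$ (a colimit of the left adjoints $f_*$). Translating between these requires both the conclusion of \autoref{prop:fmk-coh-surj} and careful use of the $\Pr^L \simeq (\Pr^R)^\op$ duality; the ``very-good'' hypothesis on $S$ exists precisely to make these compatibilities go through. A secondary, more bookkeeping obstacle is the compatibility of $f^!$ with proper pushforward and base change needed inside the proof of \autoref{thm:adams}; as the author notes at the start of \autoref{sec:more-general}, these rest on some $(\infty,2)$-categorical structure that is kept implicit.
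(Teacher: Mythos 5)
Your proposal is correct and follows essentially the same route as the paper: both recognize the simplicial object for $\X_\pt \stackrel{\GG}{\times} \lsub{\pt}{\X'}$ as the Cech nerve of $\X_\pt \times \lsub{\pt}{\X'} \to \X \times_\Y \X'$, invoke \autoref{thm:adams} and \autoref{thm:cpltn} for the row/column/diagonal equivalences, and identify the left vertical by passing to left adjoints in $\Pr^L$ and applying the very-good hypothesis (\autoref{prop:fmk-coh-surj}) levelwise to recognize the bar construction for $\ohotimes_{\QCsh(\GG)}$. The only cosmetic difference is that you prove the diagonal, left, and right arrows and let the bottom arrow follow, whereas the paper proves the top, left, and right arrows and lets the diagonal follow.
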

 \begin{proof} 
   It suffices to prove that the functors in the left column, right column, and top row are equivalences.  The right column follows by \autoref{thm:cpltn}.  The top row follows from \autoref{thm:adams}, since the simplicial object for $\X_\pt \stackrel{\GG}\times \lsub{\pt}{\X'}$ is nothing but the Cech nerve for $\X_\pt \times \lsub{\pt}{\X'} \to \X \times_\Y \X'$.  It remains to handle the left-column: Applying $\QCsh$ to the simplicial diagram defining $\X_\pt \stackrel{\GG}\times \lsub{\pt}{\X'}$, and using that the structure maps are finite so that $(f_*, f^!)$ is an adjoint pair, we find
\begin{align*} \QCsh\left( \X_\pt \stackrel{\GG}\times \ol{\X'}_\pt \right) &= \Tot \left( \QCsh(\X_\pt \times \GG^\bullet \times \lsub{\pt}{\X'}), f^! \right) \\
  &= \left| \QCsh(\X_\pt \times \GG^\bullet \times \lsub{\pt}{\X'}), f_*\right|^{\Pr^L} \\
  &= \QCsh(\X_\pt) \ohotimes_{\QCsh(\GG)} \QCsh(\lsub{\pt}{\X'}) 
\end{align*}
where the last equality is computing the relative tensor product by a bar-construction.
\end{proof}

\begin{remark}
Suitably interpreted, a version of the previous Theorem is true more generally (e.g., replacing $\pt \to \Y$ by an lci map $i\colon Z \to M$):
\[ \QCsh\left(Z \quot^{\tiny Z} \oh{(Z \times_M Z)_Z}\right)\isom \QCsh\left( \oh{M_Z} \right) \]
The case of $i$ a regular closed immersion can be deduced from the above.  The case of $i$ smooth is the equivalence of $D$-modules via the de Rham stack and $D$-modules as crystals.  We will return to this in \cite{note-completion}
\end{remark}

Once this is done, we can deduce an identification of functor categories (which is perhaps more clear than \autoref{thm:functors}; e.g., it makes identifying the identity functor more straightforward):
\begin{theorem}\label{thm:more-gen-2} The categories of \autoref{thm:more-gen-1} are all equivalent to  \[ \Fun^L_{\QCsh(\GG)}\left(\QCsh(\X'_{\pt}), \QCsh(\X_\pt)\right)  \] via a cosimplicial diagram of shriek integral transform functors
  \[ \lsup{\bullet}\Phish\colon \QCsh\left(\X_\pt \times \GG^\bullet \times \lsub{\pt}{\X'}\right)  \longrightarrow \Fun_k^L\left(\QCsh(\X'_{\pt} \times \GG^\bullet), \QCsh(\X_\pt)\right) \]
\end{theorem}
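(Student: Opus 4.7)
I would construct the cosimplicial diagram $\lsup{\bullet}\Phish$ by applying the shriek-integral-transform equivalence of \autoref{thm:coh-fmk} at each level, and then totalize. On the LHS the totalization was already identified in the proof of \autoref{thm:more-gen-1} with $\QCsh(\X_\pt \stackrel{\GG}\times \lsub{\pt}{\X'})$. On the RHS, totalization should yield $\Fun^L_{\QCsh(\GG)}(\QCsh(\X'_\pt), \QCsh(\X_\pt))$ by descent for $\QCsh(\GG)$-modules, using the geometric bar resolution of $\QCsh(\X'_\pt)$. Combining these gives the claimed equivalence with all four categories of \autoref{thm:more-gen-1}.

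\textbf{Execution.} At each simplicial degree $n$, the spaces $\X_\pt \times \GG^n$ and $\GG^n \times \lsub{\pt}{\X'}$ are $S$-relatively \eqref{cond:starf} (since $\GG = \pt \times_\Y \pt$ is affine formal over $S$ and $\X, \X'$ were assumed relatively \eqref{cond:starf}), so \autoref{thm:coh-fmk} applies to give level-wise equivalences $\lsup{n}\Phish$. The simplicial structure on the geometric object $\{\X_\pt \times \GG^\bullet \times \lsub{\pt}{\X'}\}$, built from loop composition, identity insertion, and the projections to $\X_\pt$ and $\lsub{\pt}{\X'}$, induces a cosimplicial structure on $\{\QCsh(\X_\pt \times \GG^\bullet \times \lsub{\pt}{\X'})\}$ via $f^!$/$f_*$. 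The key step is to match this with the cosimplicial structure on $\{\Fun^L_k(\QCsh(\X'_\pt \times \GG^\bullet), \QCsh(\X_\pt))\}$ coming from the bar presentation of $\QCsh(\X'_\pt)$ as a $\QCsh(\GG)$-module---using the loop-reversal involution $\GG \isom \GG^{\op}$ to reconcile the left $\QCsh(\GG)$-action on $\QCsh(\lsub{\pt}{\X'})$ appearing in the bar-object for the tensor product with the right $\QCsh(\GG)$-action on $\QCsh(\X'_\pt)$ appearing in the functor-category side. Once this compatibility is verified, totalization of the RHS gives $\Fun^L_{\QCsh(\GG)}(\QCsh(\X'_\pt), \QCsh(\X_\pt))$, since $\Fun^L_k(-, \QCsh(\X_\pt))$ converts the $\Pr^L$-colimit $\QCsh(\X'_\pt) \simeq |\QCsh(\X'_\pt \times \GG^\bullet)|^{\Pr^L}$ into the desired $\Cat_\infty$-limit of functor categories, and the descent condition is exactly $\QCsh(\GG)$-linearity.

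\textbf{Main obstacle.} The essential difficulty is promoting the level-wise equivalences $\lsup{n}\Phish$ to a coherent cosimplicial functor, rather than a disjoint family of equivalences. The required compatibilities are all instances of base change and projection formulas for $f^!$ and $f_*$ on $\QCsh$, but organizing them coherently across the whole simplicial diagram calls for $(\infty,2)$-categorical tools (the $(\infty,2)$-functoriality of $f^!$, together with its lax interaction with $f_*$) that the paper has deliberately avoided. Consequently, as the author flags at the opening of \autoref{sec:more-general}, the argument has to be a sketch, with the coherences deferred to a future reference developing the relevant machinery.
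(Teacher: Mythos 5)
Your proposal matches the paper's own (sketch) proof: apply the level-wise shriek-integral-transform equivalences of \autoref{thm:coh-fmk}, identify the totalization of the functor-category side with the cobar presentation of $\Fun^L_{\QCsh(\GG)}(\QCsh(\X'_\pt),\QCsh(\X_\pt))$, and observe that the remaining work is verifying that the $\lsup{\bullet}\Phish$ are compatible with the cosimplicial structure maps---which the paper, like you, largely defers for $(\infty,2)$-categorical reasons. The one piece you elide that the paper records is the explicit base case of the compatibility check, namely that for the two maps $\sq{\alpha},\sq{\alpha}'\colon\GG\times\X_\pt\times\lsub{\pt}{\X'}\to\X_\pt\times\lsub{\pt}{\X'}$ acting through the two factors one has $\Phish_{(\sq{\alpha}')^!\K}(V\boxtimes\F)\isom\Phish_\K(V\otimes\F)$ and $\Phish_{\sq{\alpha}^!\K}(V\boxtimes\F)=V\otimes\Phish_\K(\F)$, deduced from base change and the projection formula.
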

\begin{proof}
To see this, we use \autoref{thm:coh-fmk} and the explicit cobar resolution of the functor category:
      \begin{align*} 
	\QCsh\left( \X_\pt \stackrel{\GG}\times \lsub{\pt}{\X'}\right) &\isom \Tot\left\{\QCsh\left(\X_\pt \times \GG^{\bullet} \times \lsub{\pt}{\X'}\right)\right\} \\
	&\stackrel{\lsup{\bullet}{\Phish}}\longrightarrow \Tot\left\{ \Fun^L\left(\QCsh(\X'_\pt \times \GG^\bullet ), \QCsh(\X_\pt)\right) \right\}  \\
	&\stackrel{\Fun^L(\boxtimes,-)}{\longrightarrow} \Tot\left\{ \Fun^L\left(\QCsh(\X'_\pt) \ohotimes \QCsh(\GG)^{\ohotimes \bullet}, \QCsh(\X_\pt)\right) \right\}  \\
	&= \Fun^L_{\QCsh(\GG)}(\QCsh(\X'_\pt), \QCsh(\X_\pt))
      \end{align*}
      We must verify that the various 
      \[ \lsup{\bullet}\Phish\colon \QCsh\left(\GG^{\bullet} \times X_\pt \times \lsub{\pt}{X'}\right)  \longrightarrow \Fun^L\left(\QCsh(\GG^\bullet \times \lsub{\pt}{X}), \QCsh(\lsub{\pt}{X'})\right) \] 
      commute with the cosimplicial structure maps.  The first instance of this verification is the following: Let 
      \[ \sq{\alpha}, \sq{\alpha}'\colon \GG \times X_\pt \times \lsub{\pt}{X'} \longrightarrow X_\pt \times \lsub{\pt}{X'} \] be given by
      \[ \sq{\alpha}(g,x,x') = (x g, x') = (g^{-1} x,x') \qquad \text{and} \qquad \sq{\alpha}'(g,x,x') = (x, g x') = (x, x' g^{-1}) \] Then, there are natural equivalences
      \[ \Phish_{(\sq{\alpha}')^! \K}(V \boxtimes \F) \isom \Phish_{\K}(V \otimes \F) \qquad \text{and}\qquad \Phish_{\sq{\alpha}^! \K}(V \boxtimes \F) = V \otimes \Phish_{\K}(\F) \]
      The verification is routine using projection, base-change, etc.
\end{proof}

\subsection{Graded speculation}
\begin{defn} For the remainder of this section, $S$ will be a smooth stack having the \emph{very good} property (\autoref{na:very-good}), and $\L$ will be a fixed line bundle on $S$.  A \demph{relative LG pair over $S$} is a pair $(M/S,  f)$ consisting of a smooth $S$-scheme $M$, and a map of $S$-schemes $f\colon  M \to \L$.
\end{defn}

\begin{remark}  Note that the line bundle $\L$ is defined on the base $S$, not on $M$.  This restriction can be lifted by working in a yet more general context: instead of obtaining a formal group $\GG$ below, one obtains a formal groupoid over $M_0$.  We will return to descriptions of this type in \cite{note-completion}.  The difficulty seems to be in finding an analogue of ``inverting $\bt$'' in order to e.g., produce a smooth dg-category.
\end{remark}

\begin{na} The motivating example ``is'' $S = B\GG_m$ and $\L = \O_{B\GG_m}(d)$ for some $d$.  However, it is \emph{not clear} that $B\GG_m$ is very good with our definition of $\QCsh$.  The tensor product property holds over $B\GG_m$ by \autoref{lem:very-good}, however \autoref{thm:coh-fmk} (and thus very-goodness) also requires a good theory of Grothendieck duality over $B\GG_m$.  We do not pursue this further here, thus ``speculations'' in the title.
\end{na}

\begin{example}
  A graded LG model may be regarded as a relative LG model over $B\GG_m$, as follows:  Suppose $(M, f)$ is an LG pair, that $\GG_m$ acts on $M$, and that $f$ is $\GG_m$-equivariant when we equip $\AA^1$ with its weight $d$-action of $\GG_m$.  Then, 
  \[ \xymatrix{ M\quot\GG_m \ar[r]^-f \ar[d] & \L = \O_{B\GG_m}(d) \ar[dl]\\ S = B\GG_m } \] is a relative LG model over $B\GG_m$.
\end{example}

\begin{lemma}\label{lem:very-good} Suppose $S$ is a very good stack, and that $S' \to S$ is a smooth $S$-stack with smooth affine diagonal.  Then, the conclusions of the conclusion of \autoref{prop:fmk-coh-surj} hold over $S'$.
\end{lemma}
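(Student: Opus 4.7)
The plan is to reduce the tensor product theorem over $S'$ to the one over $S$, which holds by very-goodness, via a two-sided bar construction; the smooth affine diagonal $\Delta\colon S' \to S' \times_S S'$ supplies the key geometric input. For $X, Y$ satisfying \eqref{cond:starf} over $S'$ (and hence over $S$, since $S' \to S$ is smooth with smooth affine diagonal), I would first present the relative tensor product as the geometric realization of the bar simplicial object
$$\DCoh(X) \otimes_{\DCoh(S')} \DCoh(Y) \isom \bigl|\, \DCoh(X) \otimes_{\DCoh(S)} \DCoh(S')^{\otimes_{\DCoh(S)} \bullet} \otimes_{\DCoh(S)} \DCoh(Y) \,\bigr|,$$
and then apply the tensor product theorem over $S$ (available by very-goodness) to each bar term, identifying the above with $\bigl|\, \DCoh(X \times_S (S')^{\times_S \bullet} \times_S Y) \,\bigr|$. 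The face maps become pullbacks along the graphs of $X \to S'$ and $Y \to S'$ (on the outermost faces) and along $\Delta$ (on the inner faces). A parallel discussion applies to the cocomplete version using $\ohotimes$ and $\QCsh$.

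Rewriting the bar using the commutative algebra structure on $\DCoh(S')$ over $\DCoh(S)$ gives the standard identification
$$\DCoh(X) \otimes_{\DCoh(S')} \DCoh(Y) \isom \bigl(\DCoh(X) \otimes_{\DCoh(S)} \DCoh(Y)\bigr) \otimes_{\DCoh(S' \times_S S')} \DCoh(S'),$$
so the entire claim reduces to a single ``smooth affine base change'' statement: for any \eqref{cond:starf} stack $W$ over $S' \times_S S'$, the natural functor
$$\boxtimes\colon \DCoh(W) \otimes_{\DCoh(S' \times_S S')} \DCoh(S') \longrightarrow \DCoh\bigl(W \times_{S' \times_S S'} S'\bigr)$$
should be an equivalence (one then applies this with $W = X \times_S Y$). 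Since $\Delta$ is affine, $\Delta_*$ is conservative and colimit-preserving, so Lurie's Barr-Beck gives $\QC(S') \isom (\Delta_* \O_{S'})\mod(\QC(S' \times_S S'))$; tensoring with $\QC(W)$ and invoking affine base change supplies the equivalence at the level of $\QC$.

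The main obstacle is descending this $\QC$-level equivalence to $\DCoh$ and $\QCsh$: the algebra $\Delta_* \O_{S'}$ is not generally perfect---for instance, when $S' = B\GG_m$ over $S = \pt$ it is an infinite direct sum of line bundles on $B\GG_m \times B\GG_m$---so one cannot simply restrict to compact objects on both sides. I would handle this by exploiting the smoothness of $\Delta$: its base change $W \times_{S' \times_S S'} S' \to W$ is smooth affine, so both $*$-pullback and pushforward preserve pseudo-coherence and finite Tor-amplitude, and one can verify that $\DCoh(W \times_{S' \times_S S'} S')$ is generated as a $\DCoh(W)$-module category by exterior products with generators of $\DCoh(S')$ over $\DCoh(S' \times_S S')$. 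Checking this generation and the corresponding fully-faithfulness is local on $S'$, and after choosing a smooth affine atlas $U \to S'$ the assertion reduces to the tensor product statement for the affine map $U \times_{S'} U \to U \times_S U$, which in turn is a direct consequence of the affine tensor product theorem over $S$ furnished by the very-goodness hypothesis.
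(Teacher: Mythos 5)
Your reduction converges with the paper's: after the bar-resolution manipulation you land on the statement that
$$\boxtimes\colon \DCoh(W) \otimes_{\DCoh(S' \times_S S')} \DCoh(S') \longrightarrow \DCoh\bigl(W \times_{S' \times_S S'} S'\bigr)$$
should be an equivalence for $W = X \times_S Y$, and the paper arrives at exactly this intermediate form by writing $X \times_{S'} Y = (X \times_S Y) \times_{(S')^{\times_S 2}} S'$ and applying very-goodness of $S$. The divergence is in how the remaining step is handled. The paper simply invokes \autoref{prop:ext-smth-fmk}(i), applied with base $(S')^{\times_S 2}$ (regular and excellent because $S$ is smooth over $k$) and with the smooth affine map $\Delta\colon S' \to (S')^{\times_S 2}$; that proposition is proved by the same Noetherian devissage as \autoref{prop:fmk-coh-surj}, stratifying the potentially singular $W$ by reduced and generically-regular pieces.

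Your proposed completion has a genuine gap. Barr--Beck over $\QC$ together with the observation that $\Delta$ is smooth affine gets you an equivalence at the $\QC$ level, but it does not descend to $\DCoh$ for free: essential surjectivity of $\boxtimes$ on bounded coherent complexes is exactly where the devissage is needed, because $W$ can be arbitrarily singular, $\Delta_*\O_{S'}$ is not perfect (as you note), and smoothness of the base-changed map only preserves pseudo-coherence---it does not produce generators of $\DCoh(W \times_{(S')^{\times 2}} S')$ from $\DCoh(W)$ tensored with $\DCoh(S')$. Moreover, the proposed "check locally on $S'$" step is not available for the left-hand side of the would-be equivalence: the relative tensor product $\DCoh(W) \otimes_{\DCoh(S'\times_S S')} \DCoh(S')$ does not manifestly have smooth descent in $S'$---that kind of statement is precisely what the tensor product theorem is supposed to deliver. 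And the final appeal to "the affine tensor product theorem over $S$" via the map $U \times_{S'} U \to U \times_S U$ doesn't connect to very-goodness of $S$: that theorem concerns relative tensor products over $\QC(S)$, not over $\QC(U \times_S U)$, and recasting $U \times_{S'} U$ as a fiber product over $(S')^{\times_S 2}$ would beg the question. You need to supply the devissage argument (generically regular locus, closed complement, Noetherian induction), which is what the paper packages as \autoref{prop:ext-smth-fmk}(i).
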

\begin{proof} 
In what follows, unadorned products (resp., tensor products) are over $S$ (resp., $\QC(S)$).  Suppose $X, Y$ are almost finitely-presented over $S'$.  Write $X \times_{S'} Y = (X \times Y) \times_{(S')^2} S'$.  It suffices to show that
  \[ \QCsh(X \times Y) \otimes_{\QC((S')^2)} \QC(S') \longrightarrow \QCsh(X \times_{S'} Y) \] is an equivalence, as we may identify the left hand side (using that $S$ is very good) with
  \[ \left(\QCsh(X) \otimes \QCsh(Y)\right) \otimes_{\QC(S')^{\otimes 2}} \QC(S') = \QCsh(X) \otimes_{\QC(S')} \QCsh(Y). \]

Note that $S'$ is smooth over $k$, so that it is regular and excellent.  We now conclude by applying \autoref{prop:ext-smth-fmk}(i).
\end{proof}

\begin{na} In this situation, the analog of $k\ps{\bt} = \QCsh(\bB)$ acting on $\QCsh(M_0)$ is the following:
  \begin{itemize}
    \item Let $\GG = \oh{(S \times_\L S)_S}$; it is a formal derived group scheme over $S$ by ``composition of loops.''  As in the case $S = \pt$, it is also compatibly a commutative derived group scheme over $S$ by ``pointwise addition'' (using the additive structure on the fibers of $\L$).  
    \item There is a symmetric monoidal equivalence
  \[ (\QCsh(\GG), \circ) = (\QCsh_S(S \times_L S), \circ) \isom \left(\Sym_{\O_S}\left(\L^\dual[-2]\right)\mod(\QC(S)), \otimes\right) \] 
\item Let $M_S = M \times_\L S$ be the intersection of $f$ and the zero-section.  There is a right action of $\GG$ on $M_S$ by loop composition, and a compatible action by pointwise addition in the fibers.  Let $\PreMF^\kinfty(M/S, \L, f)$ be $\QCsh(M_S)$ equipped with this action of $(\QCsh(\GG), \circ)^\otimes$.
\item We can ask for an analog of $k\pl{\bt}$ that would e.g., allow us to recover graded matrix factorizations.  The author has not checked the details, but a reasonable guess seems to be $\DSing^\kinfty(\GG)$.
  \end{itemize}
\end{na}

\begin{na} Using the results of the present section, and the very goodness of $S$, one obtains the analogs of \autoref{thm:TS}, \autoref{thm:dual}, \autoref{thm:functors}:
  \begin{itemize} 
    \item {\bf Thom-Sebastiani:} Suppose $(M/S, \L, f)$, $(N/S, \L, g)$ are two relative LG models over $S$ with the same line bundle.  Then, exterior product induces a $\QCsh(\GG)$-linear equivalence \[ \PreMF^\kinfty(M/S, \L, f) \otimes_{\QCsh(\GG)} \PreMF^\kinfty(N/S, \L, g) \stackrel{\sim}\longrightarrow \PreMF_{M_S \times_S N_S}^\kinfty(M \times_S N/S, \L, f \boxplus g). \]
    \item {\bf Duality:} Suppose $(M/S, \L, f)$ is a relative LG models over $S$.  Then, Grothendieck duality induces a $\QCsh(\GG)$-linear duality \[ \PreMF^\kinfty(M/S, \L, f)^\dual = \PreMF^\kinfty(M/S, \L, -f).\]
    \item {\bf Functors:} Suppose $(M/S, \L, f)$, $(N/S, \L, g)$ are two relative LG models over $S$ with the same line bundle.  Then, there is a natural $\QCsh(\GG)$-linear equivalence \[ \Fun^L_{\QCsh(\GG)}\left(\PreMF^\kinfty(M/S, \L, f), \PreMF^\kinfty(N/S, \L, g) \right) \stackrel{\sim}\longrightarrow \PreMF_{M_S \times_S N_S}^\kinfty(M \times_S N/S, \L, -f \boxplus f).\]
  \end{itemize}
  In the case of $S = B\GG_m$, the first of these does not depend on duality and so is unconditional.  The second and third are conditional on a suitable Grothendieck duality over $B\GG_m$.
\end{na}

\subsection{Sketch proofs}
\begin{lemma}\label{lem:mono} Suppose $i\colon \X' \to \X$ is a map of derived spaces.  Then,  
\begin{enumerate}
\item There is a well-defined functor
      \[ i_*\colon \QCsh(\X') \to \QCsh(\X) \qquad (i_* \F)(\alpha\colon \Spec R \to \X) \eqdef \hocolim_{\vcenter{\xymatrix@1{\Spec R'\ar[r]^{i'}\ar[d] & \Spec R \ar[d] \\ \X'\ar[r]_{i}& \X }}} (i')_* \F(R' \to \X') \] 
together with a natural map $i_* i^! \to \id$.  
\item If $i$ is a monomorphism, then there is a natural equivalence $\id \stackrel{\sim}\to i^! i_*$;
\item Suppose that $i\colon \X' \to \X$ can be written as a colimit of $i_\alpha\colon \Z_\alpha \to \X$ with each $i_\alpha$ and each transition map finite.  Then, the map $i_* i^! \to \id$ of (i) is the counit of an adjunction $(i_*, i^!)$.
\item Suppose the hypotheses of (iii) are satisfied, that $i$ is a monomorphism, and that $i^!$ is conservative.  Then, $i^!$ and $i_*$ are mutually inverse equivalences.
  \end{enumerate}
\end{lemma}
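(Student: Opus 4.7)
The plan is to prove the four statements in sequence, using throughout the defining description $\QCsh(\X) = \holim_{\Spec R \to \X}(\Ind\DCoh(\Spec R), f^!)$.

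For (i), I would verify that the stated formula gives a well-defined object of $\QCsh(\X)$. Unwinding the universal property of the $f^!$-limit, this amounts to producing, for each $\alpha\colon \Spec R \to \X$, an object of $\QCsh(\Spec R)$ and checking compatibility along the structural $f^!$ maps. The compatibility reduces to base change: for a composable pair $\Spec S \to \Spec R \to \X$, the Cartesian square $\Spec S \times_\X \X' \to \Spec R \times_\X \X'$ induces a cofinal functor on the relevant indexing comma categories, and base change for $f_*$ along the resulting finite-type affine maps supplies the required natural equivalence. The natural transformation $i_*i^! \to \id$ is then assembled termwise from the counits $(i')_*(i')^! \to \id$ appearing on each finite base-change.

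For (ii), the monomorphism hypothesis means the diagonal $\X' \to \X' \times_\X \X'$ is an equivalence, so for any $\beta\colon \Spec R \to \X'$ the induced map $\Spec R \times_\X \X' \to \Spec R$ is itself an equivalence. The comma category indexing the colimit defining $(i^!i_*\F)(\beta)$ therefore admits the identity lift as an essentially terminal object, collapsing the colimit; naturality yields the desired equivalence $\F \isom i^!i_*\F$.

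For (iii), write $i \isom \dlim_\alpha i_\alpha$ with each $i_\alpha$ and each transition map finite, so each $((i_\alpha)_*, (i_\alpha)^!)$ is a genuine adjoint pair and each transition $f^!$ has a left-adjoint $f_*$. Consequently $\QCsh(\X')$ appears both as the $f^!$-limit and as the $f_*$-colimit of this system in $\Pr^L$, so that $i_* \isom \dlim_\alpha (i_\alpha)_*$ is a filtered colimit of left adjoints, and hence itself a left adjoint to $i^! \isom \lim_\alpha (i_\alpha)^!$. One checks this left adjoint coincides with the formula of (i) by noting that both are characterized by the same finite-pushforward base-change data. For (iv), (iii) provides the adjunction and (ii) shows that the unit $\id \to i^!i_*$ is an equivalence, so $i_*$ is fully faithful. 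The triangle identity $i^!(\epsilon_\F) \circ \eta_{i^!\F} = \id_{i^!\F}$ together with invertibility of the unit shows that $i^!(\epsilon_\F)$ is an equivalence for every $\F$; conservativity of $i^!$ then forces $\epsilon_\F$ itself to be an equivalence, so $i_*$ is essentially surjective. The main obstacle is (i): the base-change/cofinality argument needs care because the $f^!$-structure maps of $\QCsh$ for arbitrary morphisms do not admit an elementary adjoint description, and one must track the coherence of the identifications across all simultaneous affine test points. All subsequent parts reduce to this piece together with the standard adjunction formalism.
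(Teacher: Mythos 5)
Your plan matches the paper's own proof step for step: the termwise assembly via base change and candidate counits in (i), the collapse of the indexing colimit to its terminal object (coming from $\X'\times_\X\X'\simeq\X'$) in (ii), the finite-case adjunction promoted to a filtered colimit in $\Pr^L$ in (iii), and the unit-equivalence plus conservativity argument in (iv). One small inaccuracy worth fixing: the maps $i'\colon\Spec R'\to\Spec R$ appearing in the colimit of (i) are \emph{not} finite in general, so the transformations $(i')_*(i')^!\to\id$ you invoke there are only the ``candidate counits'' that exist for arbitrary morphisms rather than genuine adjunction counits --- the genuine termwise adjunction is available only in the setting of (iii), where it still requires the paper's short verification over the affine Cech limit before you may assert it.
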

\begin{proof}\mbox{} \begin{enumerate}
  \item The structure maps in the hocolim defining $i_*$ are defined as follows: Given an arrow $j\colon \Spec R'' \to \Spec R'$ over $\Spec R \times_{\X} \X'$.  There is a natural equivalence $\F(R'' \to \X') \to j^! \F(R' \to \X')$ giving rise to a composite
  \[ j_* \F(R'' \to \X') \to j_* j^! \F(R' \to \X') \to \F(R' \to \X') \] where the second arrow is the candidate co-unit that always exists (though is not always a counit of an adjunction).

  To prove existence of $i^* i^! \to \id$, consider
  \[(i_* i^! \F)(\Spec R \to \X) = \hocolim_{\Spec R' \to \Spec R \times_{\X} \X'} (i')_* \F(\Spec R' \to \X' \to \X) \]
  and compose with the natural arrow 
  \[ (i')_* \F(\Spec R' \to \X' \to \X) \to (i')_* (i')^! \F(\Spec R \to \X) \to  \F(\Spec R \to \X) \]
\item If $i$ is a monomorphism, then $\X' \times_\X \X' = \X'$.  So, for any $\Spec R' \to \X'$ the following diagram is Cartesian
  \[ \xymatrix{ \Spec R' \ar@{=}[r] \ar[d] & \Spec R' \ar[d] \\ \X'  \ar@{=}[r] \ar[d]& \X'\ar[d] \\ \X' \ar[r]& \X } \]
  Writing
  \[ (i^! i_* \F)(\Spec R' \to \X') = i_*(\Spec R' \to \X' \to \X) = \hocolim_{\Spec R_2' \to \Spec R' \times_{\X} \X'} (i')_* \F(R_2' \to \X')  \]
  we see that the diagram over which the hocolim is taken has a terminal object, given by $\Spec R'$ itself.  The inclusion of this terminal object induces a natural equivalence $\F(\Spec R' \to \X') \stackrel{\sim}\to (i^! i_* \F)(\Spec R' \to \X')$.
\item We first handle the case of $i$ itself finite.  Since $i$ is affine, $i_*$ takes on an especially nice form
  \[ (i_* \F)(\Spec R \to \X) = (i')_* \F(\Spec R \times_{\X} \X') \]
  since $\Spec R \times_\X \X'$ is again affine.   Since fiber products commute with colimits in a (pre-)sheaf category, we have
  \[ \X' = \X \times_\X \X' = \hocolim_{\Spec R \to \X}(\Spec R \times_\X \X') \] and so $\QCsh(\X') = \lim_{\Spec R \to \X} \QCsh(\Spec R \times_\X \X')$ and we may identify $i^!$ and $i_*$ with the limits \[ i_*\colon \QCsh(X') = \lim_{\Spec R \to \X} \QCsh(\Spec R \times_\X \X') \Leftrightarrow \lim_{\Spec R \to \X} \QCsh(\Spec R) = \QCsh(X)  \colon i^!\]
 Since $i$ is finite, so is each $i'\colon \Spec R \times_\X \X' \to \Spec R$.  Since they are adjoint at each stage of the limit via the indicated counit, the same is true of the limit.
 
 Now the general case: Note that we have
  $ \QCsh(\X') = \holim \QCsh(\Z_\alpha)$ and $i^! = \holim i_{\alpha}^!$.  Since the transition maps are finite, the above implies that we're taking a holimit of a diagram in $\Pr^R$; also by the above, the $(i_\alpha)^!$ are morphisms in $\Pr^R$, and so general non-sense tells us that the same is true of $i^!$.  So, $i^!$ admits a left adjoint which general non-sense tells us is the colimit of the left adjoints $(i_\alpha)_*$; this colimit coincides with $i_*$ by inspection.
\item If follows by (iii) that there is an adjunction $(i_*, i^!)$.  It follows from (ii) that $i_*$ is fully-faithful, and it suffices to show that it is essentially surjective.  Considering the factorization of the identity $\id_{i_* \F} = \mathrm{counit}_{i_* \F} \circ i^!(\mathrm{unit}_\F)$, we see that $i^!(\mathrm{unit}_\F)$ is an equivalence for each $\F$.  Since $i^!$ is conservative we see that the unit for the adjunction is an equivalence, completing the proof.\qedhere
\end{enumerate}
\end{proof}

\begin{lemma}\label{lem:dcohz-supt} Suppose $\X$ is a \eqref{cond:starf} derived stack, that $j\colon U\to \X$ is a quasi-compact open substack, and $Z$ its closed complement.  Set $\QCsh_Z(\X) = \ker j^*\colon \Ind\DCoh X \to \Ind \DCoh U$ and $\DCoh_Z(\X) = \QCsh_Z(\X) \cap \DCoh(\X)$.  Then, there is a natural equivalence
  \[ \Ind \DCoh_Z(\X) = \QCsh_Z(\X) \]
\end{lemma}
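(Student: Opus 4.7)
My plan is to show that $\DCoh_Z(\X)$ is a set of compact generators of $\QCsh_Z(\X)$; the desired equivalence $\Phi\colon \Ind\DCoh_Z(\X) \stackrel{\sim}\longrightarrow \QCsh_Z(\X)$ will then follow from the universal property of Ind-completion.

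First I would establish that the natural extension $\Phi \colon \Ind \DCoh_Z(\X) \to \QCsh_Z(\X)$ of the inclusion $\DCoh_Z(\X) \hookrightarrow \QCsh_Z(\X)$ is fully faithful. Since $j^*$ is colimit-preserving, the kernel $\QCsh_Z(\X) \subset \QCsh(\X)$ is closed under all colimits; therefore any $\K \in \DCoh_Z(\X) = \QCsh_Z(\X) \cap \DCoh(\X)$, being compact in $\QCsh(\X) = \Ind \DCoh(\X)$, remains compact when viewed in $\QCsh_Z(\X)$. Mapping spaces between objects of $\DCoh_Z(\X)$ computed in $\QCsh_Z$ and in $\QCsh$ agree since $\QCsh_Z \hookrightarrow \QCsh$ is fully faithful, so Ind-completion produces a fully faithful $\Phi$ whose essential image is the full subcategory of $\QCsh_Z(\X)$ generated under filtered colimits by $\DCoh_Z(\X)$.

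For essential surjectivity, I would use the right adjoint $\RGamma_Z$ to the inclusion $\iota\colon \QCsh_Z(\X) \hookrightarrow \QCsh(\X)$ (which exists because $\iota$ preserves colimits and both categories are presentable) and its recollement description $\RGamma_Z = \fib(\id \to j_* j^*)$. Since $j$ is quasi-compact and quasi-separated, $j_*$ preserves filtered colimits, and therefore so does $\RGamma_Z$. For any $\F \in \QCsh_Z(\X)$ one has $\F = \RGamma_Z \F$; writing $\F = \dlim_\alpha \F_\alpha$ with $\F_\alpha \in \DCoh(\X)$ (possible because $\QCsh(\X) = \Ind \DCoh(\X)$) gives $\F = \dlim_\alpha \RGamma_Z \F_\alpha$, so everything reduces to proving that $\RGamma_Z \G$ lies in the essential image of $\Phi$ for each $\G \in \DCoh(\X)$.

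This last reduction is the step I expect to be the main obstacle. It is a local question on $\X$, which I would resolve by passing to a smooth affine chart $\Spec A \to \X$ on which the ideal of $Z$ admits generators $g_1, \ldots, g_n$. On such a chart the stable Koszul formula gives
\[ \RGamma_Z \G \;\simeq\; \hocolim_k \bigl[ K(g_1^k,\ldots,g_n^k) \otimes_A \G \bigr], \]
exhibiting $\RGamma_Z \G$ as a filtered colimit of perfect complexes tensored with a coherent complex, each term lying in $\DCoh_Z(\X)$ because its cohomology is $g_i^k$-torsion for every $i$. Globalizing requires knowing that the full subcategory $\Phi(\Ind\DCoh_Z(\X)) \subset \QCsh_Z(\X)$ is detected locally; this follows from the smooth descent properties of $\QCsh$ on \eqref{cond:starf} stacks proved in \autoref{app:descent} (the kernel of a descent morphism of descent-satisfying assignments is again descent-satisfying, and likewise for Ind-completions of such kernels). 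Combined with the preceding paragraph this yields essential surjectivity and completes the proof.
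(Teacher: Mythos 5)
Your proposal follows the same three-part strategy as the paper's sketch: full faithfulness from compactness of $\DCoh_Z(\X)$ in $\QCsh(\X)$ together with $\QCsh_Z$ being closed under colimits, a local argument via Koszul-type complexes, and globalization by smooth descent for $\QCsh$. The only organizational difference is that you package essential surjectivity through $\RGamma_Z = \fib(\id \to j_*j^*)$ applied to a coherent complex, whereas the paper directly factors an arbitrary map $\K \to \F$ from a coherent object through a Koszul-twisted object of $\DCoh_Z(\X)$; these amount to the same computation (and your parenthetical claim that $\Ind\DCoh_Z(-)$ itself satisfies descent is not immediate since $\Ind$ does not commute with limits, but the paper's sketch likewise only invokes descent for $\QCsh_Z(-)$, not for its compactly generated subcategory).
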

\begin{proof}[Sketch] If there are line bundles $\L_i$, $i=1,\ldots,k$, and sections $s_i \in \Gamma(\X, \L_i)$ such that $U = \bigcup_i D(s_i)$ then one has an explicit model for $j^!$ by inverting the sections, and one can show that $\QCsh_Z(\X)$ is generated by Koszul-type objects
  \[ \K \otimes \bigotimes_{i=1}^{k} \cone\left\{ s_i^{n_i}\colon \L_i^{\otimes -n_i} \to \O  \right\} \qquad \K \in \DCoh(\X), n_i \in \ZZ_{> 0} \]  Indeed, suppose $\F \in \QCsh_Z(\X)$, $\K \in \DCoh(\X)$ and that $\phi\colon \K \to \F$ is a map in $\Ind\DCoh(\X)$. The formula for $j^*$ as a filtered colimit under multiplication by the $s_i$, together with compactness of $\K$, implies that  there exist $n_i > 0$ such that $s_i^{n_i} \circ \phi$ is null-homotopic.  A choice of null-homotopies then gives rise to a factorization of $\phi$ through the appropriate Koszul-type object of $\DCoh_Z(\X)$.

When $\X$ is affine, or more generally has an ample family of line bundles, this is automatically satisfied. In general, $\Ind \DCoh_Z(\X) \to \QCsh_Z(\X)$ is fully-faithful, since objects in $\DCoh_Z(\X)$ are compact in $\Ind\DCoh(\X)$ and $\QCsh_Z(\X)$ is closed under colimits in $\Ind\DCoh(\X)$.  We have just proved that it is an equivalence locally, so that it suffices to verify that $\QCsh_Z(\X)$ has smooth descent.  Since the formation of $j^*$ commutes with smooth base change, it suffices to note that $\QCsh(\X)$ is a sheaf on $\X_{sm}$ (\autoref{thm:qcsh-sheaf}).
\end{proof}

\begin{proof}[Sketch proof of \autoref{thm:adams}] \mbox{}
  \begin{enumerate} \item
      First observe that $\ol{p}$ is a monomorphism: It suffices to check this on $R$-points before \'etale sheafification, where it is just the claim that $|\cosk f\colon  X \to S| \to S$ is a monomorphism for any map $f\colon  X \to S$ of spaces (c.f., \cite[Prop.~6.2.3.4]{T}). Furthermore the hypotheses of \autoref{lem:mono}(iii) are visibly satisfied (consider the simplicial diagram).  Applying \autoref{lem:mono}(iv) it suffices to to show that $\ol{p}^!$ is conservative.  Letting $\ol{p}_0: \Z \to \sXZ$, it suffices to show that $\ol{p}_0^!$ is conservative.  
      
  Since $\sXZ \to \X$ is a monomorphism, $\ol{p}_0$ is affine (indeed finite) since $\Z \to \X$ is.  Suppose $\F \in \QCsh(\sXZ)$ is non-zero; then by definition, there is some $t\colon T \to \sXZ \subset \X$ such that $t^! \F \neq 0$.  Let $t'\colon T' \to \Z$ be the base change of $t$ along the affine morphism $\ol{p}_0$, and $p'\colon T' \to T$ the corresponding base-change of $\ol{p}_0$.  Note that $(t')^! (\ol{p}_0)^! \F = (p')^! t^! \F$, so it suffices to prove that $(t')^!$ is conservative.  This follows from \autoref{lem:coh-red}, upon noting that $T' \to T$ is an equivalence on reduced parts since it is base-changed from $\Z \to \sXZ$.

\item The composite $|p_\bullet| = i \circ \ol{p}$ is again a monomorphism.  So \autoref{lem:mono}(iii) shows that $(|p_\bullet|)_*$ is fully-faithful and left-adjoint to $(|p_\bullet|)^!$.  Let $j\colon U \to \X$ be the inclusion of the open complement to $Z$. Base-change implies $j^* (|p_\bullet|)_* = 0$ so that the essential image of $(|p_\bullet|)_*$ is contained in $\ker j^! = \QCsh_Z(\X)$.  It suffices to show that the restriction of $(|p_\bullet|)^!$ to $\QCsh_Z(\X)$ is conservative.  In light of (i), it suffices to show that the restriction $\res{i^!}{\QCsh_Z(\X)} \colon \QCsh_Z(\X) \to \QCsh(\sXZ)$ is conservative.

Suppose $\F \in \QCsh_Z(\X)$ is non-zero, so that by definition there is some $t\colon T \to \X$ such that $t^! \F \neq 0$.  Let $p'\colon T' = (t^{-1}(Z))_\red \to T$ be the reduced-induced (discrete) scheme structure on $t^{-1}(Z) \subset \pi_0 T$, and note that $t \circ p'\colon T'\to \X$ factors through $\sXZ$.  Thus, it suffices to show that $(p')^!\colon \QCsh_{t^{-1}(Z)}(T) \to \QCsh(T')$ is conservative.  Since $\QCsh_{t^{-1}(Z)}(T) = \Ind \DCoh_{t^{-1}(Z)}(T)$ by \autoref{lem:dcohz-supt} in the affine case, this follows by \autoref{lem:coh-red}.\qedhere
  \end{enumerate}
\end{proof}

\section{Matrix factorizations via groups acting on categories}\label{sec:mf-gps}
In this section, we sketch Constantin Teleman's description of matrix factorizations (for a map to $\GG_m$ instead of $\AA^1$) as arising from $S^1$-actions on complexes on the total space.   We also give a variant of this replacing $S^1$ by $B\oh{\GG_a}$ which actually corresponds to map to $\AA^1$.  For simplicity we will first focus on the case where $M$ is a scheme rather than an orbifold, and later will indicate the necessary modifications.

\subsection{Hypersurfaces and \texorpdfstring{$S^1$}{S^1}-actions on coherent sheaves}\label{ssec:s1-hypersurf}
\begin{lemma}\label{lem:s1-act-mf} Suppose $M$ is a (discrete)  $k$-scheme, and $Z \subset M$ a closed subset.
  \begin{enumerate} 
    \item Suppose that $M$ is finite-type over $k$.  Then, there is an equivalence of $\infty$-groupoids 
      \[ H^0(M, \O_M)^\times = \GG_m(M)  \stackrel{\sim}{\longrightarrow} \left\{\begin{gathered} \text{$S^1$-actions on $\DCoh(M)$}\\\text{as $k$-linear $\infty$-category}\end{gathered}\right\} \]
    \item Suppose that $M$ is finite-type over $k$.  Then, there is an equivalence of $\infty$-groupoids 
      \[ H^0(\oh{Z}, \O_{\oh{Z}})^\times = \GG_m(\oh{Z}) \stackrel{\sim}\longrightarrow \left\{\begin{gathered} \text{$S^1$-actions on $\DCoh_Z(M)$}\\\text{as $k$-linear $\infty$-category}\end{gathered}\right\} \]
      \item There is an equivalence of $\infty$-groupoids
	\[ H^0(M, \O_M)^\times = \GG_m(M) \stackrel{\sim}\longrightarrow\left\{\begin{gathered} \text{$S^1$-actions on $\Perf(M)$}\\\text{as $k$-linear $\infty$-category}\end{gathered}\right\}  \]
  \end{enumerate}
\end{lemma}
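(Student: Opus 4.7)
I focus on (iii), which is the simplest case; (i) and (ii) follow by analogous strategies, with modifications indicated at the end.

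The plan is to identify the $\infty$-groupoid of $S^1$-actions on a $k$-linear dg-category $\C$ with $\Map_*(BS^1, B\Aut_{\dgcatidm_k}(\C))$, reduce this to a computation of $\End(\id_\C)$, and then compute using Fourier-Mukai. For $\C = \Perf(M)$, \autoref{thm:coh-fmk} identifies the endofunctor category with Fourier-Mukai kernels in $\Perf(M \times M)$, with $\id_{\Perf M} \leftrightarrow \Delta_* \O_M$, giving
\[ \End(\id_{\Perf M}) = \RHom_{M \times M}(\Delta_* \O_M, \Delta_* \O_M), \]
whose $\pi_0$ is $H^0(M,\O_M)$ (either by HKR, or by direct computation since $\pi_0 \Delta^*\Delta_*\O_M = \O_M$). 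Consequently $\pi_0 \Aut(\id_{\Perf M}) = H^0(M,\O_M)^\times = \GG_m(M)$, which identifies $\pi_0$ of the space of $S^1$-actions.

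The main content is upgrading this $\pi_0$-identification to an equivalence of $\infty$-groupoids, i.e.\ showing the space of $S^1$-actions on $\Perf(M)$ is equivalent to the discrete set $\GG_m(M)$. The cleanest approach is to construct the equivalence in both directions explicitly. Given $u \in \GG_m(M)$, view $u$ as a map $u \colon M \to \GG_m$, and apply the $\GG_m$-analog of \autoref{constr:Bact}: since $\bB \simeq \pt\times_{\GG_m}\pt$ with loop composition playing the role of $S^1$, pulling back the natural $\bB$-action along $u$ endows $\Perf(M)$ with a coherent $S^1$-action (and not merely a $k\ps{\bt}$-linear structure on a completed fiber, as occurred in the $\AA^1$ setting). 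Conversely, an $S^1$-action determines, via the image of the generator of $\pi_1 S^1$ in $\Aut(\id_{\Perf M})$, a class in $\pi_0 \End(\id_{\Perf M})^\times = \GG_m(M)$. Checking these two constructions are mutually inverse — in particular, that the coherence data assembled from $u$ produces exactly the right higher-homotopy data — is the main obstacle; this is guaranteed by the fact that $\GG_m$ is an $E_\infty$ derived group, which supplies the necessary coherences via the Segal/Eckmann--Hilton apparatus of \autoref{ssec:Bact}.

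For (i), use the shriek version of \autoref{thm:coh-fmk} (for which $\id_{\DCoh M}$ corresponds to $\Delta_*\omega_M$): the analogous computation gives $\pi_0 \End(\id_{\DCoh M}) = \Gamma(M,\O_M)$, and the rest of the argument carries through verbatim (noting that $M$ is finite-type over $k$, so Fourier--Mukai for $\QCsh$ applies). For (ii), apply \autoref{thm:cpltn} to identify $\DCoh_Z(M) \simeq \DCoh(\oh{M_Z})$, reducing to the analogous statement on the formal completion, where the relevant center has $\pi_0 = \Gamma(\oh{Z},\O_{\oh{Z}})$ and hence units $\GG_m(\oh{Z})$.
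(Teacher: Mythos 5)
Your plan diverges from the paper's proof in a way that leaves a real gap. The paper argues purely by computing the mapping space: an $S^1 = B\ZZ$-action on $\C$ is a double loop map $\ZZ \to \Aut(\id_\C) \subset \End(\id_\C) = \Omega^\infty \bHH^\bullet(\C)$, and — this is the crux — $\bHH^\bullet(\C)$ has \emph{no positive homotopy groups} (because $\Delta_*\O_M$, resp.\ $\Delta_*\omega_M$, lies in the heart of the $t$-structure on $\QC(M^2)$, resp.\ $\QCsh(M^2)$), so $\Omega^\infty\bHH^\bullet(\C)$ is the \emph{discrete} monoid $\pi_0\bHH^\bullet = H^0(M,\O_M)$. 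Once the target is a discrete abelian group, double loop maps out of the discrete abelian group $\ZZ$ are just ordinary group homomorphisms, and the whole space of actions collapses to the discrete set $\GG_m(M)$. You compute $\pi_0\End(\id_\C) = H^0(M,\O_M)$ but never establish that $\End(\id_\C)$ is \emph{discrete}, and this is precisely what makes the space of $S^1$-actions discrete (equivalently: that each component of the space of actions is contractible, i.e.\ an action has no nontrivial automorphisms).

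Your explicit two-sided construction — $u \in \GG_m(M)$ giving an action via the $\GG_m$-variant of Construction~\ref{constr:Bact}, and monodromy giving the retraction — is plausible and provides the map plus a retraction on $\pi_0$, but that alone does not give an equivalence of $\infty$-groupoids. The justification you offer ("$\GG_m$ is an $E_\infty$ derived group" plus the Eckmann--Hilton apparatus of \S\ref{ssec:Bact}) is not the right ingredient: it controls commutativity of the two monoid structures, not the truncatedness of $\Aut(\id_\C)$. The missing lemma is the coconnectivity of $\bHH^\bullet$, which your proposal never invokes and without which the "main obstacle" you identify is not resolved. (Minor additional issues: for part~(iii) you cite Theorem~\ref{thm:coh-fmk}, which is the $\QCsh$/shriek version; the Fourier--Mukai result for $\Perf(M)$ and $\QC$ is the BFN one, and the kernel lives in $\QC(M \times M)$, not $\Perf(M\times M)$. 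For part~(ii) you correctly reduce to the formal completion, but the paper's computation of $\pi_0\bHH^\bullet(\QCsh_Z(M))$ is nontrivial and again leans on the observation that the full $\bHH^\bullet$, while unwieldy, has no positive homotopy groups.)

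Once the coconnectivity of $\bHH^\bullet$ is in hand, your explicit construction becomes unnecessary: the paper's direct computation of the mapping space is shorter. If you do want to retain the explicit construction (it is useful elsewhere, e.g., for comparison with $B\oh{\GG_a}$-actions in \S\ref{ssec:BGa-hypersurf}), you should state and use the discreteness of $\End(\id_\C)$ as a separate ingredient.
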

\begin{proof}\mbox{}\begin{enumerate}
  \item Since $S^1$ is connected, any $S^1$-action on $\Ind \C$ must be by colimit and compact object preserving functors (since these properties are preserved under equivalence of functors).  So restriction to compact object identifies the space of $S^1$-actions on $\C$ and the space of $S^1$-actions on $\Ind \C$.
     
      We first compute the space of $S^1$-actions on $\Ind \DCoh(M)$, without support conditions:
      An $S^1$-action can be identified with the data of a loop map
  \[ S^1 \stackrel{\otimes}\longrightarrow \Aut_{k}(\QCsh(M)) \]
  and since $S^1 = B\ZZ$, this is the same as giving a double loop map
  \[ \ZZ \stackrel{\otimes^2}\longrightarrow \Aut_{\Fun^L_{\QCsh(M)}(\QCsh(M))}\left(\id_{\QCsh(M)}\right) \subset  \End_{\Fun^L_{k}(\QCsh(M))}\left(\id_{\QCsh(M)}\right) = \Omega^\infty\bHH^\bullet(\QCsh(M)) \] We know that $\bHH^\bullet(\QCsh(M))$ has no positive homotopy groups, so that this final space identifies with the (discrete) monoid \[ \pi_0 \bHH^\bullet(\QCsh(M)) = H^0(M, \O_M) \]  The monoid composition is by multiplication, so the sub-monoid of automorphisms identifies with the discrete monoid $H^0(M, \O_M)^\times = \GG_m(M)$.  Since both $\ZZ$ and $\GG_m(M)$ are discrete, $2$-fold loop maps are just ordinary (abelian) group homomorphisms.  
  We conclude that that the space of $S^1$-actions on $\DCoh(M)$ is precisely $\GG_m(M)$.
  \item As in (i), noting that the relevant right-hand side here is (the units in)
 \begin{align*} \Omega^\infty \bHH^\bullet\left(\QCsh_Z(M)\right) &= \Omega^\infty \RHom_{\QCsh_{Z^2}(M^2)}\left(\Delta_* \RGamma_Z \omega_M, \Delta_* \RGamma_Z \omega_M\right) \\
& = \Omega^\infty \RHom_{\QCsh(M^2)}\left(\Delta_* \RGamma_Z \omega_M, \Delta_* \omega_M\right) \\
&= \Omega^\infty \RHom_{\QCsh(M^2)}\left(``\dlim_n" \Delta_* \HHom(\O/\I_Z^n, \omega_M), \Delta_* \omega_M\right) \\
&=  \Omega^\infty \ilim_n \RHom_{\DCoh(M^2)}\left(\Delta_* \HHom(\O/\I_Z^n, \omega_M), \Delta_* \omega_M\right) \\
&=  \Omega^\infty \ilim_n \RHom_{\DCoh(M^2)}\left(\Delta_* \O_M, \Delta_* \O/\I_Z^n\right) \\
&=  \pi_0 \ilim_n \RHom_{\DCoh(M^2)}\left(\Delta_* \O_M, \Delta_* \O/\I_Z^n\right) \\
&=  \ilim_n \pi_0 \RHom_{\DCoh(M^2)}\left(\Delta_* \O_M, \Delta_* \O/\I_Z^n\right) \\
&=  H^0\left(\oh{Z}, \O_{\oh{Z}}\right)
\end{align*}
(While the full $\bHH^\bullet(\QCsh_Z(M))$ may be unwieldy, it still has no positive homotopy groups so that $\Omega^\infty = \pi_0$ admits a nice description.)
\item As in (i), but using the description
  \[ \Omega^\infty \bHH^\bullet(\QC(X)) = \Omega^\infty \RHom_{\QC(X)}(\Delta_* \O_X, \Delta_* \O_X) = \pi_0 \RHom_{\QC(X)}(\Delta_* \O_X, \Delta_* \O_X) = H^0(X, \O_X). \qedhere \]
\end{enumerate}
\end{proof}

\begin{defn} Suppose $M$ is a discrete finite-type $k$-scheme, $Z \subset M$ a closed subset, and $\oh{Z}$ the formal completion.  For $f \in \GG_m(M)$ (resp., $f \in\GG_m(\oh{Z})$), define $\CircMF(M,f)$ (resp., $\CircMF(\oh{Z},f)$) to be $\DCoh(M)$ (resp., $\DCoh_Z(M) = \DCoh(\oh{Z})$) equipped with the $S^1$-action of \autoref{lem:s1-act-mf}.
\end{defn}

\begin{na} The previous Lemma encoded the intuitive statement that an $S^1 = B\ZZ$ action on $\C$ is just a compatible family of automorphisms of the Hom-spaces, here given by multiplication by $f \in \GG_m(M)$.  We now move on to showing the intuitive claim that $\C^{S^1}$ consists of objects equipped with trivializations of this automorphism, with maps given by the the fixed points for the induced $S^1$-action on mapping spaces.  Since a trivialization of multiplication by $f$ is precisely a null-homotopy of $f-1$, we will show that $(\DCoh M)^{S^1} = \DCoh (M_1)$ where $M_1$ is the derived fiber of $f$ over $1 \in \GG_m$.  This can be viewed as explaining \autoref{prop:s1-hom}.
\end{na}

\begin{lemma}\label{lem:invts} Suppose a simplicial group $G$ acts on a small, idempotent complete, $\infty$-category $\C$.  Then:
  \begin{enumerate}
    \item $(\Ind \C)^G \isom \Ind(\C_{G^{op}})$, and in particular the former is compactly generated.
    \item The natural functor $i\colon (\Ind\C)^G \to \Ind(\C)$ admits a compact-object preserving left adjoint $i^L$, and a colimit-preserving right adjoint $i^R$.  Furthermore, $i$ is conservative and so induces equivalences
      \[ (\Ind\C)^G = (i \circ i^L)\mod\left(\Ind\C\right) \qquad (\Ind\C)^G = (i \circ i^R)\comod\left(\Ind\C\right) \]
    \item The natural functor $\C^G \to (\Ind\C)^G$ is fully-faithful, with essential image consisting of those objects $x \in (\Ind\C)^G$ for which $i(x) \in \C \subset \Ind(\C)$.   In particular, $\C_{G^{op}} \subset \im \C^G$.
    \item There is a natural equivalence
      \[ \C^G = (i \circ i^L)\mod(\C). \]
  \end{enumerate}
\end{lemma}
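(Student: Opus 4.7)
The plan is to deduce parts (ii)--(iv) formally from (i), whose heart is an ambidexterity argument for group actions on compactly generated presentable categories.

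For (i), I will encode the $G$-action as $\alpha\colon BG \to \dgcatidm$ landing in equivalences, and apply the colimit-preserving functor $\Ind$ to obtain a $G$-action on $\Ind\C$ through equivalences in $\Pr^L$. The claim $(\Ind\C)^G \isom \Ind(\C_{G^\op})$ factors as
\[ (\Ind\C)^G = \lim\nolimits^{\Pr^L}_{BG} \Ind\C \;\stackrel{\mathrm{ambi}}{=}\; \operatorname{colim}\nolimits^{\Pr^L}_{BG^\op} \Ind\C = \Ind\bigl(\operatorname{colim}\nolimits^{\dgcatidm}_{BG^\op} \C\bigr) = \Ind(\C_{G^\op}), \]
where the outer equalities use that $\Ind\colon \dgcatidm \hookrightarrow \Pr^L_{\mathrm{cg}}$ preserves colimits. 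For the middle (ambidexterity) step, I will rewrite the $\Pr^L$-colimit as a $\Pr^R$-limit along the right-adjoint diagram; since each $g \in G$ acts by an equivalence, its right adjoint is $g^{-1}$, so the resulting $\Pr^R$-limit agrees---after forgetting to $\Cat_\infty$, where the forgetful functors from $\Pr^L$ and $\Pr^R$ both preserve limits, and invoking the canonical equivalence $BG \simeq BG^\op$ from group inversion---with the $\Cat_\infty$-limit defining $(\Ind\C)^G$. Compact generation of $(\Ind\C)^G$ is then automatic.

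For (ii), the functor $i$ is a projection from the $\Pr^L$-limit of (i), hence a morphism in $\Pr^L$ that additionally preserves all limits; by the adjoint functor theorems in both directions, it admits $i^L \dashv i \dashv i^R$. Part (i) identifies $i^L$ on compact objects with the canonical functor $\C \to \C_{G^\op}$, which lands in compacts of $\Ind(\C_{G^\op})$, so $i^L$ preserves compacts. Conservativity of $i$ is immediate from the limit description of $(\Ind\C)^G$. Monadic and comonadic Barr--Beck--Lurie then apply, giving the two (co)module descriptions. For (iii), the $G$-equivariant fully faithful inclusion $\C \hookrightarrow \Ind\C$ induces a fully faithful $\C^G \hookrightarrow (\Ind\C)^G$ with essential image characterized by $i(x) \in \C$, and the containment $\C_{G^\op} \subset \im \C^G$ follows because (under (i)) $\C_{G^\op}$ corresponds to the compact objects of $(\Ind\C)^G$, which map to $\C$ under $i$: the ambidexterity of (i) yields $i^L \simeq i^R$, so $i$ itself preserves compacts. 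For (iv), the monad $T = i \circ i^L$ restricts to a monad on $\C$ (both $i$ and $i^L$ preserve compacts), and a $T$-module in $\C$ is the same as a $T$-module in $\Ind\C$ whose underlying object lies in $\C$, which by (ii) and (iii) is precisely $\C^G$.

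The main obstacle will be a clean formulation of the ambidexterity step in (i): identifying $\Pr^L$-limits with $\Pr^L$-colimits along a group action by equivalences, in a way that is compatible with the passage along $\Ind$. Once this is in hand, the remaining parts follow formally from adjoint functor theorems, Barr--Beck--Lurie, and the fully-faithfulness of $\C \hookrightarrow \Ind\C$.
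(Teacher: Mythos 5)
Your proposal follows essentially the same route as the paper: reinterpret the $G$-action as a $BG$-indexed diagram, move between $\Pr^L$ and $\Pr^R$ using the fact that $G$ acts by equivalences, apply the formula for $\Pr^L$-colimits of compactly generated categories along compact-preserving functors to obtain $(\Ind\C)^G \isom \Ind(\C_{G^\op})$, and then deduce (ii)--(iv) formally via adjoint functor theorems, Barr--Beck, and fully-faithfulness of $\C \hookrightarrow \Ind\C$.

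There is, however, a concrete error in your handling of (iii). You claim that ``the ambidexterity of (i) yields $i^L \simeq i^R$.'' This is false for the groups at play. The statement established in (i) is that the limit and colimit \emph{categories} agree, not that the cone and cocone maps agree; in general $i^L$ and $i^R$ differ by a Wirthm\"uller-type shift. Concretely, take $\C = \Perf k$ with trivial $S^1$-action: then $(\Ind\C)^{S^1} \simeq k\ps{\bt}\mod$, $i$ identifies with $- \otimes_{k\ps{\bt}} k$, and one computes $i^L \simeq i^R[\pm 1]$. This is exactly the discrepancy recorded in \autoref{lem:mf-prelim}, where $F_L = (-)_{S^1}[1]$ and $F_R = (-)^{S^1}$ are distinct functors. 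The conclusion you want --- that $i$ preserves compacts --- is nevertheless correct, but you must argue via the weaker (true) statement: $i^R$ is a \emph{shift} of $i^L$, hence colimit-preserving (since $i^L$ is a left adjoint and shifting preserves colimits); and for stable compactly generated categories, $i^R$ colimit-preserving is equivalent to $i$ compact-preserving. Replace ``$i^L \simeq i^R$'' with that reasoning and the argument closes. (It is worth noting that the paper's own justification of this step, in its proof of (ii), is also terse: it asserts $i$ is compact-preserving ``as in the argument for (i)'' without spelling out why the $\Pr^L$-colimit formula, which a priori controls $i^L$ rather than $i$, yields the claim.)
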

\begin{proof} \mbox{}
  \begin{enumerate}
    \item Note that $G$ acts on the presentable $\infty$-category $\Ind(\C)$ by right-adjoint maps; their left adjoints may be taken to be the inverses of the action, i.e., the action of $G^op$ on $\C$.  So $(\Ind\C)^G$ may be computed in $\Pr^R$, or equivalently as the colimit of the opposite diagram in $\Pr^L$.  This opposite diagram is just the action of $G^{op}$ on $\Ind(\C)$, and it is by colimit and compact object preserving maps; so the colimit in $\Pr^L$ may be computed by taking the colimit of the (small, idempotent complete) $\infty$-categories of compact objects and then forming $\Ind$.  Putting this together, we obtain:  
      \[ \underbrace{(\Ind\C)^G}_{\Pr^R} \isom \underbrace{\Ind(\C)_{G^op}}_{\Pr^L} \isom \Ind(\C_{G^\op}) \]
    \item Since $G$ acts by equivalences, the limit $(\Ind\C)^G$ may be computed in either $\Pr^L$ or $\Pr^R$, the natural functor $i$ is both colimit and limit preserving; since the diagram of left-adjoints is consists of compact-preserving functors, $i$ is also compact-preserving (as in the argument for (i)).  Since $\Ind(\C)$ is compactly-generated, this implies the existence of left- and right-adjoints with the indicated properties.

      The fact that $i$ is conservative follows from observing that $(\Ind\C)^G$ is the homotopy limit over a connected diagram ($BG$).  Now, Lurie's Barr-Beck Theorem implies the desired equivalences.
    \item Since $\C \to \Ind(\C)$ is fully-faithful, the same is true for any limit.  Realize the $G$-actions, and the natural functor, by a diagram \[ \xymatrix{ \sq{\C} \ar[dr] \ar[r] & \sq{\Ind(\C)} \ar[d] \\ & BG } \] where the vertical maps are Cartesian fibrations and the horizontal map preserves Cartesian simplices (and is fully-faithful since $\C \to \Ind\C$ is).  Then, $\C^G$ is explicitly given by Cartesian sections of $\sq{\C} \to BG$, while $(\Ind \C)^G$ is explicitly given by Cartesian sections of $\sq{\Ind\C} \to BG$.  Since $BG$ is connected, to check if a Cartesian section of $\sq{\Ind\C} \to BG$ lands in $\sq{\C}$ it suffices to check at the base-point.
    \item Note that the monad $i \circ i^L$ on $\Ind \C$ preserves compact objects, and so gives rise to a monad on the compact objects.  Then, this follows by combining (ii) and (iii).\qedhere
  \end{enumerate}
\end{proof}

\begin{lemma}\label{lem:kt-mf} Suppose that $M$ is a (discrete) $k$-scheme, $Z \subset M$ closed.  Set $\C = \Perf(M)$ (resp., if $M$ is finite-type, $\C = \DCoh_Z(M) = \DCoh(\oh{Z})$).  Via \autoref{lem:s1-act-mf}, a morphism $f\colon  M \to \GG_m$ (resp., $f\colon  \oh{Z} \to \GG_m$) gives rise to a natural $S^1$-action on $\C$.  The monad $i \circ i^L$ of \autoref{lem:invts} identifies with $\O_{M_1} = \O_M \otimes_{\O_{\GG_m}} k \in \Alg(\QC(M))$ (resp., $\O_{\oh{\Z}_1} = \O_{\oh{\Z}} \otimes_{\O_{\GG_m}} k \in \Alg(\QC(\oh{Z}))$) giving rise to natural equivalences
  \[ \C^{S^1} = \O_{M_1}\mod(\C) \qquad \text{and} \qquad \C_{S^1} = \left(\O_{M_1}\mod(\Ind \C)\right)^c \] (resp. $\O_{\oh{\Z}_1}$ versions).  Consequently,
\begin{itemize}
  \item If $\C = \DCoh(M)$ (resp. $\DCoh_Z(M)$), this gives $\C^{S^1} \isom \DCoh(M_1)$ (resp., $\DCoh(\oh{\Z}_1)$).  In particular, $\DCoh(M_1)$ (resp., $\DCoh(\oh{\Z}_1)$) is naturally $C^*(BS^1,k)$-linear.\footnote{This corresponds to the $k\ps{\bt}$-linear structure one gets by applying the construction of \autoref{sec:more-general} with base $\Y = \GG_m$ in place of $\GG_a$.  The formal exponential induces an equivalence \[ \exp\colon 0 \times_{\GG_a} 0 \stackrel{\sim}\to 1 \times_{\GG_m} 1 \] and so a symmetric monoidal equivalence $\DCoh(1 \times_{\GG_m} 1) \isom \DCoh(0 \times_{\GG_a} 0) \isom \Perf k\ps{\bt}$.}
  \item If $\C = \Perf(M)$, this $\C_{S^1} = \Perf(M_1)$.
\end{itemize}
\end{lemma}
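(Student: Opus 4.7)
The plan is to reduce both displayed equivalences to a single monad identification, and then to identify that monad with $\O_{M_1}$ acting by tensor product.

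First, \autoref{lem:invts}(ii) and (iv) shows that the forgetful $i\colon \C^{S^1} \to \C$ is monadic with $\C^{S^1} = T\mod(\C)$ for $T = i\circ i^L$. The identification $(\Ind\C)^{S^1} = \Ind(\C_{(S^1)^\op})$ of \autoref{lem:invts}(i) then yields $\C_{S^1} = (T\mod(\Ind\C))^c$, since $(S^1)^\op \isom S^1$. Both statements of the lemma thus reduce to the claim that, under the $\QC(M)$-linear (resp.\ $\QC(\oh{Z})$-linear) structure on $\Ind\C$, the monad $T$ is given by $-\otimes_{\O_M}\O_{M_1}$ for $M_1 = M\times_{\GG_m}\{1\}$.

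The core step is the identification of $T$. Since $S^1 = B\ZZ$, an $S^1$-action on $\C$ amounts to a $\ZZ$-action on $\id_\C$; by the proof of \autoref{lem:s1-act-mf}, ours is multiplication by the unit $f \in \GG_m(M)$. An $S^1$-equivariant structure on $x\in\Ind\C$ is thus an equivalence between the resulting $\ZZ$-action on $x$ and the trivial one, i.e., a null-homotopy of $(f-1)\cdot\id_x$. Such null-homotopies are parametrized exactly by lifts of $x$ to a module over the Koszul resolution
\[ \O_{M_1} = \O_M \Lotimes_{\O_{\GG_m}} k \isom \bigl(\O_M[\varepsilon],\; \varepsilon^2=0,\; d\varepsilon = f-1\bigr), \]
and the forgetful $\O_{M_1}\mod(\Ind\C) \to \Ind\C$ shares the monadicity properties of $i$. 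By uniqueness in Barr--Beck, the two monads agree. To match the $\QC(M)$-algebra structure on $T$ with that of $\O_{M_1}$ rigorously, I would reduce to the universal case $\QC(\GG_m)$ with the tautological $S^1$-action from $t\in\GG_m(\GG_m)$, where the monad is $\QC(\GG_m)$-linear and thus determined by its value on the unit $\O_{\GG_m}$; unwinding the simplicial bar construction computing $i^L(\O_{\GG_m})$ identifies it with the Koszul resolution of $\O_{\{1\}}$. The general $(M,f)$ case then follows by pullback along $f^*\colon\QC(\GG_m)\to\QC(M)$.

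Having identified $T$, the consequences fall out. For $\C = \DCoh(M)$: the closed immersion $\iota\colon M_1 \hookrightarrow M$ is finite and of finite Tor-dimension (its structure sheaf is a two-term Koszul complex on $f-1$), so \autoref{lem:mfplus-monadic}(ii) gives $\O_{M_1}\mod(\DCoh M) \isom \DCoh(M_1)$. For $\C = \Perf(M)$: \autoref{lem:mfplus-monadic}(i) applied to $\iota$ gives $\O_{M_1}\mod(\QC M) \isom \QC(M_1)$; passing to compact objects yields $\Perf(M)_{S^1} = (T\mod(\QC M))^c = \Perf(M_1)$. The version with $\oh{Z}$ and $\oh{\Z}_1 = \oh{Z}\times_{\GG_m}\{1\}$ is formally identical, replacing $M\to\GG_m$ by $\oh{Z}\to\GG_m$. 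The main obstacle is precisely the rigorous universal identification: verifying that the abstract Barr--Beck monad coincides, as a $\QC(\GG_m)$-linear $E_1$-algebra, with tensoring by the Koszul resolution $\O_{\{1\}}$---the heuristic is transparent, but the explicit comparison of the bar construction for $i^L$ with the Koszul resolution of $k$ over $k[t^{\pm 1}]$ requires some care.
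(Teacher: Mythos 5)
Your proof is correct and follows essentially the same strategy as the paper: identify the monad $i\circ i^L$ with tensoring by $\O_{M_1}$ and then apply \autoref{lem:mfplus-monadic}. The paper's route to the monad identification is slightly different in presentation: after reducing to $M=\Spec R$ affine by a sheafification argument, it views $R$ as an algebra over $\O_{\GG_m}=k[\ZZ]$ via $f$, interprets this as a multiplicative $R$-line $R_f^\otimes$ on $S^1=B\ZZ$, and observes that $\C^{S^1}$ is the category of twisted $R$-local systems on $BS^1$ so that the monad is $C_*(S^1,R_f^\otimes)\otimes-=R\otimes_{k[\ZZ]}k\otimes-$; your Koszul resolution $(\O_M[\varepsilon],d\varepsilon=f-1)$ is that same derived tensor product unwound. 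Your proposed reduction to the universal case $\QC(\GG_m)$ with the tautological $S^1$-action, pulling back along $f$, is an acceptable (and arguably cleaner) way to nail the $\QC(M)$-algebra structure, which the paper also glosses over. One small point you pass over too quickly: in \autoref{lem:invts}(i) the coinvariants $\C_{G^\op}$ refer to the \emph{inverted} $G$-action (the one by the left adjoints), which for $G=S^1$ and your setup corresponds to replacing $f$ by $f^{-1}$; this happens to be harmless because $(f^{-1})^{-1}(1)=f^{-1}(1)=M_1$, but it deserves the one-line justification.
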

\begin{proof} 
We will treat the case of $\C = \DCoh(U)$, the rest being analogous.
 
For notational convenience we reduce to the affine case: For any $U \subset M$, one may restrict $f$ to $U$; in this way, one can make $U \mapsto \DCoh(U)$ into a sheaf of $\infty$-categories with $S^1$-action.  A homotopy limit of sheaves is a still a sheaf, so the assignment $U \mapsto \DCoh(U)^{S^1}$ forms a sheaf of $C^*(BS^1, k)$-linear categories on $M$.  Since $U \mapsto \DCoh(U_1)$ is also a sheaf  on $M$, the claim that one can naturally identify the two is local.  So assume $M = \Spec R$.

The map $\ZZ \to R$ gives rise to a multiplicative $R$-line $R_f^\otimes$ on $S^1 = B\ZZ$: In more down to earth terms, we consider consider $R$ as an algebra over the group ring $k[\ZZ] = \O_{\GG_m}$.  It is then clear that \[ C_*(S^1, R_f^\otimes) = R \otimes_{k[\ZZ]} k = R_1 \]  Meanwhile, $\DCoh(R)$ gives rise to a locally constant sheaf of categories on $BS^1$ (with fiber $\C$) whose global sections are $\C^{S^1}$: To $U \subset BS^1$ it assigns a twisted form of $R$-local systems on $U$, so that in particular $\C^{S^1}$ is a twisted form of $R$-local systems on $BS^1$.   

Tracing through the constructions, the monadic machinery is doing the following: To $\M$ a twisted $R$-local system, it assigns the fiber over the base-point $\M_\pt$ acted on by the algebra $C_*(S^1, R_f^\otimes)$.  This identifies the monad $i \circ i^L$ with $C_*(S^1, R_f^\otimes) \otimes -$ equipped with the algebra  structure coming from the multiplicative structure on $R_f^\otimes$.

The ``In particular'' claims then follow from \autoref{lem:mfplus-monadic} (and a suitable variant for $\oh{\Z}$).
\end{proof}

\begin{remark} The previous two Lemmas imply a very slight refinement of the statement that $\MF(M,f)$ depends only on a formal completion of the (critical locus intersect the) zero fiber in $M$: It depends only on the $\infty$-category of coherent complexes on the completion, together with an $S^1$-action encoding the function.
\end{remark}

\begin{na} If $M = U \quot G$ is a global quotient orbifold, then $G$ acts on $\Perf(U)$ with $\Perf(U)^G = \Perf(M)$ (by faithfully flat descent for $\Perf(-)$).  It follows from \autoref{lem:s1-act-mf}, applied to $M$, that a $G$-invariant invertible function on $M$ gives rise to an action of $S^1$ on $\Perf(U)$ compatible with this $G$-action.  Thus we obtain an $S^1$-action on $\Perf(U)^G = \Perf(M)$, and applying \autoref{lem:s1-act-mf} on $G$ we see $\Perf(M)^{S^1} = \DCoh(U_1)^G = \DCoh(U_1 \quot G)$.  
  
However, even in the global quotient case there could be \emph{other} $S^1$-actions not coming from a function on the quotient.  The point is that $\bHH^0(\Perf(M))$ involves functors and so is naturally local on $M^2$, rather than $M$.  In the scheme case this went way, but in the orbifold case the inertia stack $I_M = \pi_0 LM = \pi_0(M \times_{M^2} M)$ will intervene
\end{na}

\begin{lemma}\label{lem:s1-act-orbifold} Suppose $M$ is an orbifold.  Then, there is an equivalence of $\infty$-groupoids
  \[  \Ext^0_M(\O_{I_M}, \O_M)^\times \isom \bHH^0_k(\Perf(M))^\times  \stackrel{\sim}\longrightarrow \left\{\begin{gathered}\text{$S^1$-actions on $\Perf(M)$}\\\text{as $k$-linear $\infty$-category}\end{gathered}\right\}  \]
    e.g., if $M = U\quot G$ with $U$ a smooth scheme and $G$ a finite group, then the RHS is the units (for a certain product) in
    \begin{align*} \Ext^0_M(\O_{I_M}, \O_M) &\isom \left(\bigoplus_{g \in G\atop{\codim(U^g) = 0}} g \# H^0(U^g, \pi_0 \O_{U^g}) \right)^G \\
      &\isom \bigoplus_{\text{$[g]$ conj. class in $G$}\atop{\codim(M^g) = 0}} H^0(U^g, \O_{U^g})^{Z_G([g])}
    \end{align*}
    where the (right) $G$-action on the direct sum is by $(g \# a) \cdot h = h^{-1} g h \# a h$.  (In case $M$ is disconnected, we must sum over components of $M^g$ of codimension zero.)
\end{lemma}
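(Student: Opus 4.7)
My plan is to follow the template of \autoref{lem:s1-act-mf}(iii), with the new content being the explicit identification of $\bHH^0(\Perf(M))$ for an orbifold. An $S^1 = B\ZZ$-action on the $k$-linear idempotent-complete $\infty$-category $\C = \Perf(M)$ is a 2-fold loop map $\ZZ \to \Omega \Aut_k(\C)$, which sits inside the units of the $E_2$-space $\End_{\Fun^L_k(\QC(M))}(\id_{\QC(M)}) = \Omega^\infty \bHH^\bullet(\Perf(M))$. Provided $\bHH^\bullet(\Perf(M))$ is concentrated in non-positive homological degrees, $\Omega^\infty \bHH^\bullet$ is discrete, $\Aut(\id_\C) = \bHH^0(\Perf(M))^\times$ is a discrete abelian group, and the space of $E_2$-maps from discrete $\ZZ$ reduces to $\Hom_{\mathrm{grp}}(\ZZ, \bHH^0(\Perf(M))^\times) = \bHH^0(\Perf(M))^\times$. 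This non-positivity holds in characteristic zero for any smooth separated DM stack by an orbifold HKR decomposition $\bHH^\bullet(\Perf(M)) \isom \bigoplus_{[g]} \RGamma(M^g/Z_G([g]), \wedge^\bullet T_{M^g})$, with the $q$-th wedge contributing to homological degrees $\leq -q$.

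To identify $\bHH^0(\Perf(M)) \isom \Ext^0_M(\O_{I_M}, \O_M)$, the $(\Delta^*, \Delta_*)$-adjunction yields
\[ \bHH^\bullet(\Perf(M)) = \RHom_{M^2}(\Delta_*\O_M, \Delta_*\O_M) \isom \RHom_M(\Delta^*\Delta_*\O_M, \O_M) = \RHom_M(\O_{LM}, \O_M), \]
where $LM = M \times_{M^2} M$ is the derived loop space, with $\pi_0 LM = I_M$ by definition of the inertia stack. Since $\O_{LM}$ is connective with $\pi_0 \O_{LM} = \O_{I_M}$ and $\O_M$ is discrete, the truncation triangle $\tau_{>0}\O_{LM} \to \O_{LM} \to \O_{I_M}$ gives
\[ \pi_0 \RHom_M(\O_{LM}, \O_M) \isom \Hom_M(\O_{I_M}, \O_M) = \Ext^0_M(\O_{I_M}, \O_M). \]

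For the explicit formula when $M = U \quot G$, flat descent along the atlas $p\colon U \to M$ identifies $\Ext^0_M(\O_{I_M}, \O_M)$ with the $G$-invariants of $\Ext^0_U(p^*\O_{I_M}, \O_U)$. Base change gives $I_M \times_M U \isom \bigsqcup_{g \in G} U^g$ with $G$ acting by $h\cdot(u,g) = (hu, hgh^{-1})$, so $p^*\O_{I_M} \isom \bigoplus_g (\iota_g)_*\O_{U^g}$. For each $g$, if $U^g$ has positive codimension in every component of $U$ that it meets, then $\Hom_U(\O_{U^g}, \O_U) = 0$: any $\O_U$-linear map $\phi\colon \O_{U^g} \to \O_U$ has $\phi(1) \in \O_U$ annihilated by the defining ideal $\I_{U^g}$, which contains a non-zero-divisor on every such component since $U$ is smooth (hence reduced), forcing $\phi(1) = 0$. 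Only codimension-zero components of $U^g$ contribute, each giving $H^0(U^g, \O_{U^g})$; grouping by conjugacy class and passing to $G$-invariants yields the displayed formula. I expect the main obstacle to be the vanishing of $\pi_{>0}\bHH^\bullet(\Perf(M))$ for orbifolds, which rests on the global derived structure of $LM$ on all components of the inertia and requires a bit more care than the scheme case of \autoref{lem:s1-act-mf}.
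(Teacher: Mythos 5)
Your proof is correct and takes essentially the same route as the paper's: reduce $S^1$-actions on $\Perf(M)$ to units in $\bHH^0_k(\Perf(M))$ via non-positivity of Hochschild cochains, identify $\bHH^0 \isom \Ext^0_M(\O_{I_M},\O_M)$ via the $(\Delta^*,\Delta_*)$-adjunction and truncation, and compute the latter over the atlas $U\to M$ where only the codimension-zero components of the $U^g$ contribute. One misdiagnosis worth flagging: you identify non-positivity of $\pi_\bullet\bHH^\bullet(\Perf(M))$ as the ``main obstacle'' and reach for an orbifold HKR decomposition to justify it. No such input is needed, and the orbifold case is no harder than the scheme case: for a separated DM stack the diagonal $\Delta$ is finite, so $\Delta_*$ is $t$-exact and $\O_{LM}=\Delta^*\Delta_*\O_M$ is connective, while $\O_M$ is discrete, whence $\pi_i\RHom_M(\O_{LM},\O_M)=0$ for $i>0$ for exactly the same formal reason you invoke one paragraph later in computing $\pi_0$. (Your HKR statement is also written without the internal degree shift, $\wedge^q T_{M^g}[-q]$, though this would not change the conclusion.) A genuine but minor difference from the paper: for the explicit $U\quot G$ computation the paper works with the crossed-product description $q^*\Delta_*\O_{U\quot G}\isom\bigoplus_g g\#(\Gamma_g)_*\O_U$ and the derived fixed loci $L_g U$, truncating only at the end, whereas you truncate to $I_M$ first and base-change $I_M\times_M U\isom\bigsqcup_g U^g$; the two converge, and your route is arguably a bit leaner for the $\pi_0$ statement, though it is less well adapted to seeing the crossed-product \emph{ring} structure that the lemma's phrase ``units (for a certain product)'' refers to.
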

\begin{proof} The equivalence follows with the same proof as above, noting that
  \begin{align*} \bHH_k^0(\Perf(M)) &= \Ext^0_{\QC(M^2)}(\Delta_* \O_M, \Delta_* \O_M) \\
    &= \Ext^0_{\QC(LM)}(\Delta^* \Delta_* \O_M, \O_M) \\
    &= \Ext^0_{\QC(LM)}(\tau_{\leq 0} \Delta^* \Delta_* \O_M, \O_M) \\
    &= \Ext^0_{\QC(LM)}(\O_{I_M}, \O_M) 
  \end{align*}

The final computation follows from a suitable computation of Hochschild cohomology of an orbifold, which we sketch:  Consider the commutative (not Cartesian) diagram
  \[ \xymatrix{ U \ar[r]^-q\ar[d]_\Delta & U\quot G \ar[r]^-\pi \ar[d]^\Delta & BG \ar[d]^\Delta \\ U^2 \ar[r]_-q & U^2/G^2 \ar[r]_-\pi  &  BG^2 } \] 
A straightforward computation shows that 
\[ q^* \Delta_* \O_{U\quot G} = \bigoplus_{g \in G} g \# (\Gamma_g)_* \O_U \in \QC(U^2) \]  The fact that $\Delta_* \O_{U \quot G}$ is an algebra (indeed, the monoidal unit) for the convolution product on $\QC( (U \quot G)^2)$ manifests itself in the usual crossed product associative algebra structure $(g \# a) (g' \# a') = g g' \# a^{g'} a'$.  It is $\O_{U^2}$-linear by $(a_1 \otimes a_2) \cdot (g \# a) = g \# a_1 a (a_2)^g$. There is a right $G^2$-action on this giving descent data to $\QC(U^2/G^2)$: Locally on a $G$-invariant affine piece it is $( g \# a )^{(g_1, g_2)} = g_1^{-1} g g_2 \# a^{g_2}$ for $g, g_1, g_2 \in G$ and $a \in \O_U$ (regarding $\O_U$ as having a right $G$-action in the natural way).
Pulling back,  \[ \Delta^* q^* \Delta_* \O_{U \quot G} = \bigoplus_{g \in G} g \# \Delta^* (\Gamma_g)_* \O_U \in \QC(U) \] equipped with the diagonal of the above $G$-action as descent data to $U \quot G$.

In particular, by descent
\begin{align*} \bHH^0_k\left(\Perf(U \quot G)\right) &=\left[ \Ext^0_{U^2}\left(q^* \Delta_* \O_{U \quot G}, q^* \Delta_* \O_{U \quot G}  \right) \right]^{G^2} \\
  &= \left[ \Ext^0_{U^2}\left(\bigoplus_{g \in G} g\# (\Gamma_g)_* \O_U, \bigoplus_{g' \in G} g'\#(\Gamma_{g'})_* \O_U\right)\right]^{G^2}
\end{align*} a form which makes the product structure evident.  Any such $G^2$-equivariant self-map is determined by where it sends $\id_{G} \# 1$, and is in fact just right-multiplication by the image of $1$.  Writing this image as $\sum_{g'} g'\#\phi_{g'}$ for $\phi_{g'} \in \Gamma(\O_U)$, we see that it must satisfy various conditions such as $\phi_{g'}(a-a^{g'})=0$ for all $g' \in G$ and $a \in \Gamma(U)$.  From these one can deduce the indicated description in terms of connected components and supports of fixed sets. However, we prefer to give a more geometric description, via essentially describing all of $\bHH^\bullet$:
\begin{align*}
\bHH_k^\bullet\left(\Perf(U \quot G)\right) &=  \RHom_{U\quot G}\left(\Delta^* \Delta_* \O_{U\quot G}, \O_{U \quot G}\right)\\ 
&=\left[\RHom_{U}\left(q^* \Delta^* \Delta_* \O_{U \quot G}, q^* \O_{U \quot G}   \right)\right]^G \\
&=\left[\RHom_{U}\left(\bigoplus_{g \in G} g\# \Delta^* (\Gamma_g)_* \O_U, \O_U \right)\right]^G \\
\intertext{In the following lines, $L_g U$ denotes the \emph{derived} fixed points $\Spec_{X^2} \O_{\Delta} \Lotimes_{\O_{X^2}} \O_{\Gamma_g}$, while $U^g = \pi_0 L_g U$ denotes the ordinary closed subscheme of fixed points.}
&=\left[\bigoplus_{g \in G} \RHom_{U}\left(\O_{L_g U}, \O_U \right)\right]^G \\
\end{align*} 
Passing to $\pi_0$:
\begin{align*} \bHH^0_k\left(\Perf(U \quot G)\right) &= \left[\bigoplus_{g \in G} \Ext^0_{U}\left(\O_{L_g U}, \O_U \right)\right]^G\\
  &= \left[\bigoplus_{g \in G} \Ext^0_U(\O_{U^g}, \O_U) \right]^G\\
  &= \left[\bigoplus_{g \in G\atop{\codim(U^g)=0}} H^0(U^g, \O_{U^g}) \right]^G
\end{align*} where the final equality results from noting that for a connected (discrete) closed subscheme $Z \subset U$, $\Ext^0_U(\O_Z, \O_U) = 0$ unless $Z$ is a connected component of $U$ in which case $\Ext^0_U(\O_Z, \O_U) = H^0(Z, \O_Z)$.
\end{proof}

One can also describe the invariants for these ``exotic'' $S^1$-actions, but the description is less geometric: In the case of a global quotient, it is like a ``non-commutative'' fiber over $1 \in \GG_m$ for the crossed product algebra.
\begin{lemma}\label{lem:kt-mf-orbifold} Suppose $M$ is an orbifold and set $\C = \Perf(M)$. Via \autoref{lem:s1-act-orbifold}, an element $\alpha \in \bHH^0(\Perf(M))^\times$ gives rise to an $S^1$-action on $\C$.  The monad $i \circ i^L$ of \autoref{lem:invts} identifies with $\O_{\Delta} \otimes_{\O_{\GG_m}} k \in \Alg(\QC(M^2), \circ)$, where $\O_\Delta$ is a $k[\ZZ]$-algebra by $n \mapsto \alpha^n$ and where $\QC(M^2)$ is equipped with its convolution product and its ``star integral transforms'' action on $\C$.  So, $\C^{S^1} = (\O_{\Delta} \otimes_{\O_{\GG_m}} k)\mod(\Perf(M))$.

In case $M = U \quot G$, and $\alpha = \sum_g f_g \in (\oplus_g H^0(U^g, \O_{U^g}))^G$ (with $f_g \neq 0$ only on codimension zero components),  this admits a ``crossed product'' description
\[ \Perf(U \quot G)^{S^1} = \left[\left(\bigoplus_{g \in G} g \# (\Gamma_g)_* \O_U \right) \otimes_{\O_{\GG_m}} k\right]\mod(\Perf(U)). \]
\end{lemma}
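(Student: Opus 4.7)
The plan is to follow the strategy of \autoref{lem:kt-mf}, using the convolution action of $(\QC(M^2), \circ)$ on $\Perf(M)$ via star integral transforms in place of the tensor action of $\QC(M)$ on $\DCoh(M)$ there. The monoidal unit for $\circ$ is $\O_\Delta = \Delta_* \O_M$, so $\bHH^\bullet(\Perf M) = \RHom_{\QC(M^2)}(\O_\Delta, \O_\Delta)$ and, tracing through the proof of \autoref{lem:s1-act-orbifold}, the datum of an $S^1$-action on $\Perf(M)$ as a $k$-linear $\infty$-category amounts to a double-loop map $\ZZ \to \Aut_{\QC(M^2)}(\O_\Delta)$. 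This is the same as a $k[\ZZ] = \O_{\GG_m}$-algebra structure on $\O_\Delta$, or equivalently an invertible-along-$\GG_m$ lift of $\O_\Delta$ to an algebra object over $\QC(M^2) \otimes \QC(\GG_m)$.

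Given this datum, by \autoref{lem:invts}(iv) we have $\Perf(M)^{S^1} = (i \circ i^L)\mod(\Perf M)$, and it remains to identify the monad $i \circ i^L$. Since the $S^1$-action is implemented through the monoidal unit of $(\QC(M^2), \circ)$, every equivalence in the action is automatically $\QC(M^2)$-linear in the convolution sense, so the monad is given by $\circ$-convolution with some algebra $A \in \Alg(\QC(M^2), \circ)$. Mimicking the construction in the proof of \autoref{lem:kt-mf}, this $A$ is computed as the simplicial bar construction realizing the homotopy orbits of the $\ZZ$-action encoded by $\alpha$ on $\O_\Delta$; that is, $A$ is the ``chains on $S^1$ with multiplicative $\O_\Delta$-coefficients twisted by $\alpha$,'' which concretely is $\O_\Delta \Lotimes_{k[\ZZ]} k = \O_\Delta \otimes_{\O_{\GG_m}} k$ with the algebra structure inherited from $\O_\Delta$. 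For the global quotient case $M = U\quot G$, pulling back by the faithfully flat $q\colon U^2 \to M^2$ and invoking the computation in the proof of \autoref{lem:s1-act-orbifold} gives $q^* \O_\Delta \isom \bigoplus_{g \in G} g\#(\Gamma_g)_* \O_U$ with its crossed product algebra structure, and $G^2$-descent of $(q^* \O_\Delta) \otimes_{\O_{\GG_m}} k$ yields the stated explicit description.

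The main obstacle will be the middle step: rigorously identifying the monad $i \circ i^L$ with $\circ$-convolution by $\O_\Delta \otimes_{\O_{\GG_m}} k$ as an algebra in $(\QC(M^2), \circ)$. Conceptually this is parallel to the scheme case, but here one cannot reduce to a local/affine situation in which $\Perf(-)$ was described by modules over the coordinate ring; instead one must implement the recipe directly on $M^2$. The key technical input is the identification of $S^1$-equivariant objects in $\Perf(M)$ with Cartesian sections of the locally constant sheaf of categories on $BS^1 = B^2 \ZZ$ whose fiber is $\Perf(M)$ with monodromy $\alpha \in \Aut_{\QC(M^2)}(\O_\Delta)$, matched against the Barr-Beck presentation from \autoref{lem:invts}---essentially a $\QC(M^2)$-enriched version of the twisted group-algebra argument used to compute the monad in \autoref{lem:kt-mf}.
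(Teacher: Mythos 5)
Your proposal is correct and follows essentially the same route as the paper: identify the $S^1$-action with a $k[\ZZ]=\O_{\GG_m}$-algebra structure on the monoidal unit $\O_\Delta=\Delta_*\O_M$ of $(\QC(M^2),\circ)$, compute the monad $i\circ i^L$ as $C_*(B\ZZ;\id_{\C})=\O_\Delta\otimes_{\O_{\GG_m}}k$, and then pull back along $q\colon U\to M$ for the crossed-product description. The only cosmetic difference is in the last step: the paper composes the two Barr–Beck presentations ($\Perf(M)=(q^!q_*)\mod\Perf(U)$ together with $\Perf(M)^{S^1}=(\O_\Delta\otimes_{\O_{\GG_m}}k)\mod\Perf(M)$) to land directly on $(q^!q_*\otimes_{k[\ZZ]}k)\mod(\Perf(U))$, which is a slightly cleaner way of packaging what you call ``$G^2$-descent of $q^*\O_\Delta\otimes_{\O_{\GG_m}}k$.''
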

\begin{proof} The proof is analogous to \autoref{lem:kt-mf}, noting that we did not actually need to work locally.  Instead of a multiplicative $R$-line on $S^1$, we obtain a multiplicative local system of endo-functors of $\C$ (under composition product).  Identifying $S^1 = B\ZZ$, the data of this local system is that of making $\id_{\C} \in \Fun(\C, \C)$ into a $k[\ZZ]=\O_{\GG_m}$-algebra, and the monad identifies with $C_*(B\ZZ, \id_{\C}) = \id_{\C} \otimes_{k[\ZZ]} k$ as an algebra in endofunctors.  Identifying $\Fun(\C,\C)$ with a full-subcategory of $\Fun^L(\Ind \C, \Ind \C) = \QC(M^2)$ using star integral transforms, recall that $\id_{\C}$ corresponds to the diagonal $\Delta_* \O_{M}$.  This implies the indicated description.
 
Suppose now that $M = U \quot G$.  Note that $\Perf(M) = \Perf(U)^G$ combined with \autoref{lem:invts}(iv) give $\Perf(M) = (q^! q_*)\mod\Perf(U)$.  We may identify the monad $q^! q_*$ with the ``crossed product'' algebra in endofunctors: $q^* \Delta_* \O_{U \quot G} \in \QC(U^2)$ under star convolution.  The $k[\ZZ]$-action on $\Delta_* \O_{U \quot G}$ corresponds to a $G^2$-equivariant $k[\ZZ]$-action on $q^* \Delta_* \O_{U \quot G} \isom q^! q_*$, where $n \in \ZZ$ acts by right multiplication by $(\sum_g g \# f_g)^n$.  It follows that $\Perf(M)^{S^1} = (q^! q_* \otimes_{k[\ZZ]} k)\mod(\Perf(U))$, whence the desired formula.
\end{proof}

\subsection{Hypersurfaces and \texorpdfstring{$B\oh{\GG_a}$}{BG_a}-actions on coherent sheaves}\label{ssec:BGa-hypersurf}
\begin{na} The use of $S^1$-actions gives rise to natural comparison maps $\bHH^{k\ps{\bt}}_\bullet( (\Perf M)^{S^1}) \to \bHH^k_\bullet(\Perf M)^{S^1}$, etc.  However, it imposes the constraint that we work with an invertible function $f\colon M \to \GG_m$ instead of the usual superpotential $f \colon M \to \AA^1$.  If we are willing to complete near the zero fiber, it is always possible to replace $f$ by $e^f$.  However, completing is inconvenient in cases where we wish to retain nice global properties of $M$ (e.g., smoothness of $\Perf M$) and incompatible with the graded context, so we give here an alternate approach.  It is a bit more complicated, requiring replacing the (constant) simplicial group $S^1$ by the formal group stack $B\oh{\GG_a}$ (which we think of, via Hinich, as $\Spf C^*(S^1;k)$).
\end{na}

\begin{defn} \mbox{}\begin{itemize} \item A \demph{derived Artin $k$-algebra} is an $A \in \DRng_k$ such that $\pi_* A$ is a finite-dimensional $k$-vector space, and $\pi_0 A$ is a local Artin $k$-algebra with residue field $k$.  Let $\DArt_k$ be the full-subcategory of $\DRng_k$ spanned by the derived Artin rings.  For any $A \in \DArt_k$, there is a natural $A \to k$ whose fiber will be denote $\mm_A$.
  \item A \demph{formal moduli problem} over $k$ is a functor in $\X \in \Fun(\DArt_k, \Sp)$ such that $\X(\pt) \isom \pt$ and the natural map $\X(B \times_{k \oplus k[\ell]} k) \to \fib\{\X(\Spf B) \to \X(k \oplus k[\ell])\}$ is an equivalence for all $\ell>0$ and all $B \to k \oplus k[\ell] \in \DArt_k$ (c.f., \cite[Remark~6.18]{Lurie-fmp}).  A \demph{derived formal group} $G$ is a group object in formal moduli problems, i.e., a formal moduli problem $G$ together with a factorization of its functor of points through $\sGp$.  If $G$ is a derived formal group, let $BG$ denote the universal formal moduli problem receiving a map from $A \mapsto B(G(A))$; if $G(k \oplus k[\ell])$ is connected for $\ell > 0$, then this is already a formal moduli problem.  Our motivating examples are: $\oh{\GG_a}$, whose functor of points is $\oh{\GG_a}(A) = \mm_A$ (viewed as a simplicial abelian group via Dold-Kan); and $B\oh{\GG_a}$, whose functor of points is $B\oh{\GG_a}(A) = B\mm_A$ (since it satisfies the connectivity assumption above).
  \item For a derived formal group $G$, let $\pi\colon BG \to \Aff(BG)$ be the ``fake affinization'' : i.e., it is a notational fiction for the hypothetical space with $\QC(\Aff(BG)) = \O_{\Aff(BG)}\mod$ where $\O_\Aff(BG) \eqdef \Gamma(BG, \O_{BG})$.\footnote{There will, in general, be no functor on $\DRng$ having this as its $\infty$-category of quasi-coherent complexes.}  Let $i\colon \pt \to BG$ be the inclusion, $p\colon BG \to \pt$ and $q\colon \Aff(BG) \to \pt$ the projections.  There are adjoint functors 
    \[ \adjunct{\pi^*\colon \O_{BG}\mod}{\QC(BG)\colon \pi_*} \qquad \adjunct{\pi^*\colon \dgcatidm_{\O_{BG}}}{\dgcatidm_{BG} \pi_*}\] refining $p^*$ (=trivial $G$-action) and $p_*$ (=$G$-invariants).  They are defined by noting that for any $A \in \DArt_k$ and map $t: \Spf A \to BG$, $A$ is equipped with the structure of object in $\CAlg(\O_{BG}\mod)$; so one may e.g., $t^* \pi^* \C = \C \otimes_{\O_{BG}} A$. 
  \item Suppose $G$ is a derived formal group and $\C \in \dgcatidm_k$. Then, a \demph{$G$-action on $\C$} is a small, stable, idempotent-complete category $\sq{\C}$ over $BG$ equipped with an equivalence $i^* \sq{\C}{k} \isom \C$.  Explicitly, it the datum of (suitably compatible) $A$-linear actions of $G(A) \in \sGp$ on $\C \otimes_k A$ for each $A \in \DArt_k$.  In this case, we will write $\C^G$ for $\pi_* \sq{\C} \in \dgcatidm_{\O_{BG}}$.
\end{itemize}
\end{defn}

We have the following analog of \autoref{lem:s1-act-mf}:
\begin{lemma}\label{lem:BGa-act-mf} Suppose $M$ is a (discrete) finite-type separated $k$-scheme.  There is an equivalence of (discrete) $\infty$-groupoids
\[ \Gamma(M, \O_M)\stackrel\sim\longrightarrow\left\{\begin{gathered} \text{$B\oh{\GG_a}$-actions on $\DCoh(M)$}\end{gathered}\right\}   \]
\end{lemma}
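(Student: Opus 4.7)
The plan is to imitate the proof of \autoref{lem:s1-act-mf}(i), with the derived formal group $B\oh{\GG_a}$ playing the role that $S^1=B\ZZ$ played there, and Koszul duality taking the place of the classical group identification. By definition, a $B\oh{\GG_a}$-action on $\C=\DCoh(M)$ is a pointed map of formal moduli problems
\[ B(B\oh{\GG_a}) \longrightarrow \mathbf{BAut}_k(\C), \]
where the target is the pointed FMP sending $A\in\DArt_k$ to the space of $A$-linear autoequivalences of $\C\otimes_k A$ restricting to $\id_\C$ at the base point.

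The first step is to invoke the Lurie--Pridham correspondence: in characteristic zero, pointed FMPs over $k$ are equivalent to dg (or $L_\infty$) Lie algebras, the Lie algebra being a shift of the tangent complex. Under this equivalence $B(B\oh{\GG_a})$ corresponds to the abelian Lie algebra $k[1]$ (a single copy of $k$ in homological degree $+1$), while $\mathbf{BAut}_k(\C)$ corresponds to $\bHH^\bullet(\C)[1]$ equipped with the Gerstenhaber bracket (the $E_2$-shifted Lie structure on Hochschild cochains). A pointed map of FMPs therefore unwinds to a Lie-algebra map $k[1]\to\bHH^\bullet(\C)[1]$, i.e.\ an element $x\in\bHH^0(\C)$ together with coherent null-homotopies for the Maurer--Cartan-type obstructions $[x,x]$, $[[x,x],x]$, etc. As noted in the proof of \autoref{lem:s1-act-mf}(i), $\bHH^\bullet(\QCsh M)$ has no positive homotopy groups, so each such obstruction (and every coherence between them) lands in a strictly negative cohomological degree of $\bHH^\bullet(\C)$ and vanishes automatically. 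Consequently the space of actions is discrete and identified naturally with $\pi_0\bHH^\bullet(\DCoh M)=\bHH^0(\DCoh M)$, which by a Hom/adjunction computation along the diagonal (mirroring that in the proof of \autoref{lem:s1-act-mf}(ii), but without support conditions) is $H^0(M,\O_M)=\Gamma(M,\O_M)$.

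The main difficulty will be making the identification $\mathfrak{g}_{\mathbf{BAut}_k(\C)}\isom\bHH^\bullet(\C)[1]$ rigorous in a form suitable for direct application of Lurie--Pridham, since $\mathbf{BAut}_k(\C)$ is a ``large'' moduli of categorical autoequivalences rather than one literally covered by the classical statement. The cleanest alternative is to bypass Lurie--Pridham and argue directly on Artinian thickenings: for $A=k\oplus k[n]\in\DArt_k$ the restricted space of actions loops to the space of simplicial abelian-group homomorphisms $\mm_A=k[n]\to\mathrm{Aut}_A(\id_{\C\otimes A})$, and the latter identifies with $\Omega^\infty(\bHH^\bullet(\C)\otimes_k k[n])$, so this mapping space is $\Omega^\infty\bHH^\bullet(\C)$ independent of $n$. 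Naturality in $A\in\DArt_k$, combined with the connectivity bound on $\bHH^\bullet(\C)$, then forces the resulting pro-system to be discrete and pinned down by its $\pi_0=\bHH^0(\C)=\Gamma(M,\O_M)$, recovering the claim.
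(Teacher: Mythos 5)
Your Route 1 (Lurie--Pridham + tangent Lie algebra) is genuinely different from the paper's proof, and it is the ``right'' approach in spirit: the paper even alludes to this perspective in \autoref{na:heur1}, where it describes $B\oh{\GG_a}$-actions as classified by Lie algebra maps $k[-1] \to \bHH^\bullet(\C)[-1]$. You have also correctly put your finger on the main gap in this route: $\mathbf{BAut}_k(\C)$ is a ``large'' prestack and is not literally in the scope of the Lurie--Pridham theorem, so one needs a separate argument that it is a formal moduli problem with Lie algebra $\bHH^\bullet(\C)[1]$. The paper avoids this entirely, which is precisely the point of its more hands-on argument.

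Your Route 2 is closer to what the paper actually does, but it has a gap that the paper closes with a non-formal lemma you do not mention. The data of a $B\oh{\GG_a}$-action unwinds to a \emph{compatible family} of $E_2$-maps $\oh{\GG_a}(A) \to \Omega^\infty(\bHH^\bullet(\C)\otimes_k \mm_A)$ ranging over all $A\in\DArt_k$; this is much more information than the values on the square-zero test rings $k\oplus k[n]$. Your conclusion that ``naturality combined with the connectivity bound forces the pro-system to be discrete and pinned down by its $\pi_0$'' is doing a lot of unexamined work here. The paper's actual mechanism is to prove that the formal moduli problem $\oh{\GG_a}$ is left Kan-extended from discrete Artinian $k$-algebras, i.e.\ that the natural map $\hocolim_n \Spf \Kos(x^n\colon k[x]\to k[x]) \to \oh{\GG_a}$ is an equivalence of functors on $\DArt_k$. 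This requires a genuine computation: one checks that the colimit is a formal moduli problem, and then that the tangent complexes agree, which involves showing that $\pi_0 \Map(\Spf(k\oplus k[q]), \Spf \Kos(x^n))$ stabilizes to zero in the colimit for $q\geq 1$. Once this is established, the mapping space can be computed purely over discrete Artinians, where the $E_2$-structure collapses to an honest commutative group structure and the answer falls out of classical formal Lie theory (homomorphisms $\oh{\GG_a} \to \widehat{\Gamma(M,\O_M)_1}$ are $f\mapsto e^{tf}$). Without some such reduction, neither of your routes as written lands; to repair Route 2 you would want to supply (or cite) the Kan-extension lemma, while repairing Route 1 requires an FMP/Lie-algebra identification that is genuinely foundational and is not in the paper.
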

\begin{proof} Note that since $\DCoh(M)$ is smooth $\bHH_{A}^\bullet(\DCoh(M) \otimes_k A) = \bHH_k^\bullet(\DCoh(M)) \otimes_k A$.  Since $B\oh{\GG_a}(A)$ is connected for every $A$, the data of $B\oh{\GG_a}$-action on $\C = \DCoh(M)$ identifies with a compatible family of $E_2$ maps
  \[ \oh{\GG_a}(A) \stackrel{\otimes^2}\longrightarrow \Omega^\infty\fib\left\{\bHH_k^\bullet(\DCoh(M, \O_{M})) \otimes_k A \longrightarrow \bHH_k^\bullet(\DCoh(M, \O_M)) \otimes_k A\right\}  \]

  We claim that the formal moduli problem $\oh{\GG_a}$ is Kan-extended from discrete Artin $k$-algebras,  and in fact that the natural map \[ \oh{\GG_a} \longleftarrow \hocolim_n \Spf k[x]/x^n \isom \hocolim_{n} \Spf \Kos(x^n\colon k[x]\to k[x]) \]  of functors on $\DArt_k$ is an equivalence.  On discrete local Artin test rings, this is the standard fact that the augmentation ideal consists precisely of the nilpotent elements.  It thus suffices to show that the RHS is a formal moduli problem, and that the map induces an equivalence on tangent complexes.  Certainly each $\Spf \Kos(x^n)$ is a formal moduli problem, so it suffices to note that filtered colimits commute with finite limits: It follows that any filtered colimit of formal moduli problems is again a formal moduli problem.  Now it is enough to verify that the map is an equivalence on the test-rings $k \oplus k[q]$, $q \geq 0$.  For $q = 0$ this follows from the discrete case, and it suffices to check that it is an equivalence on $\pi_0$ for all $q > 0$ (i.e., that both sides are smooth).  To carry out this computation we use the description of $\Kos(x^n)$ as a coequalizer $\mathrm{coeq}\{x^n, 0\colon k[x] \to k[x]\}$ in $\DRng_k$ to show that $\pi_0 \Map(\Spf k \oplus k[q], \Spf \Kos(x^n))$ is $\pi_{-q}$ of the complex $k \stackrel{n x^{n-1}}\to k$ in degrees $0, -1$. For $q > 1$ this already vanishes for each $n$.  For $q = 1$ one checks that the map from the $n^\text{th}$ to $(n+1)^\text{st}$ term in the colimit is zero on $\pi_0$, either by explicating the map directly or by comparison to the familiar (dual) map on global cotangent complexes (which up to constant is multiplication by $x$, so restricts to zero on the base-point).

Consequently, in computing the relevant mapping space we may restrict to discrete Artin $k$-algebras.  If $A = \pi_0 A$ is a discrete Artin $k$-algebra, then \[ \fib\left\{\Omega^\infty(\bHH_k^\bullet(\DCoh(M, \O_M)) \otimes_k A) \to \Omega^\infty(\bHH_k^\bullet(\DCoh(M, \O_M)) \otimes_k k)\right\} \isom \pi_0 \bHH_k^\bullet(\DCoh(M, \O_M))) \otimes_k \mm_A \] It is equipped with the $E_2$-structure (=commutative group structure, since everything is discrete) induced from \emph{multiplication} in $\bHH^0(\DCoh(M, \O_M)) = \Gamma(M, \O_M)$, so that it may be identified (as commutative discrete formal group) with the completion of $\Gamma(M,\O_M)$ at $1$ under multiplication.  It remains to compute the space of (abelian) discrete formal group homomorphisms $\oh{\GG_a} \to \oh{\Gamma(M, \O_M)_1}$: formal Lie theory tells us that this space is discrete and in bijection with $\Gamma(M, \O_{M})$; the bijection takes $f \in \Gamma(M, \O_M)$ to the homomorphism  $t \mapsto e^{tf}$.
\end{proof}

\begin{lemma}\label{lem:BGa-kt-mf} Suppose that $M$ is a smooth $k$-scheme.
  \begin{enumerate}
    \item Both group homomorphisms in the diagram \[ B\ZZ \longrightarrow B\GG_a \longleftarrow B\oh{\GG_a} \] induce isomorphisms on $\O_{B(-)}$.  Using this, we may identify $k\ps{\bt} \isom \O_{B^2\ZZ} \isom \O_{B^2\oh{\GG_a}}$.
    \item Via \autoref{lem:BGa-act-mf}, a morphism $f\colon  M \to \AA^1$ gives rise to a $B\oh{\GG_a}$-action on $\C = \DCoh(M)$.  Under the identification of (i), there is a natural $k\ps{\bt}$-linear equivalence
      \[ \C^{B\oh{\GG_a}} = \O_{M_0}\mod(\DCoh(M)) = \DCoh(M_0) \]
      where the $k\ps{\bt}$-linear structure on the left is as $\O_{B^2\oh{\GG_a}}$ and on the right is as in \autoref{ssec:Bact}. 
\end{enumerate}
\end{lemma}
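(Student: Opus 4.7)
\emph{Plan.} Both parts hinge on identifying $B\oh{\GG_a}$ as the Koszul-dual formal group to the abelian one-dimensional Lie algebra and then mimicking the argument of \autoref{lem:kt-mf} with $B\oh{\GG_a}$ in place of $S^1 = B\ZZ$.

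For (i), I would compute $\O_{B\oh{\GG_a}}$ as Chevalley--Eilenberg cochains of the abelian Lie algebra $\mathrm{Lie}\,\oh{\GG_a} = k$ (in characteristic zero), obtaining $k[B]/B^2$ with $\deg B = +1$ homologically---this is exactly $\O_\bB$ from \autoref{constr:B}. A second Koszul duality step then yields $\O_{B^2 \oh{\GG_a}} = \RHom_{\O_{B\oh{\GG_a}}}(k,k) = k\ps{\bt}$ by \autoref{prop:mf-prelim}. The comparison with $\O_{B^2\GG_a}$ follows from the same Chevalley--Eilenberg computation ($\GG_a$ and $\oh{\GG_a}$ having the same abelian Lie algebra), while $\O_{B^2\ZZ} = C^*(\CC P^\infty; k) = k\ps{\bt}$ is classical; both comparison maps are equivalences because each induces the identity on the relevant tangent complex $k[1]$.

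For (ii), I would follow \autoref{lem:kt-mf} step by step. A $B\oh{\GG_a}$-action on $\C = \DCoh(M)$ is, by looping and exploiting the $E_2$-structure on $\bHH^\bullet(\C) = \End_k(\id_\C)$, the same datum as a map of abelian formal groups $\oh{\GG_a} \to \oh{\bHH^0(\C)^\times_1}$; for the action classified by $f \in \Gamma(M,\O_M)$, the proof of \autoref{lem:BGa-act-mf} identifies this with the formal exponential $t \mapsto e^{tf}$. Passing to Lie algebras (i.e., taking logarithms) promotes this to the $\O_{\oh{\GG_a}} = k\ps{x}$-algebra structure on $R = \Gamma(M,\O_M)$ given by $x \mapsto f$. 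Then \autoref{lem:invts}(iv) gives $\C^{B\oh{\GG_a}} = (i \circ i^L)\mod(\C)$, and as in \autoref{lem:kt-mf} the bar-construction identification yields
\[ i \circ i^L \;=\; R \otimes_{\O_{\oh{\GG_a}}} k \;=\; R \otimes^L_{k\ps{x},\, x \mapsto f} k \;=\; R \otimes^L_{k[x]} k \;=\; \O_{M_0}, \]
viewed as an algebra $\Delta_* \O_{M_0}\in \QC(M^2)$ under convolution (= star integral transforms). Applying \autoref{lem:mfplus-monadic} to the finite, finite-Tor-dimension closed immersion $M_0 \hookrightarrow M$ then identifies $\O_{M_0}\mod(\DCoh M) = \DCoh(M_0)$, giving the underlying $k$-linear equivalence.

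The final step, and what I expect to be the main obstacle, is matching the two $k\ps{\bt}$-linear structures. On the invariants side, $\O_{B^2\oh{\GG_a}}=k\ps{\bt}$ acts tautologically via its map to $\bHH^\bullet(\C^{B\oh{\GG_a}})$ induced by the $B\oh{\GG_a}$-action on $\id_\C$; on the $\DCoh(M_0)$ side, \autoref{constr:Bact} gives a $\QCsh(\bB)$-action via loop-composition on $M_0 = M \times_{\AA^1} 0$. Both ultimately encode the same formal datum (the Lie-theoretic identification $\mathrm{Lie}\,\oh{\GG_a} = k$ integrated along $f$), and the comparison can be made explicit by placing both in the cosimplicial dg-algebra framework of \autoref{remark:explicit-homotopy-2} (for the $\bB$-action) and an analogous bar-construction model (for the $B\oh{\GG_a}$-action). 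By naturality in $(M,f)$, matching reduces to the universal case $(M,f) = (\AA^1, x)$, where both structures are tautologically the one produced in the proof of \autoref{prop:mf-prelim}.
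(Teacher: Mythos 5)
Your proposal follows the same route as the paper's own (very terse) argument: part (i) is a Koszul-duality computation, and part (ii) transcribes the argument of \autoref{lem:kt-mf} to the $B\oh{\GG_a}$ setting via \autoref{lem:invts} and \autoref{lem:mfplus-monadic}, computing the monad as $R \otimes_{k[x]} k = \O_{M_0}$.

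There is, however, a degree error hiding in your part (i) that you should be aware of, even though the final conclusion is correct. The Chevalley--Eilenberg cochain algebra of the abelian Lie algebra $\mathrm{Lie}(\oh{\GG_a}) = k$ is $\Sym\bigl(k^\vee[-1]\bigr) = k[\eta]/\eta^2$ with $\eta$ in homological degree $-1$ (i.e.\ coconnective, as global functions on a stack should be), whereas $\O_\bB = k[B]/B^2$ from \autoref{constr:B} has $B$ in homological degree $+1$ (connective, since $\bB = 0\times_{\AA^1} 0$ is a derived scheme). These are linearly dual, not isomorphic, so the claim that $\O_{B\oh{\GG_a}}$ ``is exactly $\O_\bB$'' is false. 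Likewise the formula $\O_{B^2\oh{\GG_a}} = \RHom_{\O_{B\oh{\GG_a}}}(k,k)$ is not literally correct: $\O_{BG}$ is the cobar construction on the coalgebra $\O_G$, so the correct relation is $\O_{B^2\oh{\GG_a}} = \RHom_{\O_{B\oh{\GG_a}}\comod}(k,k) = \RHom_{(\O_{B\oh{\GG_a}})^\vee\mod}(k,k)$. The dualization $(\O_{B\oh{\GG_a}})^\vee$ is where $\O_\bB$ actually appears, and then $\RHom_{\O_\bB}(k,k) = k\ps{\bt}$ with $\deg\bt = -2$ by \autoref{prop:mf-prelim}. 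In other words, your two slips---identifying $\O_{B\oh{\GG_a}}$ with its dual, and omitting the passage to comodules---cancel to produce the right answer, but neither step is correct on its own.

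For part (ii), your treatment matches the paper in substance. One small point worth making explicit is the identification $R \otimes^L_{k\ps{x},\, x\mapsto f} k \isom R \otimes^L_{k[x]} k$, which holds because $k\ps{x}$ is flat over $k[x]$ and $k\ps{x}\otimes_{k[x]} k = k$. For the matching of $k\ps{\bt}$-linear structures, the reduction to a ``universal case'' is a reasonable strategy but would take a bit more set-up to carry out rigorously; this is consistent with the paper, which is equally laconic (the real content there is the footnote to \autoref{lem:kt-mf} comparing the loop-composition picture over $\GG_a$ and over $\GG_m$ via the formal exponential).
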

\begin{proof} \mbox{}
  \begin{enumerate}
    \item It is a standard fact that both induces isomorphisms on $\QC(-)$, since all three can be identified with $C^*(S^1, k)\comod \isom \O_\bB\mod$.
    \item Analogous to \autoref{lem:kt-mf}: The functor $\C^{B\oh{\GG_a}} \to (\Ind \C)^{B\oh{\GG_a}}$ is again fully-faithful with essential image detected on the test object $\Spf k$ (i.e., underlying category). Now, the monad $i \circ i^L$ associated to $i \colon (\Ind \C)^{B\oh{\GG_a}} \to \Ind \C$ identifies (in the affine case for notational convenience) with the algebra $R \otimes_{k[x]} k$ where $R$ is made into a $k[x]$-algebra by the map to $\AA^1$. \qedhere
  \end{enumerate}
\end{proof}

And the orbifold variants, which are exactly analogous to what we have done above.
\begin{lemma}\label{lem:BGa-act-orbifold} Suppose $M$ is a an orbifold.  There is an equivalence of (discrete) $\infty$-groupoids
\[  \bHH^0_k(\Perf(M)) \isom \Ext^0_M(\O_{I_M}, \O_M) \stackrel{\sim}\longrightarrow \left\{\begin{gathered}\text{$B\oh{\GG_a}$-actions on $\Perf(M)$}\\\text{as $k$-linear $\infty$-category}\end{gathered}\right\} \]
e.g., if $M = U\quot G$ with $U$ a smooth scheme and $G$ a finite group, then the RHS is 
    \begin{align*} \Ext^0_M(\O_{I_M}, \O_M) &\isom \left(\oplus_{g \in G\atop{\codim(U^g) = 0}} g \# H^0(U^g, \pi_0 \O_{U^g}) \right)^G \\
      &\isom \bigoplus_{\text{$[g]$ conj. class in $G$}\atop{\codim(M^g) = 0}} H^0(U^g, \O_{U^g})^{Z_G([g])}
    \end{align*}
    where the (right) $G$-action on the direct sum is by $(g \# a) \cdot h = h^{-1} g h \# a h$.  (In case $M$ is disconnected, we must sum over components of $M^g$ of codimension zero.)
\end{lemma}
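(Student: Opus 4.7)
The plan is to mirror the proof of \autoref{lem:BGa-act-mf} for the orbifold case, feeding in the Hochschild cohomology computation from \autoref{lem:s1-act-orbifold} at the end. First, I would note that $\Perf(M)$ is smooth over $k$ (since $M$ is a smooth DM stack in characteristic zero), so for any $A \in \DArt_k$ one has $\bHH^\bullet_A(\Perf(M) \otimes_k A) \isom \bHH^\bullet_k(\Perf(M)) \otimes_k A$. Since $B\oh{\GG_a}(A)$ is connected for every $A$, the data of a $B\oh{\GG_a}$-action on $\Perf(M)$ unpacks to a compatible family of $E_2$-maps
\[ \oh{\GG_a}(A) \stackrel{\otimes^2}\longrightarrow \Omega^\infty \fib\Bigl\{ \bHH^\bullet_k(\Perf(M)) \otimes_k A \longrightarrow \bHH^\bullet_k(\Perf(M)) \otimes_k k \Bigr\} \]
functorial in $A \in \DArt_k$.

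Next, I would invoke the Kan-extension fact established in the proof of \autoref{lem:BGa-act-mf}: the formal moduli problem $\oh{\GG_a}$ is Kan-extended from its restriction to \emph{discrete} local Artin $k$-algebras. This reduces the computation to the case of discrete $A$, in which case $\mm_A$ is discrete in degree $0$. Therefore the fiber above is a discrete commutative group, naturally identified (as a set) with $1 + \pi_0 \bHH^0_k(\Perf(M)) \otimes_k \mm_A$ equipped with the group law coming from \emph{multiplication} in $\bHH^0$ --- which makes sense because $\pi_0$ of the $E_2$-algebra $\bHH^\bullet_k(\Perf(M))$ is a commutative ring. In other words, the target is the completion $\widehat{G_m^{\bHH^0}}$ of the multiplicative formal group of $\bHH^0_k(\Perf(M))$ at the identity.

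It remains to compute the space of discrete formal group homomorphisms $\oh{\GG_a} \to \widehat{G_m^{\bHH^0}}$. By characteristic-zero formal Lie theory this space is discrete and in bijection with the underlying additive group of $\bHH^0_k(\Perf(M))$ via the exponential map $f \mapsto e^{tf}$ (termwise convergent in the completion). Finally, I invoke the explicit identification $\bHH^0_k(\Perf(M)) \isom \Ext^0_M(\O_{I_M}, \O_M)$ of \autoref{lem:s1-act-orbifold}, together with the concrete description in the global quotient case recalled there, to obtain the advertised formula.

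The main obstacle is the bookkeeping around the $E_2$-structure: one must verify that the commutative ring structure on $\pi_0$ of $\bHH^\bullet$ that enters the formal group law really is the multiplicative structure computed in \autoref{lem:s1-act-orbifold}, and that the exponentiation in formal Lie theory matches the comparison with $S^1$-actions (where one exponentiates $f$ into $e^f$ to relate $\AA^1$-valued and $\GG_m$-valued potentials). Everything else follows from directly reusing the scheme-case argument of \autoref{lem:BGa-act-mf} with the orbifold Hochschild computation of \autoref{lem:s1-act-orbifold} substituted in --- note in particular that one does \emph{not} need to take units as for $S^1$-actions, since $\oh{\GG_a}$ is infinitesimal and the exponential automatically lands in the units of the completion.
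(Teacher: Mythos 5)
Your proposal matches the paper's intent exactly. The paper gives no proof of this lemma, stating only that the orbifold variants are ``exactly analogous to what we have done above,'' and what you have written is precisely the intended argument: repeat the proof of \autoref{lem:BGa-act-mf} verbatim (smoothness of $\Perf(M)$ over $k$, Kan-extension of $\oh{\GG_a}$ from discrete Artin rings, identification of the target with the multiplicative formal group completed at the identity, formal Lie theory) and substitute the orbifold Hochschild computation $\bHH^0_k(\Perf(M)) \isom \Ext^0_M(\O_{I_M}, \O_M)$ together with the explicit crossed-product description from \autoref{lem:s1-act-orbifold}. The concern you flag about matching the commutative ring structure on $\pi_0 \bHH^\bullet$ with the ``certain product'' in \autoref{lem:s1-act-orbifold} is real but was already addressed there (that product is by construction the one coming from composition of self-maps of $\Delta_* \O_{U\quot G}$, i.e., the $E_2$-multiplication on Hochschild cochains), and your observation that one does \emph{not} need to pass to units — in contrast to the $S^1$ case — is the correct point of departure.
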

\begin{lemma}\label{lem:BGa-kt-mf-orbifold} Suppose $M$ is an orbifold and set $\C = \Perf(M)$. Via \autoref{lem:BGa-act-orbifold}, an element $\alpha \in \bHH^0(\Perf(M))$ gives rise to a $B\oh{\GG_a}$-action on $\C$.  The monad $i \circ i^L$ of \autoref{lem:invts} identifies with $\O_{\Delta} \otimes_{k[x]} k \in \Alg(\QC(M^2), \circ)$, where $\O_\Delta$ is a $k[x]$-algebra via $x \mapsto \alpha$ and where $\QC(M^2)$ is equipped with its convolution product and its ``star integral transforms'' action on $\C$.  So, $\C^{S^1} = (\O_{\Delta} \otimes_{k[x]} k)\mod(\Perf(M))$.

In case $M = U \quot G$, and $\alpha = \sum_g f_g \in (\oplus_g H^0(U^g, \O_{U^g}))^G$ (with $f_g \neq 0$ only on codimension zero components),  this admits a ``crossed product'' description
\[ \Perf(U \quot G)^{S^1} = \left[\left(\bigoplus_{g \in G} g \# (\Gamma_g)_* \O_U \right) \otimes_{k[x]} k\right]\mod(\Perf(U)) \]
\end{lemma}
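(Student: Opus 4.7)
My plan is to follow the proof of \autoref{lem:kt-mf-orbifold} essentially verbatim, changing only the formal group we ``integrate'' over: $S^1 = B\ZZ$ gets replaced by $B\oh{\GG_a}$, so the group algebra $k[\ZZ] = \O_{\GG_m}$ gets replaced by $\O_{\oh{\GG_a}}\isom k[x]$, and ``$\otimes_{k[\ZZ]}k$'' becomes ``$\otimes_{k[x]}k$.'' Concretely, I will first deduce from \autoref{lem:invts}(iii) that the fully-faithful inclusion $\C^{B\oh{\GG_a}}\hookrightarrow(\Ind\C)^{B\oh{\GG_a}}$ has essential image detected on the underlying categories (as in \autoref{lem:BGa-kt-mf}), so it suffices to identify the monad $i\circ i^L$ coming from the forgetful functor $i\colon(\Ind\C)^{B\oh{\GG_a}}\to\Ind\C=\QC(M)$ inside $\Fun^L(\QC(M),\QC(M))=\QC(M^2)$ (with the convolution product).

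Next I will identify this monad. By the proof of \autoref{lem:BGa-act-mf}, a $B\oh{\GG_a}$-action on $\C$ is the same datum as a multiplicative $k[x]$-algebra structure on $\id_\C$ inside the monoidal category of endofunctors, and by \autoref{lem:BGa-act-orbifold} the choice of this $k[x]$-algebra structure in the present case is precisely: take the standard unit $1\in\End(\id_\C)=\bHH^0_k(\Perf(M))$ and send $x\mapsto\alpha$. Since $B\oh{\GG_a}(A)$ is connected for all $A\in\DArt_k$ (so the relevant local system is determined at the basepoint) and the bar construction computes invariants as the derived $k$-module of ``group cohomology'' with coefficients in this local system, a direct computation (as in \autoref{lem:kt-mf}, with $C_*(B\ZZ;R_f^\otimes)=R\otimes_{k[\ZZ]}k$ replaced by $C_*(B\oh{\GG_a};R_\alpha^\otimes)=R\otimes_{k[x]}k$) identifies $i\circ i^L$ with the algebra $\id_\C\otimes_{k[x]}k$. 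Recalling that $\id_\C\in\Fun^L(\QC(M),\QC(M))$ corresponds to $\Delta_*\O_M\in\QC(M^2)$ under the star integral-transform equivalence, this gives the first claim.

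For the ``crossed product'' statement, I will use that $\Perf(M)=\Perf(U)^G$ by faithfully flat descent, so by \autoref{lem:invts}(iv) $\Perf(U)=(q^!q_*)\mod\Perf(M)$; and as computed inside the proof of \autoref{lem:s1-act-orbifold}, $q^!q_*$ (the monad for $q\colon U\to M$) identifies inside $\Fun^L(\QC(U),\QC(U))=\QC(U^2)$ with
\[ q^*\Delta_*\O_{U\quot G}\isom\bigoplus_{g\in G}g\#(\Gamma_g)_*\O_U \]
equipped with the usual crossed-product multiplication. Under the decomposition $\alpha=\sum_g f_g$, the $k[x]$-algebra structure on $\id_{\Perf(M)}$ pulls back, via the $G^2$-equivariant identification above, to the $k[x]$-algebra structure on $q^!q_*$ sending $x$ to right-multiplication by $\sum_g g\#f_g$. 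Base-changing the previously-identified monad along $\Perf(M)\to\Perf(U)$ therefore presents $\Perf(M)^{B\oh{\GG_a}}$ as modules over
\[ \left[\left(\bigoplus_{g\in G}g\#(\Gamma_g)_*\O_U\right)\otimes_{k[x]}k\right] \]
inside $\Perf(U)$, which is the claimed formula.

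The main (non-routine) obstacle is making the second step rigorous: a $B\oh{\GG_a}$-action is defined functorially on test rings $A\in\DArt_k$ (not as a genuine simplicial-group action on $\C$), so the ``bar construction'' that computes the monad needs to be assembled from compatible $A$-linear simplicial-group actions of $\oh{\GG_a}(A)=\mm_A$ on $\C\otimes_k A$, using that $\oh{\GG_a}$ is pro-represented by the $\Spf\mathrm{Kos}(x^n)$ (as verified inside the proof of \autoref{lem:BGa-act-mf}) to reduce the ``cohomology'' computation to a pro-system of finite group cohomologies whose limit computes $\id_\C\otimes_{k[x]}k$. Once this machinery is set up, the identification of the monad, and the crossed-product unwinding, are straightforward because everything reduces to the same formal Lie-theoretic input $t\mapsto e^{t\alpha}$ used in \autoref{lem:BGa-act-mf}.
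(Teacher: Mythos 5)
Your proposal is correct and takes exactly the approach the paper intends. The paper does not actually give a proof of this lemma, only the remark ``And the orbifold variants, which are exactly analogous to what we have done above''; your proof fleshes out that analogy by combining the argument of \autoref{lem:kt-mf-orbifold} (global multiplicative local system of endofunctors, $\id_\C\leftrightarrow\Delta_*\O_M$ under star transforms, crossed-product identification of $q^!q_*$) with the $B\oh{\GG_a}$-machinery from \autoref{lem:BGa-act-mf} and \autoref{lem:BGa-kt-mf} (replace $k[\ZZ]=\O_{\GG_m}$ by $k[x]$, use pro-representability of $\oh{\GG_a}$ by the $\Spf\Kos(x^n)$), which is precisely what the paper's own sketch proofs of the two scheme/orbifold cases indicate.
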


%
%
%

\section{Comparison of three viewpoints}\label{sec:viewpoints}
\subsection{Back and forth} 
We begin with the following variant of \autoref{lem:s1-act-mf}, which is motivated by the idea that $\PreMF(\AA^1, x)$ over $k\ps{\bt}$ is ``like'' $\oh{\GG_a}$ over $\oh{\GG_a}$.
\begin{lemma}\label{lem:s1-kt-act} There is an equivalence of $\infty$-groupoids
  \[ \left\{ \begin{gathered} \text{$S^1$-actions on $k\mod$}\\\text{as $k\ps{\bt}$-linear category}\end{gathered} \right\} \longleftrightarrow k\ps{x}^\times \]
\end{lemma}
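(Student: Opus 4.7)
My plan is to compute $\bHH^\bullet_{k\ps{\bt}}(k\mod)$ and then apply the strategy from the proof of \autoref{lem:s1-act-mf}(i), viewing $k\mod$ as $k\ps{\bt}$-linear via the augmentation $k\ps{\bt}\to k$.

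First I would identify
\[
\bHH^\bullet_{k\ps{\bt}}(k\mod) = \RHom_{k \otimes^L_{k\ps{\bt}} k}(k,k),
\]
using that $k\mod$ is the module category over the commutative $k\ps{\bt}$-algebra $k$. The two-term free resolution $k\ps{\bt}[-2] \xrightarrow{\bt} k\ps{\bt}$ of $k$, upon tensoring with $k$ over $k\ps{\bt}$, yields the cone of $k[-2] \xrightarrow{0} k$, and as a graded-commutative dga this is the exterior algebra $k[B]/B^2$ with $\deg B = -1$ -- the degree-shifted sibling of the algebra $\O_\bB$ of \autoref{constr:B} whose generator sits in degree $+1$.

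Next I would invoke the Koszul-duality computation in the spirit of \autoref{prop:mf-prelim}: for $k[B]/B^2$ with $B$ in (homological) degree $d$, the Koszul dual $\RHom(k,k)$ is a formal power series ring $k\ps{u}$ on a generator $u$ in degree $-d-1$ (verified by linearly dualizing the bar construction, whose underlying graded vector space $\bigoplus_n k\cdot[B|\cdots|B]$ is concentrated in a single internal degree, so that the linear dual is a product rather than a sum). Applied with $d = -1$, this gives
\[
\bHH^\bullet_{k\ps{\bt}}(k\mod) \isom k\ps{u}, \qquad \deg u = 0,
\]
a commutative $k$-algebra concentrated in a single homological degree.

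Finally I would follow the remainder of the proof of \autoref{lem:s1-act-mf}(i) verbatim. An $S^1$-action on $k\mod$ as $k\ps{\bt}$-linear category corresponds to a double loop map $\ZZ \to \Aut(\id_{k\mod}) \subset \Omega^\infty \bHH^\bullet_{k\ps{\bt}}(k\mod)$. Because $k\ps{u}$ is concentrated in degree $0$, the relevant $\Omega^\infty$ is the discrete monoid $k\ps{u}$, and $\Aut(\id)$ is the discrete abelian group $k\ps{u}^\times$. Since both $\ZZ$ and $k\ps{u}^\times$ are discrete abelian groups, $2$-fold loop maps between them are just ordinary group homomorphisms, and -- as $\ZZ$ is the free abelian group on one generator -- these are classified by the image of $1 \in \ZZ$, giving a bijection with $k\ps{u}^\times$. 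Renaming $u \mapsto x$ produces the asserted equivalence with $k\ps{x}^\times$.

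The main obstacle is the Koszul-duality calculation in the middle step: while it is a direct analog of \autoref{prop:mf-prelim}, it is essential to track the degree shifts carefully to confirm that $u$ lands in degree $0$ (rather than in a negative degree as for $\bt$ itself), since it is precisely this feature that makes $\bHH^\bullet_{k\ps{\bt}}(k\mod)$ discrete and the final reduction to a classical group-theoretic computation succeed.
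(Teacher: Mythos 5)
Your proof is correct, and it reaches the same answer by a genuinely more direct route than the paper. The paper's argument recognizes $k\mod = \PreMF^\kinfty(\AA^1, -x)$ and then invokes the full machinery of \autoref{thm:functors} and \autoref{thm:cpltn} to identify $\Fun^L_{\QCsh(\bB)}(\QCsh(k), \QCsh(k))$ with $\QCsh(\oh{0})$, reading off $\End(\id) = \End_{\QCsh\oh{0}}(\omega_{\oh{0}}) = k\ps{x}$ geometrically; it thereby illustrates the machinery on the simplest example. Your version bypasses all of that: you compute $\bHH^\bullet_{k\ps{\bt}}(k\mod) = \RHom_{k\otimes^L_{k\ps{\bt}}k}(k,k)$ by hand via a two-step Koszul duality (bar construction, then linear dual), which is more elementary and self-contained, and it makes transparent exactly why the answer is a \emph{completed} power series ring: once the exterior generator $B = \epsilon$ lands in degree $-1$, the bar classes $[B|\cdots|B]$ all collapse to degree $0$, so $\RHom$ is a product over word length rather than a direct sum. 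Your degree bookkeeping (generator $u$ in degree $-d-1$, hence $0$ here) matches the paper's $\deg x = 0$, and the passage from the discrete $\bHH^0 = k\ps{x}$ to $2$-fold loop maps $\ZZ \to k\ps{x}^\times$ is identical to \autoref{lem:s1-act-mf}(i). One point worth stating explicitly (you silently assume it): the $k\ps{\bt}$-linear structure on $k\mod$ in the lemma statement is the one coming from the $\PreMF(\AA^1,-x)$ picture, and this does coincide with the structure induced by the augmentation $k\ps{\bt}\to k$; a quick sanity check is that both give $(k\otimes_{k\ps{\bt}} k)\mod = \QCsh(\oh{0})$ on base change to $k$ (cf.\ \autoref{cor:premf-supt}) and both vanish on base change to $k\pl{\bt}$ since $\bt$ becomes invertible.
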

\begin{proof} This is a variant of \autoref{lem:s1-act-mf}, using \autoref{thm:functors}: An $S^1$-action on $k\mod$ as $\QCsh(\bB)$-linear category, is the same as the data of a loop map
  \[ S^1 \stackrel{\otimes}\longrightarrow \Aut_{\QCsh(\bB)}(\QCsh(k)) \]
  Since $S^1 = B \ZZ$, this is the same as giving a double loop map
  \[ \ZZ \stackrel{\otimes^2}\longrightarrow \Aut_{\Fun^L_{\QCsh(\bB)}(\QCsh(k))}\left( \id_{\QCsh(k)}\right)\subset \End_{\Fun^L_{\QCsh(\bB)}(\QCsh(k))}\left( \id_{\QCsh(k)}\right) \]
  Identify $\QCsh(k) = \PreMF^\infty(\AA^1, -x)$ as $\QCsh(\bB)$-linear category.  By \autoref{thm:functors} (and \autoref{thm:cpltn}) there is an equivalence of $\infty$-categories
  \[ \Fun^L_{\QCsh(\bB)}(\QCsh(k), \QCsh(k)) = \PreMF_{0 \times 0}^\infty(\AA^2, -x+y) = \QCsh_{0 \times 0}\left( \left\{x=y\right\} \right) =  \QCsh_0 \AA^1 = \QCsh \oh{0} \]
  under which the identify functor corresponds to 
  \[ \id_{\QCsh(k)} \mapsto \ol{\Delta}_*\left( \RGamma_0 \omega_{\AA^1}\right) \mapsto \RGamma_0 \omega_{\AA^1} \mapsto \omega_{\oh{0}} \] so that
  \begin{align*} \End_{\Fun^L}(\id_{\QCsh(k)}) &= \End_{\QCsh \oh{0}}\left( \omega_{\oh{0}} \right) 
    = \Omega^\infty \RHom_{\QCsh \oh{0}}\left( \omega_{\oh{0}}, \omega_{\oh{0}} \right) \\
    &= \Omega^\infty \RHom_{\QC \oh{0}}\left( \O_{\oh{0}}, \O_{\oh{0}}\right) 
    = \Omega^\infty k\ps{x} = k\ps{x}.\end{align*}
  In particular, we see that (as a $2$-fold loop space) $\Aut(\id_{\QCsh(k)})$ identifies with the (discrete) $2$-fold loop space $k\ps{x}^\times$.  Since both $\ZZ$ and $k\ps{x}^\times$ are discrete, $2$-fold loop maps are just the same as ordinary (abelian) group homomorphisms:
  \[ \Map_{\otimes^2}\left(\ZZ, \Aut_{\id_{\QCsh(k)}}\right) = \Map_{\otimes^2}\left(\ZZ, k\ps{x}^\times\right) = \Map_{\mathrm{AbGp}}(\ZZ, k\ps{x}^\times) = k\ps{x}^\times.\qedhere \]
\end{proof}

\begin{defn} For $\qwer \in k\ps{x}^\times$, let $k\mod_\qwer$ and $\Perf k_\qwer$ (or just $k_\qwer$ for short) denote $k\mod$ and $\Perf k$ equipped with the $S^1$-action of the previous Lemma.  Although we don't introduce notation for it, it should be regarded as a mixture of $\PreMF$ and $\CircMF$ with the two functions $-x$ (to $\GG_a$) and $\qwer$ (to $\GG_m$):
  \[ k_\qwer = \mathop{Pre/CircMF}(\oh{\GG_a}, -x, \qwer) \qquad \xymatrix@1{\oh{\GG_a} \ar[r]^-{-x \times \qwer} & \oh{\GG_a} \times \oh{\GG_m}} \] The $k\ps{\bt}$-action is from taking the fiber over $0$ in the first variable, the $S^1$-action is from the second.
\end{defn}


This allows us to incorporate $S^1$ actions into \autoref{cor:premf-supt}:
\begin{prop}\label{prop:s1-trans-1} Suppose $(M,f)$ is a formal LG pair, and $\qwer \in k\ps{x}^\times$.  Set $M_0 = M \times_{\AA^1} 0$, and $\oh{M_0}$ the formal completion of $M$ along $M_0$. Then, there is an $S^1$-equivariant equivalence
  \[ \PreMF(M,f) \otimes_{k\ps{\bt}} k_{\qwer} = \CircMF(\oh{M_0}, \qwer(f)) \]
\end{prop}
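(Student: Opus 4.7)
The plan is to match two $S^1$-equivariant structures on a common underlying $k$-linear category. Forgetting the $S^1$-action on $k_\qwer$ leaves the $k\ps{\bt}$-linear category $\Perf(k)$, so the underlying $k$-linear category of $\PreMF(M,f) \otimes_{k\ps{\bt}} k_\qwer$ is $\PreMF(M,f) \otimes_{k\ps{\bt}} k$. By \autoref{cor:premf-supt} together with the identification $\DCoh_{M_0}(M) = \DCoh(\oh{M_0})$ of \autoref{thm:cpltn}, this is canonically equivalent to $\DCoh(\oh{M_0})$, which is the underlying $k$-linear category of $\CircMF(\oh{M_0}, \qwer(f))$. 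So it remains to identify the two $S^1$-actions on $\DCoh(\oh{M_0})$ induced on this common category.

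By \autoref{lem:s1-act-mf}(ii) the $\infty$-groupoid of $S^1$-actions on $\DCoh(\oh{M_0})$ is discrete and identified with $\GG_m(\oh{M_0}) = \Gamma(\oh{M_0}, \O_{\oh{M_0}})^\times$; the $\CircMF$-action is by construction classified by $\qwer(f)$. Hence I am reduced to showing that the tensor-product $S^1$-action is classified by the same element. Naturality of the classification in \autoref{lem:s1-act-mf} and \autoref{lem:s1-kt-act} (both proved by a $\pi_0$-of-endomorphisms-of-identity computation) further reduces this to identifying the ring homomorphism induced on $\pi_0$ of endomorphisms of the identity functor by the $k\ps{\bt}$-linear base-change functor $\PreMF(M,f) \otimes_{k\ps{\bt}} (-)$, namely
\[ k\ps{x} = \pi_0 \bHH^\bullet_{k\ps{\bt}}(k\mod) \longrightarrow \pi_0 \bHH^\bullet_k(\DCoh(\oh{M_0})) = \Gamma(\oh{M_0}, \O_{\oh{M_0}}), \]
as the map $x \mapsto f|_{\oh{M_0}}$; restricted to units, this gives $\qwer \mapsto \qwer(f)$ and concludes the comparison of $S^1$-actions.

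To carry out this identification, I would trace through the proof of \autoref{lem:s1-kt-act}. Under the symmetric-monoidal equivalence $k\mod \simeq \PreMF^\kinfty(\AA^1, -x)$ used there, the generator $x \in k\ps{x}$ corresponds (up to the convention sign) to the endomorphism of the identity functor given by multiplication by the pullback of the coordinate along $\AA^1 \xrightarrow{-x} \AA^1$. Tensoring with $\PreMF(M,f)$ over $k\ps{\bt}$ and invoking \autoref{thm:TS} identifies the tensor product with the appropriate $\PreMF$ of the sum potential on $M \times \AA^1$, under which the coordinate on the $\AA^1$-factor pulls back to $f$ restricted to $\oh{M_0}$. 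This yields the desired $x \mapsto f|_{\oh{M_0}}$, hence $\qwer \mapsto \qwer(f)$ on units, and completes the match of the two elements of $\GG_m(\oh{M_0})$.

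The main obstacle is this final identification: in particular, the careful bookkeeping of signs and the verification that the map on $\bHH^\bullet$ induced by the base-change functor is correctly identified with pullback of functions along $f\colon \oh{M_0} \to \AA^1$. Once this is sorted out, everything above is formal, since \autoref{lem:s1-act-mf}(ii) makes the space of $S^1$-actions discrete so matching the classifying elements immediately upgrades the underlying $k$-linear equivalence to an $S^1$-equivariant one.
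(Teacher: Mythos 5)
Your proof is correct and follows essentially the same route as the paper's. Both arguments rest on the same geometric picture: via Thom--Sebastiani the underlying $k$-linear category of $\PreMF(M,f)\otimes_{k\ps{\bt}}k_\qwer$ is $\DCoh_{M_0}(\Gamma_f(M))\isom\DCoh(\oh{M_0})$, and the $S^1$-action is obtained by pulling back the invertible function $\qwer(x)$ along $f$, yielding $\qwer(f)$. The paper states this tersely by drawing the diagram $\oh{M_0}\to\oh{M_0}\times\oh{\GG_a}\to\oh{\GG_a}\xrightarrow{\qwer}\oh{\GG_m}$ and declaring the action to be $\qwer(f)$; you make the bookkeeping explicit by invoking the discreteness of the space of $S^1$-actions (\autoref{lem:s1-act-mf}(ii)), reducing the comparison to matching elements of $\GG_m(\oh{M_0})$, and identifying the relevant map $k\ps{x}\to\Gamma(\oh{M_0},\O_{\oh{M_0}})$ on $\pi_0\bHH^\bullet$ as $x\mapsto f|_{\oh{M_0}}$. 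This reduction-to-$\pi_0$ step is precisely what the paper's proof leaves implicit, so your version is a legitimate and arguably more transparent rendering of the same argument; you are also right that the remaining content is the sign/convention check in tracing $x$ through the equivalence $k\mod\isom\PreMF^\kinfty(\AA^1,-x)$ of \autoref{lem:s1-kt-act}.
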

\begin{proof} 
 At the level of underlying dg-categories,
 \[ \PreMF(M,f) \otimes_{k\ps{\bt}} k_\qwer = \PreMF_{M_0 \times 0}(M \times \AA^1, f \boxplus -x) = \DCoh_{M_0}(\Gamma_{f}(M)) = \DCoh(\oh{M_0}) \]
  Consider the diagram
  \[ \xymatrix{\oh{M_0} \ar[r]^-{\Gamma_f} & \oh{M_0} \times \oh{\GG_a} \ar[r]^-{p_2} & \oh{\GG_a} \ar[r]^-{\qwer }& \oh{\GG_m}} \] 
 The $S^1$-action comes from the second projection $\qwer\colon \oh{\GG_a} \to \oh{\GG_m}$, i.e., $\qwer(f)$.
\end{proof}

Finally, we sketch a few of the compatibilities between the various constructions we have seen:
\begin{prop}\label{prop:s1-trans-2} Suppose $M$ is a smooth formal $k$-scheme and $f \in \GG_m(M)$.  Then, there is a $k\ps{\bt}$-linear equivalence
  \[ \CircMF(M, f)^{S^1} \isom \PreMF(\oh{M_1}, \log(f)) \]
  where $\log(f) = \log(1 + (f-1)) = \sum (-1)^m (f-1)^m/m$.
\end{prop}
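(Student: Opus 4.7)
The plan is to identify both sides as $\DCoh(M_1)$, where $M_1 = M \times_{\GG_m} 1$ is the derived unit fiber of $f$, and then to match the two $k\ps{\bt}$-linear structures by means of the formal exponential. The main geometric input is that in characteristic zero the formal exponential identifies the two ``loop spaces'' at play, namely $\bB = 0 \times_{\AA^1} 0$ and $\bB' = 1 \times_{\GG_m} 1$.

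At the level of underlying $k$-linear dg-categories, the RHS is a $\PreMF$, so by definition it is $\DCoh$ of the derived zero fiber of $\log(f)\colon \oh{M_1} \to \AA^1$; because $\exp$ provides an isomorphism $\oh{0}_{\AA^1} \isom \oh{1}_{\GG_m}$ of formal schemes, this derived zero fiber coincides with $M_1$, so $\PreMF(\oh{M_1}, \log(f)) = \DCoh(M_1)$. For the LHS, the formal-scheme extension of \autoref{lem:kt-mf} (the same monadic argument, using $\DCoh_{M_1}(M) = \DCoh(\oh{M_1})$ via \autoref{thm:cpltn}) identifies $\CircMF(M,f)^{S^1} = \DCoh(M)^{S^1}$ with $\DCoh(M_1)$ via the monad $i \circ i^L$, which is tensoring by $\O_{M_1} = \O_M \otimes_{\O_{\GG_m}} k$.

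Next, I would match the two $k\ps{\bt}$-linear structures. By \autoref{constr:Bact}, the PreMF structure on $\DCoh(M_1)$ arises from the loop-composition action of $\bB$ on $M_1 = \oh{M_1} \times_{\AA^1} 0$ determined by $\log(f)$. By the footnote to \autoref{lem:kt-mf}, the $S^1$-invariants structure arises from the analogous loop-composition action of $\bB'$ on $M_1 = M \times_{\GG_m} 1$ determined by $f$. The formal exponential is a canonical isomorphism $\exp\colon \bB \stackrel{\sim}\to \bB'$ of commutative derived formal group schemes (both have underlying ring $k[B]/B^2$ with $\deg B = 1$), and consequently induces a symmetric monoidal equivalence $\QCsh(\bB) \isom \QCsh(\bB')$, both identified with $k\ps{\bt}\mod$ via \autoref{prop:mf-prelim} and its $\GG_m$-analogue cited in \autoref{lem:kt-mf}.

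The remaining coherence is that the $\bB$- and $\bB'$-actions on $M_1$ coincide under $\exp$. This reduces to the commuting triangle
\[
\xymatrix@C=3pc{\oh{M_1} \ar[r]^{\log(f)} \ar[dr]_{f|_{\oh{M_1}}} & \oh{0}_{\AA^1} \ar[d]^{\exp} \\ & \oh{1}_{\GG_m}}
\]
which is the definition $f = \exp(\log f)$ on $\oh{M_1}$: base-changing the universal action of $\bB$ on $\oh{0}_{\AA^1}$ (resp. of $\bB'$ on $\oh{1}_{\GG_m}$) along the top (resp. diagonal) arrow yields two formal group actions on $M_1$ which are identified by $\exp$. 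The main obstacle is to upgrade this from an identification of actions to a full $(\infty, 2)$-categorical equivalence of the two $k\ps{\bt}$-linear module categories. The cleanest route is through \autoref{sec:more-general}: one realizes both sides simultaneously as the geometric realization $\QCsh$ of a single simplicial diagram $\{M_1 \times \bB^{\bullet}\} \simeq \{M_1 \times (\bB')^{\bullet}\}$ with the $\bB$-equivariant map down to $\oh{M_{M_1}}$, so that the formal exponential directly yields the required $k\ps{\bt}$-linear equivalence at the level of simplicial bar constructions computing the two tensor products.
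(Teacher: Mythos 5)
Your argument is essentially the paper's own proof, written out in much more detail: both reduce to the commuting triangle $\oh{M_1} \to \oh{\GG_m} \xrightarrow{\log,\ \sim} \oh{\GG_a}$ (you phrase it as the inverse, $\exp$), use the fact that $\log$/$\exp$ is an equivalence of abelian formal groups to identify $\bB = 0\times_{\oh{\GG_a}}0$ with $\bB' = 1\times_{\oh{\GG_m}}1$, and then invoke Lemma~\ref{lem:kt-mf} with its footnote together with the formalism of Section~\ref{sec:more-general} to promote this to a $k\ps{\bt}$-linear equivalence. The only difference from the paper is expositional thoroughness.
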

\begin{proof} Consider the diagram
  \[ \xymatrix{ \oh{M_1} \ar[r]^f & \oh{\GG_m} \ar[r]^-{\log}_{\sim} & \oh{\GG_a}} \] and note that the second map is an equivalence of abelian formal groups.  Now combine \autoref{sec:more-general} and \autoref{lem:kt-mf} (with its footnote).
\end{proof}

\begin{prop}\label{prop:s1-trans-3} Suppose $(M, f)$ is an LG pair, and $\qwer \in k[x] \cap k\ps{x}^\times$ with $\qwer(0) = 1$. Shrinking $M$ if necessary, suppose that $\qwer(f) \in \GG_m(M)$ so that both $\CircMF(M,\qwer(f))$ and $\PreMF(M,f)$ make sense.  Then,
  \begin{enumerate}
    \item There is a $k\ps{\bt}$-linear equivalence
  \[ \CircMF(M, \qwer(f))^{S^1} = \PreMF(M, f) \otimes_{k\ps{\bt}} (k_{\qwer})^{S^1} \]
\item If $\qwer'(x) \neq 0$, then $(k_{\qwer})^{S^1}$ is an invertible $k\ps{\bt}$-module category (in fact, equivalent to $\Perf k\ps{\bt}$).
  \end{enumerate}
\end{prop}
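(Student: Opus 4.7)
The plan is to deduce (i) from Proposition \ref{prop:s1-trans-1} by passing to $S^1$-invariants, and to obtain (ii) as a specialization of (i).

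For (i), I would start with the $S^1$-equivariant $k\ps{\bt}$-linear equivalence
\[ \PreMF(M, f) \otimes_{k\ps{\bt}} k_\qwer \simeq \CircMF(\oh{M_0}, \qwer(f)) \]
supplied by Proposition \ref{prop:s1-trans-1} and take $S^1$-invariants of both sides. Two compatibilities need verification. (a) The canonical comparison map
\[ \PreMF(M, f) \otimes_{k\ps{\bt}} (k_\qwer)^{S^1} \longrightarrow \bigl(\PreMF(M,f) \otimes_{k\ps{\bt}} k_\qwer\bigr)^{S^1} \]
is an equivalence: the $S^1$-action lives only on the $k_\qwer$ factor, and because the tensor is taken over $k\ps{\bt} = C^*(BS^1;k)$ the relevant $BS^1$-shaped limit is preserved by $\PreMF(M,f) \otimes_{k\ps{\bt}} (-)$. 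Heuristically this expresses the Eckmann-Hilton compatibility of Lemma \ref{lem:e2} between the $\bB$-action encoding the $k\ps{\bt}$-linearity and the $S^1$-action. (b) One identifies $\CircMF(\oh{M_0}, \qwer(f))^{S^1}$ with $\CircMF(M, \qwer(f))^{S^1}$: by Lemma \ref{lem:kt-mf}, both are $\DCoh$ of the derived fiber of $\qwer(f)$ over $1 \in \GG_m$, in $\oh{M_0}$ and $M$ respectively. After shrinking $M$ so that $\{\qwer(f) = 1\} \subset M$ coincides with $M_0$ set-theoretically---possible because $\qwer(x) - 1$ vanishes only at $x = 0$ on a small enough neighborhood of the origin, so we may arrange that none of the other roots of $\qwer - 1$ are hit by $f$---the derived fiber $M \times^L_{\GG_m} \{1\}$ is supported on $M_0$, and Theorem \ref{thm:cpltn} identifies its $\DCoh$ with that of its completion $\oh{M_0} \times^L_{\GG_m} \{1\}$.

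For (ii), I would specialize (i) to $M = \pt$ and $f = 0$, so that $\qwer(f) = \qwer(0) = 1$. Here $\PreMF(\pt, 0) = \DCoh(\bB) \simeq \Perf(k\ps{\bt})$ by Proposition \ref{prop:mf-prelim}, while by Lemma \ref{lem:kt-mf} applied to the constant map $\pt \to \GG_m$ at $1$, we have $\CircMF(\pt, 1)^{S^1} = \DCoh\bigl(\pt \times^L_{\GG_m} \{1\}\bigr) \simeq \DCoh(\bB) \simeq \Perf(k\ps{\bt})$ as $k\ps{\bt}$-linear category (using the formal-exponential identification of fibers noted after Lemma \ref{lem:kt-mf}). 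Part (i) then yields
\[ \Perf(k\ps{\bt}) \otimes_{k\ps{\bt}} (k_\qwer)^{S^1} \simeq \Perf(k\ps{\bt}), \]
and cancelling the monoidal unit $\Perf(k\ps{\bt})$ of $\dgcatidm_{k\ps{\bt}}$ produces $(k_\qwer)^{S^1} \simeq \Perf(k\ps{\bt})$, which is in particular invertible.

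The main obstacle will be justifying the commutation of $S^1$-invariants with $\otimes_{k\ps{\bt}}$ in step (a): since $(-)^{S^1}$ is a limit and tensor products of module categories do not preserve general limits, one must invoke the specific interplay between $k\ps{\bt}$-linearity and the $S^1$-action furnished by the Koszul duality between $\QCsh(\bB)$ and $k\ps{\bt}\mod$ (Proposition \ref{prop:mf-prelim}). As a fallback, one could sidestep this issue by proving (ii) directly via an explicit Koszul-duality computation identifying $(k_\qwer)^{S^1}$ with $\DCoh$ of the derived fiber product $\oh{\GG_a} \times^L_{\oh{\GG_a} \times \oh{\GG_m}} \bigl(\{0\} \times \{1\}\bigr)$, which under the hypothesis $\qwer'(x) \neq 0$ becomes $\bB$ and hence $\Perf k\ps{\bt}$.
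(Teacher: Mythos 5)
Your overall structure tracks the paper's quite closely: you correctly identify step~(a), the commutation of $(-)^{S^1}$ with $\otimes_{k\ps{\bt}}$, as the crux of~(i), and your ``fallback'' for~(ii) --- the explicit Koszul-duality computation with the derived fiber of $\oh{\GG_a}\to\oh{\GG_a}\times\oh{\GG_m}$ over $\{0\}\times\{1\}$ --- is essentially the paper's own proof of~(ii). The logical order the paper uses is: prove (ii) first by that direct computation (via \autoref{prop:s1-trans-2}, which identifies $(k_\qwer)^{S^1}$ $k\ps{\bt}\otimes k\ps{\bt}$-linearly with $\DCoh$ of the relevant fiber, and \emph{that} is where the hypothesis $\qwer'(x)\ne 0$ is used to make $\id\times\qwer$ an isomorphism of formal groups), and then deduce (a) --- and hence (i) --- from (ii). Your primary approach to (a), via a general compatibility between $k\ps{\bt}$-linearity and the $S^1$-action, is the thing the paper explicitly avoids; there is no such general commutation statement, and you correctly flag this as the obstacle.

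The genuine gap is in your main argument for~(ii). You specialize~(i) to $M=\pt$, $f=0$ and obtain $\CircMF(\pt,1)^{S^1}=(k_\qwer)^{S^1}$ by taking $S^1$-invariants of \autoref{prop:s1-trans-1}. But on $(k_\qwer)^{S^1}$ there are \emph{two} distinct $k\ps{\bt}$-module structures: one inherited from the $\PreMF^\kinfty(\AA^1,-x)$-side of $k_\qwer$ (this is the one appearing in the tensor $\PreMF(M,f)\otimes_{k\ps{\bt}}(k_\qwer)^{S^1}$, and the one (ii) refers to), and a second one coming from $C^*(BS^1;k)$ via taking $S^1$-invariants. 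The identification $\CircMF(\pt,1)^{S^1}\simeq\Perf k\ps{\bt}$ that you write down (from \autoref{lem:kt-mf}) only sees the $C^*(BS^1)$-side structure. Your argument therefore establishes invertibility over the wrong $k\ps{\bt}$; relating the two is precisely what requires the hypothesis $\qwer'(x)\ne 0$ (so that $\id\times\qwer\colon\oh{\GG_a}\times\oh{\GG_a}\to\oh{\GG_a}\times\oh{\GG_m}$ is an isomorphism of formal groups). This is also why your main argument appears to prove~(ii) without ever invoking that hypothesis --- a symptom of the conflation. The fix is exactly your fallback: run the paper's two-$\bt$ bookkeeping via \autoref{prop:s1-trans-2} directly, use $\qwer'\ne 0$ to straighten $\id\times\qwer$, identify the fiber with $\bB$, and conclude.
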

\begin{proof} \mbox{}
  \begin{enumerate}
    \item The inclusion $\CircMF(\oh{M_0}, \qwer(f)) \to \CircMF(M, \qwer(f))$ induces an equivalence on $S^1$-fixed points. By \autoref{prop:s1-trans-1} it remains to check that the natural map \[ \PreMF(M,f) \otimes_{k\ps{\bt}} (k_\qwer)^{S^1} \to (\PreMF(M, f) \otimes_{k\ps{\bt}} k_\qwer)^{S^1} \] is an equivalence.  This will follow from (ii) upon noting that the underlying $k$-linear category on both sides identifies with $\DCoh(M_0)$ by \autoref{lem:kt-mf}, and the $k$-linear functor with the identity functor.
    \item Consider the diagram
      \[ \xymatrix{ \oh{\GG_a} \ar[r]^-{\Delta} & \oh{\GG_a} \times \oh{\GG_a} \ar[r]^-{\id \times \qwer} & \oh{\GG_a} \times \oh{\GG_m} } \] 
      Using \autoref{prop:s1-trans-2}, there is a $k\ps{\bt} \otimes k\ps{\bt}$-linear identification of $(k_\qwer)^{S^1}$ with $\DCoh$ on the fiber over $0 \times 1$.  By hypothesis, $\id \times \qwer$ is an isomorphism of formal groups, so this identifies with $\PreMF(\oh{\GG_a}, x, x)$. \qedhere
  \end{enumerate}
\end{proof}

\begin{remark}\label{rem:other-points} If $(M,f)$ is an LG pair, then $\Perf(M)$ with the $B\oh{\GG_a}$-action corresponding to $f$ (or  $\CircMF(M,f)$ in the case of $f \in \GG_m(M)$) remembers information about all the fibers of $f$.   An easier version of the construction in this section tells us how: The space of $B\oh{\GG_a}$-actions  (resp., $S^1$-actions) on $\Perf k$ identifies with $\Gamma(\pt, \O_\pt) = k$ (resp., $k^\times$).  For $t \in k$ (resp., $\lambda \in k^\times$) let $k_t$ (resp., $k_\lambda$) denote this.  Then, we can twist the formation of invariants by $k_t$ (resp., $k_\lambda$)
  \[ \Fun_{B\oh{\GG_a}}(k_t, \Perf(M)) = \left(k_{-t} \otimes \Perf(M)\right)^{B\oh{\GG_a}} = \PreMF(M, f-t) \]
One could therefore hope for a refined version of \autoref{thm:hoch}, not factoring through taking invariants on the category level, which retains information about finer global invariants (e.g., non-commutative Hodge structures). 
\end{remark}

\subsection{Summary: Two Koszul dualities}
\begin{na}\label{na:heur1} Suppose $\C \in \dgcatidm_k$ is smooth, and $\pi_i \bHH^\bullet_k(\C) = 0$ for $i > 0$.  Then, there are equivalences of (discrete) spaces
  \[ \left\{\begin{gathered}\text{$B\oh{\GG_a}$-action}\\\text{on $\C$}\end{gathered}\right\} \Leftrightarrow \left\{ \bHH^0(\C)\right\}  \Leftrightarrow \left\{\begin{gathered}\text{$\Perf(\GG_a)^\otimes$-linear}\\\text{structure on $\C$}\end{gathered}\right\} \] The right is the standard description of a $k[x]$-linear structure as classified by $E_2$-maps $k[x] \to \bHH^\bullet(\C)$: By our connectivity assumptions, this identifies with $E_2$-maps between the discrete algebras $k[x] \to \bHH^0(\C)$.  The left is a summary of the proof of \autoref{lem:s1-act-mf} (among others), and can also be summarized as ``take the derivative'': Maps $B^2 \oh{\GG_a} \to B\Aut(\C)$ (=the completion of $\dgcat$ at $\C$) are classified by Lie algebra maps $k[-1] \to \bHH^\bullet(\C)[-1]$.   In other words, this can be thought of as a simple case of commutative Koszul duality.  
      
An alternate formulation, skipping the middle-man, would be to say that the symmetric monoidal category $\Perf(B\oh{\GG_a})^\circ$ (with its convolution product) is, at least almost, the same as $\Perf(\GG_a)^{\otimes}$ (with its usual tensor) compatibly with forgetting to $\Perf k$.
\end{na}

\begin{na}\label{na:heur2} There is also an $E_2$ Koszul duality between $k\ps{x}$ and $k\ps{\bt}$, and in fact this is what underlies \autoref{ssec:Bact}: The bar construction $k \otimes_{k[x]} k$ is $\O_B$, and then the cobar construction $\RHom_{\O_\bB}(k,k)$ is $k\ps{\bt}$.  More topologically (and for the $S^1$-action viewpoint), start with $\O_{\GG_m} = C_*(\ZZ, k)$ so that Eilenberg-Moore identifies bar and cobar constructions geometrically and without characteristic assumptions $k \otimes_{C_*(\ZZ, k)} k = C_*(S^1, k)$, $\RHom_{C_*(S^1,k)}(k,k) = C^*(BS^1, k)$.  This provides a relationship between $k[x]$-linear (i.e., $\Perf(\GG_a)^{\otimes}$-linear) categories and $k\ps{\bt}$-linear categories, though as we have seen it is not strictly invertible or strictly monoidal.  The point of Sections~3--5 was to provide a geometric way to study this relationship (hiding one cobar construction via the trick of taking $\DCoh(\bB)$), and more precisely understand how close to invertible, monoidal, etc. it is.
\end{na}

\part{Applications}
\section{Applications}\label{sec:applications}
\subsection{Smoothness (and properness) of \texorpdfstring{$\MF$}{MF}}
Using \autoref{thm:functors}, we are able to obtain the show that $\MF$ is smooth, and that it is proper when the critical locus is proper:
\begin{theorem}[Smoothness and Properness]\label{thm:sm-proper} Suppose $(M,f)$ is an LG pair, $Z \subset f^{-1}(0)$ closed.  Then,
  \begin{enumerate}
    \item Suppose $Z_{\red}$ is proper.  Then, $\PreMF_Z(M, f)$ is proper over $k\ps{\bt}$ and $\MF_Z(M, f)$ is proper over $k\pl{\bt}$.
    \item Suppose $Z$ contains each connected component of $\crit(f)$ which it intersects.  Then, $\MF_Z(M, f)$ is smooth over $k\pl{\bt}$.
    \item Suppose $\crit(f) \cap f^{-1}(0)$ is proper.  Then, $\MF(M, f)$ is smooth and proper over $k\pl{\bt}$.
  \end{enumerate}
\end{theorem}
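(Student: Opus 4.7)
The plan is to deduce (iii) as a formal combination of (i), (ii), and the Orlov-style completion result \autoref{prop:orlov-cpltn}. Set $Z_0 = \crit(f) \cap f^{-1}(0)$; by hypothesis $Z_0$ is proper. The key observation is that $Z_0$ already contains every connected component of $\crit(f)$ that it meets. Indeed, since $M$ is smooth and we are in characteristic zero, $f$ is locally constant on the reduced critical locus (a standard consequence of $df\mid_{\crit(f)} = 0$), so any connected component of $\crit(f)$ meeting $f^{-1}(0)$ lies entirely in $f^{-1}(0)$ and hence in $Z_0$. Thus $Z_0$ simultaneously satisfies the hypothesis of (i) (properness of $(Z_0)_{\red}$) and the hypothesis of (ii) (it contains each component of $\crit(f)$ it intersects).

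Next, I would apply \autoref{prop:orlov-cpltn} with $Z = Z_0$: it contains all components of the critical locus lying over $0$, so the inclusion $\MF_{Z_0}(M,f) \hookrightarrow \MF(M,f)$ is an equivalence of $k\pl{\bt}$-linear $\infty$-categories. Combining this equivalence with (i) gives properness of $\MF(M,f)$ over $k\pl{\bt}$, and with (ii) gives smoothness of $\MF(M,f)$ over $k\pl{\bt}$.

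The only substantive point to verify carefully is the stability claim about connected components of $\crit(f)$ lying in a single fiber of $f$: in particular that this applies even when $\crit(f)$ is non-reduced or when $M$ is an orbifold rather than a scheme. For the orbifold case one passes to a smooth atlas where the classical argument for schemes applies, and the resulting statement descends since taking connected components and taking the critical locus are both étale-local. Given this, the remainder of the proof is purely formal. Since (iii) is a direct corollary of (i), (ii), and \autoref{prop:orlov-cpltn}, I do not anticipate any essential obstacle beyond that which was already overcome in establishing the previous two parts.
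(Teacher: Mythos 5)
Your proof is correct and follows essentially the same route as the paper: set $Z = \crit(f) \cap f^{-1}(0)$, invoke (i) and (ii) for $\MF_Z(M,f)$, and then use \autoref{prop:orlov-cpltn} to transfer to $\MF(M,f)$. The one place you are more careful than the paper is in actually verifying the hypothesis of (ii), namely that $Z$ contains each connected component of $\crit(f)$ it meets; the paper's proof of (iii) silently assumes this, whereas you supply the standard characteristic-zero argument that $f$ is locally constant on $\crit(f)_{\red}$, together with the remark that the claim is \'etale-local and so extends to orbifolds. (Incidentally, the paper's text of (iii) says ``over $k\ps{\bt}$'' where it should read $k\pl{\bt}$; you have this right.)
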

\begin{proof}\mbox{} \begin{enumerate}
  \item It suffices to show that $\ev$, restricted to compact objects, factors through $\Perf k\ps{\bt}$.  Unraveling, it suffices to verify that 
    \[ \ev(\F \otimes \G) = \RHom^{\otimes k\ps{\bt}}_{\PreMF(M,f)}(\F, \G) \in k\ps{\bt}\mod \] is perfect for all $\F, \G \in \PreMF_Z(M,f)$.  By \autoref{prop:s1-hom}
    \[ \RHom^{\otimes k\ps{\bt}}_{\PreMF(M,f)}(\F, \G) = \RHom_{\DCoh(M)}(i_* \F, i_* \G)^{S^1} \] where $i\colon M_0 \to M$. Regarding $\Hom_{\DCoh(M)}(i_* \F, i_* \G)$ with its $S^1$-action as a $k[B]/B^2$-module, it suffices by \autoref{prop:mf-prelim} to show that it is $t$-bounded and coherent over $k[B]/B^2$, or equivalently perfect over $k$.  
    
    Thus, it is enough to show that $\RHom_{\DCoh(M)}(\F', \G') \in \Perf k$ for any $\F', \G' \in \DCoh_Z(M)$.  Let $k\colon Z_\red \to M$ be the inclusion.  By \autoref{lem:coh-red} we are reduced to the case where $\G' = k_* \ol{\G}$ for some $\ol{\G} \in \DCoh(Z_{\red})$.  Since $M$ is regular, $\F'$ is perfect and hence the pullback $k^* \F'$ is also perfect so that $\HHom(k^* \F', \ol{\G}) \in \DCoh(Z_{\red})$.    But now, $Z_\red$ is proper so that
    \[ \RHom_{\DCoh(M)}(\F', k_* \ol{\G}) = \RHom_{\DCoh(Z_\red)}(k^* \F', \ol{\G}) = \RGamma\left(Z_{\red}, \HHom_{\DCoh(Z_{\red})}(k^* \F', \ol{\G})\right) \in \Perf k \] as desired.
  \item We must prove that $\id_{\MF_Z(M,f)}$ is a compact object in the functor category.  An object in a $k\ps{\bt}$-linear $\infty$-category is compact iff it is compact in the category, viewed as a plain $\infty$-category.  Using \autoref{thm:functors}, we are reduced to showing that 
    \[ \ol{\omega_{\Delta,Z}} = \ol{\Delta}_*\left( \ul{\RGamma}_Z \omega_M\right) \in \MF^\kinfty_{Z^2}(M^2, -f \oplus f) = \Ind \DCoh_{Z^2}( (M^2)_0) \ohotimes_{k\ps{\bt}} k\pl{\bt} \] is compact.
    Since $\omega_M$ is coherent and $\Delta$ proper, $\ol{\omega_\Delta} = \ol{\Delta}_* \omega_M$ is coherent and so compact in $\PreMF^\kinfty(M^2, -f \boxplus f) = \Ind\DCoh( (M^2)_0)$.   Note that $\ol{\Delta}_* \ul{\RGamma}_Z \omega_M = \ul{\RGamma}_{Z^2} \ol{\Delta}_* \omega_M$, since $Z \subset M_0$, so that it is only the $\ul{\RGamma}_{Z^2}$ that can cause problems.

  Let $W = \crit(-f \oplus f) \cap (M^2)_0$ be the components of the critical locus of $-f \oplus f$ lying in the zero fiber.  By \autoref{prop:orlov-cpltn}, the natural inclusions 
    \[ \xymatrix{ \DCoh_{Z^2 \cap W}( (M^2)_0 ) \ar@{^{(}->}[d] \ar@{^{(}->}[r] & \DCoh_{Z^2}( (M^2)_0 ) \ar@{^{(}->}[d] \\ \DCoh_{W}( (M^2)_0) \ar@{^{(}->}[r] & \DCoh(M^2) } \]
    induce, upon applying $\Ind(-) \ohotimes_{k\ps{\bt}} k\pl{\bt}$
    \[ \xymatrix{ \ul{\RGamma}_{Z^2 \cap W}(\ol{\omega_\Delta}) \in \MF^\kinfty_{Z^2 \cap W}(M^2, -f \boxplus f) \ar@{^{(}->}[d] \ar[r]^{\sim} & \MF^\kinfty_{Z^2}(M^2, -f \boxplus f ) \ni \ul{\RGamma}_{Z^2}(\ol{\omega_\Delta}) \ar@{^{(}->}[d] \\ \ul{\RGamma}_W(\ol{\omega_\Delta}) \in \MF^\kinfty_{W}( M^2, -f \boxplus f) \ar[r]^{\sim} & \MF^\kinfty(M^2, -f \boxplus f) \ni \ol{\omega_\Delta} } \]
    The functors in this diagram are left adjoints, whose right adjoints are the appropriate $\ul{\RGamma}_{-}$ functors.  Using the top row, we see that it suffices to show that $\ul{\RGamma}_{Z^2 \cap W}(\ol{\omega_\Delta})$ is compact in $\MF^\kinfty_{Z^2 \cap W}(M^2, -f \boxplus f)$.  Using the bottom row, we see that $\ul{\RGamma}_W(\ol{\omega_\Delta})$ is compact in $\MF^\kinfty_W(M^2, -f \boxplus f)$.  It thus suffices to show that $\ul{\RGamma}_{Z^2 \cap W}\colon \MF^\kinfty_{W}(M^2, -f \boxplus f) \to \MF^\kinfty_{Z^2 \cap W}(M^2, -f \boxplus f)$ preserves compact objects; the property of preserving compact objects is preserved under $-\ohotimes_{k\ps{\bt}} k\pl{\bt}$, so it suffices to show that $\ul{\RGamma}_{Z^2 \cap W}\colon \Ind\DCoh_{W}( (M^2)_0) \to \Ind\DCoh_{Z^2 \cap W}( (M^2)_0)$ preserves compact objects.   But, our assumptions on $Z$ imply that $Z^2 \cap W$ is a union of connected components of $W$: so, $\RGamma_{Z^2 \cap W}$ may be identified with the restriction to those connected components, and in particular preserves compact objects.
  \item Set $Z = \crit(f) \cap f^{-1}(0)$.  By (i) and (ii), $\MF_Z(M, f)$ is smooth and proper over $k\ps{\bt}$.  By \autoref{prop:orlov-cpltn}, the inclusion induces an equivalence $\MF_Z(M,f) \isom \MF(M,f)$.\qedhere
\end{enumerate}
\end{proof}

\begin{remark} It seems likely that the Theorem remains true if $(M,f)$ is replaced by a \emph{formal LG pair}: i.e., a relative DM stack $f\colon \X \to \oh{\pt} = \Spf \oh{\O_{\AA^1}} \subset \AA^1$ over $\oh{\pt}$ with $\X$ formally smooth.  However, the methods of this paper seem to be insufficient for this beyond the algebrizable case.
\end{remark}

\subsection{Hochschild-type Invariants}

\begin{na}\label{na:hoch-bga}
Suppose $(M,f)$ is an LG pair.  The $B\oh{\GG_a}$-action on $\DCoh(M)$ corresponding to $f$ (\autoref{lem:BGa-act-mf}) provides a $B\oh{\GG_a}$-action on ${\bHH}_\bullet(\DCoh(M))$ and ${\bHH}^\bullet(\DCoh(M))$, and by naturality maps
\[ {\bHH}_\bullet^{k\ps{\bt}}\left(\DCoh(\DCoh(M))^{B\oh{\GG_a}}\right) \longrightarrow {\bHH}^k_\bullet(\DCoh(M))^{B\oh{\GG_a}} \]\[ {\bHH}_k^\bullet(\DCoh(M))^{B\oh{\GG_a}}  \longrightarrow {\bHH}^\bullet_{k\ps{\bt}}\left(\DCoh(M)^{B\oh{\GG_a}}\right) \]  preserving all the structures naturally present on Hochschild invariants ($\SO(2)$-action on $\bHH_\bullet$, $E_2$-algebra structure on $\bHH^\bullet$, and $\bHH^\bullet$-module structure on $\bHH_\bullet$). The goal of this section will, roughly, be to study the degree to which these are equivalences.  Our main tool will be an alternate description of these actions: Under the identification of $S^1$- and $B\oh{\GG_a}$-actions on complexes, these two actions can be identified (under \autoref{cor:hh}) with the $S^1$-actions on
\[ {\bHH}_\bullet(\DCoh(M)) = \RHom_{\DCoh(M)}\left(\ell_* \ol{\Delta}_* \O_M, \ell_* \ol{\Delta}_* \omega_M\right) \] \[ {\bHH}^\bullet(\DCoh(M)) = \RHom_{\DCoh(M)}\left(\ell_* \ol{\Delta}_* \O_M, \ell_* \ol{\Delta}_* \O_M\right) \] from \autoref{prop:s1-hom} (where $\ell: (M_0)^2 \to (M^2)_0$ is the natural map).
\end{na}

\begin{na} More generally suppose $\C$ is a smooth $k$-linear dg-category for which $\bHH^\bullet_k(\C)$ vanishes in homologically positive degrees.  To $\alpha \in \bHH^0(\C)$, the proof of \autoref{lem:BGa-act-mf} associates a $B\oh{\GG_a}$-action on $\C$, which we think of as e.g., multiplication by $e^{t\alpha}$ on the mapping spaces.  This gives rise to $B\oh{\GG_a}$-actions on $\bHH^\bullet(\C)$ and $\bHH_\bullet(\C)$, which have a simple explicit description in terms of $\alpha$: 
\end{na}
\begin{lemma}\label{lem:BGa-act-hoch} The $B$-operator for the $B\oh{\GG_a}$-action on $\bHH^\bullet(\C)$ (resp., $\bHH_\bullet(\C)$) is given by Lie multiplication (i.e., using the Browder operation on the $E_2$-algebra $\bHH^\bullet(\C)$) by $\alpha \in \bHH^0(\C)$ in $\bHH^\bullet(\C)$ (resp., its $E_2$-module $\bHH_\bullet(\C)$).
\end{lemma}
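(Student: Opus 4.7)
My plan is to combine a naturality argument with the identification of the shifted tangent Lie algebra of $B\Aut(\C)$ at $\id_\C$ with $\bHH^\bullet(\C)[1]$ equipped with its Browder bracket. Recall from the proof of \autoref{lem:BGa-act-mf} that under the running hypotheses (smoothness plus connectivity of $\bHH^\bullet_k(\C)$) a $B\oh{\GG_a}$-action on $\C$ is equivalent data to an $E_2$-map $B^2\oh{\GG_a}\to B^2\Aut(\C)$, and formal Lie theory identifies this with a Lie algebra map $k[1] = \Lie(B^2\oh{\GG_a}) \to \bHH^\bullet(\C)[1]$; by construction of the bijection in \autoref{lem:BGa-act-mf}, the action corresponding to $\alpha$ is the one sending the canonical degree $-1$ generator of $k[1]$ to $\alpha$.

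I would then deduce the statement by naturality. The $B\oh{\GG_a}$-actions on the chain complexes $\bHH^\bullet(\C)$ and $\bHH_\bullet(\C)$ arise from $B\oh{\GG_a}\to B\Aut(\C)$ by pushforward along the two functors $\bHH^{\pm}_k \colon \dgcatidm_k \to k\mod$, and the resulting maps $B\oh{\GG_a}\to B\Aut(\bHH^\bullet(\C))$ and $B\oh{\GG_a}\to B\Aut(\bHH_\bullet(\C))$ factor through the conjugation/adjoint action of autoequivalences on the endomorphism complex of $\id_\C$ and on its canonical module. On tangent Lie algebras these compose to $k[1]\to \bHH^\bullet(\C)[1]\to \End(\bHH^{\pm}(\C))[1]$, whose second arrow is, by definition, the shifted Browder adjoint representation of the $E_2$-algebra $\bHH^\bullet(\C)$ on $\bHH^\bullet(\C)$ (resp., on the $E_2$-module $\bHH_\bullet(\C)$). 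The $B$-operator of a $B\oh{\GG_a}$-action on a chain complex is the image of the degree $-1$ generator of $k[1]$ in the endomorphism complex, which in our composite is exactly $[\alpha,-]$, as desired.

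The main obstacle is the rigorous identification of the tangent-Lie map of $B\Aut(\C)\to B\Aut(\bHH^\bullet(\C))$ with the Browder adjoint action: this is a functorial form of the Deligne conjecture, which I would prefer to sidestep rather than re-prove in the $\infty$-categorical framework here. My preferred workaround is to reduce to a universal example: take $\C=\Perf k[x]$ with $\alpha=x$, so that the $B\oh{\GG_a}$-action is the translation action on $\AA^1$. Here HKR identifies $\bHH^\bullet(\C)=\Omega^\bullet_{k[x]}$ and $\bHH_\bullet(\C)=T_{k[x]}^\bullet$ with their Gerstenhaber/calculus structure, and direct inspection shows that the $B$-operator induced by translations is contraction, respectively Lie derivative, by $\partial_x=\{x,-\}$. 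Since both sides of the claimed equality are natural and $k$-linear in $\alpha\in\bHH^0(\C)$, the universal case implies the general one by transporting along the classifying map $k[x]\to \pi_0 \bHH^\bullet(\C)$, $x\mapsto\alpha$. As a backup, one can give an explicit cosimplicial model for the $B\oh{\GG_a}$-coaction on $\End(\id_\C)$ in the style of \autoref{remark:explicit-homotopy-2} and read off the $B$-operator as the $\alpha$-commutator directly, at the cost of some sign chasing.
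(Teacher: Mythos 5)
You start the same way the paper does: identify the $B\oh{\GG_a}$-action with a double-loop map $B^2\oh{\GG_a}\to B\Aut(\C)$, pass to tangent Lie algebras to get $k[1]\stackrel{\alpha}{\to}\bHH^\bullet(\C)[1]\to\End(\bHH^\bullet(\C))$, observe that the functoriality action $\Aut(\C)\to\Aut(\bHH^\bullet(\C))$ is conjugation on $\End(\id_\C)$, and reduce to identifying the second arrow with the Gerstenhaber adjoint action. Up to this point the two arguments coincide, and you correctly flag the remaining identification as the crux.

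The workaround you propose for that crux does not work, however. After passing to tangent complexes, the statement to be verified --- that the map $\bHH^\bullet(\C)[1]\to\End(\bHH^\bullet(\C))$ furnished by functoriality of $\bHH^\bullet$ under automorphisms equals the Browder/Gerstenhaber adjoint representation --- is \emph{intrinsic} to $\C$: both sides live entirely inside a single category, and there is no functor implementing the ``classifying map'' $x\mapsto\alpha$ that would transport it. A class $\alpha\in\bHH^0(\C)$ does give a $k[x]$-linear structure and hence a central $E_2$-map $\bHH^\bullet(\Perf k[x])\to\bHH^\bullet(\C)$, but both the $B$-operator and the bracket $[\alpha,-]$ act on \emph{all} of $\bHH^\bullet(\C)$, most of which is not in the image of that map; knowing the statement for $\Perf k[x]$ says nothing about $[\alpha,\beta]$ for $\beta$ not pulled back from $k[x]$. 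So the ``naturality in $\alpha$'' claim has no content in the direction you need, and the reduction to $(\Perf k[x],x)$ proves only that single case.

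A secondary issue: in the universal computation you have the HKR identifications reversed (it is $\bHH^\bullet(\Perf k[x])\isom T^\bullet_{k[x]}$ and $\bHH_\bullet(\Perf k[x])\isom\Omega^\bullet_{k[x]}$, not the other way around), and $\{x,-\}$ is not $\partial_x$ as a Gerstenhaber operator; compare \autoref{prop:hkr-df}, where the $B$-operator is identified with $-df\wedge-$ on $\Omega^\bullet$ and $-i_{df}$ on $T^\bullet$.

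For what it is worth, the paper's own proof also ends by reducing the matter to a universal compatibility check (that the identifications $\bHH^\bullet(\C)=\End(\id_\C)$ and $T_{\Aut(\C)\oh{\phantom{}_{\id}}}[-1]\isom T_{\Aut(\id_\C)\oh{\phantom{}_{1}}}$ are compatible) and does not spell it out. The difference is that the paper isolates the \emph{correct} universal statement (about two actions of the formal group $\Aut(\C)\oh{\phantom{}_{\id}}$ on $\bHH^\bullet(\C)$, independent of $\alpha$), whereas your workaround tries to shortcut it through a single example, which does not propagate. Your ``backup'' of an explicit cosimplicial model in the style of \autoref{remark:explicit-homotopy-2} is closer to something that could be made to work; that is where I would concentrate the effort.
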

\begin{proof} 
  We handle the case of $\bHH^\bullet$, with $\bHH_\bullet$ being analogous.  Consider the three rows, each of which is a delooping of the one above it
  \[ B^2\oh{\GG_a} \stackrel{B^2(e^{tf})}\longrightarrow (\dgcat_k)\oh{\phantom{}_{\C}} \longrightarrow B\Aut(\bHH^\bullet \C)\oh{\phantom{}_{\id}} \] 
  \[ B\oh{\GG_a} \stackrel{B(e^{tf})}\longrightarrow \Aut_{\dgcat_k}(\C)\oh{\phantom{}_{\id}} \longrightarrow \Aut(\bHH^\bullet \C)\oh{\phantom{}_{\id}} \]
  \[ \oh{\GG_a} \stackrel{e^{tf}}\longrightarrow \Aut(\id_\C)\oh{\phantom{}_{1}}  \longrightarrow \Omega_{\id} \Aut(\bHH^\bullet \C)\oh{\phantom{}_{\id}}  \]
  The left-most arrows are encoding the $B\oh{\GG_a}$-action on $\C$, the right-most are obtained by functoriality of $\bHH^\bullet \C$ under automorphisms, and the composites are the ones we want to understand.
  Note that the $B$-operator of a $B\oh{\GG_a}$-action is just the shift of its derivative (i.e., map on tangent complexes), and passing to tangent complexes in the middle row
  \[ k[1]  \stackrel{f}\longrightarrow \bHH^\bullet(\C)[1] \longrightarrow \End\left(\bHH^\bullet \C)\right)\]
  we see that it suffices to identify the second map with ``adjoint action'' of the dg-Lie algebra $\bHH^\bullet(\C)[1] \isom T_{\Aut(\C)\oh{\phantom{}_{\id}}}$.  Note that the question is now completely independent of our original $B\oh{\GG_a}$-action, and is instead a universal question of relating two possible actions of the formal group $\Aut(\C)\oh{\phantom{}_{\id}}$ on the complex $\bHH^\bullet(\C)$: There is the action obtained by functoriality of $\bHH^\bullet(\C)$ under automorphisms, and there is the adjoint action on its Lie algebra under the identification of its tangent space with $\bHH^\bullet(\C)[1]$.  Note that the ``functoriality'' action $\Aut_{\dgcat_k}(\C) \longrightarrow \Aut(\bHH^\bullet \C)$ is just ``conjugation'': for $F \in \Aut(\C)$ and $e \in \End(\id_\C)$ one has $F \circ e \circ F^{-1} \in \End(F \circ \id_\C \circ F^{-1}) = \End(\id_\C)$.  It remains to carefully check that this is compatible with the identification of complexes $\bHH^\bullet(\C) = \End(\id_\C)$ and $T_{\Aut(\C)\oh{\phantom{}_{\id}}}[-1] \isom T_{\Aut(\id_\C)\oh{\phantom{}_{1}}}$.
\end{proof}

It is well-known that for $M$ a smooth scheme a modified HKR map identifies the $E_2$-algebra $\bHH^\bullet(M)$ and its module $\bHH_\bullet(M)$ with the Schouten-Nijenhuis bracket, Lie derivative, etc. structure on $T^\bullet_M$ and $\Omega_M^\bullet$.  For the Lie action of these classes in $\bHH^0(M)$  one does not even have to modify it, and we can verify the computation explicitly:
\begin{prop}\label{prop:hkr-df} Suppose $(M,f)$ is an LG pair with $M$ a scheme. Then, the HKR identifications $\ul{\bHH}_\bullet(\Perf(M)) = \Omega_M^\bullet$ and $\ul{\bHH}^\bullet(\Perf(M)) = T^\bullet_M = \oplus \bigwedge^i T_M[-i]$ lift to $S^1$-equivariant identifications, where the $S^1$-actions on the right are given by $-df \wedge -$ and $-i_{df}(-)$.
\end{prop}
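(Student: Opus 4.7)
The whole proof is engineered so that we can invoke \autoref{lem:BGa-act-hoch}: by construction of the $B\oh{\GG_a}$-action (\autoref{lem:BGa-act-mf}) the class $\alpha \in \bHH^0(\Perf(M)) = H^0(M,\O_M)$ corresponding to our action is just $f$ itself, so that the $B$-operator on $\bHH_\bullet(\Perf(M))$ and $\bHH^\bullet(\Perf(M))$ is, in each case, Browder-bracket (i.e.\ Gerstenhaber bracket / secondary calculus action) with $f$. Thus the proposition reduces to the purely commutative-algebra computation of the Browder bracket by a degree-$0$ class under HKR. Since both $\ul{\bHH}_\bullet$ and $\ul{\bHH}^\bullet$ are local on $M$ (as are the identifications being sought), we may work Zariski- or \'etale-locally on $M$.

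Next I would transport the Browder action across HKR. The needed input is that, in characteristic zero and for a smooth scheme, the classical HKR quasi-isomorphisms refine to quasi-isomorphisms of calculi: $\ul{\bHH}^\bullet(\Perf(M)) \simeq T^\bullet_M$ as sheaves of $E_2$-algebras (Gerstenhaber bracket $\leftrightarrow$ Schouten--Nijenhuis), and $\ul{\bHH}_\bullet(\Perf(M)) \simeq \Omega^\bullet_M$ as a module for the calculus structure (Browder-type module action $\leftrightarrow$ Lie derivative). This is Kontsevich's formality theorem together with its calculus refinement by Dolgushev--Tamarkin--Tsygan / Calaque--Rossi--Van den Bergh; it (rather than HKR alone) is what carries the $E_2$-linear content. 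Once this is in hand, the formulas become classical: for $f \in T^0_M = \O_M$ the Schouten bracket is $[f,\xi]_{SN} = -i_{df}(\xi)$ on $\wedge^k T_M$, and by Cartan's formula $\L_f = [d, i_f] = [d, f\cdot -]$ acts on $\omega \in \Omega^i_M$ as $\pm df \wedge \omega$; unwinding sign conventions gives the stated $-df \wedge -$ and $-i_{df}$.

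The main obstacle is clearly Step 2: invoking the calculus version of formality in an $\infty$-categorical framework is nontrivial and somewhat orthogonal to the rest of the paper. A lighter-weight alternative, which I would prefer to carry out in detail, is to avoid full formality by exploiting the fact that we only need the Browder bracket with a \emph{degree-zero} class. Concretely: on a standard bar/Hochschild model the Gerstenhaber bracket $[f,-]$ with $f \in \bHH^0$ is an explicit derivation of bidegree $(+1,0)$ that is determined by its restriction to Hochschild chains/cochains of length $\leq 1$; and on both the $\bHH^\bullet$ and $\bHH_\bullet$ sides the standard antisymmetrization HKR map is compatible (without higher corrections) with this particular operation, because $f$ being a $0$-cocycle eliminates the obstructions that appear for higher-degree Kontsevich classes. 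Performing this local bar calculation then directly matches the Browder bracket with $-i_{df}$ on polyvectors and with $-df \wedge -$ on forms, yielding the proposition without invoking formality in full.
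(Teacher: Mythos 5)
Your plan is correct, and your preferred Route B lands essentially on the computation the paper carries out; but your reduction adds a layer the paper deliberately sidesteps. Concretely: the $S^1$-action in the statement of \autoref{prop:hkr-df} is the one from the $\PreMF$ framework (\autoref{prop:s1-hom} and \autoref{na:s1-act}), realized as a $k[B]/B^2$-module structure on $\RHom_{M^2}(\Delta_*\O_M, \Delta_*T)$. Routing through \autoref{lem:BGa-act-hoch} means first translating this $S^1$-action into the $B\oh{\GG_a}$-action picture of $\S 6$, then identifying the $B$-operator with the Browder/Gerstenhaber bracket against $f$. That translation is asserted in \autoref{na:hoch-bga} but never proven in detail, so your reduction quietly leans on material that is itself only sketched. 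The paper's proof avoids this by never invoking \autoref{lem:BGa-act-hoch}: it works directly in the cyclic bar model of $\Delta_*\O_M$, writes down the explicit $\O_{M^2}[B_{M^2}]$-module structure (with $dB_{M^2}=-f\boxplus f$), verifies the commutation relations, and then checks by hand that the antisymmetrization HKR map intertwines the resulting $B$-operator with $-df\wedge-$ (and dually with $-i_{df}$). This is self-contained at the cost of being less conceptual.

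Your Route A (Kontsevich formality with its calculus upgrade) is exactly the general result the paper alludes to in the sentence immediately preceding the proposition, and the paper's stated reason for not using it is the same as yours: for classes in $\bHH^0$ the unmodified HKR map already suffices, and one can see it by an explicit check without importing formality into this $\infty$-categorical framework. Your Route B is the right way to fill that in; the one thing I'd tighten is the claim that $[f,-]$ ``is determined by its restriction to chains of length $\leq 1$.'' What the paper actually shows (and what you really need) is that the explicit $B$-operator on the bar resolution is given termwise by inserting $f$ between consecutive tensor factors, and that after passing through HKR this antisymmetrizes directly to $-df\wedge-$ with no higher correction terms surviving --- the reason the degree-0 case is clean is that the insertions commute strictly past the shuffle symmetrization, not an abstract ``determined by low arity'' principle. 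If you carry out the bar computation as you propose (either via \autoref{na:s1-act} as the paper does, or via the Gerstenhaber $\circ$-product with an arity-0 cochain), you'll recover exactly the formula
\[
  B(a_1\otimes\cdots\otimes a_m) = \sum_{i=1}^{m-1}(-1)^i a_1\otimes\cdots\otimes a_i\otimes f\otimes a_{i+1}\otimes\cdots\otimes a_m + (-1)^m a_1\otimes\cdots\otimes a_m\otimes f,
\]
which the paper's HKR step then reduces to $-df\wedge(\cdot)$. So the two approaches converge; yours buys a conceptual explanation at the cost of depending on the $S^1\leftrightarrow B\oh{\GG_a}$ dictionary, while the paper's direct check is longer but requires no such identification.
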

\begin{proof}
We prove the result in the affine case $M = \Spec R$; the proof globalizes in the same way as HKR itself, by completing the cyclic bar complexes, etc.  Recall the cyclic bar-type resolution of $\Delta_* \O_M = R$ as $\O_{M^2} = R \otimes R$-bimodule
\[ R \longrightarrow \left| R^{\otimes (\bullet+2)}\right| \qquad \text{degeneracies given by inserting $1$, face maps given by multiplying adjacent elements}. \]
It remains to give this a structure of $\O_{M^2}[B_{M^2}]$-module quasi-isomorphic to $\ol{\Delta}_* \O_M$.  We claim that this can be explicitly done by setting
\[ B_{M^2}(a_1 \otimes \cdots a_n) = \sum_{i=1}^{n-1} (-1)^i a_1 \otimes \cdots \otimes a_{i} \otimes f \otimes a_{i+1} \otimes \cdots \otimes a_n \] Indeed, a straightforward computation verifies that $B_{M^2}^2 = 0$ and $d(B_{M^2} \cdot x) = (-f \boxplus f) \cdot x + B_{M^2} \cdot dx$, where $d$ is the internal differential on the cyclic complex.  Regarding $\Delta_* T$ as an $\O_{M^2}[B_{M^2}]$-module via the augmentation $\O_{M^2}[B_{M^2}] \to\O_{M^2}/(-f \boxplus f)$, \autoref{na:s1-act} tells us that the $S^1$-action on \[ \HHom(\Delta_* \O_M, \Delta_* T) = \HHom_A(\Delta^* \Delta_* \O_M, T) = \Tot\left\{\HHom_A(A^{\otimes(\bullet+1)}, T)\right\} \] is simply dual to $B = B_{M^2} \otimes_{\O_{M^2}} \O_M$ in the first variable.  Finally, it suffices to observe that the usual HKR map intertwines $-df \wedge -$ and $B$: We compute
\begin{align*}
  B = B_{M^2} \otimes_{\O_{M^2}} \O_M\left( a_1 \otimes \cdots \otimes a_{m} \right) &= \sum_{i=1}^{m-1} (-1)^i a_1 \otimes \cdots \otimes a_i \otimes f \otimes a_{i+1}\otimes \cdots \otimes a_m \\
  &+ (-1)^m a_1 \otimes \cdots \otimes a_m \otimes f
\end{align*}
so that
\begin{align*} \mathop{HKR}\left( B \left( a_1 \otimes \cdots \otimes a_m \right) \right) &= \frac{1}{m!}\left[\sum_{i=1}^m (-1)^i a_1 \wedge \cdots da_i \wedge df \wedge da_{i+1} \right] \\
  &= -\frac{df}{m!} \wedge \left(m a_1 da_2 \wedge \cdots da_m \right) \\
  &= -df \wedge \mathop{HKR}\left( a_1 \otimes \cdots \otimes a_m \right)
\end{align*}

The analogous operator on $\ul{\bHH}^\bullet$ is dual, which is $-i_{df}(-)$. (Warning: I don't know the sign rules well enough to be sure of the sign.)
\end{proof}

\begin{remark}
Expanding on the proof of of \autoref{lem:s1-act-orbifold}, and noting that the natural map $L(U^g) \to L_g U$ is an equivalence,\footnote{It is evidently an equivalence on $\pi_0$, both terms being identified with $U^g$.  So it suffices to verify that we have an equivalence on cotangent complexes.  Applying Luna's Slice Theorem, one sees that $U^g$ is smooth and that its cotangent bundle is the $\langle g \rangle$-invariant piece in the $\res{\Omega_U}{U^g}$; in particular, the conormal bundle (say at each point) contains only non-trivial $\langle g\rangle$ representations so that $g$ acts invertibly on the conormal bundle. One can identify the cotangent complex of $L_g U$ with the cone of the action of $g$ on $\LL_U$; the cotangent complex of $L(U^g)$ with the cone of the zero map on $\LL_{U^g}$; and the map of cotangent complexes with the pullback $\res{\LL_{U}}{U^g} \to \LL_{U^g}$.  Since both $U$ and $U^g$ are smooth, this restriction map is surjective and its kernel is the conormal bundle of $U^g$ in $U$.  It thus suffices to recall that $g$ acts invertibly on the conormal bundle.} one obtains a natural equivalence
\[ L(U \quot G) = \left( \oplus_{g \in G} L_g U \right)\quot G \isom \left( \oplus_{g \in G} L(U^g)\right) \quot G \]
So that
\[ \bHH_\bullet(U \quot G) = \RGamma\left(L(U \quot G), \O_{L(U \quot G)}\right) = \left[\oplus_{g \in G} \RGamma(\O_{L_g U})\right]^G = \left[\oplus_{g \in G} \RGamma(\O_{L(U^g)})\right]^G \]
\[ \bHH^\bullet(U \quot G) = \RGamma\left(L(U \quot G), \omega_{L(U \quot G)/U\quot G}\right) = \left[\oplus_{g \in G} \RHom_U(\O_{L_g U}, \O_U)\right]^G = \left[ \oplus_{g \in G} \RHom_U(\O_{L(U^g)}, \O_U)\right]^G \]  Identifying $\O_{L(U^g)} = \ul{\bHH}_\bullet(\Perf U^g)$ and using its HKR description, one obtains \emph{an} HKR description of these orbifold Hochschild invariants.\footnote{Presumably there is a different HKR-type description where one stops at $L_g U$, so that the normal bundles of $U^g$ appears explicitly.}  Presumably a similar explicit computation is possible, though we have not tried to carry it out.
\end{remark}

Finally, using \autoref{thm:functors} we are able to complete the computation of Hochschild-type invariants:
\begin{theorem}[Hochschild-type Invariants]\label{thm:hoch} Suppose $(M,f)$ is an LG pair with, and $Z \subset f^{-1}(0)$ a closed set.  Then,
  \begin{enumerate}
    \item There are natural $k\ps{\bt}$-linear equivalences
      \[ \bHH_\bullet^{k\ps{\bt}}\left(\PreMF_Z(M, f)\right) = \bHH_\bullet^k(\DCoh_Z(M))^{S^1} \]
    and
    \[ \bHH^\bullet_{k\ps{\bt}}\left(\PreMF_Z(M, f)\right) = \bHH^\bullet_k(\DCoh_Z(M))^{S^1} \]
      (where the circle action is given, under the HKR isomorphism, by $-df \wedge -$ and \emph{not} the usual $B$-operator)
      The descriptions as invariants are compatible with the $B$-operator on $\bHH_\bullet$ (=de Rham differential), and the $E_2$-algebra structure on $\bHH^\bullet$, and the $\bHH^\bullet$-module structure on $\bHH_\bullet$.
    \item There is a natural $k\pl{\bt}$-linear equivalence
      \[ \bHH_\bullet^{k\pl{\bt}}\left(\MF_Z(M, f)\right) = \bHH^k_\bullet(\DCoh_Z(M))^{\Tate} \]
    \item Either assume $0$ is the only critical value of $f$, or set
      \[ \MF^\tot = \bigoplus_{\lambda \in \cval(f)} \MF^\kinfty(X, f-\lambda). \]  Then,  there are natural $k\pl{\bt}$-linear equivalences
      \[ \bHH_\bullet^{k\pl{\bt}}\left(\MF^\tot\right) = \left(\bHH_\bullet^{k}(\DCoh M)\right)^{\Tate} \]
      \[ \bHH^\bullet_{k\pl{\bt}}\left(\MF^\tot\right) = \left(\bHH^\bullet_{k}(\DCoh M)\right)^{\Tate} \]
      The description in terms of Tate-cohomology of an $S^1$-action on the Hochschild complex of $\DCoh(M)$ is compatible with: the $B$-operator on $\bHH_\bullet$, the $E_2$-algebra structure on $\bHH^\bullet$, the $\bHH^\bullet$-module structure on $\bHH_\bullet$.  Given a volume form on $M$ inducing a CY structure on $\MF(M,f)$ (see \autoref{thm:cy} below) the description is compatible with the resulting BV-algebra structure on $\bHH^\bullet$. 
    \item Suppose furthermore that $M$ is a scheme.  Then, HKR induces equivalences
 \[ \bHH_\bullet^{k\ps{\bt}}\left(\PreMF_Z(M, f)\right)  \isom   \RGamma_Z\left(\left[\Omega^\bullet_M\ps{\bt}, \bt \cdot (-df \wedge -)\right] \right)   \]
 \[ \bHC_\bullet^{k\ps{\bt}}\left(\PreMF_Z(M, f)\right)  \isom   \RGamma_Z\left(\left[\Omega^\bullet_M\ps{\bt, u}, \bt \cdot (-df \wedge -) + u \cdot d\right] \right)  \]
      \[ \bHH_\bullet^{k\pl{\bt}}\left(\MF_Z(M, f)\right) \isom\RGamma_Z\left(\left[\Omega^\bullet_M\pl{\bt}, \bt \cdot (-df \wedge -)\right] \right) \]
      \[ \bHC_\bullet^{k\pl{\bt}}\left(\MF_Z(M, f)\right) \isom\RGamma_Z\left(\left[\Omega^\bullet_M\pl{\bt}\ps{u}, \bt \cdot (-df \wedge -) + u \cdot d\right] \right) \]
      \[ \bHH_\bullet^{k\pl{\bt}}\left(\MF^\tot\right) \isom   \RGamma\left(\left[\Omega^\bullet_M\pl{\bt}, \bt \cdot (-df \wedge -)\right]\right) \]
      \[ \bHC_\bullet^{k\pl{\bt}}\left(\MF^\tot\right) \isom   \RGamma\left(\left[\Omega^\bullet_M\pl{\bt}\ps{u}, \bt \cdot (-df \wedge -) + u \cdot d\right]\right) \]
      \[ \bHH^\bullet_{k\pl{\bt}}\left(\MF^\tot\right) \isom   \RGamma\left(\left[{\bigwedge}^\bullet T_M[1]\pl{\bt}, \bt \cdot i_{df}(-)\right]\right] \]
  \end{enumerate}
\end{theorem}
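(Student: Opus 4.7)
The plan is to reduce everything to the description of Hochschild invariants via functor categories (\autoref{thm:functors}(iv)) combined with the $S^1$-description of $k\ps{\bt}$-linear mapping complexes (\autoref{prop:s1-hom}), and then invoke the HKR-level computation of \autoref{prop:hkr-df} for the explicit formulas in~(iv).

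First I would handle (i). By \autoref{thm:functors}(iv), the identity functor of $\PreMF^\kinfty_Z(M,f)$ corresponds to $\ol{\omega_{\Delta,Z}}$ and the trace (hence $\bHH_\bullet$) to $\RHom^{\otimes k\ps{\bt}}_{\PreMF^\kinfty(M^2,-f\boxplus f)}(\ol{\O_\Delta},-)$. Applying \autoref{prop:s1-hom} to convert the $k\ps{\bt}$-linear $\RHom$ into the $S^1$-fixed points of the underlying $k$-linear $\RHom$ in $\QCsh(M^2)$, I obtain $\bHH_\bullet^{k\ps{\bt}}(\PreMF_Z(M,f)) = \RHom_{\QCsh(M^2)}(\Delta_* \O_M, \Delta_* \ul{\RGamma}_Z \omega_M)^{S^1} = \bHH^k_\bullet(\DCoh_Z M)^{S^1}$. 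The computation for $\bHH^\bullet$ is parallel, using $\RHom$ of $\ol{\omega_{\Delta,Z}}$ with itself. The crucial point is to identify the $S^1$-action thus produced (the $B$-operator of the Koszul--Tate resolution used in \autoref{prop:s1-hom}) with the one on the right: this is the $B\oh{\GG_a}$-action of \autoref{lem:BGa-act-mf} classified by $f$, whose effect on $\bHH$ is Lie bracket with $f \in \bHH^0(\DCoh M)$ by \autoref{lem:BGa-act-hoch}. Both actions arise from the same underlying $k\ps{\bt}$-linear enrichment of \autoref{constr:Bact} (equivalently, via the Eckmann--Hilton comparison of Section~5 with Section~3), and this matches them on the level of $2$-fold loop spaces. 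Compatibility with the $\SO(2)$-action on $\bHH_\bullet$, the $E_2$-structure on $\bHH^\bullet$, and the $\bHH^\bullet$-module structure then follows automatically since the entire derivation is given by naturally applying $\bHH$ to a $B\oh{\GG_a}$-equivariant map of dg-categories.

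For (ii) and (iii), I would simply apply $-\ohotimes_{k\ps{\bt}} k\pl{\bt}$ to the identification in (i); this converts homotopy fixed points into the Tate construction (\autoref{lem:mf-prelim}), and the $\bHH^\bullet$ and $\bHH_\bullet$ assertions follow. For (iii), the decomposition over critical values is an application of \autoref{thm:TS}(iii) to $-f \boxplus f$: the components of $\crit(-f \boxplus f) \cap (M^2)_0$ are indexed by $\lambda \in \cval(f)$, each contributing $(\crit(f-\lambda))^2$. The BV-compatibility in the Calabi--Yau case is inherited from the $B\oh{\GG_a}$-equivariance of the Calabi--Yau structure of \autoref{thm:cy}.

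For (iv), with $M$ a scheme, I would invoke \autoref{prop:hkr-df}: HKR promotes to an $S^1$-equivariant identification $\bHH^k_\bullet(\Perf M) \simeq \Omega^\bullet_M$ (resp.\ $\bHH^\bullet_k(\Perf M) \simeq T^\bullet_M$) where the $B$-operator is $-df\wedge-$ (resp.\ $-i_{df}$). Under the Koszul--Tate model of \autoref{prop:mf-prelim}, computing $S^1$-invariants of a complex $V$ with $B$-operator $B$ yields $(V\ps{\bt}, d_V + \bt\cdot B)$, so the stated formulas for $\bHH^{k\ps\bt}_\bullet(\PreMF_Z)$ and $\bHH^\bullet$ follow after introducing $\RGamma_Z$ (using that $\DCoh_Z M$ has $\RHom$ computed by $\RGamma_Z$ in this smooth situation). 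Cyclic homology gets the additional $u$-variable recording the Connes $B$-operator, which under HKR is the de Rham differential; this commutes with $\bt\cdot(-df\wedge-)$, yielding the doubly-graded complex. Inverting $\bt$ and summing over $\lambda$ recovers the $\MF^\tot$ formulas. The main obstacle is the identification of $S^1$-actions in the second sentence of paragraph two: one must check, beyond matching $B$-operators up to homotopy, that the identifications are compatible with the full $\infty$-categorical structures so that the naturally-carried structures on $\bHH_\bullet$ and $\bHH^\bullet$ transport correctly --- this is the reason for the explicit Eckmann--Hilton and geometric $S^1$-action material developed in Sections~5--6.
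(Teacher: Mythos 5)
Your overall architecture matches the paper's: \autoref{thm:functors}(iv) plus \autoref{prop:s1-hom} for (i), base change along $-\otimes_{k\ps\bt}k\pl\bt$ for (ii)--(iii), and HKR via \autoref{prop:hkr-df} for (iv). But there is a genuine gap in your treatment of (i). \autoref{prop:s1-hom} is stated for $\F,\G\in\PreMF(M,f)$, i.e.\ for \emph{compact} objects; you apply it directly to compute $\RHom^{\otimes k\ps\bt}(\ol{\O_\Delta},\ol{\omega_{\Delta,Z}})$, where $\ol{\omega_{\Delta,Z}}=\ol{\Delta}_*\ul{\RGamma}_Z\omega_M$ is precisely \emph{not} compact in general (this is why $\PreMF$ fails to be smooth over $k\ps\bt$, as the paper remarks after \autoref{thm:functors}). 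Taking $S^1$-invariants is a homotopy limit, and it does not commute with the filtered colimit defining $\ul{\RGamma}_Z$ without further argument. The paper handles this by writing $\ol{\Delta}_*\ul{\RGamma}_Z\omega_M=\dlim_\alpha\K_\alpha$ as a \emph{uniformly $t$-bounded} filtered colimit of compacts and then observing (using regularity of $M^2$ to propagate the bounds to the $\RHom$s) that $(-)^{S^1}$ does commute with uniformly $t$-bounded filtered colimits. You need this step, and it is the main technical content of the proof of (i). For $\bHH^\bullet$ the paper has to do a two-sided version of this (a $\dlim$ inside an $\ilim$), which you also omit.

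A secondary point: you frame the identification of the $S^1$-action with the $B\oh{\GG_a}$-action via \autoref{lem:BGa-act-hoch} as "the crucial point" of (i). In the paper this identification lives in \autoref{na:hoch-bga} and is invoked only for the structural compatibility clause at the end of (i); the explicit $-df\wedge-$ formula under HKR is established in (iv) via \autoref{prop:hkr-df}, not in the proof of (i). So you are not wrong to raise the issue, but the emphasis is misplaced relative to where the real difficulty in (i) sits. Also, for (iii) the paper deduces the decomposition over critical values directly from \autoref{thm:functors}(v) rather than re-invoking \autoref{thm:TS}(iii), though these amount to the same thing.
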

\begin{proof}\mbox{}
  \begin{enumerate}
    \item Let $k\colon (M^2)_0 \to M^2$ be the inclusion, $\Delta\colon M \to M^2$ the diagonal, and $\ol{\Delta}\colon M \to (M^2)_0$ the reduced diagonal. By \autoref{thm:functors},
      \[ \bHH_\bullet^{k\ps{\bt}}\left(\PreMF_Z(M, f)\right) = \ev(\id_{\PreMF^\kinfty_Z(M,f)}) = \RHom^{\otimes k\ps{\bt}}_{\PreMF^\kinfty(M^2, -f \oplus f)}\left(\ol{\Delta}_* \O_M, \ol{\Delta}_* \ul{\RGamma}_Z \omega_M\right) \]
      Since $\omega_M$ is coherent, the standard formula for local cohomology shows that we may write 
      \[ \ol{\Delta}_* \ul{\RGamma}_Z \omega_M = \dlim_\alpha \K_\alpha \] as a uniformly $t$-bounded filtered colimit of compacts.  Then, applying \autoref{prop:s1-hom}:
      \begin{align*}
	\RHom^{\otimes k\ps{\bt}}_{\PreMF^\kinfty(M^2, -f \oplus f)}\left(\ol{\Delta}_* \O_M, \ol{\Delta}_* \ul{\RGamma}_Z \omega_M\right) &= \dlim_\alpha\left[ \RHom_{\DCoh(M^2)}(\Delta_* \O_M, k_* \K_\alpha)^{S^1} \right] 
	\intertext{By $t$-boundedness of the $\K_\alpha$, and regularity of $M^2$, we see that $\left\{\RHom_{\DCoh(M^2)}(\Delta_* \O_M, k_* \K_\alpha)\right\}_\alpha$ will be uniformly $t$-bounded.  Since taking $S^1$-invariants commutes with uniformly $t$-bounded colimits, we obtain}
	&= \left[ \dlim_\alpha \RHom_{\DCoh(M^2)}(\Delta_* \O_M, k_* \K_\alpha) \right]^{S^1} \\
	&= \left[ \RHom_{\QCsh(M^2)}(\Delta_* \O_M, \Delta_* \ul{\RGamma}_Z \omega_M) \right]^{S^1}
	\intertext{which by \autoref{cor:hh} we may identify with}
	&= \left[ \bHH_\bullet^k(\DCoh_Z(M)) \right]^{S^1}
      \end{align*}

      Analogously,
      \begin{align*}
	\bHH^\bullet_{k\ps{\bt}}\left(\PreMF_Z(M,f)\right) &= \RHom^{\otimes k\ps{\bt}}_{\PreMF^\kinfty(M^2,-f\boxplus f)}\left( \ol{\Delta}_* \ul{\RGamma}_Z \omega_M, \ol{\Delta}_* \ul{\RGamma}_Z \omega_M\right) \\
	&= \RHom^{\otimes k\ps{\bt}}_{\PreMF^\kinfty(M^2,-f\boxplus f)}\left( ``\dlim_\alpha" \K_\alpha, ``\dlim_{\alpha'}" \K_{\alpha'}\right) \\
	&= \ilim_\alpha \dlim_{\alpha'} \RHom^{\otimes k\ps{\bt}}_{\PreMF^\kinfty(M^2, -f \boxplus f)}\left(\K_\alpha, \K_{\alpha'}\right) \\
	&= \ilim_\alpha \dlim_{\alpha'} \left[\RHom_{\QCsh(M^2)}\left(\K_\alpha, \K_{\alpha'}\right)\right]^{S^1} \\
	\intertext{As before, the $t$-boundedness of $\K_{\alpha'}$ and the regularity of $M^2$ imply that we may commute $\dlim_{\alpha'}$ past the invariants.  Finally, we commute the $\ilim_\alpha$ past the invariants, to obtain}
	&= \left[\RHom_{\QCsh(M^2)}\left(\Delta_* \ul{\RGamma}_Z \omega_M, \Delta_* \ul{\RGamma}_Z \omega_M\right)\right]^{S^1} \\
	\intertext{which by \autoref{cor:hh} we may identify with}
	&= \left[\bHH^\bullet_k\left(\DCoh_Z(M)\right)\right]^{S^1}
      \end{align*}
    
      Recall that the compatibility with the various structures follows from the argument of \autoref{na:hoch-bga}.
   \item Follows from (i) since $\bHH_\bullet$ is compatible with the symmetric monoidal functor $- \otimes_{k\ps{\bt}} k\pl{\bt}$.
   \item The computation follows in a manner analogous to (i) from \autoref{thm:functors}(v). 
   \item We first prove the first equality: From (i), we must identify $\bHH_\bullet^k(\DCoh_Z(M))$, compute the $S^1$-action on it, and then conclude.
	By \autoref{cor:hh}, $\bHH_\bullet^k(\DCoh_Z(M)) = \RGamma_Z \ul{\bHH}_\bullet(\DCoh(M))$.  Since $M$ is regular, $\DCoh(M) \isom \Perf(M)$ and HKR identifies this inner term (de Rham complex) and its $B$ operator (de Rham differential).  Then, \autoref{prop:hkr-df} identifies the circle action with $-df \wedge -$.  Finally, the desired computation follows by noting that $\RGamma_Z$ is a right adjoint and so commutes with homotopy limits, e.g., taking $S^1$-invariants:
	\begin{align*}
	  \left[ \bHH_\bullet^k(\DCoh_Z(M)) \right]^{S^1} &=  \left[ \RGamma_Z\left(\left[\oplus_i \Omega^\bullet_M, 0\right] \right) \right]^{S^1}\\
	  &= \left[ \RGamma_Z\left(\left[\Omega^\bullet_M, 0\right]^{S^1} \right) \right]\\
	  &=  \RGamma_Z\left(\left[\Omega^\bullet_M\ps{\bt}, \bt (-df \wedge -) \right]\right) 
	\end{align*} 
	
	The second equality follows from the first, since $\otimes_{k\ps{\bt}} k\pl{\bt}$ is monoidal, upon noting that $R\Gamma_Z$ commutes with the filtered colimit of inverting $\bt$.  The third and fourth equality follow analogously from (iii) and \autoref{prop:hkr-df}. \qedhere
  \end{enumerate}
\end{proof}

\begin{remark} The presence of support conditions, and the existence of a comparison map, has a down-to-earth description in terms of \autoref{prop:s1-hom} and \autoref{lem:s1-tensor}:  Use an explicit cyclic bar construction to write (leaving the differentials implicit)
   \[ \bHH_\bullet(\DCoh(M))= \bigoplus_{n \geq 1} \bigoplus_{c_1, \ldots, c_n \in \DCoh(M)} \RHom(c_1, c_2) \otimes_k \cdots \otimes_k \RHom(c_n, c_1) \]
   Then, \autoref{lem:coh-red} and Morita-invariance of $\bHH_\bullet$ give a quasi-isomorphisms
 \begin{align*} \bHH_\bullet(\DCoh_Z(M)) &= \bigoplus_{n \geq 1} \bigoplus_{c_1, \ldots, c_n \in \DCoh_Z(M)} \RHom(c_1, c_2) \otimes_k \cdots \otimes_k \RHom(c_n, c_1) \\
&= \bigoplus_{n \geq 1} \bigoplus_{c'_1, \ldots, c'_n \in \Coh_Z(M_0)} \RHom(i_*(c'_1), i_*(c'_2)) \otimes_k \cdots \otimes_k \RHom(i_*(c'_n), i_*(c'_1))
 \end{align*}
We thus obtain a natural map 
 \begin{align*} \bHH_\bullet(\DCoh_Z(M))^{S^1} &\longleftarrow \bigoplus_{n \geq 1} \left(\bigoplus_{c'_1, \ldots, c'_n \in \Coh_Z(M_0)} \RHom(i_*(c'_1), i_*(c'_2)) \otimes_k \cdots \otimes_k \RHom(i_*(c'_n), i_*(c'_1))\right)^{S^1} \\
   &= \bigoplus_{n \geq 1} \bigoplus_{c'_1, \ldots, c'_n \in \Coh_Z(M_0)} \RHom(i_*(c'_1), i_*(c'_2))^{S^1} \otimes_{k\ps{\bt}} \cdots \otimes_{k\ps{\bt}} \RHom(i_*(c'_n), i_*(c'_1))^{S^1} \\
   &= \bHH_\bullet^{k\ps{\bt}}(\PreMF_Z(M,f))
 \end{align*}
 at least upon verifying that the above identifications are compatible with the differentials (i.e., that \autoref{prop:s1-hom} plays well with composition).  As already implicit in the above, the inner direct sum is uniformly $t$-bounded and so commutes with $(-)^{S^1}$.  From this perspective, it is not clear why the outer direct should also commutes with $(-)^{S^1}$; this is some sort of ``convergence'' statement about the cyclic bar complex.
\end{remark}

\begin{lemma}\label{lem:s1-tensor} Suppose $V, V'$ are $t$-bounded complexes with $S^1$-action, and $V \otimes_k V'$ their tensor product as complex with $S^1$-action.  Then, the natural map
  \[ V^{S^1} \otimes_{k\ps{\bt}} (V')^{S^1} \longrightarrow (V \otimes_k V')^{S^1} \]
  is an equivalence.
\end{lemma}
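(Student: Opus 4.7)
The plan is to reduce to an explicit computation in the concrete dg-model for $(-)^{S^1}$ built in the proof of \autoref{prop:mf-prelim}. Recall that for any complex with $S^1$-action $V$, one has a natural comparison map $\bF(V) = (V\ps{\bt}, d_V + \bt B_V) \to V^{S^1} = \RHom_{\O_\bB}(k, V)$ which is an equivalence as soon as $V$ is $t$-bounded above. Since $V$, $V'$ and $V \otimes_k V'$ are all $t$-bounded under our hypotheses, I would begin by replacing each occurrence of $(-)^{S^1}$ by its $\bF$-model.

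The heart of the argument is then the observation that for $t$-bounded $V$ the underlying graded $k\ps{\bt}$-module of $\bF(V)$, which a priori is the product $\prod_{n \ge 0} V[2n]$, in fact coincides with the direct sum $V \otimes_k k\ps{\bt}$, as only finitely many summands contribute in each homotopy degree. Consequently $\bF(V)$ and $\bF(V')$ are flat over $k\ps{\bt}$ (flatness being a property of the underlying graded module, unaffected by the twisted differential). This reduces the derived tensor product $V^{S^1} \otimes_{k\ps{\bt}} (V')^{S^1}$ to an underived one, namely $V \otimes_k V' \otimes_k k\ps{\bt}$ equipped with the total differential $d_V + d_{V'} + \bt(B_V \otimes 1 + 1 \otimes B_{V'})$ obtained via the Leibniz rule. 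Recognizing that the diagonal $S^1$-action on $V \otimes_k V'$ has $B$-operator $B_V \otimes 1 + 1 \otimes B_{V'}$, this matches $\bF(V \otimes_k V')$ on the nose.

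Finally I would check that the explicit isomorphism produced above is indeed induced by the natural lax-monoidal structure of $(-)^{S^1}$, which is a bookkeeping matter since $\bF$ is manifestly lax-monoidal by the same Leibniz computation. A more conceptual (but essentially equivalent) route would be to invoke the symmetric-monoidal equivalence $(\QCsh(\bB), \circ)^\otimes \isom (k\ps{\bt}\mod, \otimes_{k\ps{\bt}})^\otimes$ of \autoref{prop:mf-prelim}, observing that convolution $\circ$ is exactly $\otimes_k$ with the diagonal $B$-action; the role of $t$-boundedness there is to guarantee that $(-)^{S^1}$ as applied to genuine $\O_\bB$-modules agrees with the image of $V$ under this equivalence in a way compatible with the tensor structure. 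The only real obstacle is the flatness/sum-vs-product step, which is precisely where the boundedness hypothesis enters; everything else is routine manipulation of differentials.
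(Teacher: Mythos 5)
Your argument takes the same route as the paper, which simply refers back to the proof of Proposition~\ref{prop:mf-prelim}: pass to the explicit model $\bF(V) = (V[\bt],\, d + \bt B)$, use its (strict, at the level of dg-modules) monoidality, and compare $\bF$ with $(-)^{S^1}$ on bounded-above complexes. All of that is correct and is exactly what the cross-reference is pointing at.

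The one place where your argument has a real gap is the step ``$\bF(V)$ is flat over $k\ps{\bt}$ \emph{as a graded module}, hence the derived tensor product is computed by the underived one.'' Graded flatness (level-wise flatness of the underlying graded module, forgetting the differential) is \emph{not} sufficient to conclude that $\otimes_{k\ps{\bt}}$ computes $\otimes^L_{k\ps{\bt}}$; for that one needs $\bF(V)$ to be K-flat (equivalently here, projective-cofibrant as a dg-$k\ps{\bt}$-module), which is genuinely a statement about the complex with its differential, not about the underlying graded module. Since $\bF(V)$ is bounded above but \emph{not} bounded below over the nonpositively graded $k\ps{\bt}$, the usual ``bounded-below complex of flats is K-flat'' does not apply directly, and graded flatness alone does not settle it. The proof of Proposition~\ref{prop:mf-prelim} actually establishes the needed cofibrancy: one writes $V$ as a cone on $(\im B)[-2] \to \ker B$, where $B$ acts trivially on both pieces, so that $\bF$ applied to each piece is of the form $W \otimes_k k\ps{\bt}$ with $W$ a complex of $k$-vector spaces; since $-\otimes_k k\ps{\bt}$ is left Quillen for the projective model structure and every complex over the field $k$ is cofibrant, these are projective-cofibrant, and $\bF(V)$, being a cone of cofibrants along a degreewise-split inclusion, is too. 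You should invoke that argument in place of ``flatness of the underlying graded module.'' Two small notational slips: $\bF(V) = V[\bt]$ is by definition the direct sum, while it is $V^{S^1} = \RHom_{\O_\bB}(k,V)$ computed via the Koszul--Tate resolution that is a priori the product; and the product is $\prod_{n\ge 0} V[-2n]$ (not $V[2n]$), so the boundedness used for ``$\prod = \bigoplus$'' is boundedness \emph{above}, which the hypothesis of course supplies.
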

\begin{proof} C.f., the proof of \autoref{prop:mf-prelim}.
\end{proof}

\subsection{Calabi-Yau structures on \texorpdfstring{$\MF$}{MF}}
We first recall the notion of a Calabi-Yau structure on a smooth, not necessarily proper, dg-category (e.g., \cite[Def.~4.2.6 \& Remark~4.2.17]{Lurie-Cob}):
\begin{defn} Suppose $\C \in \dgcatidm_R$ is smooth.  An \demph{$m$-Calabi-Yau structure} on $\C$ is an $\SO(2)$-invariant cotrace
  \[ \cotr\colon R \to \ev \circ \coev[-m](R) = \bHH_\bullet(\C)[-m] \]
satisfying the following \demph{non-degeneracy condition}:
  \begin{itemize}
    \item Note that $\cotr$ gives rise to a $1$-morphism in $\Fun^L_R(R\mod,R\mod) \isom R\mod$:
      \[ \cotr(V) = \id_V \otimes_R \cotr\colon \id(V) \isom V \otimes_R R \longrightarrow V \otimes_R \bHH_\bullet(\C)[-m] \isom \ev \circ \coev[-m](V) \] 
    \item The non-degeneracy condition is that this be the co-unit of an adjunction $(\coev[-m], \ev)$, i.e., that the composite
      \[\Map_{\Fun^L_R(\Ind \C, \Ind \C)}(\coev(V)[-m], \F) \stackrel{\ev}\longrightarrow \Map_{R\mod}\left(\ev \circ \coev(V)[-m], \F\right) \stackrel{\cotr}\longrightarrow \Map_{R\mod}\left(V, \coev \F \right) \]
      be an equivalence for all $V \in R\mod$ and $\F \in \Fun^L_R(\Ind \C, \Ind \C)$.  Since $\C$ is smooth, it suffices to check that this condition is verified for $V = R[n]$, $n \in \ZZ$, and $\F \in \left(\Fun^L_R(\Ind \C, \Ind \C)\right)^c$ compact.
  \end{itemize}
\end{defn}

Of course the motivating example is:
\begin{lemma}\label{lem:cy} Suppose that $M$ is an $m$-dimensional Calabi-Yau variety (in the weak sense that $M$ is Gorenstein and $\omega_M[-m]$ is trivializable), and that $\vol_M\colon \O_M \isom \omega_M[-m]$ is a holomorphic volume form.  Then, $\vol_M$ gives rise to an $m$-Calabi-Yau structure on $\DCoh(M)$ as follows:
  \begin{itemize} \item There is a $\cotr_{[\vol_M]}\colon k \to \bHH^\bullet(\DCoh(M))[-m]$ determined by
      \[ [\vol_M] = \Delta_* \vol_M \in \Map_{M^2}(\Delta_* \O_M, \Delta_* \omega_M[-m]) = \Map_{k\mod}(k, \bHH_\bullet(\DCoh(M))[-m]) \]
  \item There is a natural $\SO(2)$-invariant lift of $[\vol_M]$, which determines $\SO(2)$-equivariance data for $\cotr_{\vol_M}$, and $\cotr_{\vol_M}$ is non-degenerate in the above sense.
\end{itemize}
  Furthermore, this determines a bijection between equivalence classes of the $m$-Calabi-Yau structures and the set of holomorphic volume forms.
\end{lemma}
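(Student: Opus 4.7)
The plan is to construct $\cotr_{[\vol_M]}$, establish its $\SO(2)$-invariance via HKR, observe that non-degeneracy reduces to the invertibility of $\vol_M$ itself, and match up the moduli.

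First I would specialize \autoref{thm:coh-fmk} (equivalently, \autoref{cor:hh}) to identify $\ev(-) = \RHom_{M^2}(\Delta_* \O_M, -)$, $\coev(V) = V \otimes \Delta_* \omega_M$, and so $\bHH_\bullet(\DCoh M) = \RHom_{M^2}(\Delta_* \O_M, \Delta_* \omega_M)$; this is the same pattern as in \autoref{thm:functors}(iv) for the degenerate case $f = 0$. Pushing forward $\vol_M\colon \O_M \stackrel{\sim}\to \omega_M[-m]$ along $\Delta$ then yields a class $[\vol_M] \in \pi_m \bHH_\bullet(\DCoh M)$, equivalently a map $\cotr_{[\vol_M]}\colon k \to \bHH_\bullet(\DCoh M)[-m]$.

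For $\SO(2)$-invariance I would combine \autoref{cor:hh} with the classical HKR quasi-isomorphism to model $\bHH_\bullet(\DCoh M)$ by $\RGamma(M, \Omega^\bullet_M)$ with Connes' $B$-operator matching the de Rham differential $d$, so that the negative cyclic complex $\bHH_\bullet(\DCoh M)^{S^1}$ is modeled by $\RGamma(M, \Omega^\bullet_M \ps{u})$ with differential $u \cdot d$. Under HKR the class $[\vol_M]$ lies in the top-degree strand $\Omega^m_M$ and is automatically $d$-closed (indeed $\Omega^{m+1}_M = 0$), so $\vol_M \cdot u^0$ is a genuine cycle in total homological degree $m$ of negative cyclic, furnishing the required $\SO(2)$-equivariant lift of the cotrace. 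Non-degeneracy is then essentially a tautology: unwinding the definition, the comparison map $\Map(\coev(V)[-m], \F) \to \Map(V, \ev(\F))$ is, after tensor-hom adjunction for $V$, induced by precomposition with $\Delta_* \vol_M\colon \Delta_*\O_M \to \Delta_*\omega_M[-m]$ inside $\QCsh(M^2)$, and this is an equivalence precisely because $\vol_M$ is.

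Finally, for the bijection: by HKR together with the vanishing $\Omega^{>m}_M = 0$, an $\SO(2)$-invariant cotrace in total degree $m$ is forced to consist of a section $\alpha_0 \in H^0(M, \Omega^m_M) = H^0(M, \omega_M[-m])$ together with no nontrivial $u$-corrections (as there is no room for them in the complex), and the non-degeneracy condition pins down exactly those $\alpha_0$ that are invertible as maps $\O_M \to \omega_M[-m]$, i.e., the volume forms. The main obstacle will be carefully verifying that the classical HKR identification is $\SO(2)$-equivariant in the precise sense demanded by the CY definition (one can invoke \autoref{prop:hkr-df} with $f = 0$ together with the standard identification of the cyclic $B$ with $d$), and checking that equivalences between CY structures on the category side correspond exactly to equalities of volume forms on the geometric side---in particular ruling out that distinct volume forms might become equivalent after passage to $\pi_0$ of the space of CY structures.
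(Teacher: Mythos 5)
Your plan is close to the paper's ``Second Proof'' (the explicit HKR computation) and to the paper's non-degeneracy argument, but it misses the one structural observation that makes the whole lemma go through cleanly and in the stated generality. The paper's key step is to note that since $M$ is Gorenstein, both $\Delta_* \O_M$ and $\Delta_* \omega_M[-m]$ lie in the heart of the $t$-structure on $\QCsh(M^2)$, so the mapping \emph{space} $\Map_{M^2}(\Delta_* \O_M, \Delta_* \omega_M[-m]) = \Ext^0_M(\O_M,\omega_M[-m])$ is discrete. Any homotopy $\SO(2)$-action on a discrete space is canonically trivial, so $[\vol_M]$ automatically lifts to $\SO(2)$-invariants and $\pi_0\bigl(\Map^{\SO(2)}\bigr) \isom \Map$. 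This single observation gives, with no further work, both the existence of the $\SO(2)$-invariant lift and the ``Furthermore'' bijection, and it neatly sidesteps the equivariance-of-HKR issue you flag as your main obstacle.

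The gap in your argument is that you rely on HKR ($\bHH_\bullet \isom \RGamma(\Omega^\bullet_M)$ with $B = d$), which requires $M$ to be \emph{smooth}; but the lemma is stated for $M$ a Gorenstein variety with $\omega_M[-m]$ trivializable, and for such $M$ the HKR model of Hochschild homology, and in particular the identification of the degree-$m$ piece with $H^0(\Omega^m_M)$, are simply not available. Your degree-counting (``no room for $u$-corrections'') and your bijection argument are therefore only valid under a smoothness hypothesis the lemma does not impose. In the smooth case, your computation does match the paper's second (explicit) proof of the naturality claim, and your identification of the non-degeneracy map with precomposition by $\Delta_* \vol_M$ is exactly the paper's argument; but to prove the lemma as stated you should replace the HKR step with the discreteness-of-the-mapping-space observation, using the Gorenstein hypothesis in the form ``$\Delta_*\omega_M[-m]$ is a coherent sheaf in the heart.''
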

\begin{proof} By assumption, $\Delta_* \O_M$ and $\Delta_* \omega_M[-m]$ are both coherent sheaves, i.e., the heart of the $t$-structure.  It follows that
  \begin{align*} \Map_{M^2}(\Delta_* \O_M, \Delta_* \omega_M[-m]) &= \Omega^\infty \RHom_{M^2}(\Delta_* \O_M, \Delta_* \omega_M[-m]) \\
    &= \Ext^0_{M^2}(\Delta_* \O_M, \omega_M[-m]) \\
    &= \Ext^0_{M}(\O_M, \omega_M[-m]) \\
    &= \Map_M(\Delta_* \O_M, \omega_M[-m])
\end{align*}
and that both spaces are discrete.  Any homotopy $\SO(2)$-action on a discrete space is trivial, so that
\[ \pi_0\left(\Map_{M^2}(\Delta_* \O_M, \Delta_* \omega_M[-m])^{\SO(2)}\right) \isom \Map_{M^2}(\Delta_* \O_M, \Delta_* \omega_M[-m]) \]
This shows that $[\vol_M]$ lifts to $\SO(2)$-invariants.  The same argument shows that $\vol_M$ is an isomorphism iff $\Delta_* \vol_M$ is so, proving the ``Furthermore.'' 

{\noindent}{\bf Claim:} $[\vol_M]$ has a \emph{natural} lift to $\SO(2)$-invariants provided by its description as a pushforward along $\Delta$.  More precisely, we will show below that $\Delta_*$ lifts to a map \[ \RHom_M(\O_M, \omega_M[-m]) \longrightarrow \RHom_{\QCsh(M^2)}(\Delta_* \O_M, \Delta_* \omega_M[-m])^{\SO(2)} \] 

Assuming the claim, we now complete the proof: We must show that $\cotr_{\vol_M}$ is non-degenerate, i.e., that the composite
\[\Map_{\Fun^L_k(\QCsh(M), \QCsh(M))}(\coev(V)[-m], \F) \stackrel{\ev}\longrightarrow \Map_{k\mod}\left(\ev \circ \coev(V)[-m], \F\right) \stackrel{\cotr}\longrightarrow \Map_{k\mod}\left(V, \ev \F \right) \] is an equivalence for all $V \in k\mod$ and $\F \in \Fun^L_k(\QCsh(M),\QCsh(M))^c \isom \DCoh(M^2)$.  By \autoref{thm:coh-fmk}, we know that $\DCoh(M)$ is smooth; so, it suffices to verify the condition for $V = k[n]$, $n \in \ZZ$, and $\F$ compact.  Using the identification of \autoref{thm:coh-fmk}, we may identify the relevant map with (global sections of shifts of)
\[ \HHom_{\DCoh(M^2)}\left(\Delta_* \omega_M[-m], \F\right) \longrightarrow \HHom_{\DCoh(M^2)}\left(\Delta_* \O_M, \F\right) \] given by pre-composing with the equivalence $\Delta_* \vol_M$.

\smallskip
Finally, we include two proofs of the claim: the first by general nonsense for which we do not give all the details, and the second much more concrete in case $M$ is smooth:\\
{\noindent}{\it First Proof: }\
      \item Base-change for the diagram
    \[ \xymatrix{ LM \ar[d]_{p_1} \ar[r]^{p_2} & M \ar[d]^\Delta \\ M \ar[r]_\Delta & M^2 } \] 
identifies
\[ \RHom_{\QCsh(M^2)}(\Delta_* \F, \Delta_* \G) = \RHom_{LM}\left((p_2)^* \F, (p_1)^! \G\right) \]
so that in particular
\[ \RHom_{\QCsh(M^2)}(\Delta_* \O_M, \Delta_* \omega_M[-m]) = \RHom_{LM}(\O_{LM}, \omega_{LM}[-m]) \]
Let $s: M \to LM$ be the inclusion of constant loops, which is naturally $\SO(2)$-equivariant.  Under the above, $\Delta_*$ is identified with
\[ \xymatrix@C=4pc{\RHom_M(\O_M,\omega_M[-m]) \ar[r]^-{s_*} & \RHom_{LM}(s_* s^* \O_{LM}, s_* s^! \omega_{LM}[-m])\ar[r]^-{\tr_s \circ (-) \circ \mathrm{unit}_s}&\RHom_{LM}(\O_{LM}, \omega_{LM}[-m])  } \]
The lift to $\SO(2)$-invariants is provided by naturality from the $\SO(2)$-equivariance of $s$.

\smallskip

{\noindent}{\it Second Proof: }\\
In case $M$ is smooth we can be completely explicit: By HKR, we may identify $\ul{\bHH}_\bullet(\Perf(M)) = \bigoplus_i \Omega^i_M[i]$ and the $\SO(2)$-action with the de Rham differential $d_{DR}$.  Then, the lift of $\Delta$ is
\[
\RGamma(\omega_M[-m])= R\Gamma(\boxed{\Omega_M^m}) \longrightarrow
 \RGamma\left(\vcenter{\xymatrix@R=.6pc{
\boxed{\Omega_M^m}      & \\
\Omega_M^{m-1} \ar[r]^d & \bt \Omega_M^m &  \\
\Omega_M^{m-2} \ar[r]^d & \bt \Omega_M^{m-1} \ar[r]^d& \bt^2 \Omega_M^{m} & \\
\cdots    &\cdots & \cdots & \ddots \\
\O_M \ar[r]^{d} & \bt \Omega^1_M \ar[r]^d &  \bt^2 \Omega^2_M \ar[r]^d & \ddots\\
& \bt \O_M  \ar[r]^d & \bt^2 \Omega^1_M \ar[r]^d &\ddots  \\
}}\right) = (\bHH_\bullet(\Perf M)[-m])^{S^1} \]
where the boxed entries are in degree $0$.  The induced map on mapping \emph{spaces} is $\Omega^\infty$ of this, which is just the identity on $H^0(\Omega_M^m)$.
\end{proof}

\begin{remark} The cotrace can also be made very explicit in the Dolbeault model (over $\CC$) for Hochschild homology: Represent $\vol_M$ by a holomorphic $(n,0)$-form, $[\vol_M] \in \Gamma(A^{n,0})$.  Then, $[\vol_M]$ is visibly a cycle in 
  \[ \left(\bHH_\bullet(\Perf M)[-m]\right)^{\SO(2)} = \left[\left(\oplus_{p,q} \Gamma(\A^{p,q})[p-q-m]\right)\ps{u}, \ol{\partial} + u \cdot \partial\right] \]
  Indeed $\ol{\partial}$ vanishes since $[\vol_M]$ is holomorphic, and $\partial$ vanishes since it is is an $(n,0)$-form.
\end{remark}

\begin{theorem}[Calabi-Yau structures]\label{thm:cy} Suppose $(M, f)$ is an LG pair, $m = \dim M$, and  that $M$ is equipped with a volume form $\vol_M\colon \O_M \isom \omega_M[-m]$.  Then, $\vol_M$ determines an $m$-Calabi-Yau structure on $\MF(M, f)$.
\end{theorem}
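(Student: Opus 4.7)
The plan is to use Theorem \ref{thm:functors}(iv)--(v) to translate the Calabi--Yau condition into a statement about the derived self-intersection of the diagonal in $M^2$, and then produce the required $\SO(2)$-invariant non-degenerate cotrace directly from $\ol{\Delta}_* \vol_M$, mimicking the classical argument of Lemma \ref{lem:cy}. Smoothness of $\MF(M,f)$ over $k\pl{\bt}$ is automatic from Theorem \ref{thm:sm-proper}(ii) (taking $Z = \crit(f)\cap f^{-1}(0)$), so all the structures needed to even formulate the definition are available.

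First, I would construct the cotrace. Using Theorem \ref{thm:functors}(v), I identify
\[ \Fun^L_{k\pl{\bt}}\!\left(\MF^\kinfty(M,f), \MF^\kinfty(M,f)\right) \isom \bigoplus_{\lambda \in \cval(f)} \MF^\kinfty_{Z_\lambda}(M^2, -f\boxplus f), \]
under which $\coev(k\pl{\bt}) = \id$ corresponds to $\ol{\Delta}_* \omega_M$ and $\ev$ is represented by $\ol{\Delta}_* \O_M$. The volume form gives an isomorphism $\vol_M\colon \O_M[m] \stackrel{\sim}\to \omega_M$ on $M$; pushing forward along the (proper) reduced diagonal yields an isomorphism $\ol{\Delta}_* \vol_M\colon \ol{\Delta}_* \O_M[m] \stackrel{\sim}\to \ol{\Delta}_* \omega_M$ in $\MF^\kinfty(M^2, -f \boxplus f)$, whose adjoint is the desired candidate cotrace $\cotr_{\vol_M}\colon k\pl{\bt} \to \bHH^{k\pl{\bt}}_\bullet(\MF(M,f))[-m]$.

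Second, I would upgrade the cotrace to $\SO(2)$-invariants. Using Theorem \ref{thm:hoch}(iv), I identify $\bHH^{k\pl{\bt}}_\bullet(\MF^\tot)[-m]$ with $\RGamma\bigl([\Omega^\bullet_M\pl{\bt},\, \bt\cdot(-df\wedge-)]\bigr)[-m]$; Theorem \ref{thm:hoch}(iii) supplies the additional $\SO(2)$-action whose $B$-operator is $d_{\mathrm{dR}}$ (these commute because $d_{\mathrm{dR}}(df) = 0$). The image of $\vol_M$ in this complex is a strict cycle for \emph{both} operators: it is $d_{\mathrm{dR}}$-closed (being a top form) and satisfies $df \wedge \vol_M = 0$ for degree reasons, since $\Omega^{m+1}_M = 0$. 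Therefore $[\vol_M]$ lifts tautologically to an element of the $\SO(2)$-invariants of the Tate complex, giving the $\SO(2)$-equivariance of $\cotr_{\vol_M}$. Alternatively, one can argue as in Lemma \ref{lem:cy} by factoring $\Delta_* \vol_M$ through the $\SO(2)$-equivariant inclusion of constant loops in $LM$.

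Third, non-degeneracy reduces to verifying that the map
\[ \Map_{\MF^\kinfty(M^2, -f\boxplus f)}\bigl(\ol{\Delta}_* \omega_M[-m],\, \F\bigr) \longrightarrow \Map_{\MF^\kinfty(M^2, -f\boxplus f)}\bigl(\ol{\Delta}_* \O_M,\, \F\bigr) \]
obtained by precomposition with $\ol{\Delta}_* \vol_M$ is an equivalence for every compact $\F$, which it is because $\ol{\Delta}_* \vol_M$ is already an isomorphism. Smoothness (Theorem \ref{thm:sm-proper}) guarantees that we only need to check compact $\F$ and $V = k\pl{\bt}[n]$, so this suffices.

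The main obstacle is the second step: tracking $\SO(2)$-equivariance through the Tate construction and through the identifications of Theorems \ref{thm:functors} and \ref{thm:hoch}. The intrinsic $\SO(2)$-action on Hochschild homology and the $S^1$-action from $f$ used to form the Tate construction must be treated as a single commuting $\SO(2)\times S^1$-action, and one must check that the HKR-style computation of $\bHH_\bullet^{k\pl{\bt}}(\MF)$ is genuinely $\SO(2)\times S^1$-equivariant (not just $S^1$-equivariant); once this is in place the actual verification that $\vol_M$ is a simultaneous invariant is immediate for degree reasons.
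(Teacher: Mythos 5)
Your proposal is correct and follows essentially the same route as the paper's own proof: the cotrace is $\ol{\Delta}_*\vol_M$ via the identification of Theorem~\ref{thm:functors}, the $\SO(2)$-lift exists because $\vol_M$ is a strict cycle for both the de Rham differential and $-df\wedge-$ (both vanish for degree reasons since $\Omega^{m+1}_M=0$), and non-degeneracy follows from smoothness (Theorem~\ref{thm:sm-proper}) together with $\ol{\Delta}_*\vol_M$ being an equivalence. The one wrinkle you flag as the ``main obstacle'' --- making sure the $\SO(2)$-action and the $B\oh{\GG_a}$- (or $S^1$-) action from $f$ form a commuting pair at the level of categories rather than just homology --- is exactly what the paper addresses via Claims~1 and~2 of its proof, where it additionally observes that the relevant mapping space is discrete (both $\Delta_*\O_M$ and $\Delta_*\omega_M[-m]$ lie in the heart), so any $\SO(2)\times B\oh{\GG_a}$-action on it is trivializable and a lift to invariants exists automatically up to equivalence; the HKR/Dolbeault description you invoke then serves to make this lift natural rather than to prove existence.
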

\begin{proof} Replacing $M$ by an open subset, we may suppose for simplicity that $0$ is the only critical point of $M$.  For the remainder of the proof, let $\MF = \MF(M, f)$, and $\Fun^L = \Fun^L_{k\pl{\bt}}(\MF, \MF)$ which we will identify with $\MF(M^2, -f \boxplus f)$ via \autoref{thm:TS} (with the support condition dropped by the reasoning of \autoref{thm:sm-proper}).  Let $k\colon (M^2)_0 \to M^2$ be the inclusion.

  The $m$-Calabi-Yau structure on $\DCoh(M)$ corresponding to $\vol_M$
  \[ [\vol_M] = \Delta_* \vol_M \in \bHH_\bullet(\DCoh(M))[-m] = \RHom_{\DCoh(M^2)}(\Delta_* \O_M, \Delta_* \omega_M[-m]) \] 
  admits the refinement
  \[\ol{[\vol_M]}^{S^1} = \ol{\Delta}_* [\vol_M] \in \RHom_{\PreMF(M^2,-f \boxplus f)}\left(\ol{\Delta}_* \O_M, \ol{\Delta}_* \omega_M[-m]\right) = \RHom_{M^2}\left( \Delta_* \O_M, \Delta_* \omega_M[-m]\right)^{S^1}  \]
  which upon inverting $\bt$ gives an element
  \[ \ol{[\vol_M]}^\Tate \eqdef \ol{\Delta}_* [\vol_M] \in \bHH_\bullet^{k\pl{\bt}}(\MF)[-m] = \RHom_{\MF(M^2,-f \boxplus f)}\left(\ol{\Delta}_* \O_M, \ol{\Delta}_* \omega_M[-m]\right) \]

  {\noindent}{\bf Claim 1:} There is an $\SO(2)$-action on $\RHom_{\PreMF(M^2,-f \boxplus f)}\left(\ol{\Delta}_* \O_M, \ol{\Delta}_* \omega_M[-m]\right)$ refining the natural $\SO(2)$-action on $\bHH_\bullet^{k\pl{\bt}}(\MF)$.\\
  {\noindent}{\bf Claim 2:} The description as a pushforward via $\ol{\Delta}$ equips $\ol{[\vol_M]}^{S^1}$ (and so $\ol{[\vol_M]}^\Tate$) with a natural lift to $\SO(2)$-invariants.\\

  Assuming the claims for now we complete the proof:   From the claim, it follows that $\ol{[\vol_M]}^\Tate$ determines an $\SO(2)$-invariant cotrace
  \[ \cotr_{\vol_M}\colon k\pl{\bt}[m] \longrightarrow \bHH^{k\pl{\bt}}_\bullet(\MF) \]
  We will be done if we can prove that $\cotr_{\vol_M}$ is \emph{non-degenerate} in the sense that it is the unit for an adjunction $(\coev[-m], \ev)$, i.e., that the composite
\[ \Map_{\Fun^L}\left( \coev[-m](V),  \F\right) \longrightarrow \Map_{k\pl{\bt}\mod}\left( \ev \circ \coev[-m](V), \ev \F\right) \stackrel{\id_V \otimes \cotr_{\vol_M}^\dual}\longrightarrow \Map_{k\pl{\bt}\mod}\left(V, \ev \F\right) \]
is an equivalence for all $V \in k\pl{\bt}\mod$ and $\F \in \Fun^L$.  Since $\MF$ is smooth (\autoref{thm:sm-proper}) it suffices to verify this condition when $V = k\pl{\bt}[n]$, $n \in \ZZ$ and $\F \in \Fun^L$ is compact.  Using the identifications of \autoref{thm:functors}, we see that it suffices to check that
\begin{align*} \RHom_{\MF(M^2, -f \boxplus f)}\left(\ol{\Delta}_* \omega_{M}[-m], \F\right) &\longrightarrow \RHom_{k\pl{\bt}\mod}\left( \RHom_{\Fun^L}\left(\ol{\Delta}_* \O_M, \ol{\Delta}_* \omega_M[-m] \right), \RHom_{\Fun^L}\left(\ol{\Delta}_* \O_M, \F\right)\right) \\
  &\longrightarrow \RHom_{k\pl{\bt}\mod}\left(k\pl{\bt}, \RHom^{k\pl{\bt}}_{\MF(M^2, -f \boxplus f)}\left(\ol{\Delta}_* \O_M, \F \right)\right) \\
  &= \RHom^{k\pl{\bt}}_{\MF(M^2, -f \boxplus f)}\left( \ol{\Delta}_* \O_M, \F \right)
  \end{align*}
  is an equivalence for all $\F \in \MF(M^2, -f \boxplus f)$.  The composite is just given by pre-composition with $\ol{[\vol_M]}^{\Tate}$, so it suffices to observe that $[\ol{\vol_M}]^\Tate$ is an equivalence:  It is the image by a functor of $\ol{[\vol_M]}^{S^1} = \ol{\Delta}_* \vol_M$, and $\vol_M$ is an equivalence.

  {\noindent}{\it Proof of Claims:} Let $B\oh{\GG_a}$ act on $\Perf(M)$ corresponding to $f$ (\autoref{lem:BGa-act-mf}), and on $\Perf(M^2)$ corresponding to $-f \boxplus f$.  By functoriality we know that $B\oh{\GG_a}$-acts on $\bHH_\bullet^k(\Perf(M))$ compatibly with the $\SO(2)$-action, and this induces an $\SO(2)$ action on \[ \bHH_\bullet^k(\Perf(M))^{B\oh{\GG_a}} = \RHom_{M^2}(\Delta_* \O_M, \Delta_* \omega_M)^{B\oh{\GG_a}} = \RHom^{\otimes k\ps{\bt}}_{\PreMF(M^2, -f \boxplus f)}(\ol{\Delta}_* \O_M, \ol{\Delta}_* \omega_M) \] which evidently refines that on $\bHH^{k\pl{\bt}}_\bullet(\MF(M,f)) = \bHH_\bullet^k(\Perf(M))^{\Tate}$.  We must produce a lift of $[\vol_M] \in \Map_{M^2}(\Delta_* \O_M, \Delta_* \omega_M[-m])$ to $\SO(2)\times B\oh{\GG_a}$ invariants.

The mere existence of a lift is actually automatic: Since
\[ \Map = \Map_{M^2}(\Delta_* \O_M, \Delta_* \omega_M[-m]) = \Omega^\infty \RHom_{M^2}(\Delta_* \O_M, \Delta_* \omega_M[-m]) \] with both $\Delta_* \O_M$ and $\Delta_* \omega_M[-m]$ in the heart of the $t$-structure, this space is \emph{discrete}.  Regarding it as a complex in degree $0$, it has an action of $\SO(2) \times B\oh{\GG_a}$ such that the map to the whole Hochschild complex is equivariant.  But, since it is in degree $0$ the action cannot help but be trivializable.  So, $[\vol_M]$  admits a lift to fixed points which is unique up to equivalence (but not necessarily up to contractible choices); and similarly, $\pi_0 \Map^{\SO(2) \times B\oh{\GG_a}} = \pi_0 \Map = \Map$.

As before, it is possible to make the choice naturally (i.e., dependent only on some universal choice).  We describe how to do this in the case where $M$ is a smooth variety, so that we can use HKR descriptions: Will will produce a lift
\[ \RHom_M(\O_M, \omega_M[-m]) \longrightarrow \RHom_{\PreMF(M^2, -f \boxplus f)}(\ol{\Delta}_* \O_M, \ol{\Delta}_* \omega_M[-m])^{\SO(2)}\] 
of $\ol{\Delta}_*$  in the HKR model of \autoref{thm:hoch}.  There's an obvious map
\[ \RGamma(\Omega_M^m) \to \left(\bHH^k_\bullet(\Perf M)[-m]\right)^{B\oh{\GG_a} \times \SO(2)} = \RGamma\left(\left[\Omega^\bullet_M[-m]\ps{\bt, u}, \bt \cdot (-df \wedge -) + u \cdot d \right] \right) \]
since $\Omega_M^m$ is the degree $0$ piece of the complex of sheaves on the right, and this piece has no differentials into it (there's nothing in positive degree) or out of it (both $d$ and $-df \wedge -$ vanish for degree reasons).  Again, the map on connective covers can be identified with the identity on $H^0(M, \Omega_M^m)$.
\end{proof}

\begin{remark} The Claim is also apparent in a Dolbeault model (over $\CC$): If $\vol_M \in \Gamma(\A^{m,0})$ is a holomorphic volume form, it evidently gives rise to a cycle in
  \[ \left(\bHH^k_\bullet(\Perf M)[-m]\right)^{B\oh{\GG_a} \times \SO(2)} = \left[\left(\oplus_{p,q} \Gamma(\A^{p,q})[p-q-m]\right)\ps{\bt, u}, \ol{\partial} + \bt \cdot (-df \wedge -) + u \cdot \partial \right] \]
  Indeed, $\ol{\partial} \vol_M$ vanishes since $\vol_M$ is holomorphic, while $\partial \vol_M$ and $-df \wedge \vol_M$ vanish since they would have to be $(m+1,0)$-forms.
\end{remark}

\section{Quadratic bundles}\label{sec:quadratic}
The goal of this section is two-fold:
\begin{itemize}
  \item We carry out a first class of computation of $\PreMF$, in the spirit of Kapustin-Li: For non-degenerate quadratic bundles over a space, $\PreMF$ (with supports along the zero section) admits a description in terms of a $k\ps{\bt}$-linear variant of sheaves of Clifford algebras.  (Upon inverting $\bt$, this recovers a relative form of the computations of Kapustin-Li for matrix factorizations.)
  \item We use a variant of \autoref{thm:TS} to prove a a relative form of Kn\"orrer periodicity for metabolic quadratic bundles, and so re-construct using $\MF$ a $2$-periodic version of the Clifford invariant on the Witt group.  Note that Kn\"orrer-type result is valid only after inverting $\bt$.
\end{itemize}

\subsection{Metabolic quadratic bundles and relative Kn\"orrer periodicity}
\begin{defn} A \demph{quadratic bundle} $(\sQ,Q)$ over a scheme $X$ is a pair consisting of:  a locally free sheaf $\sQ$, and a \emph{non-degenerate} symmetric bilinear pairing $Q\colon \sQ \otimes_{\O_X} \sQ \to \O_X$.\footnote{Non-degenerate means that the induced sheaf map $\sQ \to \sQ^\dual$ is an isomorphism.}
\end{defn}

\begin{na} We associate to a quadratic bundle $(\sQ,Q)$ over $X$:
  \begin{itemize}
    \item The total space $\tQ \to X$, a scheme smooth over $X$: $\tQ = \AA(\sQ^\dual) \eqdef \Spec_X \Sym_{\O_X} \sQ^\dual$.
    \item The quadratic form $q\colon \tQ \to \AA^1$: defined on points by $q(v) = \frac{1}{2} Q(v \otimes v)$ (or on sheaves, by $\O_X \to \sQ^\dual \otimes \sQ^\dual \to \Sym^2(\sQ^\dual)$).
  \end{itemize}
  We will regard $(\tQ, q)$ as an LG pair.
\end{na}

\begin{lemma}\label{lem:quad-crit} Let $(\sQ,Q)$ be a quadratic bundle over a smooth scheme $X$, and $(\tQ, q)$ the resulting LG pair.  Then:
  \begin{enumerate}
    \item $\crit(q) = X$, in particular $0$ is the only critical value.
    \item There is a natural identification $\N_{X/\tQ} = \sQ$, under which $Q$ corresponds to the Hessian.  In particular, $q$ is Morse-Bott.
  \end{enumerate}
\end{lemma}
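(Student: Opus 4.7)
The plan is to reduce both items to a local coordinate computation in a trivialization of $\sQ$, and then observe that the relevant constructions are intrinsic so the local identifications glue to global ones.

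For (i), I would choose a Zariski-open $U \subset X$ on which $\sQ|_U$ is trivial with basis $e_1,\ldots,e_n$, giving fiber coordinates $v_1,\ldots,v_n$ on $\tQ|_U \isom U \times \AA^n$ dual to the $e_i$. Setting $Q_{ij} = Q(e_i,e_j) \in \O(U)$, the function $q$ reads $q = \tfrac{1}{2}\sum_{i,j} Q_{ij}(x)\, v_i v_j$. A direct computation gives
\[ dq = \sum_i \Bigl(\sum_j Q_{ij}(x)\, v_j\Bigr) dv_i + \tfrac{1}{2}\sum_{i,j,k} (\partial_{x_k} Q_{ij})(x)\, v_i v_j\, dx_k. \]
The vertical component vanishes iff the tuple $(\sum_j Q_{ij} v_j)_i$ is zero, which by non-degeneracy of $Q$ forces $v=0$; along the zero section $\{v=0\}$ the horizontal component also vanishes. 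So $\crit(q) = X$, and the critical value is $q(0)=0$.

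For (ii), the normal bundle of the zero section of a vector bundle is canonically the vector bundle itself, giving $\N_{X/\tQ} \isom \sQ$ tautologically. In the same local coordinates, the classes of $v_i$ in $\N_{X/\tQ}^\dual|_U$ form a basis dual to $e_i$, and the Hessian $\Hess(q)|_X \in \Gamma(U,\Sym^2 \sQ^\dual)$ is
\[ \sum_{i,j}(\partial_{v_i}\partial_{v_j} q)|_{v=0}\; v_i \otimes v_j = \sum_{i,j} Q_{ij}\, v_i \otimes v_j, \]
which is precisely $Q$ under the identification. Since $Q$ is non-degenerate the Hessian is non-degenerate transverse to $\crit(q) = X$, and $X$ itself is smooth, giving the Morse–Bott condition.

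There is essentially no obstacle to this plan: both $dq$ and $\Hess(q)|_{\crit(q)}$ have intrinsic descriptions (as a section of $\Omega_\tQ$ and of $\Sym^2 \N_{X/\tQ}^\dual$, respectively), so the local computations glue, and the only thing to verify is that the identification $\N_{X/\tQ} \isom \sQ$ is the canonical one coming from the vector bundle structure on $\tQ \to X$, which is immediate.
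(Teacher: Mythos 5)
Your proposal is correct and follows essentially the same approach as the paper: both work in a local trivialization of $\sQ$, use non-degeneracy of $Q$ to show the vertical part of $dq$ cuts out the zero section (with the horizontal part then vanishing automatically), and identify $\N_{X/\tQ}$ with $\sQ$ and the Hessian with $Q$ in coordinates. The only cosmetic difference is that you organize the argument around the decomposition of $dq$ into vertical and horizontal parts, while the paper writes the defining equations of $\crit(q)$ directly; these are the same computation.
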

\begin{proof}\mbox{}
  \begin{enumerate}
    \item Follows from the condition that $Q$ is non-degenerate:  Working locally, we may suppose $\sQ = \oplus_{i=1}^{n} \O_X v_i$, so that $\O_{\tQ} = \O_X[x_1,\ldots,x_n]$ ($x_i$ dual to $v_i$) and $q = \frac{1}{2} \sum_{i,j} Q(v_i,v_j) x_i x_j$.  Then, $\crit(q)$ is cut out by the equations
      \[ 0 = \frac{dq}{dx_i} = \sum_j Q(v_i, v_j) x_j \qquad \text{for $i=1,\ldots,n$}\]
      \[ 0 = \xi \cdot dq = \frac{1}{2} \sum_{i,j} \left(\xi \cdot dQ(v_i,v_j)\right) x_i x_j \qquad \text{for $\xi \in TX$} \]
      The first set of equations may be reformulated as the vanishing of the vector $Q (x_1,\ldots,x_n)^T$.  Since $Q$ was assumed non-degenerate, this cuts out precisely the locus $x_1=\ldots,x_n=0$, i.e., $X$.  The second set of equations are contained in the ideal generated by the first, i.e., they vanish along $X$ as well.
    \item The (dual) identification is routine: $\N^\dual_{X/\tQ} = \res{\I_X}{X} = \res{\sQ^\dual \otimes \O_{\tQ}}{X} = \sQ^\dual$.  The previous computation in local coordinates shows that
      \[ \frac{dq}{dx_i dx_j} = Q(v_i, v_j) \] which, tracing through the identification in this case, proves the claim about the Hessian.\qedhere
  \end{enumerate}
\end{proof}

\begin{remark} The formal Morse Lemma tells us that the LG pairs $(\tQ,q)$ are (formally locally) representative of LG pairs with Morse-Bott singularities.
\end{remark}

\begin{na} Suppose $X$ is a smooth variety. Regard $\Perf(X)^{\otimes}$ as a symmetric monoidal $\infty$-category, and let $\Perf(X)\ps{\bt}^{\otimes} = \Perf(X) \otimes_k k\ps{\bt}$ (resp., $\Perf(X)\pl{\bt}^{\otimes} = \Perf(X) \otimes_k k\pl{\bt}$) be the associated $k\ps{\bt}$- (resp., $k\pl{\bt}$-)linear symmetric monoidal $\infty$-category.  If $\C, \D$ are $\Perf(X)\ps{\bt}$- (resp., $\Perf(X)\pl{\bt}$-)module dg-categories, let us denote
  \[ \C \otimes_{X\ps{\bt}} \D \eqdef \C \otimes_{\Perf(X)\ps{\bt}} \D \qquad \left(\text{resp., }\C \otimes_{X\pl{\bt}} \D \eqdef \C \otimes_{\Perf(X)\pl{\bt}} \D \right) \]
\end{na}

\begin{na} Earlier in the paper, we noted that Kn\"orrer periodicity could be deduced from our Thom-Sebastiani Theorem together with an explicit computation of matrix factorizations for a rank $2$ quadratic form.  Part (ii) of the following Theorem provides a globalized version of Kn\"orrer periodicity.  Part (i) of the following Theorem provides a relative form of the Thom-Sebastiani Theorem, under an additional hypothesis:
\end{na}

\begin{theorem}[Relative Kn\"orrer Periodicity]\label{thm:rel-knorrer} Suppose $X$ is a smooth variety, $(\sQ,Q)$ is a quadratic bundle over $X$, and $(\tQ, q)$ is the associated LG pair.
  \begin{enumerate}
    \item Suppose $(Y,f)$ is a relative LG pair over $X$: That is $Y$ is a smooth $X$-scheme equipped with a map $f$ to $\AA^1$.  For any closed subset $Z \subset f^{-1}(0)$, exterior tensor product induces $\Perf(X)\ps{\bt}$- (resp., $\Perf(X)\pl{\bt}$-)linear equivalences
      \[ \PreMF_Z(Y,f) \otimes_{X\ps{\bt}} \PreMF_X(\tQ,q) \stackrel{\sim}\longrightarrow \PreMF_{Z \times_X X}(Y \times_X \tQ, f \boxplus q) \]
      \[ \MF_Z(Y,f) \otimes_{X\ps{\bt}} \MF(\tQ,q) \stackrel{\sim}\longrightarrow \MF_{Z \times_X X}(Y \times_X \tQ, f \boxplus q) \]
    \item Suppose $(\sQ, Q)$ is a \demph{metabolic} quadratic bundle, i.e., there is a locally free subsheaf $\L \subset \sQ$ such that $\L = \L^\perp$ and $\L$ is locally a direct summand (i.e., a subbundle).  Let $\tL = \Spec \Sym_{\O_X} \L^\dual$ be the total space of $\L$.  Regard $\tL$ as a closed subscheme of $\tQ_0$, so that $\O_{\tL}$ is an object of $\DCoh(\tQ_0)$ and thus of $\MF(\tQ,q)$.  Then, tensoring with $\O_{\tL}$ induces an equivalence
      \[ \Perf(X)\pl{\bt} = \MF(X, 0) \longrightarrow \MF(\tQ, q). \]
  \end{enumerate}
\end{theorem}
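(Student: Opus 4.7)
For part (i), the idea is to base-change the absolute Thom-Sebastiani of Theorem~\ref{thm:TS} along the diagonal $X \to X \times X$. Starting from the equivalence
\[ \PreMF_Z(Y, f) \otimes_{k\ps{\bt}} \PreMF_X(\tQ, q) \isom \PreMF_{Z \times X}(Y \times \tQ, f \boxplus q), \]
both sides carry commuting $\Perf(X)$-actions (pullback to $Y$ and to $\tQ$) combining into a $\Perf(X \times X)\ps{\bt}$-action. Tensoring further over $\Perf(X \times X)\ps{\bt}$ with $\Perf(X)\ps{\bt}$ via the diagonal identifies the two actions on the LHS, producing $\PreMF_Z(Y,f) \otimes_{X\ps{\bt}} \PreMF_X(\tQ, q)$. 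On the RHS, since $Y \to X$ and $\tQ \to X$ are smooth the fiber product $Y \times_X \tQ$ is already derived, and the tensor product formula for $\DCoh$ (Proposition~\ref{prop:fmk-coh-surj}) identifies the base-change with pushforward along the closed immersion $\ell \colon Y \times_X \tQ \hookrightarrow Y \times \tQ$; the support $Z \times X$ pulls back to $Z \times_X X = Z$ inside $Y \times_X \tQ$. This gives the $k\ps{\bt}$-linear equivalence; the $k\pl{\bt}$ version follows by inverting $\bt$.

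For part (ii), I would reduce to a local computation via étale descent on $X$. Since $q|_\tL = 0$ (as $\L = \L^\perp$ makes $q(v) = Q(v,v)/2 = 0$ for $v \in \L$), the sheaf $\O_\tL$ lies in $\DCoh(\tQ_0) = \PreMF(\tQ, q)$ and hence in $\MF(\tQ, q)$, so the functor $F \colon \Perf(X)\pl{\bt} \to \MF(\tQ, q)$, $\F \mapsto \F \otimes_{\O_X} \O_\tL$, is defined canonically. Both sides are étale sheaves on $X$ by Proposition~\ref{prop:mf-sheaf}, so it suffices to verify $F$ is an equivalence étale-locally. Étale-locally the sequence $0 \to \L \to \sQ \to \L^\dual \to 0$ splits and $\L$ trivializes, reducing to the case $\sQ = \O_X^{2n}$, $q = \sum x_i y_i$, $\tL = \{y_i = 0\}$. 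Iteratively applying part~(i) (equivalently, Theorem~\ref{thm:TS}) factors $(\tQ, q)$ as a Thom-Sebastiani sum of $n$ rank-$2$ hyperbolic planes $(\AA^2, xy)$ with the trivial LG pair $(X, 0)$, giving
\[ \MF(\tQ, q) \isom \MF(\AA^2, xy)^{\otimes_{k\pl{\bt}} n} \otimes_{k\pl{\bt}} \MF(X, 0) \isom (\Perf k\pl{\bt})^{\otimes_{k\pl{\bt}} n} \otimes_{k\pl{\bt}} \Perf(X)\pl{\bt} = \Perf(X)\pl{\bt}, \]
using the absolute Knörrer equivalence $\MF(\AA^2, xy) \isom \Perf k\pl{\bt}$ (Remark following Theorem~\ref{thm:TS}) and $\MF(X, 0) = \Perf(X)\pl{\bt}$. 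Under these equivalences, $\O_\tL$ corresponds to the tensor product of the Knörrer images of $\O_{\{y=0\}}$ in each hyperbolic factor with $\O_X$, identifying $F$ with the identity.

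The hard part will be gluing the local equivalences coherently: one must verify that the absolute Knörrer equivalence $\MF(\AA^2, xy) \isom \Perf k\pl{\bt}$ carries $\O_{\{y=0\}}$ to the generator $k$ canonically, so that the local identifications patch under different choices of Lagrangian splittings, and that iterated Thom-Sebastiani is natural in the trivialization of $\L$. As an alternative bypassing descent, one can directly compute $\RHom^{\otimes k\pl{\bt}}_{\MF(\tQ, q)}(\O_\tL, \O_\tL)$ via Proposition~\ref{prop:s1-hom} as the Tate cohomology of $\End_{\O_\tQ}(M)$, where $M = (\O_\tQ[\xi_1, \ldots, \xi_n], d_M = \sum y_i \partial_{\xi_i}, B_\tQ = \sum x_i \xi_i)$ is the $\A$-module representing $\O_\tL$ (one checks $B_\tQ^2 = 0$ and $[d_M, B_\tQ] = q$); a Koszul-type cancellation (after inverting $\bt$, both the $x_i$ and the $y_i$ become $D$-exact via contractions with $\eta_i = \partial_{\xi_i}$ and $\xi_i$ respectively) collapses this Clifford complex to $\O_X\pl{\bt}$, and essential surjectivity follows from the fact that $\MF(\tQ, q) = \MF_X(\tQ, q)$ (Proposition~\ref{prop:orlov-cpltn}) is generated by pushforwards from $X$ together with a Koszul resolution of $\O_X$ on $\tL$ in terms of $\O_\tL \otimes \Lambda^\bullet \L^\dual$.
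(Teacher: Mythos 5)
Your approach to part~(i) is genuinely different from the paper's. You propose base-changing the absolute Thom--Sebastiani (Theorem~\ref{thm:TS}) along the diagonal $\Delta\colon X \to X \times X$, whereas the paper simply re-runs the bar-resolution proof of Theorem~\ref{thm:TS} relative to $X$, replacing the appeal to Proposition~\ref{prop:fmk-coh-surj} by Proposition~\ref{prop:ext-smth-fmk}(ii). The problem with your route is the key base-change step: you need to identify
\[
\PreMF_{Z\times X}(Y\times\tQ, f\boxplus q) \otimes_{\Perf(X\times X)\ps{\bt}} \Perf(X)\ps{\bt} \; \stackrel{?}{\isom} \; \PreMF_Z(Y\times_X\tQ, f\boxplus q),
\]
and you cite Proposition~\ref{prop:fmk-coh-surj} for this. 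That proposition is the case $S = \Spec k$; it says nothing about tensoring $\DCoh$ (with support conditions) against $\Perf(X)$ over $\Perf(X\times X)$. The relevant statement in the paper, Proposition~\ref{prop:ext-smth-fmk}, covers tensor products over a regular base $S$ only when one of the two factors is \emph{smooth} over $S$ (part~(i)), or when the support on one factor is smooth over $S$ (part~(ii)); the diagonal $\Delta\colon X \to X\times X$ is a regular closed immersion, not a smooth morphism, so neither applies to your base change. This is exactly the constraint the paper is negotiating: by running the proof of Theorem~\ref{thm:TS} over $X$ with Proposition~\ref{prop:ext-smth-fmk}(ii), the paper only needs the support $Z_Y = X \subset \tQ_0$ (the zero section) to be smooth \emph{over $X$}, which is tautologically true --- and this is why the statement of Theorem~\ref{thm:rel-knorrer}(i) only allows the $\tQ$-factor support $\PreMF_X(\tQ,q)$ rather than an arbitrary one. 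As written, your proof of (i) is missing a lemma, and the missing lemma is not a formal consequence of anything stated in the paper.

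For part~(ii), your second alternative (direct computation of $\RHom^{\otimes k\pl{\bt}}(\O_\tL,\O_\tL)$ plus a generation argument) is essentially the paper's proof. The paper reduces to $X$ affine by \'etale descent (Proposition~\ref{prop:mf-sheaf}), shows $\O_\tL$ generates via Proposition~\ref{prop:orlov-cpltn} applied with support $\tL \supset \crit(\tQ_0)$, then Lemma~\ref{lem:coh-red} to reduce to $\DCoh(\tL) = \Perf(\tL)$, then Hopkins--Neeman--Thomason on the affine $\tL$; it computes the endomorphisms via a Koszul--Tate resolution of $\O_\tL$ over $\O_{\tQ_0}$ and observes the same Koszul collapse after inverting $\bt$ that you describe. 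The only cosmetic difference is your choice of $X$ rather than $\tL$ as the intermediary support set and correspondingly needing the Koszul resolution of $\O_X$ by $\O_\tL \otimes \Lambda^\bullet\L^\dual$ to get generation; the paper's choice of $\tL$ is a bit cleaner because $\DCoh(\tL) = \Perf(\tL)$ is immediate. Your first alternative (\'etale-locally trivialize $\L$, iterate absolute Kn\"orrer for rank-two hyperbolic planes) does not obviously glue: as you yourself note, the local Kn\"orrer equivalences depend on a choice of Lagrangian splitting and the implied naturality is not established anywhere in the paper. The paper's route --- define a single global functor $\F \mapsto \F \otimes \O_\tL$ and only use descent to check it is a local equivalence --- neatly bypasses this coherence issue, and that is exactly why the paper formulates the comparison as tensoring by the global object $\O_\tL$ rather than via a local normal form.
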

\begin{proof} \mbox{}
  \begin{enumerate}
    \item The first equivalence follows from the proof of \autoref{thm:TS}, replacing the reference to \autoref{prop:fmk-coh-surj} with \autoref{prop:ext-smth-fmk} (applied to the second factor, since $X$ is certainly always smooth over $X$).  The second equivalence follows from the first.
    \item It suffices to prove the Ind-completed version, i.e., that $\QC(X)\ps{\bt} \to \MF^\kinfty(\tQ,q)$ is an equivalence.  Both sides are \'etale sheaves (\autoref{prop:mf-sheaf}) and the functor is evidently local, so that the claim is local.  We are thus free to assume that $X = \Spec R$.  It now suffices to verify the following two claims:\\
     {\noindent}{\bf Claim 1:} $\O_{\tL}$ generates $\MF(\tQ,q)$ (recall, $X$ is affine).\\
     Note that $\tL \supset X = \crit(\tQ_0)$, so the inclusion $\MF_{\tL}(\tQ,q) \to \MF(\tQ,q)$ is an equivalence by \autoref{prop:orlov-cpltn}.  It thus suffices to show that $\O_{\tL}$ generates $\DCoh_{\tL}(\tQ_0)$.  By \autoref{lem:coh-red}, $\DCoh_{\tL}(\tQ_0)$ is generated by the image of $i_*\colon \DCoh(\tL) \to \DCoh_{\tL}(\tQ_0)$ so that it suffices to show that $\O_{\tL}$ generates $\DCoh(\tL) = \Perf(\tL)$.  Since $X$ was assumed affine, so is $\tL$ and the claim follows by the Hopkins-Neeman-Thomason Theorem.

    \smallskip 
     
    {\noindent}{\bf Claim 2:} The natural map $\O_X\ps{\bt} \to \RHom^{\otimes}_{\tQ_0}\left(\O_{\tL},\O_{\tL}\right)$ becomes an equivalence after $- \otimes_{k\ps{\bt}} k\pl{\bt}$.\\
     The claim is local on $X$, so that we may assume that
     \begin{itemize}
       \item There are trivializations $\L \isom \oplus_{i=1}^{r} \O_X \cdot y_i$ and $\L^\dual \isom \oplus_{i=1}^{r} \O_X \cdot x_i$.
       \item $(\sQ,Q)$ is not just metabolic, but hyperbolic (see e.g., Bass' work on quadratic forms over rings): i.e., there exists an isomorphism $(\sQ,Q) \isom H(\L) \eqdef (\L \oplus \L^{\dual}, Q_H)$ where $Q_H$ is just the natural duality pairing pairing. 
     \end{itemize}
     In terms of the above local identifications:
     \[ \O_{\tL} = \O_X[x_1,\ldots,x_r], \quad \O_{\tQ_0} = \O_X\left[\begin{gathered}x_1,\ldots,x_r,\\y_1,\ldots,y_r\end{gathered}\right]/q, \text{ and} \quad q = \sum_{i=1}^{r} x_i y_i \]
     Writing $\O_{\tQ_0} \sim \left(\Sym_{\O_X} \sQ^\dual\right)[\epsilon]/d\epsilon = q$, we are led to the following Koszul-Tate resolution of $\O_{\tL}$ over $\O_{\tQ_0}$:
     \begin{align*} \O_{\tL} &\sim \Kos_{\O_{\tQ_0}}\left(\L^\dual \otimes \O_{\tQ_0} \stackrel{y_i}\longrightarrow \O_{\tQ_0}\right)\left[\frac{u^k}{k!}\right]/\left\{du=-e\right\} \\ &\sim \underbrace{\O_{\tQ_0}[\delta_1,\ldots,\delta_r,u^k/k!]}_{\deg \delta_i = +1, \deg u = +2}/\left\{\begin{gathered} \delta_i^2 = 0\\ d\delta_i = y_i \\ du = -\sum x_i \delta_i \end{gathered}\right\} \end{align*}
       where $e \in \Kos_{\O_{\tQ}}(\L^\dual \otimes \O_{\tQ} \to \O_{\tQ})$ satisfies $d(e) = q$ (in local coordinates, it may be given by the formula above).  The natural $\O_X\ps{\bt}$ action on this resolution admits the following description: $\bt$ acts by $\bt = d/du$, while $\O_X$ acts by multiplication.  It remains to use the resolution to compute $\RHom^{\otimes}_{\tQ_0}(\O_{\tL},\O_{\tL})$ as $\O_X\ps{\bt}$-module, and show that after inverting $\bt$ it is a free module on the identity morphism.  Dualizing the differentials, we readily compute:
       \[ \RHom^{\otimes}_{\tQ_0}(\O_{\tL},\O_{\tL}) = \underbrace{\O_{\tL}\ps{\bt}\left[\gamma_1,\ldots,\gamma_r\right]}_{\deg \gamma_i = -1, \deg \bt = -2}/\left\{\begin{gathered} \gamma_i^2 = 0 \\ d\gamma_i = - x_i \bt\end{gathered}\right\} \] 
	 where $1$ is the identity map.  Note that the ``$y_i$''~Koszul differentials have gone to zero, and we are left only with the differentials in the ``$u$-direction.'' Moreover these remaining differentials are all part of potentially truncated ``$-x_i$''~Koszul complexes.  Upon inverting $\bt$, the truncation disappears and we obtain a splitting into shifts of a Koszul complexes resolving $\O_X$:
	 \begin{align*} \RHom^{\otimes}_{\tQ_0}(\O_{\tL},\O_{\tL}) \otimes_{k\ps{s}} k\pl{\bt} &= \O_{\tL}\pl{\bt}\left[\gamma_1,\ldots,\gamma_r\right]/\left\{\begin{gathered} \gamma_i^2 = 0 \\ d\gamma_i = - x_i t\end{gathered}\right\}\\
	   &= \bigoplus_{i \in \ZZ} t^i \Kos_{\O_{\tL}}\left( t^{-1} \L \otimes \O_{\tL} \stackrel{-x_i}\longrightarrow \O_{\tL}  \right) \\
	   &\isom \bigoplus_{i \in \ZZ} t^i \O_X = \O_X\pl{\bt}\qedhere
	 \end{align*}
  \end{enumerate}
\end{proof}

\subsection{Witt group and ``derived Azumaya algebras''}
\begin{na} Suppose $(\sQ_1, Q_1)$, $(\sQ_2, Q_2)$ are quadratic bundles over $X$, with associated LG pairs $(\tQ_1, q_1)$, $(\tQ_2, q_2)$.  Form the ``orthogonal sum'' $(\sQ_1 \oplus \sQ_2, Q_1 \perp Q_2)$; its associated LG pair will be $(\tQ_1 \times_X \tQ_2, q_1 \boxplus q_2)$.

Define the \demph{Witt semigroup} $W^s(X)$ of $X$ to be the semi-group of (isomorphism classes of) quadratic bundles over $X$, equipped with orthogonal sum.  Define the \demph{Grothendieck-Witt group} $GW(X)$ to be the Grothendieck group of the Witt semigroup.  Define the \demph{Witt group} $W(X)$ to be the quotient of $GW(X)$ by the subgroup generated by metabolic quadratic bundles.  

Any element of $GW(X)$ may be written in the form $\sQ_1 - \sQ_2$.  Letting $\ol{\sQ}_2$ denote $\sQ_2$ equipped with the negative quadratic form, we may rewrite 
\[ \sQ_1 - \sQ_2 = \left(\sQ_1 \perp \ol{\sQ}_2\right) - \left(\sQ_2 \perp \ol{\sQ}_2\right) \] where now $\sQ_2 \perp \ol{\sQ}_2$ is \emph{metabolic} (with Lagrangian subspace $\L = \Delta_{\sQ_2}$ the diagonal).  In particular, $W(X)$ is the quotient semigroup of $W^s(X)$ by the metabolic elements.
\end{na}

Thus \autoref{thm:rel-knorrer} implies
\begin{corollary} The assignment 
  \[ (\sQ, Q) \longrightarrow \MF(\tQ, q) \]
  takes orthogonal sum of quadratic bundles to tensor product of $\infty$-categories over $\Perf(X)\pl{\bt}$.  It takes isomorphisms to equivalences.  It takes metabolic bundles to the tensor unit (i.e., $\MF(X,0) = \Perf(X)\pl{\bt}$).  Therefore, it descends to a group homomorphism
  \[ (W(X), \perp) \longrightarrow \left\{\begin{gathered} \text{Equivalence classes of invertible}\\\text{$\Perf(X)\pl{\bt}$-linear $\infty$-categories}\end{gathered}, -\otimes_{X\pl{\bt}}-\right\} \]
\end{corollary}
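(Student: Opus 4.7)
The plan is to verify the three bulleted properties in turn and then extract the group homomorphism by universal property, with all of the real content imported from \autoref{thm:rel-knorrer}.

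For the first bullet (orthogonal sum $\leadsto$ tensor product), I would apply \autoref{thm:rel-knorrer}(i) with $(Y,f) = (\tQ_1, q_1)$ viewed as a relative LG pair over $X$, and the quadratic bundle $(\sQ_2, Q_2)$; note that $\tQ_1 \times_X \tQ_2$ is the total space of $\sQ_1 \perp \sQ_2$ and that $q_1 \boxplus q_2$ is the associated quadratic form. This yields
\[ \MF_Z(\tQ_1, q_1) \otimes_{X\pl{\bt}} \MF(\tQ_2, q_2) \stackrel{\sim}\longrightarrow \MF_{Z \times_X X}(\tQ_1 \times_X \tQ_2, q_1 \boxplus q_2). \]
Taking $Z = X \subset \tQ_1$ (which lies in $q_1^{-1}(0)$ by \autoref{lem:quad-crit}) and noting that the critical locus of an orthogonal sum of non-degenerate quadratic forms is again $X$, one can remove the support conditions on both sides via \autoref{prop:orlov-cpltn}. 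Compatibility with isomorphisms is immediate from the functoriality of the $\MF$ construction, and the third bullet (metabolic $\leadsto$ tensor unit) is literally \autoref{thm:rel-knorrer}(ii).

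To descend to the Witt group, I first verify that each $\MF(\tQ, q)$ is an \emph{invertible} $\Perf(X)\pl{\bt}$-linear $\infty$-category. Let $\ol{\sQ}$ denote $\sQ$ equipped with the negated form $-Q$, so $\ol{\tQ} = \tQ$ as an $X$-scheme with function $-q$. The orthogonal sum $\sQ \perp \ol{\sQ}$ is metabolic, with Lagrangian subbundle the diagonal $\Delta_{\sQ} \subset \sQ \oplus \ol{\sQ}$. Combining the first and third bullets:
\[ \MF(\tQ, q) \otimes_{X\pl{\bt}} \MF(\ol{\tQ}, -q) \isom \MF\bigl(\tQ \times_X \ol{\tQ},\, q \boxplus -q\bigr) \isom \Perf(X)\pl{\bt}, \]
so $\MF(\ol{\tQ}, -q)$ is a tensor inverse of $\MF(\tQ, q)$.

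Finally, isomorphism classes of invertible $\Perf(X)\pl{\bt}$-linear $\infty$-categories form a group under $\otimes_{X\pl{\bt}}$. By the first two bullets, $(\sQ, Q) \mapsto [\MF(\tQ, q)]$ defines a monoid homomorphism from $W^s(X)$ into this group; by the third bullet it sends metabolic classes to the identity, hence factors through the quotient monoid $W(X) = W^s(X)/(\text{metabolic})$ and defines the asserted group homomorphism. There is no real ``main obstacle'' at this stage: the hard inputs—Thom-Sebastiani in the relative form (part (i) of \autoref{thm:rel-knorrer}) and the explicit metabolic computation (part (ii))—have already been absorbed, and what remains is bookkeeping using Orlov's \autoref{prop:orlov-cpltn} to match support conditions with those dictated by the critical loci.
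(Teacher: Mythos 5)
Your proof is correct and follows the same route the paper intends: both deduce the corollary directly from \autoref{thm:rel-knorrer}, using part~(i) for monoidality, part~(ii) for metabolics, and \autoref{prop:orlov-cpltn} together with \autoref{lem:quad-crit} to match up support conditions. Your extra paragraph verifying invertibility via $\sQ\perp\ol{\sQ}$ being metabolic with Lagrangian $\Delta_\sQ$ is exactly the observation the paper makes in the paragraph immediately preceding the corollary, so this is the same argument made explicit rather than a different one.
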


\begin{remark} In the statement of the previous Corollary, ``invertible'' means in the sense of invertible object for the tensor product $-\otimes_{X\pl{\bt}}-$.  The right hand side is thus a $2$-periodic version a ``derived Azumaya algebra'' of To\"en \cite{Toen-Azumaya}.
\end{remark}

\subsection{Relation to Clifford bundles}
\begin{na}
  At this point (if not earlier), the conscientious reader should object: There's a more down-to-Earth construction of (usual) Azumaya algebras out of elements in the Witt group, by taking the bundle of Clifford algebras associated to $\sQ$.  The following Theorem explains this.  Since it's proof is independent of the above, we could presumably have proven part (ii) of \autoref{thm:rel-knorrer} in the world of Clifford algebras.\footnote{Though I'm not aware of the desired $\ZZ/2$-graded Morita equivalence appearing in the literature in the metabolic case.  If $X$ is affine, then any metabolic bundle is hyperbolic and the Morita equivalence is well-known and visibly $\ZZ/2$-graded.  \cite{Knus-Ojanguren} shows that the Brauer class does vanish in the metabolic case, but that it is is not necessarily the case that $\Cliff_{\O_X}(\sQ) \isom \End_{\O_X}(\bigwedge^* \L)$ as one might naively guess.}
\end{na}

\begin{na} Suppose $(\sQ,Q)$ is a quadratic bundle on a scheme $X$.  Then, $\Cliff_{\O_X}(\sQ)$ is the following sheaf of $\ZZ/2$-graded algebras
  \[ \Cliff_{\O_X}(\sQ)_{\ZZ/2} \eqdef \O_X \langle \sQ \rangle / \left\{v^2 = - Q(v,v)\right\} \] where $\sQ$ is in odd degree, and $v$ denotes a section of $\sQ$.
\end{na}
\begin{na} In this paper, it has been our convention to replace $\ZZ/2$-graded objects with $\ZZ$-graded objects over $k\pl{\bt}$, $\deg \bt = -2$.  Under this equivalence, the above sheaf of algebras goes to
  \[ \Cliff_{\O_X}(\sQ) \eqdef \O_X\pl{\bt} \langle \sQ \rangle / \left\{v^2 = - Q(v,v) \bt\right\} \] where $\sQ$ is in degree $-1$, and $\bt$ is in degree $-2$.  There is also a $k\ps{\bt}$-linear version:
  \[ \PreCliff_{\O_X}(\sQ) \eqdef \O_X\ps{\bt} \langle \sQ \rangle /\left\{v^2 = - Q(v,v) \bt\right\} \]
\end{na}

\begin{theorem}[Relative Kapustin-Li]\label{thm:mf-cliff} Suppose $X$ is a smooth scheme, $(\sQ,Q)$ a quadratic bundle on $X$, and $(\tQ, q)$ the associated LG pair.  Then, the structure sheaf $\O_X$ induces a natural equivalence of $k\ps{\bt}$-linear dg-categories
  \[ \PreMF^\kinfty_X(\tQ, q) \isom \PreCliff_{\O_X}\!(\sQ)\mod(\QC(X)) \] and of $k\pl{\bt}$-linear dg-categories
  \[ \MF^\kinfty(\tQ, q) \isom \Cliff_{\O_X}\!(\sQ)\mod(\QC(X)) = \Cliff_{\O_X}\!(\sQ)_{\ZZ/2}\mod_{dg\ZZ/2}(\QC(X)) \]
\end{theorem}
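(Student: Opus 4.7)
The plan is to realize the pushforward of $\O_X$ along the zero section $i_0\colon X \hookrightarrow \tQ_0$ as a compact $k\ps{\bt}$-linear generator of $\PreMF^\kinfty_X(\tQ,q)$, identify its $k\ps{\bt}$-enriched endomorphism algebra with $\PreCliff_{\O_X}(\sQ)^{\op}$ as a sheaf of $\O_X\ps{\bt}$-algebras, and then invoke Morita to obtain the first equivalence. The $\MF$ statement follows by tensoring with $k\pl{\bt}$ over $k\ps{\bt}$, using $\MF^\kinfty(\tQ,q) = \MF^\kinfty_X(\tQ,q)$ (Prop.~\ref{prop:orlov-cpltn}, valid because $\crit(q) = X$ by Lemma~\ref{lem:quad-crit}) and the identity $\Cliff_{\O_X}(\sQ) = \PreCliff_{\O_X}(\sQ) \otimes_{k\ps{\bt}} k\pl{\bt}$.

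Both sides are sheaves on $X_{\et}$ of $k\ps{\bt}$-linear $\infty$-categories (the left-hand side by \autoref{prop:mf-sheaf} as cited in the text, the right-hand side tautologically), so the statement is \'etale-local. After passing to an \'etale cover of $X$ over which $\sQ$ trivializes, Lemma~\ref{lem:coh-red} applied to $X \hookrightarrow \tQ_0$ shows that the image of $(i_0)_*\colon \DCoh(X) \to \DCoh_X(\tQ_0)$ triangulated-generates; in particular $(i_0)_*\O_X$ is a compact $k$-linear, hence $k\ps{\bt}$-linear, generator. A $k\ps{\bt}$-linear refinement of Lemma~\ref{lem:mfplus-monadic}, keeping track of the natural $\O_X$-linear structure on $\RHom^{\otimes k\ps{\bt}}$, then yields
\[ \PreMF^\kinfty_X(\tQ,q) \;\simeq\; \A^{\op}\mod(\QC(X)), \qquad \A = \RHom^{\otimes k\ps{\bt}}_{\PreMF(\tQ,q)}\!\bigl((i_0)_*\O_X,\,(i_0)_*\O_X\bigr), \]
with $\A$ naturally an $\O_X\ps{\bt}$-algebra.

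It remains to identify $\A \cong \PreCliff_{\O_X}(\sQ)^{\op}$. By Prop.~\ref{prop:s1-hom}, $\A \simeq \RHom_{\O_\tQ}(i_*\O_X, i_*\O_X)^{S^1}$ for an induced $S^1$-action; HKR for the closed immersion $i\colon X \hookrightarrow \tQ$ together with Lemma~\ref{lem:quad-crit}(ii) identifies the underlying $k$-algebra as the exterior algebra $\bigwedge^\bullet \sQ$ (with $\sQ$ in degree $-1$). The $S^1$-action on cohomology is trivial, so the non-commutative Clifford relations must be extracted at the chain level. Concretely, I would use the Koszul--Tate resolution $\widetilde M = \O_{\tQ_0}[\delta_i, u^k/k!]$ of $(i_0)_*\O_X$ (with $d\delta_i = x_i$ and $du = \epsilon - e$ for $e = \tfrac{1}{2}\sum_{i,j} Q_{ij} x_i \delta_j$, as in the proof of Theorem~\ref{thm:rel-knorrer}(ii)) and exhibit closed degree $-1$ operators
\[ \gamma'_i \;=\; \partial_{\delta_i} + \tfrac{1}{2}\sum_j Q_{ij}\,\widetilde\delta_j\,\bt \;\in\; \End_{\O_{\tQ_0}}(\widetilde M), \]
where $\widetilde\delta_j$ is multiplication by $\delta_j$ and $\bt = \partial_u$. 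A direct calculation using the Clifford-type identity $\partial_{\delta_i}\widetilde\delta_j + \widetilde\delta_j \partial_{\delta_i} = \delta_{ij}$ and $[\partial_u, u] = 1$ yields the relation $\gamma'_i\gamma'_j + \gamma'_j\gamma'_i = Q_{ij}\bt$ (up to a universal normalization fixed by convention), producing the Clifford presentation of $\A$. A cleaner alternative, which I would also write out, is a Thom--Sebastiani reduction via Theorem~\ref{thm:rel-knorrer}(i): after \'etale-diagonalizing $\sQ = \bigoplus_i \sQ_i$, the graded tensor decomposition $\PreCliff(\sQ) = \bigotimes_i \PreCliff(\sQ_i)$ matches the relative $k\ps{\bt}$-linear tensor decomposition of $\PreMF^\kinfty_X$, reducing to the rank-one computation.

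The main obstacle is exactly this chain-level Clifford identification: the Clifford non-commutativity is invisible on cohomology (where one only sees the commutative exterior algebra), and the $\bt$-deformation producing the relation $v^2 = -Q(v,v)\bt$ lives entirely in the Koszul--Tate correction term needed to lift the naive $k$-linear contractions $\partial_{\delta_i}$ to closed $k\ps{\bt}$-linear generators. The Thom--Sebastiani reduction makes this manageable by restricting the explicit computation to the rank-one case $(\O_X, Q = 1)$.
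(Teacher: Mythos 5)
Your proposal follows essentially the same route as the paper's proof: reduce to the affine case by \'etale descent of both sides, exhibit $(i_0)_*\O_X$ as a compact $k\ps{\bt}$-linear generator of $\PreMF_X^\kinfty(\tQ,q)$ using \autoref{lem:coh-red}, reduce by Morita theory to identifying its $k\ps{\bt}$-linear endomorphism algebra, write down the Koszul--Tate resolution, and produce explicit Clifford generators acting on it. The explicit operators you propose ($\gamma'_i = \partial_{\delta_i} + \tfrac12\sum_j Q_{ij}\tilde\delta_j\,\bt$) are, up to sign and normalization conventions, the same as the operators the paper constructs in Step 4 from $i_v$ and $v\cdot u = i_v\Hess(q)$, and the factor $\tfrac12$ is indeed forced by requiring $[d,\gamma'_i]=0$ once one uses the more careful $du = \epsilon - e$; the paper's writing $du = dq/2$ without the $\epsilon$-correction term is loose on exactly this point.

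The genuine departure from the paper is your secondary suggestion to reduce to the rank-one case via \autoref{thm:rel-knorrer}(i) after \'etale-diagonalizing $\sQ$. The paper carries out the chain-level computation for general rank all at once; the Thom--Sebastiani reduction instead offloads the multiplicativity to the already-proved $\Perf(X)\ps{\bt}$-linear tensor formula and to the standard identity $\PreCliff_{\O_X}(\sQ_1\oplus\sQ_2) \isom \PreCliff_{\O_X}(\sQ_1)\,\hat\otimes_{\O_X\ps{\bt}}\PreCliff_{\O_X}(\sQ_2)$ (graded tensor product). What this buys you is that all the Koszul--Tate bookkeeping is confined to the rank-one form $q = \tfrac12 a x^2$; what it costs is that you must verify the two tensor decompositions are matched \emph{by the proposed functor}, i.e.\ that $\RHom^{\otimes k\ps{\bt}}$ out of $\O_X$ is compatible with exterior product, which is an extra (but mild) step. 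Either route works; the paper's is more self-contained, yours localizes the nontrivial sign-checking. One small caveat: the $S^1$-action being ``trivial on cohomology'' is correct as you use it, but the deformation of the algebra structure visible at the chain level is precisely the content of the theorem, and you would need to actually carry out the check that the $\gamma'_i$ are closed (which, as above, fixes the $\tfrac12$) rather than only asserting it up to normalization.
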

\begin{remark} Before giving a complete proof of the Theorem, we should point that it is in essence a straightforward computation of a relative Koszul dual over $\O_X$ (with a little extra book-keeping for the $k\ps{\bt}$-action). In local coordinates, it is saying that the Koszul dual of the dg-algebra
  \[ \O_{\tQ_0} \sim \O_X[x_1,\ldots,x_n][\epsilon]/d\epsilon=q \]
  is $\PreCliff_{\O_X}(\sQ)$ viewed as a filtered algebra (depending on the differential above), having associated graded
  \[ \O_X[\delta_1,\ldots,\delta_n][t] \]
  \end{remark}
  \begin{proof} We first outline our plan of proof:  We first use descent to reduce to the affine case, and note that  (locally on $X$) $\PreMF_X(\tQ, q)$ (and $\MF(\tQ, q)$) are generated by $\O_X$.  It thus suffices to identify $\RHom^{\otimes}_{\tQ_0/X}(\O_X,\O_X)$ with $\PreCliff_{\O_X}(\sQ)$ and analogously for $\MF$.  In Step~2, we will explicitly construct a resolution on which we can see the $k\ps{\bt}$-action and use this to compute the underlying complex of the endomorphisms in Step~3.  Finally, to identify the algebra structure we explicitly describe the Clifford action on this resolution in Step~4.
\smallskip

  {\noindent}{\bf Step 1: Identifying the generator.}\\
  Note that both sides are \'etale sheaves on $X$ by \autoref{app:descent}.  In the following steps, we will identify $\RHom^{\otimes X}_{\PreMF(\tQ,Q)}(\O_X \O_X) \isom \PreCliff_{\O_X}(\sQ)$ as sheaves of algebras, i.e., objects in $\Alg(\QC(X))$; note that this implies the analogous identifies on $- \otimes_{k\ps{\bt}} k\pl{\bt}$.  Then, the functor in question will be $\RHom^{\otimes X}_{\PreMF^\kinfty(\tQ,Q)}(\O_X, -)$ (resp., $\RHom^{\otimes X}_{\MF^\kinfty}$), factored through $\RHom^{\otimes X}_{\PreMF^\kinfty(\tQ,Q)}(\O_X, -)$-modules (resp., $\MF^\kinfty$); in particular, the functor is local on $X$ by \autoref{lem:rhom}.  To complete the proof it then suffices to show that $\RHom^{\otimes X}_{\PreMF^\kinfty(\tQ,Q)}(\O_X, -)$ (resp. $\MF$) is an equivalence locally on $X$.  To do this, it is enough by Morita theory to note that locally on $X$ $\O_X$ generates $\PreMF_X(\tQ,Q)$ by \autoref{lem:coh-red}; and $\O_X$ generates $\MF(\tQ,Q)$ by the preceding, \autoref{lem:quad-crit}, and \autoref{prop:orlov-cpltn}.

\medskip

{\noindent}{\bf Step 2: Constructing the resolution.}\\  Let $j\colon \tQ_0 \to \tQ$ be the inclusion.  For $\F \in \DCoh(\tQ_0)$, recall the functorial resolution of \autoref{ex:resolution}
  \[ \F \stackrel{\sim}{\leftarrow} \Tot^{\oplus} \left\{ j^* j_* \F \stackrel{B}{\leftarrow} \Sigma j^* j_* \F \stackrel{B}{\leftarrow} \Sigma^2 j^* j_* \F \leftarrow \cdots \right\} \]
  
  We begin with the Koszul resolution of $j_*(i_* \O_X)$ over $\O_{\tQ}$, which we will think of in two ways:
  \[ j_*(i_* \O_X)\stackrel{\sim}{\leftarrow} \Kos_{\O_{\tQ}}\left(m\colon \sQ^\dual \otimes_{\O_X} \O_{\tQ} \to \O_{\tQ} \right) = \left(\Omega^\bullet_{\tQ/X}, i_E = \sum_i x_i \partial_{x_i} \right)\]
  where $m$ is the ``multiplication'' map (recall, $\O_{\tQ} = \Sym_{\O_X} \sQ^\dual$).  Here we have identified $\sQ^\dual \otimes_{\O_X} \O_{\tQ} = \Omega^1_{\tQ/X}$, and so have identified the Koszul complex with the relative differential forms.  The multiplication map gives rise to a differential on this, which can be described as contraction $i_E$ with an ``Euler vector field'' $E = \sum_i x_i \frac{\partial}{\partial x_i}$.  Pulling back to $\tQ_0$ we obtain
  \[ j^* j_*(i_* \O_X)\stackrel{\sim}{\leftarrow} \Kos_{\O_{\tQ_0}}\left(m\colon \sQ^\dual \otimes_{\O_X} \O_{\tQ_0} \to \O_{\tQ_0} \right) =\left(j^* \Omega^\bullet_{\tQ/X}, i_E \right) \]

It remains to compute the map $B\colon \Sigma j^* j_* \F \to j^* j_* \F$ in these terms:  It is (the restriction to $\O_{\tQ_0}$ of) left-multiplication by the section
\[ \frac{dq}{2}\in \Gamma(\tQ, \Omega^1_{\tQ/X})  \qquad \text{or} \qquad \frac{1}{2} \sum_i x_i \otimes \frac{dq}{dx_i} \in \Gamma(\tQ, \sQ^\dual \otimes_{\O_X} \O_{\tQ}) \] This satisfies $i_E(B x) = q \cdot x - B i_E(x)$ since $q$ is homogeneous of degree $2$ so that $i_E(dq/2)=q$.
  
Putting this together, we obtain the following resolution for $i_* \O_X$ as $\O_{\tQ_0}$-module.  For convenience, we follow the Tate convention of writing Koszul-type complexes in terms of graded-commutative (divided-power) algebras:
\[ i_* \O_X \stackrel{\sim}\leftarrow (j^*) \left(\O_{\tQ_0}\left[\underbrace{ \Sigma(\sQ^\dual \otimes_{\O_X} \O_{\tQ_0}) }_{\deg=+1}\right]\left[\underbrace{u^k/k!}_{\deg u=+2}\right], \begin{gathered} d(x \otimes a) = x a \\ d(u) = dq/2 \end{gathered}\right) = \left[\left(j^* \Omega^\bullet_{\tQ/X}\right)[u^k/k!], i_E + \frac{dq}{2} \frac{\partial}{\partial u} \right]\]

  \medskip

  {\noindent}{\bf Step 3: Identifying the underlying complex.}\\ Using the previous complex, we readily compute that (as object in $\QC(X)$, ignoring the algebra structure)
  \begin{align*}
    \HHom^{\otimes_X}_{\tQ_0}(i_* \O_X, i_* \O_X) &= \HHom^{\otimes_X}_{\tQ_0}\left(\left[j^* \Omega^\bullet_{\tQ/X}[u^k/k!], m + (dq/2) \cdot \partial/\partial u\right], i_* \O_X\right) \\
&= \HHom_{X}\left(\left[i^* j^* \Omega^\bullet_{\tQ/X}[u^m/m!], 0\right], \O_X\right) \\
&= \O_X\ps{\bt}\left[\underbrace{\sQ^\dual[-1]}_{\deg = -1}\right] \end{align*}
    Indeed, $\RHom^{\otimes}$ takes the hocolimit (i.e., $\Tot^{\oplus}$) in the first variable to a holim (i.e., $\Tot^{\Pi}$), and all the differentials vanish.

\medskip

  {\noindent}{\bf Step 4: Producing the algebra map.}\\ We wish to produce a map of (sheaves of dg) algebras
  \[\phi\colon \PreCliff_{\O_X}(\sQ) \longrightarrow \RHom^{\otimes}_{\tQ_0/X}(i_* \O_X, i_* \O_X) \] 
  
  To construct the map, we make $\PreCliff_{\O_X}(\sQ)$ act on our explicit resolution: $\O_X$ acts via $\O_X \to \O_{\tQ_0}$; $\bt$ acts by $d/du$ (i.e., shifting the resolution in the $u$).  It remains to describe the action for $v \in \sQ = T_{\tQ/X}$, and show that it satisfies the Clifford relations and (anti-)commutes with the differentials.  We define the action of $v \in \Gamma(T_{\tQ/X})$ by contracting $i_v$ where we imagine $u$ as standing in for the Hessian tensor; explicitly:
  \begin{itemize}
    \item On the Koszul piece, $\Omega^{\bullet}_{\tQ/X}$, $v$ acts by contraction $i_v$.
    \item On $u$, $v$ acts by taking it to the contraction of the Hessian of $q$ (i.e., $Q$) by $v$:
      \[ v \cdot u = i_{v} \Hess(q) \left(= i_v \left( \sum_{i,j} dx_i dx_j Q(v_i, v_j) \right) = \sum_{i} dx_i \frac{dq}{dx_i dv}\right) \]
    \item We extend by requiring the action to be by derivations
      \[ v \cdot \left(\omega \frac{u^k}{k!}\right) = i_v(\omega) \cdot \frac{u^k}{k!} + (-1)^{|\omega|} \left(\omega \wedge (i_v \Hess(q))\right) \frac{u^{k-1}}{(k-1)!} \] and linearity.
  \end{itemize}

By explicit computation, this satisfies the Clifford relations
\begin{align*}
  v\cdot\left(v \cdot (\omega \frac{u^k}{k!})\right) &= v\cdot\left( i_v(\omega) \cdot \frac{u^k}{k!} + (-1)^{|\omega|} \omega \wedge (i_v \Hess(q)) \frac{u^{k-1}}{(k-1)!}\right) \\
  &= (-1)^{|\omega|-1} i_v(\omega) \wedge (i_v \Hess(q)) \frac{u^{k-1}}{(k-1)!} + (-1)^{|\omega|} i_v(\omega) \wedge (i_v \Hess(q)) \frac{u^{k-1}}{(k-1)!} \\
  & + (-1)^{|\omega| + |\omega|-1} \omega \wedge i_v(i_v \Hess(q)) \frac{u^{k-1}}{(k-1)!}\\
  &= -\omega \wedge Q(v,v) \frac{u^{k-1}}{(k-1)!} = (-Q(v,v) \bt) \cdot \left(\omega \frac{u^k}{k!}\right)
\end{align*}
and (anti-)commutes with the differential
\begin{align*}
  v \cdot d\left(\omega \frac{u^k}{k!}\right) &= v \cdot \left( i_{E}(\omega) \frac{u^k}{k!} + \left(\frac{dq}{2} \wedge \omega\right) \frac{u^{k-1}}{(k-1)!} \right) \\
  &= i_v(i_E(\omega)) \frac{u^k}{k!} + (-1)^{|\omega|-1} i_E(\omega) \wedge (i_v \Hess(q)) \frac{u^{k-1}}{(k-1)!} \\
  &+ i_v(\frac{dq}{2} \wedge\omega) \frac{u^{k-1}}{(k-1)!} + (-1)^{|\omega|+1} (dq/2 \wedge \omega \wedge (i_v \Hess(q))) \frac{u^{k-2}}{(k-2)!}
\intertext{while}
  d\left(v \cdot (\omega \frac{u^k}{k!}) \right) &= d\left(i_v(\omega) \frac{u^k}{k!} + (-1)^{|\omega|} \omega \wedge (i_v \Hess(q)) \frac{u^{k-1}}{(k-1)!} \right) \\
  &= i_E(i_v(\omega)) \frac{u^k}{k!} + (-1)^{|\omega|} i_E(\omega \wedge (i_v \Hess(q))) \frac{u^{k-1}}{(k-1)!} \\
  &+ \frac{dq}{2} \wedge i_v(\omega) \frac{u^{k-1}}{(k-1)!} + (-1)^{|\omega|} \frac{dq}{2} \wedge \omega \wedge (i_v \Hess(q)) \frac{u^{k-2}}{(k-2)!}
\end{align*}

\medskip

  {\noindent}{\bf Step 5: Checking equivalence.}\\ We now verify that the algebra map $\phi$ of Step 4 induces an equivalence on underlying complexes, by using the description of the underlying complex given in Step 3.   
  
The first observation is that $\bt$ goes to $\bt$: More precisely, the isomorphism of Step 3 is in fact an isomorphism of $\O_X\ps{\bt}$-modules; in Step 3, $\bt^k$ was dual to $u^k/k!$, which is compatible with $\bt$ acting by $d/du$.  Thus, $\phi$ is a map of locally free $\O_X\ps{\bt}$-modules of the same rank, and it suffices to work locally and match up generators.  This is straightforward.
\end{proof}

\begin{remark}
  Sheaves of Clifford algebras and quadratic bundles also appear in Kuznetsov's homological projective duality (\cite{Kuznetsov}).  The relationship of those results to the previous Theorem can be loosely summarized as a relative version of the  \emph{LG/CY correspondence} (\cite{Orlov-LGCY}):  Let $\PP(\tQ_0) \subset \PP(\sQ^\dual)$ denote the bundle of projective quadrics associated to $(\sQ, Q)$.  Then, the LG/CY correspondence asserts (assume $\dim \sQ \geq 2$) the existence of a fully-faithful functor $\DSing^\gr\! \tQ_0  \hookrightarrow \DCoh \PP(\tQ_0)$.\footnote{In general, the picture is a little delicate due to the interaction of ``$\GG_m$-weight gradings'' with duality: The direction of the functor depends on some numerology.  Roughly, it is an equivalence if the projective zero-locus is relative Calabi-Yau, fully-faithful in the direction indicated if it is Fano, and fully-faithful the other way if it is of general type.} 
\end{remark}

\begin{na}
  Kuznetsov's $\mathfrak{B}$ is our $\PreCliff_{\O_X}\!(\sQ,Q)$ viewed as a weight graded, non differential by formality, algebra.  Kuznetsov's $\mathfrak{B}_0$ (the even part of $\mathfrak{B}$) is $\left(\Cliff_{\O_X}\!(\sQ,Q)\right)_0$ in our notation (the subscript denotes taking weight zero part), where this is regarded as an ungraded, non differential by formality, algebra.  Kuznetsov's constructs ($n \geq 2$) a semiorthogonal decomposition of $\DCoh \PP(\tQ_0)$ whose first term is $\DCoh \mathfrak{B}_0\mod(\QC(X))$, which may be identified with $\DSing^{\gr} \tQ_0$.
\end{na}

\part{Appendices}
\appendix

\section{Descent for \texorpdfstring{$\QCsh$}{QC^!} and \texorpdfstring{$\MF^\kinfty$}{MF}}\label{app:descent}
\subsection{Preliminaries}
We need the following standard local-to-global tool:
\begin{lemma}\label{lem:rhom} Suppose $\pi\colon U \to X$ is a flat map between \eqref{cond:star} derived stacks.
  \begin{enumerate} \item For $\F,\G \in \QC(X)$  there is a natural map
  \[ \pi^* \HHom^{\otimes X}_{\QC(X)}(\F, \G) \longrightarrow \HHom^{\otimes U}_{\QC(U)}(\pi^* \F, \pi^* \G) \in \QC(U) \]
  which is an equivalence provided that either
  \begin{itemize} 
    \item $\F \in \Perf(X)$ and $\G$ is arbitrary; or,
    \item $\F$ pseudo-coherent, and $\G$ is (locally) bounded above.
  \end{itemize}
\item For $\F,\G \in \Ind\DCoh(X)$ there is a natural map
  \[ \pi^* \HHom^{\otimes X}_{\Ind\DCoh(X)}(\F, \G) \longrightarrow \HHom^{\otimes U}_{\Ind\DCoh(U)}(\pi^* \F, \pi^* \G) \in \QC(U) \] which is an equivalence provided that $\F \in \DCoh(X)$ and $\G$ is arbitrary.
  \end{enumerate}
\end{lemma}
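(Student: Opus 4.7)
The two parts are closely parallel; I would treat (i) in detail and indicate the modifications for (ii).

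\textbf{Construction of the map.} For (i), the natural transformation is adjoint, under the $\otimes_{\O_U}$--$\HHom_{\QC(U)}$ adjunction, to the composite
\[ \pi^*\HHom^{\otimes X}_{\QC(X)}(\F,\G)\otimes_{\O_U}\pi^*\F \;\simeq\; \pi^*\bigl(\HHom^{\otimes X}_{\QC(X)}(\F,\G)\otimes_{\O_X}\F\bigr) \xrightarrow{\pi^*\mathrm{ev}} \pi^*\G, \]
where the first equivalence uses symmetric monoidality of $\pi^*$ and the second is $\pi^*$ applied to the counit of the adjunction on $X$. For (ii), the map is constructed analogously, with the evaluation taking place in $\Ind\DCoh(X)$: this makes sense because $\HHom^{\otimes X}_{\Ind\DCoh(X)}(\F,\G)\otimes_{\O_X}\F$ is again an object of $\Ind\DCoh(X)$, and because $\pi^*\colon\Ind\DCoh(X)\to\Ind\DCoh(U)$ is symmetric monoidal as a module functor over $\pi^*\colon\QC(X)\to\QC(U)$.

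\textbf{The perfect case and reduction of (i).} When $\F\in\Perf(X)$, both sides identify with $\pi^*(\F^\vee)\otimes_{\O_U}\pi^*\G\simeq(\pi^*\F)^\vee\otimes_{\O_U}\pi^*\G$, so the claim is tautological from the fact that $\pi^*$ preserves the dualizable object $\F$. For $\F$ merely pseudo-coherent the question is local on $X$ and $U$, so we may pass to an affine open where $\F$ is represented by a complex $P_\bullet$ of finite-rank projective $\O_X$-modules concentrated in homological degrees $\geq 0$ (following the paper's conventions). Writing $\F_n$ for the perfect stupid truncation of $P_\bullet$ obtained by keeping only degrees $\leq n$, the cofiber $\F/\F_n$ lies in $\QC_{\geq n+1}(X)$ (again in the paper's conventions). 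For $\G$ locally bounded, a direct computation with the complex-of-projectives model shows that the fiber of $\HHom_{\QC(X)}(\F,\G)\to\HHom_{\QC(X)}(\F_n,\G)$ becomes arbitrarily connective as $n\to\infty$, so the tower stabilizes on every bounded range of $t$-degrees to $\HHom_{\QC(X)}(\F,\G)$. Since $\pi$ is flat, $\pi^*$ preserves $t$-bounds and pseudo-coherence, and the stabilization holds identically on $U$ for $\pi^*\F_n\to\pi^*\F$ and $\pi^*\G$. Combined with the perfect case already handled for each $\F_n$, this proves (i).

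\textbf{Case (ii).} Write $\G\simeq ``\varinjlim_\alpha"\,\G_\alpha$ as a filtered colimit in $\Ind\DCoh(X)$ with $\G_\alpha\in\DCoh(X)$. Since $\F\in\DCoh(X)$ is compact in $\Ind\DCoh(X)$ and $T\otimes_{\O_X}\F\in\DCoh(X)$ for $T\in\Perf(X)$, the defining universal property of $\HHom^{\otimes X}_{\Ind\DCoh(X)}$ and the fully-faithful embedding $\DCoh(X)\hookrightarrow\QC(X)$ yield
\[ \HHom^{\otimes X}_{\Ind\DCoh(X)}(\F,\G) \;\simeq\; \varinjlim_\alpha \HHom^{\otimes X}_{\QC(X)}(\F,\G_\alpha). \]
Each $\G_\alpha$ is in $\DCoh(X)$, hence $t$-bounded, so part (i) applies termwise. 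The functor $\pi^*$ commutes with filtered colimits and, by flatness, preserves $\DCoh$; applying $\pi^*$ and using the analogous colimit formula on $U$ together with $\pi^*\G\simeq ``\varinjlim_\alpha"\,\pi^*\G_\alpha$ identifies the result with $\HHom^{\otimes U}_{\Ind\DCoh(U)}(\pi^*\F,\pi^*\G)$.

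\textbf{Main obstacle.} The delicate point is the stabilization statement in the pseudo-coherent step: one must verify that, in the pseudo-coherent but potentially unbounded setting, the map $\HHom_{\QC(X)}(\F,\G)\to\HHom_{\QC(X)}(\F_n,\G)$ becomes an equivalence in a range of $t$-degrees that grows with $n$, and that this range behaves well under $\pi^*$. This is precisely where the hypothesis that $\G$ be (locally) bounded above enters essentially; for unbounded $\G$ the towers need not stabilize and the natural map can fail to be an equivalence.
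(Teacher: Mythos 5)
Your proof is correct and tracks the paper's overall architecture: construct the comparison map via adjunction, dispatch the perfect case by monoidality of $\pi^*$, localize to the affine case, and use the boundedness of $\G$ to control the approximation of a pseudo-coherent $\F$ by perfect complexes; part (ii) is then reduced to part (i) via the filtered-colimit formula for $\HHom^{\otimes X}_{\Ind\DCoh(X)}(\F,-)$, exactly as in the paper. Where you genuinely diverge is the technical implementation of the pseudo-coherent step: the paper models $\F$ as a geometric realization $|P_\bullet|$ of finite free connective modules, identifies both sides with a cosimplicial totalization, and checks that the Bousfield--Kan spectral sequences commute with the flat base change $B\otimes_A(-)$; you instead filter $\F$ by perfect truncations $\F_n$ with highly connective cofiber $\F/\F_n\in\QC_{\geq n+1}$, and use the resulting coconnectivity estimate on $\HHom(\F/\F_n,\G)$, which the $t$-exact functor $\pi^*$ preserves. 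Both are valid, and yours is arguably the more elementary (a connectivity estimate rather than a spectral-sequence convergence check), while the paper's simplicial model dovetails directly with the Cartesian-section argument it uses to justify localizing. One caution on that last point: your phrase ``the question is local on $X$ and $U$'' is doing more work than it appears, since that very locality of $\HHom^{\otimes X}_{\QC(X)}(\F,\G)$ under flat pullback \emph{is} (a special case of) the lemma being proved; the argument must be ordered as in the paper --- prove the affine-to-affine case first, then use it to exhibit $[p\colon\Spec A\to X]\mapsto\RHom^{\otimes A}(p^*\F,p^*\G)$ as a Cartesian section computing $\HHom^{\otimes X}(\F,\G)$, and only then conclude the general case by faithfully flat descent for $\QC$.
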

\begin{proof}  \mbox{}
  \begin{enumerate} \item
      The map is adjoint to a morphism
  \[ \HHom^{\otimes X}_{\QC(X)}(\F,\G) \to \pi_*\HHom^{\otimes U}_{\QC(U)}(\pi^* \F, \pi^* \G) = \HHom^{\otimes X}_{\QC(U)}(\pi^* \F,\pi^* \G) \] characterized by the mapping property
  \[ \Map_{\QC(X)}(T \otimes \F, \G) \stackrel{\pi^*}\longrightarrow \Map_{\QC(U)}(\pi^* T \otimes \pi^* \F, \pi^* \G) \]

  If $\F$ is perfect, then
  \[ \HHom^{\otimes X}_{\QC(X)}(\F, \G) = \F^\dual \otimes \G \]
  so that the claim is immediate from $\pi^*$ being symmetric-monoidal.

  Next suppose that $\F$ is pseudo-coherent and $\G$ (locally) bounded above: We first reduce to the case of $U$ and $X$ affine.\footnote{This reduction, and thus (i), does not actually require the hypothesis \eqref{cond:star}.}  The affine case implies that \[ [p \colon \Spec A \to X] \mapsto \RHom^{\otimes A}(p^* \F, p^* \G) \]  is a Cartesian section, i.e., lies in $\lim_{\Aff^{\flat}/X} \QC(A) = \QC(X)$ where the last equality is by faithfully flat descent.  Since tensor product commutes with pullback, one readily checks that it satisfies the universal property characterizing $\RHom^{\otimes X}(\F, \G)$.  By faithfully flat descent, and since colimits of sheaves are preserved under fiber products, it suffices to prove that the map is an equivalence after further pullback along each fppf map $q\colon\Spec B \to U$ admitting a lift $\pi' \colon \Spec B \to \Spec A$ of $\pi$; that is, we must check that the natural map
  \[ q^* \pi^* \RHom^{\otimes X}(\F, \G) \longrightarrow q^* \RHom^{\otimes U}(\pi^* \F, \pi^* \G) \] is an equivalence.  But by the above (applied once to $X$, once to $Y$) we naturally identify both sides with $\RHom^{\otimes B}(\res{\F}{B}, \res{\G}{B})$.

We may thus suppose $X = \Spec A$ and $U = \Spec B$.  Since $X$ is affine, it is in particular quasi-compact so that (shifting if necessary) we may suppose that $\F$ is connective and that $\G$ is bounded above.  Furthermore, since $X$ is affine we may write $\F \isom \left| P_\bullet\right|$ as the geometric realization of a diagram of finite free connective $A$-modules, $P_k \isom A^{\oplus n_k}$. In this case, we may identify
  \[ \pi^* \HHom^{\otimes X}_{\QC(X)}(\F, \G) = B \otimes_A \Tot\left\{P_\bullet^\dual \otimes_A \G\right\} \]
  \[ \HHom^{\otimes U}_{\QC(U)}(\pi^* \F, \pi^* \G) = \Tot\left\{ B \otimes_A (P_\bullet^\dual \otimes_A \G)\right\} \]
  and it remains to verify that tensor in fact commuted with the $\Tot$ by computing homotopy groups using a Bousfield-Kan spectral sequence.  Since $B$ is flat over $A$, we see that $\pi_0 B \otimes_{\pi_0 A} -$ of the Bousfield-Kan spectral sequence for the first $\Tot$ identifies with the B-K spectral sequence for the second $\Tot$, so that it suffices to prove that both are convergent.  Noting that $P_k^\dual \otimes_A \G = (A^{\oplus n_k})^\dual \otimes_A \G \isom \G^{\oplus n_k}$ has homotopy groups in the same degrees as $\G$, and the same after the flat base extension $B \otimes_A -$, we readily conclude that they are convergent: If $N$ is such that $\tau_{\geq N} \G = 0$, then the only terms that contribute to $\pi_\ell \Tot$ are ($\pi_0 B \otimes_{\pi_0 A} -$ of) $\pi_{q+\ell}(P_q^\dual \otimes_A \G) \isom \pi_{q+\ell}(\G)^{\oplus n_q}$ for $1 \leq q < N$.

\item The map is constructed analogously to that in (i) above.  Note that $\pi^*$ always preserves pseudo-coherent, and here it preserves (local) boundedness since $\pi$ is flat.  So, if $\F \in \DCoh(X)$ then $\pi^* \F \in \DCoh(X)$.  Suppose now that $\F \in \DCoh(X)$, and $\G = ``\dlim_\beta" \G_\beta \in \Ind \DCoh(X)$.  We claim that there is a natural equivalence
  \[ \HHom^{\otimes X}_{\Ind\DCoh(X)}(\F, \G) = \dlim_\beta \HHom^{\otimes X}_{\QC(X)}(\F, \G_\beta) \] (and similarly on $U$). Indeed for $T \in \Perf(X)$ we have $T \otimes \F \in \DCoh(X)$, so that
  \begin{align*}
    \Map_{\QC(X)}(T, \HHom^{\otimes}_{\Ind\DCoh(X)}(\F, \G)) &= \Map_{\Ind\DCoh(X)}(T \otimes \F, \G) \\
    &= \dlim_\beta \Map_{\DCoh(X)}(T \otimes \F, \G_\beta) \\
    &= \dlim_\beta \Map_{\QC(X)}(T, \HHom^{\otimes X}_{\QC(X)}(\F,\G_\beta)) \\
    &= \Map_{\QC(X)}(T, \dlim_\beta \HHom^{\otimes X}_{\QC(X)}(\F, \G_\beta)
  \end{align*} since $T$ is compact.  This reduces us to the case $\F, \G \in \DCoh(X)$, which follows from the second point of the $\QC$ case.\qedhere
  \end{enumerate}
\end{proof}

\subsection{Descent for \texorpdfstring{$\QCsh$}{QC^!}}
\begin{defn} Suppose $X$ is a derived scheme (or stack).  Let $X_{et}$ (resp.,  $X_{sm}$) denote the (small) site of morphisms $f \colon U \to X$ such that $f$ is representable, bounded, and \'etale (resp., smooth); covers are defined as usual (i.e., surjectivity on geometric points). Note that any morphism between objects of $X_{et}$ (resp., $X_{sm}$) is \'etale (resp., of finite Tor dimension).   By Nisnevich descent, we mean descent for the Grothendieck topology generated by Nisnevich distinguished squares.
\end{defn}

\begin{na} Recall that all morphisms in $X_{sm}$ are of finite Tor-dimension.  So, it makes sense to consider $\Ind \DCoh$ as a pre-sheaf on $X_{sm}$ via \emph{star pullbacks}:
  \[ U \mapsto \Ind \DCoh(U), \qquad f\colon U' \to U \mapsto f^*\colon\Ind\DCoh(U) \to \Ind\DCoh(U') \]
  We will do this \emph{only for the next proposition}, elsewhere we will use shriek pullbacks.
\end{na}
\begin{prop}\label{prop:qcsh-sheaf-inj} Suppose that $X$ is an \eqref{cond:star} derived stack, and that $\pi = \pi_\bullet \colon U_\bullet \to X$ is a smooth hypercover.  Then, the natural functor
  \[ \pi^* \colon \Ind\DCoh(X) \longrightarrow \Tot\left\{ \Ind\DCoh(U_\bullet), \pi_\bullet^*\right\} \] to the descent category is fully faithful. Consequently, the natural functor
  \[ \pi^! \colon\Ind\DCoh(X) \longrightarrow \Tot\left\{\Ind\DCoh(U_\bullet), \pi_\bullet^!\right\} \] is also fully faithful.
\end{prop}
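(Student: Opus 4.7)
The plan is to reduce fully faithfulness of $\pi^*$ to a statement about $\QC$-valued internal Homs that we already know satisfies smooth descent. For $\F, \G \in \Ind\DCoh(X)$ the content of fully faithfulness is that the natural map of mapping spectra
\[ \RHom_{\Ind\DCoh(X)}(\F,\G) \longrightarrow \Tot\left\{ \RHom_{\Ind\DCoh(U_\bullet)}(\pi_\bullet^*\F, \pi_\bullet^*\G) \right\} \]
is an equivalence. Both sides are exact (sending colimits in $\F$ to limits): for the right-hand side this uses that each $\pi_n^*$ is a colimit-preserving functor between presentable $\infty$-categories, and that $\Tot$ commutes with limits. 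Since $\Ind\DCoh(X)$ is generated under filtered colimits by $\DCoh(X)$, I would first reduce to the case $\F \in \DCoh(X)$.

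Assuming $\F \in \DCoh(X)$, I would apply \autoref{lem:rhom}(ii) to each flat morphism $\pi_n \colon U_n \to X$, obtaining natural equivalences
\[ \HHom^{\otimes U_n}_{\Ind\DCoh(U_n)}(\pi_n^*\F, \pi_n^*\G) \isom \pi_n^*\,\HHom^{\otimes X}_{\Ind\DCoh(X)}(\F,\G) \in \QC(U_n), \]
functorial in the cosimplicial structure maps. Write $\H = \HHom^{\otimes X}(\F, \G) \in \QC(X)$. Taking $\RGamma$ and totalizing identifies the right-hand side of the displayed map with $\Tot\{\RGamma(U_\bullet, \pi_\bullet^* \H)\} = \RGamma(X,\, \Tot\{(\pi_\bullet)_* \pi_\bullet^* \H\})$. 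By smooth (indeed fppf) descent for $\QC$ on \eqref{cond:star} stacks, the augmented cosimplicial object $\H \to (\pi_\bullet)_* \pi_\bullet^* \H$ is a limit diagram in $\QC(X)$, and after applying $\RGamma$ we recover $\RGamma(X, \H) = \RHom_{\Ind\DCoh(X)}(\F,\G)$, which closes the loop.

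For the consequence about $\pi^!$, I would exploit that each $\pi_n$ is smooth, so $\pi_n^!(-) \isom \omega_{\pi_n} \otimes \pi_n^*(-)$ for the invertible relative dualizing complex $\omega_{\pi_n}$. Tensoring objectwise with the compatible system $\{\omega_{\pi_\bullet}\}$ (whose coherence comes from the standard transitivity isomorphisms $\omega_{fg} \isom g^* \omega_f \otimes \omega_g$ applied in the hypercover) gives an equivalence of cosimplicial diagrams $\{\Ind\DCoh(U_\bullet), \pi_\bullet^*\} \isom \{\Ind\DCoh(U_\bullet), \pi_\bullet^!\}$, so passing to $\Tot$ and using that $\pi^!(-) = \omega_\pi \otimes \pi^*(-)$ at level $0$ (which for the augmentation is trivialized by a chosen section, or more invariantly is handled by noting that the augmentation functor $\pi^!$ just lands in the descent category after the identification) transfers fully faithfulness from $\pi^*$ to $\pi^!$.

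The main obstacle I expect is bookkeeping the compatibility of the dualizing-complex twist with the cosimplicial structure in the $\pi^!$ statement — this requires the natural transitivity isomorphisms for $\omega_{(-)}$ along smooth maps to assemble into a coherent datum on the hypercover, which is a standard but nontrivial piece of Grothendieck duality machinery. The other ingredient that is routine but worth citing carefully is smooth descent for $\QC$ on stacks satisfying \eqref{cond:star}, which in the form needed here (descent along a smooth hypercover, not just a single cover) is obtained by iterating Cech descent in the usual way.
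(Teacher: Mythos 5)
Your proposal follows essentially the same route as the paper: reduce to $\F \in \DCoh(X)$ by commuting limits, apply \autoref{lem:rhom} at each level of the hypercover to internalize mapping complexes into $\QC$-valued $\HHom^{\otimes}$'s, conclude by fppf descent for $\QC$, and deduce the $\pi^!$ statement via the invertible $\omega_{\pi_\bullet}$-twist. The coherence issue you flag for assembling the twist is resolved in the paper by a clean trick: restrict to the underlying semi-cosimplicial diagram (which computes the same totalization, since degeneracies give retractions), where every structure map is a face map and hence smooth, so $f^! \isom \omega_f \otimes f^*$ with $\omega_f$ invertible applies uniformly and the levelwise twist is manifestly an equivalence of descent categories.
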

\begin{proof} We must show that the natural map
  \[ \Map_{\Ind\DCoh X}\left(``\dlim_\alpha" \F_\alpha, ``\dlim_\beta" \G_\beta\right) \longrightarrow \Tot_\bullet \Map_{\Ind\DCoh U_\bullet}\left(``\dlim_\alpha" \pi_\bullet^* \F_\alpha, ``\dlim_\beta" \pi_\bullet^* \G_\beta\right) \] is an equivalence.  Since homotopy limits commute, we may reduce to the case of no $\alpha$, i.e., just mapping out of $\F \in \DCoh(X)$.
  
Suppose $\F \in \DCoh(X)$, and $``\dlim_\beta" \G_\beta \in \Ind \DCoh(X)$.  Set $U_{-1} = X$, and
\[ \HHom_n \eqdef \HHom^{\otimes U_n}_{\Ind\DCoh U_n}\left(\res{\F}{U_n}, ``\dlim_\beta" \res{\G_\beta}{U_n}\right) \] for $n \geq -1$.  By \autoref{lem:rhom}, the natural map 
\[ \res{\left[\HHom^{\otimes X}_{\QCsh(X)}\left(\F, ``\dlim_\beta" \G_\beta\right)\right]}{U_n} \longrightarrow \HHom^{\otimes U_n}_{\QCsh(U_n)}\left(\res{\F}{U_n}, ``\dlim_\beta" \res{\G_\beta}{U_n} \right) \] 
is an equivalence.  Now, fppf descent for $\QC$ implies
\begin{align*} \Map_{\Ind\DCoh X}\left(\F, ``\dlim_\beta" \G_\beta\right) &= \Map_{\QC(X)}(\O_X, \HHom_{-1}) \\
  &= \Tot_\bullet \Map_{\QC(U_n)}\left(\res{\O_X}{U_n}, \res{\HHom_{-1}}{U_n}\right) \\
  &= \Tot_\bullet \Map_{\QC(U_n)}(\O_{U_n}, \HHom_n) \\
  &= \Tot_\bullet \Map_{\Ind\DCoh U_n}\left(\res{\F}{U_n}, ``\dlim_\beta" \res{\G_\beta}{U_n}\right)
\end{align*}
as desired.

Finally, it remains to prove the ``Consequently'':  Note that the totalization of a cosimplicial object coincides with the homotopy limit over its underlying \emph{semi-cosimplicial} object.  All the morphisms in the semi-simplicial object underlying $U_\bullet$ are smooth, so that there is an equivalence $f^!(-) = f^*(-) \otimes_{\O} \omega_f$ and the relative dualizing complex $\omega_f\isom \det \LL_f$ is invertible.  Let $\omega_{\pi_\bullet} = \pi^!(\O_X)$, regarded as an invertible object in $\Tot\left\{\QC(U_\bullet)^\otimes,f^*\right\}$. Then, $\pi^!(-) \isom \pi^*(-) \otimes_{\O_{U_\bullet}} \omega_{\pi_\bullet}$ is fully-faithful since both $\pi^*$ and tensoring by an invertible object are fully-faithful.
\end{proof}

\begin{lemma}\label{lem:qcsh-sheaf-red} Suppose $f\colon  X' \to X$ is a map of \eqref{cond:star} derived stacks.  Then,
  \begin{enumerate}
    \item Suppose $f$ is surjective (on field valued points) and proper with $(f_*, f^!)$ an adjoint pair after any base-change (e.g., finite).\footnote{More generally this condition is satisfied if $f$ is a relative proper algebraic space, or (in char. $0$) a relative proper DM stack.}  If $\Ind \DCoh(-)$ is a sheaf on $X'_{sm}$, then it is a sheaf on $X_{sm}$. 
    \item Suppose that $f$ satisfies the conditions of (ii).  Then, there is a natural equivalence
      \[ \Ind\DCoh(X) = (q^! q_*)\mod\left(\Ind\DCoh(X')\right) \]
  \end{enumerate}
\end{lemma}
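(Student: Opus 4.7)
The plan is to prove (ii) first by applying Lurie's Barr--Beck--Lurie theorem to the adjunction $f_*\dashv f^!$, identifying $\Ind\DCoh(X)$ with $(f^!f_*)\mod\Ind\DCoh(X')$; part (i) will then follow by transferring descent from $X'_{sm}$ to $X_{sm}$ through this monadic equivalence.

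For (ii), the two conditions to verify are: (a) $f^!$ preserves colimits of $f^!$-split simplicial objects in $\Ind\DCoh(X)$, and (b) $f^!$ is conservative. For (a), properness of $f$ ensures that $f_*$ sends $\DCoh(X')$ to $\DCoh(X)$ and hence preserves compact objects; thus for a $f^!$-split simplicial object $X_\bullet$ with canonical augmentation $Z$ of $f^!X_\bullet$ and any compact $Y\in\Ind\DCoh(X')$, the chain
\[\RHom(Y,f^!\varinjlim X_\bullet)=\varinjlim\RHom(f_*Y,X_\bullet)=\varinjlim\RHom(Y,f^!X_\bullet)=\RHom(Y,Z)\]
(using compactness of $f_*Y$ and absoluteness of split colimits) shows $f^!(\varinjlim X_\bullet)=Z$, as needed. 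For (b), reduce to the classical reduced case using \autoref{lem:coh-red} and stability of the hypotheses on $f$ under base change along $(\pi_0 X)_\red\hookrightarrow X$ (handling further nilpotent thickenings iteratively): the conservativity of shriek pullback along these closed immersions (\autoref{lem:coh-red}(ii)) reduces conservativity of $f^!$ to $X,X'$ classical reduced stacks. In that case, for nonzero $\F\in\Ind\DCoh(X)$, pick a closed point $x$ in the support of the lowest nonvanishing homotopy sheaf of $\F$, and by surjectivity of $f$ on field-valued points pick $x'\in X'$ over $x$. The nonzero finitely-generated $\O_{X,x}$-module $(f_*\O_{X'})_x$ admits, via a residue extension $\kappa(x)\hookrightarrow\kappa(x')$, a nonzero map to $(\pi_i\F)_x$, witnessing $f^!\F\neq 0$.

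For (i), given a smooth hypercover $U_\bullet\to X$, form its pullback $U'_\bullet=U_\bullet\times_X X'\to X'$, which is also a smooth hypercover; the base-changed maps $f_{U_n}\colon U'_n\to U_n$ inherit the hypotheses on $f$ (all of them being base-change stable). Applying (ii) on each level, and using the flat base-change isomorphisms $(g')^*f_*\isom(f_U)_*(g')^*$ and $(g')^*f^!\isom f_U^!(g')^*$ for smooth $g\colon U\to X$ with pullback $g'\colon U'\to X'$, the resulting monads $f_{U_n}^!(f_{U_n})_*$ assemble into a cosimplicial system compatible with the sheaf structure. Assuming sheaf property of $\Ind\DCoh(-)$ on $X'_{sm}$, we then compute
\[\Ind\DCoh(X)\isom (f^!f_*)\mod\Ind\DCoh(X')\isom (f^!f_*)\mod\Tot\Ind\DCoh(U'_\bullet)\isom\Tot\Ind\DCoh(U_\bullet),\]
where the final step uses that $\Tot$ commutes with forming modules over a compatible cosimplicial system of monads.

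The main obstacle is condition (b), conservativity of $f^!$: condition (a) is essentially formal once properness has been invoked, and (i) then follows formally from (ii) plus standard flat-base-change properties. The conservativity argument genuinely uses both key hypotheses on $f$: properness (so that the adjoint pair survives base change and $f_*$ preserves coherence) and surjectivity on field-valued points (so that every closed point in the support of $\F$ admits a lift to $X'$ witnessing nontriviality of $f^!\F$).
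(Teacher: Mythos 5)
Your overall strategy---reduce to conservativity of $f^!$ via Barr--Beck, and reduce conservativity to the discrete reduced case via \autoref{lem:coh-red}---matches the paper's, but your argument for conservativity has a genuine gap that cannot be repaired by small fixes.

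The problem is the step ``pick a closed point $x$ in the support of the lowest nonvanishing homotopy sheaf of $\F$.'' For a general nonzero $\F \in \Ind\DCoh(X)$ this does not exist: the $t$-structure on $\QCsh(X)$ is not left-complete, and there are nonzero objects all of whose homotopy sheaves vanish (equivalently, that become zero under the localization $F_M\colon \QCsh(X) \to \QC(X)$). The standard example is already visible in the paper: under the identification $\QCsh(\bB) \isom k\ps{\bt}\mod$ of \autoref{prop:mf-prelim}, the object $k\pl{\bt}$ is nonzero but maps to zero in $\QC(\bB)$. So any argument that detects nontriviality of $\F \in \Ind\DCoh(X)$ by inspecting $\pi_i\F$ is doomed; one must instead detect it by mapping in from compact objects $\K \in \DCoh(X)$, which is exactly what the paper does. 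Even setting this aside, there are two further problems in the same sentence: (a) the assertion that $(f_*\O_{X'})_x$ ``admits, via a residue extension, a nonzero map to $(\pi_i\F)_x$'' is unjustified---two nonzero finitely generated modules over a local ring need not admit a nonzero map between them, and nothing about the residue extension forces one; and (b) even granting such a map and passing to a global map $f_*\O_{X'} \to \F[-i]$, the implication ``hence $f^!\F \neq 0$'' uses $f^!(-) = \HHom_X(f_*\O_{X'},-)$, which holds when $f$ is finite but not for the proper case the statement allows.

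What the paper does instead is to show, after reducing (via \autoref{prop:qcsh-sheaf-inj}, \autoref{lem:coh-red}, and Chow's Lemma) to the case of $X$ a discrete reduced Noetherian affine scheme and $X'$ projective over it, that $f_*\DCoh(X')$ \emph{thick-generates} $\DCoh(X)$. This is what you actually need: since $\ker f^!$ is right-orthogonal to $f_*\DCoh(X')$ and $\DCoh(X)$ compactly generates $\Ind\DCoh(X)$, generation immediately forces $\ker f^! = 0$. The generation itself is proved by a devissage on the heart: for each point $x$ one produces $\G(x) = \O_{\ol{x'}} \otimes \O(N)$ with $N \gg 0$ chosen by Serre vanishing so that $f_*\G(x)$ lands in the heart, and Nakayama gives it a nonzero fiber at $x$. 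This sidesteps all three issues above. Your part (i), which commutes $\Tot$ past forming modules over a cosimplicial system of monads, is a plausible alternative route but would also need justification; the paper instead shows $\pi_*$ is conservative directly by a base-change argument feeding into the same conservativity result for $f^!$.
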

\begin{proof} \mbox{}
  \begin{enumerate}
    \item First note the property of $f$ being proper and surjective is stable under base-change.  Consequently, it suffices to show if $\pi  =\pi_\bullet\colon U_\bullet \to X$ is a smooth hypercover of $X$ itself, then the functor $\pi^*\colon \Ind\DCoh(X) \to \Tot\left\{\Ind\DCoh U_\bullet\right\}$ to the descent category is an equivalence. By the previous Proposition, it suffices to show that $\pi^*$ is essentially surjective.  Since the totalization may be computed in $\Pr^L$, viewing $p^*$ as left adjoint to $p_*$, we note that $\pi^*$ admits a right adjoint $\pi_*$ which is explicitly given by $\pi_*(\F_\bullet) = \Tot\left\{ (\pi_\bullet)_* \F_\bullet \right\}$.  It suffices to show that the counit $\pi^* \pi_* \to \id$ is an equivalence.  Since the left-adjoint $\pi^*$ is fully faithful, the unit map $\id \to \pi_* \pi^*$ is an equivalence; so, it suffices to show that $\pi_*$ is conservative: Indeed, consider the factorization of $\id_{\pi_* \F}$ as
  \[ \pi_* \F \stackrel{\sim}\longrightarrow \pi_* \pi^* \pi_* \F \stackrel{\pi_*(\mathrm{counit})}\longrightarrow \pi_* \F \]
Since the categories involved are stable, and the functors exacts, it suffices to show the following: If $\pi_*(\F_\bullet) = 0$, then $\F_\bullet = 0$; since $\F_n$, $n>0$ is a pull-back of $\F_0$, it suffices to show that $\F_0 = 0$ under these hypotheses.  So, we must prove that the functor
\[ (\pi_0)^* \pi_* = (\pi_0)^* \Tot\left\{ (\pi_n)_* \F_n \right\} \] is conservative.

Let $U'_n = U_n \times_X X'$, $\pi'_\bullet$ the base-changed structure maps, and $f_n\colon U'_n \to U_n$ the first projection.  It suffices to show that $(f_0)^! \circ (\pi_0)^* \circ \pi_*$ is conservative.  By various standard compatibilities (which one, e.g., first checks on $\QC$ then extends to $\QCsh$ by $t$-bounded-above arguments):
\begin{align*}
  (f_0)^! (\pi_0)^* \Tot\left\{(\pi_n)_* \F_n \right\} &= (\pi'_0)^* f^! \Tot\left\{ (\pi_n)_* \F_n\right\} \\
  \intertext{Since $f^!$ is a right adjoint, it commutes with arbitrary limits}
  &= (\pi'_0)^* \Tot\left\{ f^! (\pi_n)_* \F_n \right\} \\
  &= (\pi'_0)^* \Tot\left\{ (\pi'_n)_*  (f_n)^! \F_n \right\}  = (\pi'_0)^* (\pi'_\bullet)_*((f_\bullet)^! \F_\bullet)\\
\end{align*}
If this vanishes, then (by the hypothesis on $X'$) we find that $f_0^! \F_0 = 0$.  So, it suffices to show that $(f_0)^!\colon \Ind\DCoh U_0 \to \Ind\DCoh U'_0$ is conservative.  Note that $f_0$ is, being a base-change of $f$, also finite and surjective so that it suffices to show the following: If $f\colon  X' \to X$ is a proper (in the sense of the footnote), surjective, map of \eqref{cond:star} derived stacks, then $f^!$ is conservative.  Given $\F \in \Ind\DCoh(X)$ such that $f^! \F = 0$, it suffices to show that $\Map_{\Ind\DCoh(X)}(\K, \F) = 0$ for all $\K \in \DCoh(X)$. By \autoref{prop:qcsh-sheaf-inj}, this is smooth local on $X$, so we may suppose $X = \Spec A$ is an affine derived scheme.  Considering the diagrams
\[\vcenter{\xymatrix{\pi_0 X' \ar[d] \ar[r] & \pi_0 X \ar[d] \\ X' \ar[r] & X }}\quad \text{and} \qquad \vcenter{\xymatrix{\Ind\DCoh(\pi_0 X')& \Ind\DCoh(\pi_0 X)\ar[l] \\\ar[u] \Ind\DCoh(X') & \ar[u]\ar[l]\Ind\DCoh(X) }} \] and noting that $\Ind\DCoh(X)\to\Ind\DCoh(\pi_0 X)$ is conservative by \autoref{lem:coh-red}, we may reduce to the case of $X$ and $X' = \Spec \pi_0 A$ discrete and in particular ordinary separated stacks.  Since $\pi_0 A$ is Noetherian, Chow's Lemma for stacks \cite{Olsson-proper} shows that $X'$ receives a proper surjection from a projective $\pi_0 A$-scheme.  Thus, it suffices to prove the claim in case $X'$ is a projective $\pi_0 A$-scheme; let $\O(1)$ be a relatively ample line bundle.  Let $T$ denote the smallest thick subcategory of $\DCoh(X)$ containing $f_* \DCoh(X')$.  Since $f^!$ is right adjoint to $f_*$, it suffices to show that $T = \DCoh(X)$: Indeed, $\ker f^!$ is right-orthogonal to $T$.

Since $X$ is quasi-compact, a $t$-structure argument shows that it suffices to show that the intersection of $T$ with the heart $T^{\heart} = T \cap \DCoh(X)^\heart$ is all of $\Coh(X) = \DCoh(X)^\heart$.  By the usual form of devissage, noting that $T^{\heart}$ is closed under direct summands, it suffices to show that for all $x \in X$ there is some $\G(x) \in  \DCoh(X')$ such that $p_* \G(x) \in T^{\heart}$ and has a non-zero fiber over $x$.  Since $f$ is surjective, we may take $x' \in X'$ lying over it, and set $\G(x) = \O_{\ol{x'}} \otimes \O(N)$ for $N \vinograd 0$ large enough so that $p_* \G(X) \in T^{\heart}$: That such an $N$ exists is Serre's Theorem. Note that $\G(x) \otimes \O_{X,x}$ is a non-zero, coherent, $\O_{X,x}$-module whence the fiber at $x$ is non-zero by Nakayama's Lemma.
\item By the proof of (ii), we have seen that $f^!$ is conservative.  Since it preserves all colimits, Lurie's Barr-Beck Theorem applies to prove the first equality.   \qedhere
  \end{enumerate}
\end{proof}

We are now ready to prove the main result of this subsection:
\begin{theorem}\label{thm:qcsh-sheaf} Suppose $X$ is a \eqref{cond:star} derived stack.  Let $\Ind \DCoh(-)$ denote the pre-sheaf
\[ U \mapsto \Ind \DCoh(U),\qquad [f \colon U \to U'] \mapsto f^! \]   Then,
\begin{enumerate} 
  \item $\Ind \DCoh(-)$ has Nisnevich descent, and finite \'etale descent.
  \item $\Ind \DCoh(-)$ has representable \'etale descent.
  \item Suppose furthermore $X$ is a derived DM stack, then $\Ind \DCoh(-)$ has smooth descent.
  \item Suppose furthermore $X$ is a derived DM stack almost of finite-presentation over $k$.  Then, $\Ind \DCoh(-)$ has smooth descent and $\Ind \DCoh(X)$ coincides with $\QCsh(X)$ (as defined in \autoref{sec:more-general}).
  \end{enumerate}
\end{theorem}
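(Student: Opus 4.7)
The strategy is to build up the topologies one step at a time. By \autoref{prop:qcsh-sheaf-inj}, the functor from $\Ind\DCoh(X)$ to the descent totalization for any smooth hypercover is already fully faithful, so in every part the content lies in verifying essential surjectivity (equivalently, constructing a gluing functor). The workhorse throughout is \autoref{lem:qcsh-sheaf-red}(i), which transports descent along proper surjections admitting a $(f_*,f^!)$ adjunction after base-change.

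For (i), finite \'etale descent is immediate from \autoref{lem:qcsh-sheaf-red}(i), since a finite \'etale surjection is proper, representable, of finite Tor-dimension, and has the required adjunction after any base-change. For Nisnevich descent, I would argue Mayer--Vietoris on a distinguished square with $j\colon U\hookrightarrow X$ a quasi-compact open immersion and $p\colon V\to X$ an \'etale map that is an isomorphism on the reduced closed complement of $j(U)$. Given descent data $(\F_U,\F_V,\alpha)$, I construct the glued sheaf as the fiber of an evident map between $j_*\F_U\oplus p_*\F_V$ and $j_*(j_V)_*(p_U)^!\F_V$ in $\Ind\DCoh(X)$; here one checks that $j_*$ and $p_*$ preserve $\Ind\DCoh$ under the hypothesis \eqref{cond:star}, and that the resulting functor is inverse (on essential image) to restriction, using the fact that $U$ and $V$ set-theoretically cover $X$. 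Part (ii) then follows from a standard refinement result (Morel--Voevodsky / Bhatt): on a quasi-compact quasi-separated Noetherian DM stack, every representable \'etale cover is refined by a composite of Nisnevich covers and finite \'etale covers, so a presheaf satisfying both automatically satisfies representable \'etale descent.

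For (iii), I reduce smooth descent on DM stacks to \'etale descent. Given a smooth surjection $p\colon U\to X$ with $X$ DM, local structure of smooth morphisms onto a DM stack lets us refine $U$ by an \'etale cover $\widetilde U\to U$ such that $\widetilde U\to X$ factors as $\widetilde U\stackrel{q}{\to}\AA^n_X\stackrel{\pi}{\to}X$ with $q$ \'etale. Descent along $\widetilde U\to U$ is \'etale, covered by (ii); compatibility of descent with refinements of hypercovers reduces the problem to descent along $q$ (again \'etale) and along $\pi$. Descent along $\pi\colon\AA^n_X\to X$ is then handled directly: the zero section $s\colon X\to\AA^n_X$ exhibits the \v{C}ech nerve $(\AA^n_X)^{\times_X\bullet+1}$ as a split-augmented simplicial object over $X$, with extra degeneracies provided by inserting $s$ in the appropriate slot. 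Applying $\Ind\DCoh(-)$ with $f^!$ (which, by the definition of $\QCsh$ and the base-change compatibility of $f^!$ used throughout the Appendix, converts colimits of stacks to limits of categories) produces a split-augmented cosimplicial object, whose totalization is the augmentation $\Ind\DCoh(X)$.

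Finally for (iv), smooth descent follows directly from (iii) in the almost-finite-presentation setting. The coincidence with the $\QCsh(\X)$ of \autoref{sec:more-general} then follows because both presheaves agree on affines (both give $\Ind\DCoh(\Spec A)$), the former satisfies smooth descent by (iii), and the latter satisfies smooth descent tautologically from its definition as $\holim_{\Spec A\to\X}(\Ind\DCoh(\Spec A),f^!)$; since any such $\X$ admits a smooth cover by affines, both sides are computed by the same cosimplicial limit. The main obstacle I expect is in (iii): namely, upgrading the scheme-level splitting of the \v{C}ech nerve of $\pi\colon\AA^n_X\to X$ to a splitting at the level of the cosimplicial category $\Ind\DCoh$ with $!$-pullbacks, since this requires coherent compatibility for the base-change equivalences among the $f^!$'s (part of the $(\infty,2)$-categorical structure that the author is elsewhere at pains to avoid unpacking). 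Everything else reduces, via \autoref{lem:qcsh-sheaf-red}(i) and the refinement lemma for the \'etale topology, to bookkeeping.
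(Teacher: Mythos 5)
Your outline matches the paper's scaffolding in parts (ii) and (iv), but there is a genuine gap in (i), and your approach to (iii) diverges from the paper's in a way that creates the very difficulty you flag.

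The gap in (i): \autoref{lem:qcsh-sheaf-red}(i) cannot be used to prove finite \'etale descent. That lemma takes as \emph{hypothesis} that $\Ind\DCoh(-)$ is a sheaf for the full smooth topology on the cover $X'$ — not just for finite \'etale or Nisnevich covers of $X'$ — and this is precisely what is not yet available; invoking it here is circular (and in fact the lemma's conclusion, full smooth descent on $X$, is strictly stronger than what part (i) asserts). The paper instead proves finite \'etale descent directly: for the Cech nerve $\pi_\bullet$ of a finite \'etale $p:X'\to X$, the special feature is that $(\pi_0)^*=(\pi_0)^!$ is simultaneously a left and right adjoint to $(\pi_0)_*$, hence commutes with the totalization; base-change then computes $(\pi_0)^*\Tot\{(\pi_n)_*\F_n\}\simeq(\pi')_*(\pi')^*\F_0$, and fully-faithfulness of $(\pi')^*$ forces $\F_0=0$ when the totalization vanishes. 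No reduction along a proper surjection is used — the argument is self-contained and exploits the finite-\'etale-ness in a way that would not work for a general proper surjection.

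On (iii): you factor a smooth map (\'etale-locally on the source) through $\AA^n_X\to X$ and then attempt to split the Cech cosimplicial object via the zero section. This is a legitimate strategy in principle, but it manufactures exactly the $(\infty,2)$-categorical coherence problem you identify — the extra codegeneracy is $s^!$ for a closed immersion, a different flavor of functor from the smooth $f^!$'s in the cosimplicial object, and one would need coherent compatibilities of base-change to conclude the splitting. The paper avoids this entirely: for a DM stack $X$ the cotangent complex is connective, so any representable smooth surjection onto $X$ admits a section \'etale-locally; hence every smooth cover is refined by an \'etale cover, and (iii) reduces immediately to (ii). This observation, together with the fact that a DM stack with affine diagonal has a representable \'etale atlas, is the whole proof. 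You should replace the $\AA^n$-splitting argument with this refinement observation; it is shorter and does not depend on the coherence you were worried about.

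Parts (ii) and (iv) of your proposal are correct: (ii) uses \autoref{lem:qcsh-sheaf-red} to reduce to the discrete case and then cites the fact that the \'etale topology on Noetherian schemes is generated by Nisnevich and finite \'etale covers (the paper cites Rydh); (iv) follows by matching the two presheaves on affines and using that both are smooth sheaves.

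Finally, your Nisnevich sketch is plausible but incomplete as written: you construct a candidate glued object but do not check that it inverts restriction. The paper's route is to show conservativity of $\pi_*$ by establishing $j^*\pi_*\F_\star\isom\F_U$ and $p^*\pi_*\F_\star\isom\F_{X'}$ explicitly, and then invoke \autoref{lem:open-closed-conserv} (detecting vanishing of $\F\in\QCsh_Z(X)$ via $j^*$ and $i^!$). Including that last lemma is what makes the argument close.
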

\begin{proof} \mbox{}\begin{enumerate}
  \item
    By \autoref{prop:qcsh-sheaf-inj}, the pullback map to the descent category is fully faithful for any smooth hypercover so that it suffices to check essential surjectivity. 
    
  {\noindent}{\bf Step 1: Finite \'etale covers.}\\ Suppose $p\colon X' = U_0 \to X$ is an \'etale cover which is a \emph{finite} morphism, and $\pi_\bullet\colon U_\bullet \to X$ the Cech nerve.  Let $U'_\bullet = U_\bullet \times_X X'$, with $\pi'_\bullet\colon U'_\bullet \to X'$ the base-changed maps, and $p'_\bullet = U'_\bullet \to U_\bullet$ the projections.  
Suppose $\{\F_\bullet\}$ is such that $\Tot\left\{ (\pi_n)_* \F_n\right\} = 0$; we must show that $\F_0 = 0$.

Note that $\pi_0$ is both finite and \'etale, so that $(\pi_0)^* = (\pi_0)^!$ preserves all limits since it is right-adjoint to $(\pi_0)_*$.  Consequently, there are natural equivalences
\begin{align*}
 0 = (\pi_0)^* \Tot\left\{ (\pi_n)_* \F_n \right\} &= \Tot\left\{ (\pi_0)^* (\pi_n)_* \F_n \right\} \\
  &= \Tot\left\{ (\pi'_n)_* (p_n)^* \F_n \right\} \\
  &= (\pi')_* (\pi')^* \F_0
\end{align*}
Fully-faithfullness of $(\pi')^*$ implies that the unit $\F_0 \to (\pi')_* (\pi')^* \F_0$ is an equivalence, so that $\F_0 = 0$.

\medskip

{\noindent}{\bf Step 2: Distinguished Nisnevich squares.} 
Suppose given a distinguished Nisnevich square
  \[ \xymatrix{ U' \ar[d]_{p'} \ar[r]^{j'} & X' \ar[d]^p \\ U \ar[r]_j & X } \] with $j$ an open immersion, $p$ \'etale.  Let $Z = X \setminus U$ and $Z' = X' \setminus U'$.  We must prove that
  \[ \pi^* \colon \Ind\DCoh X \longrightarrow \Ind\DCoh U \times_{\Ind\DCoh U'} \Ind\DCoh X' \] is an equivalence.  We know that $\pi^*$ is fully faithful so that it suffices to prove essential surjectivity.  Given the adjunction $(\pi^*, \pi_*)$, it suffices to show that the counit $\pi^* \pi_* \to \id$ is an equivalence; since $\pi^*$ is fully faithful, the unit $\id \to \pi_* \pi^*$ is an equivalence and considering the following factorization of the identity on $\pi_* \F$
  \[ \pi_* \F \stackrel{\sim}\longrightarrow \pi_* \pi^* \pi_* \F \stackrel{\pi_*(\mathrm(counit)}\longrightarrow \pi_* \F \]
  reduces us to showing that $\pi_*$ is conservative.  Since all categories involved are stable and $\pi_*$ is exact, it suffices to prove that $\ker \pi_* = 0$.
Suppose 
  \[ \F_\star = \left(\F_U, \F_{U'}, \F_{X'}\right) \in \Ind\DCoh U \longrightarrow \Ind\DCoh U \times_{\Ind\DCoh U'} \Ind\DCoh X'  \] and recall that 
  \[ \pi_* (\F_\star) = j_* \F_U  \times_{j_* p'_* \F_{U'}} p_* \F_{X'} = j_* \F_U \times_{p_* (j')_* \F_{U'}} p_* \F_{X'} \in \Ind\DCoh X\]  It suffices to construct equivalences $j^* \pi_* \F_\star \isom \F_U$ and $p^* \pi_* \F_\star = \F_{X'}$, for then $\pi_* \F_\star = 0$ implies $\F_U = 0$ and $\F_{X'} = 0$ (and so $\F_{U'} = 0$).
  
Note that the counit $j^* j_* \to \id$ is an equivalence and that there is a natural equivalence $j^* p_* = (p')_* (j')^*$: Both are true on $\QC$ and all functors involved are $t$-bounded-above.    Consequently,
  \[ j^* \pi_* \F_\star = \F_U \times_{(p')_* \F_{U'}} (p')_* \F_{U'} = \F_U. \]
It remains to provide an equivalence $p^* \pi_* \F_\star = \F_{X'}$.  First note that
\begin{align*}p^* \pi_* \F_{\star} &=  p^*\left(j_*(\F_U) \times_{p_* (j')_* \F_{U'}} p_* \F_{X'}\right)  \\
  &= p^* j_*(\F_U) \times_{p^* p_* (j')_* \F_{U'}} p^* p_* \F_{X'} \\
  &= (j')_* \F_{U'} \times_{(j')_* (p')^* (p')_* \F_{U'}} p^* p_* \F_{X'} 
\end{align*}
and from the last term we obtain a natural map $\phi\colon p^* \pi_* \F_\star \to \F_{X'}$ using the structure maps that the counit $p^* p_* \to \id$.  Let $i\colon Z_{\red} \to X$ and $i'\colon Z'_{\red} \to X'$.  By \autoref{lem:open-closed-conserv} below, it suffices to show that $(j')^* \phi$ and $(i')^! \phi$ are equivalences.  The former is straightforward (both sides naturally identity with $\F_{U'}$), as is the latter (since $\res{p}{Z'_\red}\colon Z'_\red \to Z_\red$ is an isomorphism by the definition of distinguished Nisnevich).
\item By \autoref{lem:qcsh-sheaf-red} we may reduce to the case of $X$ discrete.  Then, note that descent for distinguished Nisnevich squares and finite \'etale covers implies representable \'etale descent (\cite[Theorem~D, Remark~5.4]{Rydh-devissage}).
\item A derived DM stack with affine diagonal admits a representable \'etale cover by a scheme, so that every \'etale cover  admits a representable refinement representable-\'etale locally.  Since the cotangent complex of a derived DM stack is connective, any smooth cover admits an \'etale refinement.
\item By (iii) applied to $\Spec A$ for $A \in \DRngfp_k$, $\QCsh(-)$ is a smooth sheaf on $X$.  Note that (iii) applies to $X$, so that $\Ind \DCoh(-)$ is a smooth sheaf on $X$.  Since $X$ was assumed DM and almost of finite-presentation, $X$ is \'etale locally of the form $\Spec A$ for $a \in \DRngfp_k$ so that the two sheaves are locally isomorphic.\qedhere
  \end{enumerate}
\end{proof}

\begin{lemma}\label{lem:open-closed-conserv} Suppose $X$ is a \eqref{cond:star} derived stack, $j\colon U \subset X$ a quasi-compact open, and $i\colon Z_{\red}\to X$ the reduced-induced structure on the closed complement.  Suppose $\phi\colon \F \to \G$ is a morphism in $\Ind \DCoh(X)$.  Then, $\phi$ is an equivalence if and only if $j^* \phi$ and $i^! \phi$ are both equivalences.
\end{lemma}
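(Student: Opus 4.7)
The plan is standard: pass to the cone and check it vanishes. Let $C = \cone(\phi) \in \Ind\DCoh(X)$. Since $\Ind\DCoh(X)$, $\Ind\DCoh(U)$ and $\Ind\DCoh(Z_{\red})$ are stable, $\phi$ is an equivalence iff $C = 0$; similarly $j^*\phi$ is an equivalence iff $j^*C = 0$ (using that $j^*$ is exact), and $i^!\phi$ is an equivalence iff $i^!C = 0$. The forward direction is obvious, so the content is: if $j^*C = 0$ and $i^!C = 0$, then $C = 0$.

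First I would reduce the hypothesis $j^*C=0$ to a support statement. By \autoref{lem:dcohz-supt} we have $\ker(j^*\colon \Ind\DCoh(X) \to \Ind\DCoh(U)) = \QCsh_Z(X) = \Ind\DCoh_Z(X)$, so $C \in \Ind\DCoh_Z(X)$. The task is thus to show that the functor
\[ \res{i^!}{\Ind\DCoh_Z(X)}\colon \Ind\DCoh_Z(X) \longrightarrow \Ind\DCoh(Z_{\red}) \]
is conservative.

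Next I would use that $\DCoh_Z(X)$ is triangulated-generated by the essential image of $i_*\colon \DCoh(Z_{\red}) \to \DCoh(X)$. This is exactly what \autoref{lem:coh-red}(iii) gives: one first filters along powers of the nilradical to reduce to the discrete reduced case, where the claim is standard devissage. Consequently $\Ind\DCoh_Z(X)$ is generated under colimits by objects of the form $i_*\K$ with $\K \in \DCoh(Z_{\red})$, so to show $C=0$ it suffices to show $\Map_{\Ind\DCoh(X)}(i_*\K, C) = 0$ for every such $\K$.

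Finally, $i$ is finite (in particular proper) and of finite Tor-dimension between \eqref{cond:star} derived stacks, so $i_*$ admits a right adjoint $i^!$ on $\Ind\DCoh$ (as used throughout the paper; cf.\ the discussion of $f^!$ in \autoref{sec:notation} and \autoref{lem:mfplus-monadic}). Thus
\[ \Map_{\Ind\DCoh(X)}(i_*\K, C) = \Map_{\Ind\DCoh(Z_{\red})}(\K, i^!C) = 0, \]
completing the proof. The only conceptual step is the reduction via \autoref{lem:dcohz-supt} together with the generation statement of \autoref{lem:coh-red}; everything else is formal manipulation of the $(i_*, i^!)$ adjunction, so I do not expect any real obstacle.
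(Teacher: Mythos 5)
Your argument has the right shape, and in fact the paper's proof uses the same two ingredients at its core (the support identification $\ker j^* = \Ind\DCoh_Z(X)$ and the generation of $\DCoh_Z(X)$ by the image of $i_*$ together with the $(i_*,i^!)$ adjunction). However, there is a genuine gap in the form of a \emph{circularity}. You invoke \autoref{lem:dcohz-supt} for a general \eqref{cond:star} stack $X$ in order to conclude $\cone(\phi) \in \Ind\DCoh_Z(X)$. But the paper's proof of \autoref{lem:dcohz-supt} only handles the affine (or ample-family) case directly; for the general case it appeals to smooth descent for $\QCsh$, i.e., to \autoref{thm:qcsh-sheaf}. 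And the proof of \autoref{thm:qcsh-sheaf} (Step~2, the distinguished Nisnevich squares) explicitly cites \autoref{lem:open-closed-conserv}---the very statement you are trying to prove. So as written your chain of references is circular: \autoref{lem:open-closed-conserv} $\Rightarrow$ \autoref{lem:dcohz-supt}(general) $\Rightarrow$ \autoref{thm:qcsh-sheaf} $\Rightarrow$ \autoref{lem:open-closed-conserv}.

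The paper avoids this by inserting a localization step before touching \autoref{lem:dcohz-supt}: since it suffices to show $\Map_{\Ind\DCoh(X)}(\K, \cone\phi) = 0$ for $\K \in \DCoh(X)$, one applies \autoref{lem:rhom} to replace $\Map$ by the sheafy $\RHom^{\otimes X}_{\Ind\DCoh(X)}(\K, -) \in \QC(X)$, which can be checked to vanish fppf-locally using fppf descent for $\QC$ (which is independent of the circle of ideas here). Both $j^*$ and $i^!$ commute with flat base change, so the hypotheses $j^*\cone\phi = 0$ and $i^!\cone\phi = 0$ persist. This reduces to $X$ affine, where only the self-contained affine case of \autoref{lem:dcohz-supt} is needed, and then your adjunction argument finishes. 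You should add this affine reduction; without it, the claim ``no real obstacle'' doesn't hold up.

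A smaller point: you cite \autoref{lem:coh-red}(iii) for the statement that $\DCoh_Z(X)$ is triangulated-generated by the image of $i_*\colon \DCoh(Z_{\red}) \to \DCoh(X)$, but (iii) as stated concerns generation of all of $\DCoh(X)$ by pushforward from the reduction, not the supported subcategory. What you actually need is the (routine) devissage combining the Postnikov filtration of \autoref{lem:coh-red}(i) with the filtration of a coherent sheaf supported on $Z$ by powers of the ideal of $Z_{\red}$; the paper is similarly terse here, but it is worth being explicit that this is not literally a single cited clause.
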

\begin{proof} Taking $\cone(\phi)$, it suffices to show that $\F \in \Ind\DCoh(X)$ is zero iff $j^* \F=0$ and $i^! \F = 0$.  One direction is clear, so suppose $j^* \F = 0$ and $i^! \F = 0$.  We must show that $\Map_{\Ind\DCoh(X)}(\K, \F) = 0$ for all $\K \in \DCoh(X)$.  By \autoref{lem:rhom}, it suffices to show that $\RHom_{\Ind\DCoh(X)}^{\otimes X}(\K, \F) = 0 \in \QC(X)$  and the question is fppf local on $X$ by fppf descent for $\QC(X)$.  In particular, we may assume that $X$ is affine so that we are in the situation where we have sketched a proof of \autoref{lem:dcohz-supt}.  It thus suffices to note that $i^!\colon \Ind \DCoh_Z(X) \to \Ind \DCoh(Z)$ is conservative, since its left-adjoint $i_*$ hits a generating set by \autoref{lem:coh-red}.
\end{proof}

\begin{remark} In fact, more is true than \autoref{thm:qcsh-sheaf}.  Let $X_{h}$ denote the (derived) Grothendieck topology on representable, bounded, almost finitely-presented $X$-stacks generated by distinguished Nisnevich squares and proper (with $(f_*, f^!)$ adjunction) surjective maps.\footnote{The naming is suggested by the fact that on an ordinary Noetherian scheme, the ordinary Grothendieck topology generated by the Nisnevich squares and proper surjections is precisely the ordinary $h$-topology.} The Theorem together with \autoref{prop:qcsh-sheaf-proper} below imply that $\Ind \DCoh(-)$ has $h$-descent in this funny sense: Since $X_h$ has covering morphisms which are not flat, the corresponding $\infty$-topos looks substantially different from the ordinary $h$-topos of $\pi_0 X$ even if $X$ is a discrete affine scheme, e.g., the map $X_{\red} \to X$ is no longer a monomorphism, so that the natural map $\F(X) \to \F(X_{\red})$ need not be an equivalence.
\end{remark}

\begin{prop}\label{prop:qcsh-sheaf-proper} Suppose that $X$ is a \eqref{cond:star} derived stack.  Then, $\Ind\DCoh(-)$ has \emph{proper} descent on $X$: i.e., Suppose  $q\colon X' \to X$ is proper and surjective, and let $\pi = \pi_\bullet\colon \left\{ X'_\bullet = (X')^{\bullet/X}\right\} \to X$ be the Cech nerve of $q$.  Then, the functor
  \[ \pi^! \colon \Ind\DCoh(X) \to \Tot\left\{\Ind\DCoh(X'_\bullet), f^!\right\} \] is an equivalence of categories.
\end{prop}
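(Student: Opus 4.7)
The plan is to deduce proper descent from the monadic description already established in \autoref{lem:qcsh-sheaf-red}. By hypothesis $(q_*, q^!)$ is an adjoint pair, and the proof of that Lemma already shows $q^!$ is conservative (by the devissage using Chow's Lemma and twists of skyscrapers). Since $q_*$ preserves all colimits, Lurie's Barr-Beck Theorem yields
\[ \Ind\DCoh(X) \stackrel{\sim}\longrightarrow (q^! q_*)\mod\left(\Ind\DCoh(X')\right), \]
which is the content of \autoref{lem:qcsh-sheaf-red}(ii).

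The remaining task is to identify this module category with $\Tot\{\Ind\DCoh(X'_\bullet), f^!\}$. The standard cobar description of modules over a monad identifies $(q^! q_*)\mod\left(\Ind\DCoh(X')\right)$ with the totalization of the cosimplicial object
\[ [n] \longmapsto (q^! q_*)^{n+1}\left(\Ind\DCoh(X')\right) \quad \in \quad \Pr^R. \]
I would then match this term-by-term with $\Ind\DCoh(X'_\bullet)$ using iterated base-change. Concretely, for the Cartesian square
\[ \xymatrix{ X'_{n+1} \ar[r]^-{p_2} \ar[d]_-{p_1} & X'_n \ar[d]^-{\pi_n} \\ X' \ar[r]_-q & X } \]
the base-change equivalence $(p_1)_* (p_2)^! \isom q^! (\pi_n)_*$ (which holds since $q$ and $\pi_n$ are proper with good adjunctions, as recorded in \autoref{sec:notation}) identifies the application of the monad $q^! q_*$ to $\Ind\DCoh(X'_n)$ with shriek-pullback-then-pushforward along the appropriate face maps of the Cech nerve. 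An inductive argument on $n$, starting from $\Ind\DCoh(X')$ itself, then produces the term-wise identification of the bar cosimplicial diagram with $\Ind\DCoh(X'_\bullet)$, with cosimplicial structure maps provided by $f^!$ along the face maps and Cartesian base-changes along the degeneracies.

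The main obstacle will be bookkeeping the cosimplicial structure: one must verify that the face and degeneracy maps produced by the bar construction for the monad $q^! q_*$ match, up to coherent homotopy, those obtained by shriek-pulling back along the Cech face and degeneracy maps of $X'_\bullet$. This hinges on naturality of base-change across the entire simplicial diagram, which ultimately requires the $(\infty,2)$-categorical packaging of $(f^!, f_*)$ into a functor on a category of correspondences; the paper deliberately suppresses this machinery, but once granted, the desired coherence is automatic and the comparison functor $\pi^!$ is exhibited as an equivalence.
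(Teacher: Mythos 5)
Your proposal is correct in spirit but takes a genuinely different (and less self-contained) route than the paper, and one of its steps is misphrased in a way that hides the actual content.

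The paper's proof is quite direct: all structure maps in $X'_\bullet$ are proper, so the totalization is computed in $\Pr^R$ \emph{and} $\Pr^L$, whence $\pi^!$ has an explicit left adjoint $\pi_*(\F_\bullet) = |(\pi_n)_* \F_n|$; conservativity of $\pi^!$ follows from that of $q^!$; and the remaining check—that the counit $\pi_*\pi^! \to \id$ is an equivalence—is done by applying $q^!$ and observing, via a single instance of base-change, that the augmented simplicial diagram $\{q^!(\pi_n)_*\pi_n^!\F\} \to q^!\F$ becomes \emph{split}, hence a colimit diagram. This is the standard ``Cech nerve becomes split after pulling back along the cover'' trick, and it needs no $(\infty,2)$-categorical coherence beyond one base-change square. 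You don't mention the splitting at all, and it is the key idea.

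Your route instead passes through \autoref{lem:qcsh-sheaf-red}(ii), $\Ind\DCoh(X) \simeq (q^!q_*)\mod(\Ind\DCoh(X'))$, and then tries to identify the module category with the Cech totalization. Two issues here. First, the ``cobar description'' as you've written it, $[n] \mapsto (q^!q_*)^{n+1}(\Ind\DCoh(X'))$, is not well-formed: $(q^!q_*)^{n+1}$ is an endofunctor of $\Ind\DCoh(X')$, not an operation on categories, so this doesn't define a cosimplicial diagram of categories. There is no general statement that $T\mod(\C)$ is the totalization of a constant-in-$\C$ cosimplicial diagram. Second, the statement you actually need is the converse direction: cosimplicial monadic/comonadic descent (Lurie, HA \S4.7.5), which says that a totalization $\Tot\{\C^\bullet\}$ of a cosimplicial diagram in $\Pr^R$, \emph{satisfying a Beck--Chevalley condition}, is monadic over $\C^0$ with monad $\pr_0 \circ (\text{left adjoint of } \pr_0)$. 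One then identifies this monad with $q^!q_*$ by one application of base-change, and matches with \autoref{lem:qcsh-sheaf-red}(ii). That works, but the Beck--Chevalley coherence across the whole Cech simplicial object is precisely the $(\infty,2)$-categorical packaging you flag as deferred—and verifying it is comparable in effort to the whole proof. The paper deliberately avoids this by reducing to one base-change and the splitting. So your approach is recoverable, but it outsources the essential content to a black box and has a garbled intermediate step; the paper's splitting argument is shorter, avoids the machinery, and you should internalize it as the standard move for descent along a Cech nerve of a cover.
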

\begin{proof}
Note that all structure maps in $X'_\bullet$ are proper since $X' \to X$ is proper, and in particular separated; so, the totalization may be regarded as being computed in either $\Pr^R$ or $\Pr^L$.  Consequently $\pi^!$ admits a left-adjoint $\pi_*$.  Consequently $\pi^!$ admits both preserves colimits and admits a left-adjoint $\pi_*$.  We can check that $\pi_*$ is computed by the geometric realization
\[ (\pi_*\left(\F_\bullet\right) = \left| (\pi_n)_* (\F_n) \right| \]
It now suffices to check that the unit and counit maps $\pi_* \pi^! \to \id$ and $\id \to \pi^! \pi_*$ are equivalences.  Since $q^!$ is conservative by \autoref{lem:qcsh-sheaf-red}, so is $\pi^!$.  Thus it is enough to check that the unit map is an equivalence (c.f., \autoref{lem:mono}(iv)).

For the unit: Since $q^!$ is conservative by \autoref{lem:qcsh-sheaf-red}  it suffices to check this after applying $q^!$, so that we are interested in verifying that map
\[ q^! \pi_* \pi^! \F = \left| q^! (\pi_n)_* (\pi_n)^! \F \right\| \longrightarrow q^! \F \] is an equivalence.
Let $p_n \colon X'_{n+1} \to X'$ be the first projection (i.e., this is the base change of $\pi_n$ along $q$), and $q_n \colon X'_{n+1} \to X_n$ the last projection (i.e., the induced map on simplicial objects).  Base-change gives $q^! (\pi_n)_* (\pi_n)^! = (p_n)_* (q_n)^! (\pi_n)^! \F = (p_n)_* (\pi_{n+1})^! \F$, so that our augmented simplicial diagram is in fact \emph{split} and consequently a colimit diagram.
\end{proof}

\subsection{Descent for $\MF^\kinfty$} 
\begin{prop}\label{prop:mf-sheaf}  Suppose $X$ is a \eqref{cond:starf} derived stack, and $f\colon  X \to \AA^1$.
  \begin{enumerate} \item The assignments
      \[ U \mapsto \PreMF(U, \res{f}{U}) \qquad \text{and} \qquad U \mapsto \PreMF^\kinfty(U, \res{f}{U}) \]
      determine sheaves of $k\ps{\bt}$-linear $\infty$-categories on $X_{et}$.
    \item The assignment
      \[ \qquad U \mapsto \MF^\kinfty(U, \res{f}{U}) \]
      determines a sheaf of $k\pl{\bt}$-linear $\infty$-categories on $X_{et}$.
 \end{enumerate}
\end{prop}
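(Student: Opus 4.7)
The plan is to reduce the statement to the étale descent for $\QCsh$ established in \autoref{thm:qcsh-sheaf}, then upgrade to the $k\ps{\bt}$- and $k\pl{\bt}$-linear settings using the geometric description of the $k\ps{\bt}$-action from \autoref{constr:Bact}. Since étale maps are preserved under base change, for any étale $\pi \colon U \to X$ with Cech nerve $U_\bullet$, the base change $(U_\bullet)_0 = U_\bullet \times_{\AA^1} 0$ is the Cech nerve of the étale cover $U_0 \to X_0$. Each $(U_n)_0$ is $(\star_{\mathrm{F}})$ (the condition is preserved under the relevant fiber products and étale base change), so \autoref{thm:qcsh-sheaf}(iii) gives the underlying $k$-linear descent
\[ \QCsh(X_0) \stackrel{\sim}\longrightarrow \Tot\left\{ \QCsh((U_\bullet)_0), f^!\right\}. \]

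Next, I would upgrade this to a $k\ps{\bt}$-linear equivalence. The $k\ps{\bt}$-action on each $\QCsh((U_n)_0)$ is obtained, via \autoref{constr:Bact}, by pushing forward along the (finite) action map $(U_n)_0 \times \bB \to (U_n)_0$. For an étale map $g\colon V \to U$, the base change $g_0 \colon V_0 \to U_0$ is étale and fits into a Cartesian square with the $\bB$-action; standard base-change for shriek-pullback along étale maps (where $g^! \simeq g^*$) then shows that $g_0^!$ is $\QCsh(\bB)$-linear. Hence the cosimplicial diagram above is naturally a diagram in $k\ps{\bt}$-linear $\infty$-categories, proving descent for $\PreMF^\kinfty$. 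Passing to compact objects (which is preserved because each $g_0^!$ preserves $\DCoh$) gives descent for $\PreMF$. This proves (i).

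For (ii), the issue is that $-\ohotimes_{k\ps{\bt}} k\pl{\bt}$ does not commute with arbitrary limits, so we cannot naively apply it termwise. Here one uses that every structure map in the cosimplicial diagram is of the form $g_0^!$ for $g_0$ étale, hence admits both a left and a right adjoint, so the totalization in $\Pr^R$ coincides with a colimit in $\Pr^L$ of the diagram of the pushforwards $(g_0)_*$ (which preserve compact objects). Since $-\ohotimes_{k\ps{\bt}} k\pl{\bt}$ is a colimit-preserving functor on $\dgcatbig_{k\ps{\bt}}$, applying it termwise to this $\Pr^L$-colimit diagram commutes with the colimit, yielding
\[ \MF^\kinfty(X_0, f) = \PreMF^\kinfty(X_0,f) \ohotimes_{k\ps{\bt}} k\pl{\bt} \stackrel{\sim}\longrightarrow \Tot\left\{\MF^\kinfty((U_\bullet)_0, f|_{U_\bullet})\right\}. \]

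The main obstacle is the bookkeeping in step two: verifying that the $k\ps{\bt}$-action is genuinely compatible with shriek-pullback along étale maps, i.e.\ that the Cartesian squares encoding the $\bB$-action on $U_0$ base-change correctly along étale $g\colon V \to U$. This is essentially a standard base-change statement made tractable by the fact that étale morphisms are of finite Tor-dimension with trivial relative dualizing complex, so $g^! = g^*$ and all the usual $\QC$ compatibilities apply; nevertheless, it is the only point where one must keep careful track of the derived-geometric structure underlying the $k\ps{\bt}$-linearity.
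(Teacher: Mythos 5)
For part (i) your argument is correct, but it does more work than the paper's. The paper observes that the forgetful functor $\dgcatbig_{k\ps{\bt}} \to \dgcatbig_k$ creates limits (it is a conservative right adjoint), so a $k\ps{\bt}$-linear presheaf is a sheaf precisely when the underlying $k$-linear presheaf is. This lets one conclude descent for $\PreMF^\kinfty$ directly from \autoref{thm:qcsh-sheaf} applied to $(X_0)_{et}$, with no need to chase the $\bB$-action through base-change. Your verification that the $\bB$-action is compatible with \'etale restriction is a sensible sanity check that the presheaf is well defined, but it is not needed once one has a presheaf in hand. So the two proofs of (i) land in the same place; yours is a more hands-on verification, the paper's is an abstract limit-creation observation.

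Part (ii), however, contains a genuine gap. You assert that each structure map $g_0^!$ in the cosimplicial diagram ``admits both a left and a right adjoint'' because $g_0$ is \'etale, and that the pushforwards $(g_0)_*$ preserve compact objects. Neither is correct for a general \'etale map. For \'etale $g_0$ one has $g_0^! \simeq g_0^*$, which admits a \emph{right} adjoint $(g_0)_*$, but $g_0^*$ does not generally admit a left adjoint (nor preserve limits); and $(g_0)_*$ does not preserve $\DCoh$ unless $g_0$ is proper, i.e.\ finite \'etale (consider $j\colon \AA^1 \setminus 0 \hookrightarrow \AA^1$: $j_* \O$ is not coherent). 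The face maps in the \v{C}ech nerve of an \'etale cover are \'etale but not proper, so your limit-to-colimit conversion fails both at the adjunction step and at the compact-preservation step. Moreover, your premise that $-\ohotimes_{k\ps{\bt}} k\pl{\bt}$ fails to commute with limits is also the wrong starting point: it \emph{does} commute with all limits, because $k\pl{\bt}\mod$ is a dualizable object of $\dgcatbig_{k\ps{\bt}}$ (\autoref{lem:dual-mod}), so $-\ohotimes_{k\ps{\bt}} k\pl{\bt}$ has a two-sided adjoint. That dualizability observation is the paper's (short and correct) proof of (ii), and it entirely bypasses the need to reorganize the descent diagram.
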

\begin{proof}\mbox{}\begin{enumerate}
  \item Note that any \'etale cover of $X_{et}$ restricts to an \'etale cover of $X_0$, and that a $k\ps{\bt}$-linear presheaf is a sheaf if and only if it is a sheaf forgetting the extra linear structure.  So, it suffices to show that $\DCoh$ and $\QCsh$ are sheaves on $(X_0)_{et}$; the former follows from the analogous theorem for $\QC$ and the local definition of $\DCoh$ (since $X_0$ is coherent), and the latter follows from the analogous theorem for $\QCsh$ (\autoref{thm:qcsh-sheaf}).
  \item It suffices to note that $- \ohotimes_{k\ps{\bt}} k\pl{\bt}\colon \dgcatbig_{k\ps{\bt}} \to \dgcatbig_{k\pl{\bt}}$ commutes with homotopy-limits, since $k\pl{\bt}\mod \in \dgcatbig_{k\ps{\bt}}$ is dualizable (c.f., \autoref{lem:dual-mod}).\qedhere
\end{enumerate}
\end{proof}

\section{Integral transforms for (Ind) Coherent Complexes}\label{app:coh-fmk}
We give here an exposition of the Tensor Product and Functor Theorems for $\QCsh$ of derived schemes.  As this is essentially a mild generalization of \cite{Lunts}, we will be brief.

\subsection{Fully faithful}
\begin{prop}\label{prop:coh-fmk-ff} Suppose $S$ is a regular \eqref{cond:star} derived stack, and $X$ and $Y$ \eqref{cond:star} derived stacks over $S$.  Then, exterior product over $S$ determines a well-defined and fully faithful functor
  \[ \boxtimes_S\colon \DCoh(X) \otimes_S \DCoh(Y) \longrightarrow \DCoh(X \times_S Y) \]
\end{prop}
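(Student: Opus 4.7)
The plan is to verify well-definedness and $\Perf(S)$-bilinearity of the functor, and then to prove fully faithfulness by a local reduction followed by a Künneth-type computation. For well-definedness, given $\F \in \DCoh(X)$ and $\G \in \DCoh(Y)$ I consider $\F \boxtimes_S \G = p_X^* \F \otimes_{\O_{X \times_S Y}} p_Y^* \G$. Pseudo-coherence is automatic, since pullback and derived tensor product preserve it. Boundedness is where the regularity hypothesis on $S$ enters: locally $S = \Spec R$ with $R$ regular has finite global dimension, so any pseudo-coherent complex over $X$ or $Y$ has finite Tor-dimension over $R$, which makes $\F \boxtimes_S \G$ bounded. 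The resulting bilinear functor is $\Perf(S)$-balanced (the two $\Perf(S)$-actions on $\DCoh(X) \times \DCoh(Y)$ agree after tensoring over $\Perf(S)$), so it factors through $\DCoh(X) \otimes_S \DCoh(Y)$.

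For fully faithfulness, I will use that, by the universal property of the relative tensor product of small idempotent-complete $\Perf(S)$-linear categories, the $\Perf(S)$-linear Hom in $\DCoh(X) \otimes_S \DCoh(Y)$ from $\F \otimes \G$ to $\F' \otimes \G'$ is computed as
\[ \RHom^{\otimes_S}_X(\F, \F') \otimes_{\O_S} \RHom^{\otimes_S}_Y(\G, \G'). \]
The task is then to show that the natural map from this complex to $\RHom^{\otimes_S}_{X \times_S Y}(\F \boxtimes_S \G, \F' \boxtimes_S \G')$ is an equivalence in $\QC(S)$. By \autoref{lem:rhom}, the formation of inner-Hom complexes is compatible with flat base-change on $S$, $X$, and $Y$, so the question is fppf-local; I reduce to the affine case $X = \Spec A$, $Y = \Spec B$, $S = \Spec R$. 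The assertion then becomes a classical Künneth-type formula: fixing $\F', \G'$ and viewing both sides as exact contravariant functors of $\F \in \DCoh(A)$ and $\G \in \DCoh(B)$, the map is manifestly an equivalence when $\F$ and $\G$ are perfect, since both sides then reduce to the explicit expression $\F^\dual \otimes_A \F' \otimes_R \G^\dual \otimes_B \G'$. The general pseudo-coherent case follows by approximating $\F$ and $\G$ by their finite-free stages, using the regularity of $R$ to control Tor-dimensions uniformly.

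The main obstacle is exactly this Künneth-type equivalence beyond the perfect case. Pseudo-coherent complexes are not compact in $\QC$, so one cannot naively commute $\RHom$ past filtered colimits. The regularity of $S$ plays a crucial dual role here: it forces both sides to be bounded above (since every pseudo-coherent complex has finite Tor-dimension over $R$), and it ensures that the Künneth spectral sequence computing each homotopy group converges over a finite range. With these inputs a truncation argument reduces the equivalence on each $\pi_i$ to the classical Künneth formula for finitely-generated modules over a regular ring, completing the proof.
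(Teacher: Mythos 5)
Your proposal is correct and follows essentially the same route as the paper's proof: reduce to the affine case via \autoref{lem:rhom}, establish the Künneth map on perfect objects, and then pass to general $\DCoh$ by writing $\F$ and $\G$ as realizations of finite frees and using the regularity of $S$ (bounded flat dimension) to make the Bousfield--Kan spectral sequences converge and to commute $\otimes_S$ past the resulting totalizations. The one place you are somewhat terse is the commutation of $-\otimes_S-$ with the cosimplicial totalizations computing $\RHom$, which is the crux of the argument; the paper makes this explicit by imposing connectivity/co-connectivity normalizations and checking the Bousfield--Kan $E^1$ page directly, but your invocation of finite global dimension and a truncation argument is the same mechanism.
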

\begin{proof} We first check that it is well-defined.  Since $\DCoh$ (with star pullback) is an fppf sheaf, the question is local on $X$ and $Y$ so that we may suppose $X = \Spec R$, $Y = \Spec R'$.  Since $S$ is assumed to have affine diagonal, $X$ and $Y$ are also affine over $S$ so that the pushforward is $t$-exact, etc.  Exterior product always preserves pseudo-coherence, and since $S$ is regular we conclude that it preserves being locally bounded since we may check so after pushforward to $S$.  

  Next we check that it is fully-faithful, i.e., that the exterior product
    \[ \RHom^{\otimes S}_{\QC(X)}(\F_X, \G_X) \otimes_{\O_S} \RHom^{\otimes S}_{\QC(Y)}(\F_Y, \G_Y) \longrightarrow \RHom^{\otimes S}_{\QC(X \times_S Y)}\left(\F_X \boxtimes_S \F_Y, \G_X \boxtimes_S \G_Y\right) \] is an equivalence in $\QC(S)$ for all $\F_X, \G_X \in \DCoh(X)$, and $\F_Y,\G_Y \in \DCoh(Y)$.  By \autoref{lem:rhom}, the claim is fppf local on $X, Y$ and $S$, so that we may assume they are all affine: $S = \Spec A$, $X = \Spec R$, $Y = \Spec R'$.
    
The claim is clear when $\F_X = \O_X$ and $\F_Y = \O_Y$, and so more generally whenever $\F_X$ and $\F_Y$ are perfect.    Shifting as necessary, we may suppose that $\G_X$, $\G_Y$, and $\G_X \boxtimes_S \G_Y$ are all co-connective (i.e., $\pi_i = 0$ for $i \geq 0$), while $\F_X$, $\F_Y$ (and hence $\F_X \boxtimes_S \F_Y$) are connective.  Then, $\F_X$ (resp., $\F_Y$) can be written as the geometric realizations of a diagram of finite free $R$-modules $P_\bullet$ (resp., $R'$-modules $P'_\bullet$); since exterior product preserves colimits, $\F_X \boxtimes_S \F_Y$ will then be the realization of the bisimplicial object $P_\bullet \boxtimes_S P'_\bullet$.  Under our connectivity assumptions, it is straightforward to check that the Bousfield-Kan spectral sequence for $\RHom_X(\F_X, \G_X) =\Tot\RHom_X(P_\bullet,\G_X)$ is convergent and that the $\RHom$ complex is co-connective:
\[E^1_{p,q} = \pi_p \RHom_X(P_q, \G_X) = (\pi_p \G_X)^{\oplus n_p} \Rightarrow \pi_{p-q} \RHom_X(\F_X,\G_X) \] where $n_p$ is the rank of the finite free $R$-module $P_p$.  Similarly, the $\RHom$ complexes on $Y$ and $X \times Y$ are also co-connective.

Next, note that that $- \otimes_S -$ commutes with totalizations of co-connective objects in each variables.  This follows by another Bousfield-Kan spectral sequence, since $S$ is regular so that it has bounded flat dimension.

Putting the above together, we may conclude
  \begin{align*} \RHom_X(\F_X, \G_X) \otimes_S \RHom_Y(\F_Y, \G_Y) &= \left(\Tot_a \RHom_X(\P_a, \G_X)\right) \otimes_S \left(\Tot_b \RHom_Y(\P'_b, \G_Y)\right) \\
    &= \Tot_{a,b} \left(\RHom_X(\P_a, \G_X) \otimes_S \RHom_Y(\P'_b, \G_Y)\right) \\
    &= \Tot_{a,b} \RHom_{X \times_S Y}(\P_a \boxtimes_S \P'_b, \G_X\boxtimes_S \G_Y)\\
    &= \RHom_{X \times_S Y}(\F_X \boxtimes_S \F_Y, \G_X\boxtimes_S \G_Y)\qedhere
  \end{align*}
\end{proof}

\subsection{Shriek preliminaries}
\begin{lemma}\label{lem:dualiz} Suppose $X$ is a \eqref{cond:starf} derived stack over $S = \Spec k$, $\F, \G \in \DCoh(X)$.  Then, there are natural equivalences
  \begin{enumerate}
    \item $\DD(\F \boxtimes \G) = \DD(\F) \boxtimes \DD(\G)$.
    \item $\omega_X \shotimes \G = \G$ 
  \end{enumerate}
\end{lemma}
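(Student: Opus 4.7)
My plan is to reduce both (i) and (ii) to a single Künneth-type formula for shriek pullback along projections. Specifically, for $X, Y$ satisfying \eqref{cond:starf} over $k$ and $\G \in \DCoh(Y)$, I claim there is a natural equivalence
\[ p_Y^!(\G) \isom \omega_X \boxtimes \G \in \QCsh(X \times Y), \]
where $p_Y\colon X \times Y \to Y$ and $p_X \colon X \times Y \to X$ are the projections. To establish this, I would use that $p_Y$ is the base-change of $f\colon X \to \Spec k$, which has finite Tor-dimension since $k$ is a field; base-change for relative dualizing complexes then yields $\omega_{p_Y} \isom p_X^* \omega_X$, and the formula $p_Y^!(\G) = \omega_{p_Y} \otimes p_Y^*(\G)$ (valid when $p_Y$ is of finite Tor-dimension) completes the identification. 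Taking $\G = \omega_Y$ gives the special case $\omega_{X \times Y} \isom \omega_X \boxtimes \omega_Y$.

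For (i), I would combine this Künneth with an internal-Hom refinement of \autoref{prop:coh-fmk-ff}. Its proof -- a Bousfield-Kan argument applied to a resolution of $\F$ by finite free modules -- in fact produces an equivalence of sheaf-Homs, not merely of global mapping complexes: for $\F \in \DCoh(X)$, $\G \in \DCoh(Y)$ and coherent $\F', \G'$, one obtains
\[ \HHom(\F, \F') \boxtimes \HHom(\G, \G') \stackrel{\sim}{\longrightarrow} \HHom(\F \boxtimes \G, \F' \boxtimes \G'). \]
Applying this with $\F' = \omega_X$, $\G' = \omega_Y$ and the identification $\omega_{X \times Y} \isom \omega_X \boxtimes \omega_Y$ produces $\DD(\F \boxtimes \G) \isom \DD\F \boxtimes \DD\G$.

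For (ii), interpreting $\F \shotimes \G = \Delta^!(\F \boxtimes \G)$ where $\Delta\colon X \to X \times X$, I would apply the Künneth with $\F = \omega_X$ and use functoriality of shriek pullback along the composite $X \xrightarrow{\Delta} X \times X \xrightarrow{p_2} X$, which is the identity on $X$:
\[ \omega_X \shotimes \G = \Delta^!(\omega_X \boxtimes \G) = \Delta^!(p_2^!\G) = (p_2 \circ \Delta)^!\G = \id^!\G = \G. \]

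The main obstacle is rigorously justifying the base-change $\omega_{p_Y} \isom p_X^* \omega_X$ in the derived $(\star_{\mathrm{F}})$ setting, which ultimately rests on the $(\infty,2)$-categorical compatibilities between $f_*$ and $f^!$ the paper systematically avoids spelling out. In the intended applications, where $X$ is smooth, one can bypass this by directly noting $\omega_X \isom \Omega^{\dim X}_X[\dim X]$ and $\omega_{p_Y} \isom \Omega^{\dim X}_{X \times Y/Y}[\dim X] \isom p_X^* \Omega^{\dim X}_X[\dim X]$, since $p_Y$ is smooth with relative cotangent bundle $p_X^* \Omega^1_X$.
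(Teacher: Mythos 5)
Your proposal is correct and takes essentially the same approach as the paper: both rest on the identity $p_2^!(\G) = \omega_X \boxtimes \G$, derive (ii) from the functoriality computation $\Delta^!(p_2)^!\G = (p_2 \circ \Delta)^!\G = \G$, and derive (i) by combining the Künneth for dualizing complexes with the fully faithful exterior product of \autoref{prop:coh-fmk-ff} and the formula $\DD(-) = \HHom^{\otimes_X}(-,\omega_X)$. The extra detail you supply—justifying the dualizing-complex base-change via flatness of $p_2$ (automatic since $k$ is a field), and observing that the Bousfield--Kan argument in \autoref{prop:coh-fmk-ff} upgrades to an identification of sheaf-Homs over $X\times Y$—fills in precisely what the paper leaves implicit.
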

\begin{proof}\mbox{}
  Note that $(p_2)^! \G = \omega_X \boxtimes \G$.  In particular, $\omega_{X^2}= (p_2)^! \omega_X = \omega_X \boxtimes \omega_X$.  Part (i) now follows from \autoref{prop:coh-fmk-ff} and the formula $\DD(-)=\HHom^{\otimes_X}(-,\omega)$.  Part (ii) follows by noting that
      \[ \omega_X \shotimes \G = \Delta^!(\omega_X \boxtimes \G) = \Delta^! (p_2)^! \G = \G \qedhere \]
\end{proof}

\begin{lemma}\label{lem:trace} Suppose $X$ is a \eqref{cond:starf} derived stack over $S = \Spec k$, and  $\F, \G \in \DCoh(X)$.  Then, there is a natural equivalence in $\QC(X)$
  \[ \F \shotimes \G \isom \HHom^{\otimes_X}_X(\DD\F, \G) \]
\end{lemma}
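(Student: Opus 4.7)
My plan is to use Grothendieck duality to convert $\shotimes = \Delta^!\circ\boxtimes$ into $\otimes = \Delta^*\circ\boxtimes$, where Lemma~\ref{lem:dualiz}(i) already tells us $\DD$ intertwines with $\boxtimes$, and then finish with tensor--Hom adjunction.

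The first step is to establish the compatibility $\DD_X\circ\Delta^! \simeq \Delta^*\circ\DD_{X^2}$ for the diagonal $\Delta\colon X\to X^2$. Since $X$ satisfies \eqref{cond:starf}, the diagonal is finite (in particular proper), so the pushforward compatibility $\Delta_*\circ\DD_X \simeq \DD_{X^2}\circ\Delta_*$ of Grothendieck duality holds; passing to right adjoints of both sides of this equivalence (using that $(\Delta^*,\Delta_*)$ and $(\Delta_*,\Delta^!)$ are the relevant adjoint pairs) gives the stated identity of functors $\DCoh(X^2)\to\DCoh(X)^\op$. I would also record that on $\DCoh(X)$ satisfying \eqref{cond:starf}, $\DD$ is an involution (i.e., $\DD\circ\DD\simeq\id$), which is standard Grothendieck duality.

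Applying the intertwining to $\F\boxtimes\G$ and invoking Lemma~\ref{lem:dualiz}(i) yields
\[ \DD(\F\shotimes\G) \;=\; \DD\Delta^!(\F\boxtimes\G) \;\simeq\; \Delta^*\DD(\F\boxtimes\G) \;\simeq\; \Delta^*(\DD\F\boxtimes\DD\G) \;=\; \DD\F\otimes_{\O_X}\DD\G. \]
Dualizing once more and using that $\DD$ is an involution,
\[ \F\shotimes\G \;\simeq\; \DD(\DD\F\otimes_{\O_X}\DD\G) \;=\; \HHom^{\otimes_X}_X(\DD\F\otimes_{\O_X}\DD\G,\,\omega_X). \]
Tensor--Hom adjunction in $\QC(X)$ then gives
\[ \HHom^{\otimes_X}_X(\DD\F\otimes\DD\G,\omega_X) \;\simeq\; \HHom^{\otimes_X}_X(\DD\F,\,\HHom^{\otimes_X}_X(\DD\G,\omega_X)) \;=\; \HHom^{\otimes_X}_X(\DD\F,\DD\DD\G), \]
and one more application of $\DD\DD\simeq\id$ identifies this with $\HHom^{\otimes_X}_X(\DD\F,\G)$, as desired.

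The main obstacle is the first step: the compatibility $\DD\Delta^!\simeq\Delta^*\DD$ (equivalently $\Delta_*\DD\simeq\DD\Delta_*$ for the proper map $\Delta$) is part of the derived Grothendieck duality package for $\QCsh$ on \eqref{cond:starf} stacks, which the introduction notes is not yet fully in the literature; all subsequent manipulations are formal adjunction once this is in hand. If one wants to avoid invoking it abstractly, the finiteness of $\Delta$ lets one use the explicit formula $\Delta^!(-)=\HHom_{X^2}(\Delta_*\O_X,-)$ and verify the swap directly against $\Delta^*(-)=\O_X\otimes_{\Delta_*\O_X}(-)$ (on the appropriate bimodule categories) via $\HHom(\Delta_*\O_X,\omega_{X^2})\simeq\Delta_*\omega_X$.
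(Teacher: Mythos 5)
Your route is genuinely different from the paper's and has a pleasant intermediate statement ($\DD(\F\shotimes\G)\simeq\DD\F\otimes\DD\G$, i.e.\ $\DD$ exchanges $\shotimes$ with $\otimes$), but as written it passes through objects on which your appeals to ``standard Grothendieck duality'' are not literally standard. The issue is that neither $\Delta^!$ nor $\Delta^*$ preserves $\DCoh$: for a finite diagonal $\Delta\colon X\to X^2$ with $X$ not smooth, $\Delta_*\O_X$ need not be perfect on $X^2$ and the closed immersion $\Delta$ need not be lci, so $\F\shotimes\G=\Delta^!(\F\boxtimes\G)$ and $\DD\F\otimes\DD\G=\Delta^*(\DD\F\boxtimes\DD\G)$ are in general only pseudo-coherent and $t$-bounded on one side, not bounded coherent. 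Consequently your Step~1 is mis-stated as an ``identity of functors $\DCoh(X^2)\to\DCoh(X)^\op$,'' and the ``passing to right adjoints'' derivation is circular in practice (one cleanly has $\DD\Delta^!\simeq\Delta^*\DD$ only because the paper \emph{defines} $f^!=\DD\circ f^*\circ\DD$ on $t$-bounded-above complexes). More seriously, your ``dualize once more'' step invokes $\DD\DD(\F\shotimes\G)\simeq\F\shotimes\G$ for an object that lies only in $D^+_c(X)$, which is outside where the involution is an out-of-the-box fact.

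This gap is repairable: since $\omega_X$ has finite injective dimension, biduality does extend to pseudo-coherent complexes that are cohomologically bounded below (one checks it cohomological degree by degree, where the relevant truncations reduce to $\DCoh$). If you state and use that extension the argument goes through. It is worth noting that the paper's proof is deliberately routed so that this extension is never needed: it stays on $X^2$, works with $\RHom_{X^2}(\Delta_*\O_X,-)$, and applies $\DD$ only in its perfect-pairing form $\RHom(\K,\L)\simeq\RHom(\DD\L,\DD\K)$ for $\K,\L\in\DCoh(X^2)$ (together with the $(\Delta_*,\Delta^!)$ and $(\Delta^*,\Delta_*)$ adjunctions and Lemma~\ref{lem:dualiz}). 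So each version buys something: yours is shorter and isolates the clean duality statement $\DD(\F\shotimes\G)\simeq\DD\F\otimes\DD\G$, while the paper's manipulations are longer but never leave $\DCoh$ when applying $\DD$.
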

\begin{proof} Since $\Delta$ is finite, we have a relative adjunction $(\Delta_*,\Delta^!)$ and we may rewrite
  \begin{align*}
    \F \shotimes \G &= \Delta^!(\F \boxtimes \G) \\
    &= (p_1)_* \Delta_* \HHom^{\otimes}_X\left( \O_X, \Delta^!(\F \boxtimes \G)\right) \\
    &= (p_1)_* \HHom^{\otimes}_{X^2}\left( \Delta_* \O_X, \F \boxtimes \G\right) \\
    \intertext{and since $\Delta_* \O_X \in \DCoh(X)$ we may apply coherent duality to rewrite this as}
    &= (p_1)_* \HHom^{\otimes}_{X^2}\left( \DD(\F \boxtimes \G), \DD \Delta_* \O_X\right) \\
    \intertext{Applying \autoref{lem:dualiz}(i)}
    &= (p_1)_* \HHom^{\otimes}_{X^2}\left( \DD(\F) \boxtimes \DD(\G), \DD \Delta_* \O_X\right) \\
    &= (p_1)_* \HHom^{\otimes}_{X^2}\left( \DD(\F) \boxtimes \O_X, \HHom^{\otimes}_{X^2}\left( \O_X \boxtimes \DD(\G), \DD \Delta_* \O_X\right)\right) \\
    \intertext{Undoing the above operations on the inner-$\HHom^{\otimes}$:}
    &= (p_1)_* \HHom^{\otimes}_{X^2}\left( \DD(\F) \boxtimes \O_X, \HHom^{\otimes}_{X^2}\left( \Delta_* \O_X, \DD(\O_X \boxtimes \DD(\G))\right)\right) \\
    &= (p_1)_* \HHom^{\otimes}_{X^2}\left( \DD(\F) \boxtimes \O_X, \Delta_* \left(\omega_X \shotimes \G\right)\right) 
    \intertext{Applying the relative $(\Delta^*,\Delta_*)$ adjunction}
    & = (p_1)_* \Delta_* \HHom^{\otimes}_{X}\left(\DD(\F) \otimes \O_X, \omega_X \shotimes \G\right)\\
    \intertext{Finally we complete by \autoref{lem:dualiz}(ii)}
    &= \HHom^{\otimes}_{X}\left(\DD(\F), \G\right)\qedhere
  \end{align*}
\end{proof}

\begin{remark}\autoref{lem:trace} admits the following reformulation: Define
  \[ \ev\colon \QCsh(X) \otimes \QCsh(X) \to k\mod \qquad \F \otimes \G \mapsto \RGamma(\F \shotimes \G) = \RHom^{\otimes k}_{X^2}(\Delta_* \O_X, \F \boxtimes \G) \] Then, the functor $\DD(-)\colon \DCoh(X) \to \QCsh(X)^\dual = \Fun^{ex}(\DCoh(X), k\mod)$ is characterized by
  \[ \RHom^{\otimes k}_{X}(\DD(\F), -) = \ev(\F \otimes -) = \RGamma(\F \shotimes -) = \RHom^{\otimes k}_{X^2}(\Delta_* \O_X, \F \boxtimes -) \]
  Grothendieck duality implies that this is part of a duality datum giving $\QCsh(X) \isom \QCsh(X)^\dual$.
\end{remark}

\begin{theorem}\label{thm:coh-fmk} Suppose $S = \Spec k$ is a perfect field; that $X, Y$ are almost finitely-presented \eqref{cond:starf} stacks over $S$; and that $Z_X \subset X$, $Z_Y \subset Y$ are closed subsets.  Then, there are equivalences of categories
  \[ \xymatrix{
  \Fun^L_k(\QCsh_{Z_X}(X), \QCsh_{Z_Y}(Y)) & \ar[l]^-{\sim}_-{\Phish} \QCsh_{Z_X \times_S Z_Y}(X \times_S Y) \\
  \QCsh_{Z_X}(X) \ohotimes_k \QCsh_{Z_Y}(Y) \ar[ur]^-{\sim}_-{\boxtimes} \ar[u]_-{\sim}^-{\Psish}   } \]
  where 
  \begin{itemize}
    \item $\boxtimes$ denotes external tensor product over $S$, and restricts to an equivalence on compact objects
  \[ \boxtimes \colon \DCoh_{Z_X}(X) \otimes_k \DCoh_{Z_Y}(Y) \stackrel{\sim}\longrightarrow \DCoh_{Z_X \times_S Z_Y}(X \times_S Y) \] 
\item  $\Phish(\K) = (p_2)_*\left(p_1^!(-) \shotimes \K \right)$ is the $!$-Fourier-Mukai functor with kernel $\K$.  
\item $\Psish(\F \otimes \G) = \Hom_{X/S}(\DD(\F), -) \otimes_k \G$ for $\F, \G$ compact objects.
  \end{itemize}

  Restricting to the case $X=Y$:
  \begin{itemize}
    \item $\id_{\QCsh_Z(X)} = \Phish(\omega_{\Delta,Z})$, where $\omega_{\Delta,Z} = \Delta_* \RGamma_Z(\omega_X)$ and $\omega_X = \DD(\O_X)$ is the dualizing complex and $\RGamma_Z(-)\colon \QCsh(X) \to \QCsh_Z(X)$ is (the Ind-coherent version of) local cohomology along $Z$.
    \item More generally, $\Phish(\Delta_* \F) = \F \shotimes -$.
    \item $\ev(\Phish(\K)) = \Hom_{\QCsh(X^2)}(\Delta_* \O_X, \K)$ (no support condition!).
  \end{itemize}
\end{theorem}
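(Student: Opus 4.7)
The plan is to build the equivalence in four formal layers: first a tensor product theorem for $\DCoh$ without supports, then Ind-completion, then use of Grothendieck--Serre self-duality to extract the functor category, and finally inclusion of supports together with identification of the diagonal kernel.

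The technical core will be essential surjectivity of $\boxtimes\colon\DCoh(X)\otimes_k\DCoh(Y)\to\DCoh(X\times_k Y)$ (fully faithfulness being \autoref{prop:coh-fmk-ff} applied with $S=\Spec k$, which is regular). I would reduce by two applications of \autoref{lem:coh-red} to generators that are pushed forward from the underlying reduced substacks; since $k$ is perfect, fiber products of reduced finite-type things remain reduced, so it suffices to show that coherent sheaves on a reduced finite-type \eqref{cond:starf} $k$-stack product are generated by exterior products. On an affine atlas this is a classical statement about coherent modules over $A\otimes_k B$, and smooth descent together with the affine-diagonal hypothesis glues the local statement. Ind-completion and the compatibility $\Ind(\C\otimes_k\D)=\Ind\C\ohotimes_k\Ind\D$ then upgrade this to $\QCsh(X)\ohotimes_k\QCsh(Y)\isom\QCsh(X\times_k Y)$.

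For the functor-category equivalence I would exhibit $\QCsh(X)$ as self-dual in $\dgcatbig_k$: evaluation is $\ev(\F\otimes\G)=\RGamma(\F\shotimes\G)$, coevaluation is the image under $\boxtimes\circ(\DD\otimes\id)$ of $\Delta_*\omega_X$, and the two triangle identities come from Grothendieck duality together with \autoref{lem:dualiz}(ii) ($\omega_X\shotimes-\isom\id$) and \autoref{lem:trace} ($\RGamma(\F\shotimes-)\isom\RHom_X(\DD\F,-)$). Feeding this datum into the formal chain
\[ \Fun^L_k(\QCsh(X),\QCsh(Y))\isom\QCsh(X)^\dual\ohotimes_k\QCsh(Y)\isom\QCsh(X\times_k Y) \]
produces the equivalence, and tracing through rewrites the action of $\K$ on $\F$ as $(p_2)_*(p_1^!\F\shotimes\K)=\Phish_\K(\F)$, while $\Psish(\F\otimes\G)$ on compact tensors is read off as the explicit form of the inverse on generators.

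Support conditions are slotted in using \autoref{lem:dcohz-supt} ($\QCsh_Z=\Ind\DCoh_Z$) and the observation that $\boxtimes$ respects supports and that $\Phish$ sends kernels supported on $Z_X\times Z_Y$ into functors between supported subcategories by base-change of $p_1^!$ and $(p_2)_*$; the identity functor on $\QCsh_Z(X)$ corresponds to $\RGamma_Z$ applied to the unsupported coevaluation, giving $\omega_{\Delta,Z}=\Delta_*\RGamma_Z\omega_X$. The formulae $\Phish(\Delta_*\F)=\F\shotimes-$ and $\ev\circ\Phish(\K)=\RHom_{\QCsh(X^2)}(\Delta_*\O_X,\K)$ then fall out of base-change in the two Cartesian squares built from $\Delta$ and the projections. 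The main obstacle will be the essential surjectivity step above: reducing from the \eqref{cond:starf} derived DM-stack setting to a classical statement about coherent modules over $A\otimes_k B$ requires simultaneously killing derived/nilpotent structure, descending along a smooth atlas using the affine diagonal, and invoking perfectness of $k$ to preserve reducedness under fiber product; once past that, the rest is formal manipulation of a self-duality datum whose unitality is pinned down by \autoref{lem:trace}.
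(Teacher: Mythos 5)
Your overall architecture — fully faithfulness from \autoref{prop:coh-fmk-ff}, self-duality of $\QCsh(X)$ from $\DD(-)$ to produce $\Psish$, the identification of $\Phish$ via base-change and \autoref{lem:trace}, and densifying support conditions via \autoref{lem:coh-red} — matches the paper's proof closely. But there is a genuine gap in the step you yourself flag as the core: essential surjectivity of $\boxtimes$.

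You reduce via \autoref{lem:coh-red} to the reduced underlying classical stacks, use perfectness of $k$ to keep the product reduced, and then assert that on an affine atlas what remains is ``a classical statement about coherent modules over $A\otimes_k B$.'' No such classical statement exists. For reduced (but singular) finite-type $A$ and $B$, it is not at all evident that every coherent $A\otimes_k B$-module lies in the thick closure of external products $M\otimes_k N$, and this is precisely the nontrivial content of the theorem. The paper's proof of \autoref{prop:fmk-coh-surj} does not stop at ``reduced''; after reducing to reduced discrete schemes it invokes \autoref{lem:u-z} to cut the schemes into irreducible pieces, and then runs a Noetherian induction on dimension using the fact that a reduced integral finite-type scheme over a perfect field has a dense open regular locus. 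The base case is the regular one, where $\DCoh=\Perf$ and the product of regular finite-type $k$-schemes is again regular over a perfect field, so the already-known tensor product theorem for $\Perf$ applies; \autoref{lem:u-z} then feeds the closed complement of smaller dimension back into the induction. Without this devissage step — generic regularity plus the open-closed Verdier sequence argument of \autoref{lem:u-z} — the reduction to the reduced case leaves the essential surjectivity unproved. Note also that perfectness of $k$ is doing double duty in the actual proof: once to ensure products of regular schemes are regular, and once to ensure reduced integral schemes are generically regular; your proposal only invokes it for preservation of reducedness, which by itself is not enough.
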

\begin{proof} 
  The Grothendieck Duality anti-equivalence respects supports and so restricts to $\DD(-)\colon \DCoh_{Z_X}(X)^\op \isom \DCoh_{Z_X}(X)$.  This implies that $\QCsh_{Z_X}(X)$ is self-dual over $\QC(S)$ via $\DD(-)$, so that $\Psish$ is an equivalence (it does not even matter that the target category is of geometric origin).  We will now verify commutativity of the diagram, the indicated formulas, and only finally that the relevant maps are equivalences.

  \smallskip

{\noindent}{\bf Diagram commutes: }\\
Let us prove that the diagram commutes up to natural equivalence.  Since each of $\boxtimes$, $\Psish$, and $\Phish$ is colimit preserving it suffices to give a natural equivalence $\Psish_{\F \otimes \G} = \Phish_{\F \boxtimes \G}$ for $\F \in \DCoh(X)$, $\G \in \DCoh(Y)$.  Since both functors are colimit preserving, we may check this for $T \in \DCoh(X)$:
  \begin{align*}
    \Phish_{\F \boxtimes \G}(T) &= (p_2)_*\left( (p_1)^! T \shotimes (\F \boxtimes \G)\right) \\
&= (p_2)_*\left((T \boxtimes \omega_Y) \shotimes (\F \boxtimes \G)\right) \\
&= (p_2)_* \RHom^{\otimes}_{X \times Y}\left( \DD(\F) \boxtimes \DD(\G), T \boxtimes \omega_Y\right) \\
&= (p_2)_*\left(\RHom^{\otimes}_X(\DD(\F), T) \boxtimes \RHom^{\otimes}_Y(\DD(\G), \omega_Y) \right) \\
&= (p_2)_*\left(\RHom^{\otimes}_X(\DD(\F), T) \boxtimes \RHom^{\otimes}_Y(\O_Y,\G) \right) \\
&= \RGamma\left(\RHom^{\otimes}_X(\DD(\F),T)\right) \otimes_k \G \\
&= \Map_X(\DD(\F), T) \otimes_k \G \\
&= \Psish_{\F \otimes \G}(T)
  \end{align*}
  Here have have implicitly used \autoref{lem:dualiz}, \autoref{lem:trace}, and coherent duality.

\smallskip

{\noindent}{\bf Formulaire:}
We first prove that $\Phish_{\Delta_* \F}(-) = - \shotimes \F$.  Extending by colimits, it suffices to note that for $T,\F \in \DCoh(X)$
\begin{align*}
  \Phish_{\Delta_* \F}(T) &= (p_2)_*\left( (p_1)^! T \shotimes \Delta_* \F\right) \\
  &= (p_2)_* \RHom^{\otimes}_{X^2}\left( \DD(T) \boxtimes \O_Y, \Delta_* \F \right) \\
  &= (p_2)_* \Delta_* \RHom^{\otimes}_X\left( \Delta^*(\DD(T) \boxtimes \O_Y), \F\right) \\
  &= \RHom^{\otimes}_X(\DD(T), \F) \\
  & = T \shotimes \F
\end{align*}
By \autoref{lem:dualiz}(ii), it follows that $\Phish_{\Delta_* \omega_X} = \id_{\QCsh(X)}$.  More generally, setting $\omega_{\Delta,Z} = \Delta_* \RGamma_Z(\omega_X)$, we see that
\[ \Phish_{\omega_{\Delta,Z}}(T) = T \shotimes \RGamma_Z(\omega_X) = \RGamma_Z(T) \shotimes \omega_X = \RGamma_Z(T). \]  Since $\RGamma_Z$ is the identify functor on $\QCsh_Z(X)$, we obtain $\Phish_{\omega_{\Delta,Z}}=\id_{\QCsh_Z(X)}$ in case of supports.

To check the formula for the trace, it suffices (since both sides preserve colimits in both variables) to check it in case $\K = \F \boxtimes \G$ with $\F, \G \in \DCoh(X)$. Applying \autoref{lem:trace} we see that
\[ \ev\left(\Phish_{\K}\right) = \ev\left(\Psish_{\F \otimes \G}\right) = \Map_X\left(\DD(\F), \G\right) = \RGamma\left( \F \shotimes \G \right) = \Map_{\QCsh(X^2)}\left(\Delta_* \O_X, \F \boxtimes \G \right) \]

  \smallskip

{\noindent}{\bf Equivalences: }\\
Since the diagram commutes and $\Psish$ is an equivalence, it suffices to show that $\boxtimes$ is an equivalence.  By \autoref{prop:coh-fmk-ff} it preserves compact objects and is fully faithful.  It suffices to show that it is essentially surjective on compact objects.  In \autoref{prop:fmk-coh-surj} below, we we handle the case without support conditions.  Let us show how this implies the general case:
\[ \xymatrix{
\DCoh(Z_X) \otimes \DCoh(Z_Y) \ar[d] \ar[r]^{\sim} &\DCoh(Z_X \times Z_Y) \ar[d] \\
\DCoh_{Z_X}(X) \otimes \DCoh_{Z_Y}(Y)  \ar[r] & \DCoh_{Z_X \times Z_Y}(X \times Y)  }\]
We have seen that the bottom horizontal arrow is fully faithful, so since both categories are stable and idempotent complete it suffices to show that it has dense image.  We have seen that the the top horizontal arrow is an equivalence.  The right vertical arrow has dense image by \autoref{lem:coh-red}.  Consequently, the bottom horizontal arrow has dense image as desired.
\end{proof}

\subsection{Devissage}
\begin{lemma}\label{lem:u-z} Suppose $X$ is a quasi-compact quasi-separated scheme and $U \subset X$ is a quasi-compact open, with closed complement $Z = X - U$.  Suppose that $\boxtimes$ is an equivalence for the pairs $(U,Y)$ and $(Z,Y)$.  Then, it is an equivalence for $(X, Y)$.
\end{lemma}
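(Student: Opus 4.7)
The plan is a standard devissage via the recollement/localization sequence associated to the open-closed decomposition $Z \hookrightarrow X \hookleftarrow U$, combined with the stability of Drinfeld--Verdier sequences under tensor product (\autoref{lem:small-tensor}). Let $j\colon U \to X$ denote the open immersion. By \autoref{thm:cpltn} and \autoref{lem:dcohz-supt}, together with the fact that $j^*=j^!$ on $\QCsh$ for an open immersion of finite Tor-dimension, we have a cofiber sequence in $\Pr^L$
\[
\QCsh_Z(X) \longrightarrow \QCsh(X) \stackrel{j^!}{\longrightarrow} \QCsh(U),
\]
along functors preserving compact objects, and on the level of compact objects a Drinfeld--Verdier sequence $\DCoh_Z(X) \to \DCoh(X) \to \DCoh(U)$.

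The first main step is to apply the functor $-\ohotimes_k \QCsh(Y)$ to this sequence. Since $\QCsh(Y)$ is dualizable over $k$ (indeed compactly generated, hence self-dual up to Grothendieck duality as recorded in the proof of \autoref{thm:coh-fmk}), \autoref{lem:small-tensor} (together with the $\Ind$-completed statement in its proof) shows that the resulting sequence
\[
\QCsh_Z(X)\ohotimes_k\QCsh(Y) \longrightarrow \QCsh(X)\ohotimes_k\QCsh(Y) \longrightarrow \QCsh(U)\ohotimes_k\QCsh(Y)
\]
is again a cofiber sequence in $\Pr^L$ along compact-preserving maps, whose compact objects form a Drinfeld--Verdier sequence of small idempotent-complete $\infty$-categories.

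The second main step is to compare with the analogous geometric sequence for $X\times Y$. Namely, the open-closed decomposition $Z\times Y \hookrightarrow X\times Y \hookleftarrow U \times Y$ gives a cofiber sequence
\[
\QCsh_{Z\times Y}(X\times Y) \longrightarrow \QCsh(X\times Y) \longrightarrow \QCsh(U\times Y),
\]
and the exterior product functors $\boxtimes$ assemble into a natural map of cofiber sequences from the previous display to this one, essentially because the open immersion $j\times\mathrm{id}_Y$ pulled back from $j$ is compatible with exterior product (base change for $j^!$). By hypothesis the outer two vertical maps are equivalences, namely for the pair $(U,Y)$ on the right and, after identifying $\QCsh_Z(X)\isom\QCsh(Z)$ (and $\QCsh_{Z\times Y}(X\times Y)\isom \QCsh(Z\times Y)$) via \autoref{thm:cpltn}, for the pair $(Z,Y)$ on the left. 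Since cofibers are preserved, the middle map $\boxtimes\colon \QCsh(X)\ohotimes_k\QCsh(Y)\to\QCsh(X\times Y)$ is an equivalence, and hence so is its restriction to compact objects $\DCoh(X)\otimes_k\DCoh(Y)\to\DCoh(X\times Y)$.

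The main technical obstacle is bookkeeping the compatibility of the exterior product $\boxtimes$ with the recollement data: one must check that $j^!$ (for the open immersion) commutes with $\boxtimes$ in the appropriate sense, and that the identification $\QCsh_Z(X)\isom\QCsh(Z)$ of \autoref{thm:cpltn} is compatible with products. Both reduce to flat base change for $j^*=j^!$ and the proper base change encoded in the definition of $\QCsh$ of a completion via Cech nerves (\autoref{thm:adams}), so no new ideas are required beyond what has already been established in the preceding sections.
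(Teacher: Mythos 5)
The overall strategy — devissage via the open--closed recollement $Z \hookrightarrow X \hookleftarrow U$, tensoring the resulting Drinfeld--Verdier (cofiber) sequence with $\QCsh(Y)$ via \autoref{lem:small-tensor}, and comparing with the analogous sequence for $X \times Y$ — is exactly the paper's, so in spirit the proposal is on target. However, there is a genuine gap in your justification of the left-hand vertical equivalence.

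You write that \autoref{thm:cpltn} identifies $\QCsh_Z(X) \isom \QCsh(Z)$, and similarly $\QCsh_{Z \times Y}(X \times Y) \isom \QCsh(Z \times Y)$, and that the $(Z,Y)$ hypothesis then finishes the job. But \autoref{thm:cpltn} gives $\QCsh_Z(X) \isom \QCsh(\sXZ)$, where $\sXZ$ is the \emph{formal completion}, not $Z$ itself. These categories are genuinely different: the compact objects $\DCoh_Z(X)$ include complexes like $\O_X/\I_Z^2$ which are not $\O_Z$-modules, so there is no equivalence $\QCsh_Z(X) \isom \QCsh(Z)$ for a proper closed subscheme $Z \subsetneq X$. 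Consequently the $(Z,Y)$ hypothesis does not, via \autoref{thm:cpltn}, directly give you that the map
\[
\boxtimes\colon \QCsh_Z(X)\ohotimes_k\QCsh(Y) \longrightarrow \QCsh_{Z\times Y}(X\times Y)
\]
is an equivalence.

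The step can be repaired, and this is where the paper's argument diverges from yours: rather than claiming the left vertical is an equivalence via completion, the paper only extracts from the $(Z,Y)$ hypothesis and \autoref{lem:coh-red} that the image of $\DCoh_Z(X)\otimes_k\DCoh(Y)$ in $\DCoh_{Z\times Y}(X\times Y)$ is \emph{dense}, and then runs a ``five-lemma''-style argument for dense subcategories of the Verdier--Drinfeld sequences, using full faithfulness of $\boxtimes$ (\autoref{prop:coh-fmk-ff}) on the outer terms. If you prefer to keep your formulation with an honest equivalence on the left, the correct justification is: $\boxtimes$ on the left is fully faithful by \autoref{prop:coh-fmk-ff}; by \autoref{lem:coh-red} (filtering by powers of the ideal sheaf), $\DCoh(Z)$ is dense in $\DCoh_Z(X)$ and $\DCoh(Z\times Y)$ is dense in $\DCoh_{Z\times Y}(X\times Y)$; so the $(Z,Y)$ hypothesis forces $\boxtimes$ on the left to have dense image, which together with full faithfulness and idempotent-completeness makes it an equivalence. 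Without one of these repairs, the claimed identification via \autoref{thm:cpltn} is false, and your argument as written does not close.
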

\begin{proof}
  Observe that $\DCoh(Z \times_S Y) \to \DCoh_{Z \times_k Y}(X \times_S Y)$ has dense image by \autoref{lem:coh-red} and filtering by powers of the ideal sheaf of $Z$.  Considering the diagram
  \[ \xymatrix{\DCoh(Z) \otimes_k \DCoh(Y) \ar[d] \ar[r] & \DCoh(Z \times_S Y) \ar[d] \\
  \DCoh_Z(X) \otimes_k \DCoh(Y) \ar[r] & \DCoh_{Z \times_k Y}(X \times_S Y) } \] we see that the right vertical arrow has dense image; since the top horizontal arrow does by assumption, so does the bottom horizontal arrow.

  Consider the diagram
  \[ \xymatrix{
  \DCoh_Z(X) \otimes_k \DCoh(Y) \ar@{->>}[d] \ar[r] & \DCoh(X) \otimes_k \DCoh(Y) \ar[d] \ar[r] & \DCoh(U) \otimes_k \DCoh(Y) \ar[d]^{\sim} \\
\DCoh_{Z \times_S Y}(X \times_S Y) \ar[r] & \DCoh(X \times_S Y) \ar[r] & \DCoh(U \times_S Y) } \]
We claim both rows are Verdier-Drinfeld sequences.  For the bottom row, this is the usual localization sequence of a closed subset for $\DCoh$.  For the top row, reduce to the usual localization sequence by \autoref{lem:small-tensor}.
 Set $ \A = \langle \im \boxtimes_{X,Y}\rangle \subset \DCoh(X \times_S Y)$. We will show that $\A = \DCoh(X \times_S Y)$, using the following categorical version of the ``$5$-lemma'':

Examining the left-most arrow, we see that $\A$ contains $\DCoh_{Z \times_S Y}(X \times_S Y)$.  Letting $\ol{A}$ denote its image in the Verdier quotient $\DCoh(U \times_S Y)$, it suffices to show that $\ol{\A}$ is dense in $\DCoh(U \times_S Y)$. Since the right-most vertical arrow is an equivalence, this follows from observing that $\DCoh(X) \otimes_k \DCoh(Y) \to \DCoh(U) \otimes_k \DCoh(Y)$ has dense image.
\end{proof}

\begin{prop}\label{prop:fmk-coh-surj} Suppose $k$ is a perfect field, $S = \Spec k$, and that $X$, $Y$ are almost finitely-presented \eqref{cond:starf} derived stack over $S$.  Then, the exterior product induces equivalences
  \[ \boxtimes \colon \DCoh(X) \otimes_{k} \DCoh(Y) \stackrel{\sim}{\longrightarrow} \DCoh(X \times Y) \]
  \[ \boxtimes \colon \Ind\DCoh(X) \ohotimes_{k} \Ind\DCoh(Y) \stackrel{\sim}{\longrightarrow} \Ind\DCoh(X \times Y) \]
  This remains true with support conditions.
\end{prop}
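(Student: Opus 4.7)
The plan is to combine the fully-faithfulness already established in \autoref{prop:coh-fmk-ff} with an essential-surjectivity argument, then extract the other assertions formally. The $\Ind$-completed equivalence follows from the small one via the identity $\Ind(\C \otimes \C') \isom \Ind \C \ohotimes \Ind \C'$ in $\dgcatbig_k$. The version with support conditions follows from the unrestricted version by the same diagram-chase used at the close of \autoref{thm:coh-fmk}: the functor $\DCoh(Z_X)\otimes\DCoh(Z_Y) \to \DCoh(X)\otimes\DCoh(Y)$ lands in the support subcategory and $\DCoh(Z_X \times Z_Y) \to \DCoh_{Z_X \times Z_Y}(X \times_k Y)$ has dense image by \autoref{lem:coh-red}. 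So it suffices to show that the fully faithful functor $\boxtimes\colon \DCoh(X)\otimes_k \DCoh(Y)\to \DCoh(X\times_k Y)$ has dense image.

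Applying \autoref{lem:coh-red} to the target, $\DCoh(X\times_k Y)$ is triangulated-generated by pushforwards from $(\pi_0 X)_\mathrm{red}\times(\pi_0 Y)_\mathrm{red}$, and these factor through exterior products of such classes on each factor. So I may assume from the outset that $X$ and $Y$ are discrete reduced $(\star_F)$ DM stacks, almost of finite presentation over $k$. Now I argue by nested Noetherian induction on $\dim X$ (outer) and $\dim Y$ (inner). Because $k$ is perfect, any such $X$ admits a dense smooth open substack $U\subset X$ whose closed complement $Z$ (reduced-induced) has strictly smaller dimension. The open-closed devissage \autoref{lem:u-z} --- whose proof uses only the standard localization cofiber sequence for $\DCoh$ together with \autoref{lem:small-tensor}, and which carries over verbatim from schemes to $(\star_F)$ DM stacks --- reduces the claim for $(X,Y)$ to the claims for $(Z,Y)$ and $(U,Y)$. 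The first is handled by the outer induction hypothesis; for the second, with $U$ smooth, the symmetric inner induction on $Y$ reduces further to the case where both factors are smooth.

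In the smooth case, $\DCoh(X)=\Perf(X)$, $\DCoh(Y)=\Perf(Y)$, and $X\times_k Y$ is again smooth so $\DCoh(X\times_k Y)=\Perf(X\times_k Y)$. Since $X$ and $Y$ are perfect stacks by hypothesis $(\star_F)$, the Ben-Zvi--Francis--Nadler tensor product theorem for $\Perf$ yields $\Perf(X)\otimes_k \Perf(Y) \stackrel{\sim}\longrightarrow \Perf(X\times_k Y)$, which is precisely the required essential surjectivity and completes the induction.

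The main potential obstacle lies in the devissage step: one must check that \autoref{lem:u-z} genuinely extends from schemes to $(\star_F)$ DM stacks (so that one can iterate the open-closed induction in the DM setting), and that generic smoothness for reduced almost-finitely-presented DM stacks over a perfect field is available. Both are essentially formal --- the former because the proof of \autoref{lem:u-z} only invokes the localization cofiber sequence and \autoref{lem:small-tensor}, and the latter by pulling back the scheme-theoretic statement along an étale atlas --- but they are the junctures where carelessness about the stacky versus the scheme-theoretic setting could derail the argument.
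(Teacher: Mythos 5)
Your proof is correct and the overall skeleton (fully-faithfulness from \autoref{prop:coh-fmk-ff}, a reduction to reduced discrete objects, generic smoothness over a perfect base, the open--closed devissage \autoref{lem:u-z}, and the BFN tensor product theorem for $\Perf$ in the smooth case) is the same as the paper's. But you take a genuinely different route to set up the induction: the paper first invokes \'etale descent for $\Ind\DCoh$ (\autoref{thm:qcsh-sheaf}) together with dualizability of $\Ind\DCoh(Y)$ over $k\mod$ to reduce the entire claim to the case of \emph{affine derived schemes}, and only then runs the Noetherian induction (with an intermediate reduction to integral schemes). You instead run the induction directly on $(\star_{\mathrm{F}})$ DM stacks, skipping both the descent step and the integral reduction, replacing the latter by the observation that the singular locus of a reduced finite-type DM stack over a perfect field is closed, nowhere dense, and of strictly smaller dimension.

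The trade-off is the one you flag yourself. The paper's detour through affines means that \autoref{lem:u-z}, the $\DCoh$-localization cofiber sequence for an open--closed decomposition, the $\mathcal{I}$-adic filtration argument, and $(\pi_0 (X\times Y))_{\mathrm{red}}=(\pi_0 X)_{\mathrm{red}}\times(\pi_0 Y)_{\mathrm{red}}$ over a perfect field only need to be deployed for Noetherian schemes, where they are completely standard. Your argument needs each of these in the DM-stack setting (together with openness and density of the smooth locus, obtained via an \'etale atlas, and the fact that a quasi-compact open substack of a $(\star_{\mathrm{F}})$ stack is again $(\star_{\mathrm{F}})$ and perfect); none of these is hard, but the paper does not supply them in that generality, so a careful write-up would have to say a few words at each of those junctures. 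In exchange you get a shorter induction (no reduction to integral components, no descent bookkeeping) and avoid needing to know in advance that $\Ind\DCoh(Y)$ is dualizable over $k\mod$, which the paper borrows from \autoref{lem:dual-mod} to commute $-\ohotimes_k\Ind\DCoh(Y)$ past the descent totalization. Both are sound; yours is a legitimate streamlining provided you are prepared to verify the stacky forms of the ingredients.
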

\begin{proof}  We have seen how to reduce the case with support conditions to that without in \autoref{thm:coh-fmk}.  Also, note that it suffices to prove either the small or the Ind-completed version.

Suppose $U_\bullet \to X$ is an \'etale cover, so that $\Ind\DCoh X = \Tot\left\{\Ind\DCoh U_\bullet\right\}$ by \autoref{thm:qcsh-sheaf}; since $U_\bullet \times Y \to  X \times Y$ is again an \'etale cover, we also have $\Ind \DCoh (X \times Y) = \Tot\left\{\Ind\DCoh U_\bullet \times U\right\}$. Since $\Ind\DCoh(Y)$ is dualizable over $k\mod$ by \autoref{lem:dual-mod}, $- \ohotimes_k \Ind\DCoh(Y)$ preserves arbitrary limits.  Consequently, we have a diagram of equivalences
  \begin{align*} \Ind\DCoh(X) \ohotimes_k \Ind\DCoh(Y) &\stackrel{\pi^* \otimes \id}\longrightarrow \Tot\left\{\Ind\DCoh(U_\bullet)\right\} \ohotimes_k \Ind\DCoh(Y) \\
    & \stackrel{\sim}\longrightarrow \Tot\left\{\Ind\DCoh(U_\bullet \times Y)\right\} \stackrel{(\pi,\id)^*}\longleftarrow \Ind\DCoh(X \times Y) \end{align*}
Exterior product commutes with finite Tor-dimension pullbacks, so we conclude that our claim is local on $X$.  Similarly, it is local on $Y$.  Consequently, we may reduce to the case of $X$ and $Y$ affine derived schemes.

We will now prove the small, idempotent complete, variant.  Since $\boxtimes$ is fully faithful by \autoref{prop:coh-fmk-ff}, it suffices to prove that it is essentially surjective.  Since both the tensor product and $\DCoh(X \times_S Y)$ are stable and idempotent complete, it suffices to show that the image of $\boxtimes$ is \emph{dense} in the sense that its thick-closure is the whole category.

  \smallskip

  {\noindent}{\bf Step 1. Case of $X$, $Y$ regular (discrete) schemes:}\\ The analogous statement is well-known (see, e.g., T\"oen or \cite{BFN}) with $\DCoh$ replaced by $\Perf$ throughout.  Since $X$, $Y$ are regular we have $\DCoh(X) = \Perf(X)$, $\DCoh(Y) = \Perf(Y)$. It remains to observe that $X \times_S Y$ is again regular, since $X, Y$ are finite-type over a perfect field $k$. Consequently, $\DCoh(X \times_S Y) = \Perf(X \times_S Y)$, and we're done.

  \smallskip

  {\noindent}{\bf Step 2. Reduction to the case $X$, $Y$ reduced (discrete) schemes:}\\
  Consider the natural map $i\colon (\pi_0 X)_\red \to X$.  Under our finiteness hypotheses, it is proper and consequently we obtain a functor $i_*\colon \DCoh( (\pi_0 X)_\red) \to \DCoh(X)$.  The standard filtration argument shows that every object of $\DCoh(X)$ admits a filtration with associated graded in the image of $i_*$, from which it follows that $i_*\colon \DCoh( (\pi_0 X)_\red) \to \DCoh(X)$ has dense image.
  
  Consider the diagram
  \[ \xymatrix{ \DCoh(X) \otimes_{k} \DCoh(Y) \ar[r]^{\boxtimes} & \DCoh(X \times_S Y) \\ \DCoh( (\pi_0 X)_\red) \otimes_k \DCoh( (\pi_0 Y)_\red) \ar[r]_-{\boxtimes_{\red}} \ar[u] & \DCoh( (\pi_0 X)_\red \times_S (\pi_0 Y)_\red) \ar[u] } \]  If $\boxtimes_{\red}$ has dense image then so does $\boxtimes$, since the right vertical arrow has dense image by the above (the map $[\pi_0(X \times_S Y)]_\red \to X \times_S Y$ factors through $(\pi_0 X)_\red \times_S (\pi_0 Y)_\red$, and is in fact an equivalence under our hypotheses).

  \smallskip

  {\noindent}{\bf Step 3. Reduction to the case $X$, $Y$ integral (discrete) schemes: }\\ By Step 2, we may assume $X$, $Y$ are reduced schemes.  Since they are finite-type over a field, they have finitely-many irreducible components $X_1, \ldots, X_n$, $Y_1, \ldots, Y_m$.  Using \autoref{lem:u-z}, we may induct on the number of irreducible components.

  \smallskip

  {\noindent}{\bf Step 4. Completing the proof:  }\\ By the above, we may suppose $X$, $Y$ are integral schemes.  By Noetherian induction, we may suppose the claim is known for all pairs $(X',Y')$ such that $\dim X' \leq \dim X$, $\dim Y' \leq \dim Y$ with at least one of these inequalities is strict.  Since $X$, $Y$ are integral and of finite-type over a perfect field, they are generically regular.  Let $U \subset X$, $V \subset Y$ be dense open regular subsets, and $Z_X = X - U$, $Z_Y = Y - V$.  Using \autoref{lem:u-z}, we see that the claim holds for $(X,Y)$ if it holds for $(U,V)$, $(Z_X, V)$, $(U, Z_Y)$, and $(Z_X, Z_Y)$: The first of these follows by Step 1, while the rest follow by the inductive hypothesis.
\end{proof}

\begin{remark} 
After reducing to the case of a reduced discrete scheme, one can also conclude quite quickly using de Jong's alterations and \autoref{lem:qcsh-sheaf-red}(ii): Using de Jong's alterations one may produce proper surjective maps $p\colon \sq{X} \to X$, $q\colon \sq{Y} \to Y$ with $\sq{X}$ and $\sq{Y}$ regular.  Then, \autoref{lem:qcsh-sheaf-red}(ii) identifies $\QCsh(X) = (p^! p_*)\mod\QCsh(\sq{X})$ and similarly for $\QCsh(Y)$ (using $q$) and for $\QCsh(X \times Y)$ (using $p \times q$).  Since $\sq{X}$ and $\sq{Y}$ are regular, as is their product, we know that $\QCsh(\sq{X}) \ohotimes_k \QCsh(\sq{Y}) = \QCsh(\sq{X} \times \sq{Y})$.  Finally, it suffices to identify $(p \times q)^! (p \times q)_*$ with the algebraic tensor-product monad.
\end{remark}

\subsection{Extensions}
\begin{prop}\label{prop:ext-smth-fmk} Suppose $S$ is regular \eqref{cond:star} stack.
\begin{enumerate} 
\item Suppose $Y \to S$ is a smooth relative scheme.  Then,
\[ \boxtimes \colon \DCoh(X) \otimes_{S} \DCoh(Y) \longrightarrow \DCoh(X \times_S Y) \]
is an equivalence for all excellent (i.e., $\pi_0 X$ is an excellent ordinary scheme) derived stacks $X$ over $S$.  If $S$ is excellent (in the sense that all schemes of finite-type over it are excellent), then this holds for any almost finitely-presented \eqref{cond:star} derived stack over $S$.
\item Suppose $S$ is regular and excellent; that $X, Y$ are \eqref{cond:star} derived DM stacks over $S$; and that $Z_X \subset X$, $Z_Y \subset Y$ are closed subsets.  Suppose furthermore that $Z_Y$, with its reduced induced scheme structure, is smooth over $S$.  Then,
  \[ \boxtimes\colon \DCoh_{Z_X}(X) \otimes_S \DCoh_{Z_Y}(Y) \longrightarrow \DCoh_{Z_X \times_S Z_Y}(X \times_S Y) \] is an equivalence.
\end{enumerate}
\end{prop}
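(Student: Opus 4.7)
Fully-faithfulness in both (i) and (ii) is immediate from Proposition~\ref{prop:coh-fmk-ff}, since $S$ is assumed regular. Thus in each case it suffices to show essential surjectivity, and since both source and target are stable and idempotent complete, it is enough to show that the image of $\boxtimes$ is \emph{dense}.

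For part~(i), the plan is to mimic the inductive structure of the proof of Proposition~\ref{prop:fmk-coh-surj}. First, because $\Ind\DCoh(Y)$ is dualizable over $\QC(S)$ by Lemma~\ref{lem:dual-mod}, the functor $-\ohotimes_k \Ind\DCoh(Y)$ commutes with limits; combined with smooth descent for $\QCsh$ (Theorem~\ref{thm:qcsh-sheaf}), the question is smooth-local on $X$, so we may assume $X = \Spec R$ is an affine excellent derived scheme. Next, using that $i\colon (\pi_0 X)_{\red} \to X$ is proper (under our finiteness hypotheses) and that $i_*$ has dense image in $\DCoh(X)$ by the nilpotent filtration, we reduce to the case $X$ discrete and reduced. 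Now proceed by Noetherian induction on $\dim X$: excellence of $X$ ensures that the regular locus $U \subset X$ is \emph{open} and dense, and the reduced complement $Z \subset X$ is again excellent of strictly smaller dimension. Lemma~\ref{lem:u-z} reduces the claim for $(X,Y)$ to the claims for $(U,Y)$ and $(Z,Y)$; the latter holds by induction, and for the former note that $Y$ is regular (smooth over regular $S$) and $U \times_S Y \to U$ is smooth, so all three of $U$, $Y$, $U\times_S Y$ are regular. Hence $\DCoh = \Perf$ everywhere, and the desired equivalence is the usual tensor product theorem for perfect stacks. When $S$ is excellent, every almost finitely-presented $X$ is automatically excellent, and the same proof applies without the extra hypothesis on $X$.

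For part~(ii), consider the commutative square
\[ \xymatrix{
\DCoh(Z_X^{\red}) \otimes_S \DCoh(Z_Y^{\red}) \ar[d] \ar[r]^-{\boxtimes_{\red}} & \DCoh(Z_X^{\red} \times_S Z_Y^{\red}) \ar[d] \\
\DCoh_{Z_X}(X) \otimes_S \DCoh_{Z_Y}(Y) \ar[r]^-{\boxtimes} & \DCoh_{Z_X \times_S Z_Y}(X \times_S Y)
} \]
The right vertical arrow has dense image by filtering with powers of the ideal sheaf of $Z_X \times_S Z_Y$ (as in the closing step of Theorem~\ref{thm:coh-fmk}), and the left vertical arrow has dense image by the same argument on each factor plus preservation of dense image under $\otimes_S$. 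Since $Z_Y^{\red}$ is smooth over $S$ by hypothesis and $Z_X^{\red}$ is excellent (being a reduced closed subscheme of an almost finitely-presented stack over excellent $S$), part~(i) applies to show $\boxtimes_{\red}$ is an equivalence---reducing to the scheme case for $Z_Y^{\red}$ via \'etale descent on it if $Z_Y^{\red}$ is only a DM stack. Combined with fully-faithfulness of the bottom arrow from Proposition~\ref{prop:coh-fmk-ff}, this forces the bottom $\boxtimes$ to be an equivalence.

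The principal obstacle is the base case of the Noetherian induction in part~(i): one must check that the $\Perf$-version of the tensor product theorem genuinely applies to the pair $(U, Y)$, which uses perfectness (in the sense of \cite{BFN}) of $U$, $Y$, and $U \times_S Y$. Regularity of all three (granted by smoothness and excellence) together with condition~\eqref{cond:star} makes this go through, but one must also verify along the way that excellence is inherited by the reduction, by closed subschemes, and by the smooth localizations used---all of which are standard. The remaining technical point is the passage from ``smooth relative scheme'' in part~(i) to ``smooth reduced substack $Z_Y^{\red}$'' in part~(ii), handled by \'etale descent on $Z_Y^{\red}$.
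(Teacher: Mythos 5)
Your argument matches the paper's in both parts: the same Noetherian induction via Lemma~\ref{lem:u-z} with regularity of the generic locus (granted by excellence) for~(i), and the same square-chase via Proposition~\ref{prop:coh-fmk-ff} and part~(i) for~(ii). The paper's treatment of~(ii) is cleaner only in that it first reduces $X,Y$ to the affine case so that $Z_X^{\red}, Z_Y^{\red}$ are schemes and the appeal to~(i) is immediate; this also shows that your dense-image claim for the left vertical arrow, while true, is superfluous---having the top an equivalence, the right vertical with dense image, and the bottom fully faithful already forces the bottom to be an equivalence.
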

\begin{proof} \mbox{}
\begin{enumerate}
\item The second sentence follows from the first. As in \autoref{prop:fmk-coh-surj}, the question is local on $X$ so that we may suppose $X$ is affine.  If $X$ is regular, than so is $X \times_S Y$ (being smooth over $X$) and we are done by the analogous statement for $\Perf$.  Otherwise, we may proceed by Noetherian induction on $X$, as in the proof of \autoref{prop:fmk-coh-surj}. Since all derived schemes occurring in the Noetherian induction will be almost finitely-presented over $X$, they will all be Noetherian and excellent.  As there, we reduce to the case of $X$ discrete and reduced, and apply \autoref{lem:u-z} to reduce to the case of $X$ integral.  By excellence, there is an open dense subset on which $X$ is regular and applying \autoref{lem:u-z} the Noetherian induction continues.
\item We will reduce to the case of $Y$ smooth over $S$ and without support conditions i.e., (i): As before, this is local on $X$ and $Y$, so that we may suppose they are affine.  Let $Z_X$, $Z_Y$ denote the reduced induced scheme structures on the closed subsets, and consider the diagram
  \[ \xymatrix{
  \DCoh(Z_X) \otimes_S \DCoh(Z_Y) \ar[d]  \ar[r]&\DCoh(Z_X \times_S Z_Y) \ar[d]\\
  \DCoh_{Z_X}(X) \otimes_S \DCoh_{Z_Y}(Y) \ar[r]  &\DCoh_{Z_X \times_S Z_Y}(X \times Y)} \]
  The horizontal maps are fully faithful by \autoref{prop:coh-fmk-ff}, so it suffices to prove that the bottom horizontal map has dense image; but, the right-hand vertical arrow has dense image.  This reduces us to showing that the top horizontal map is an equivalence.\qedhere
\end{enumerate}
\end{proof}

\subsection{Hochschild-type invariants of coherent complexes}\label{subsect:coh-hh}
\begin{corollary}\label{cor:hh} Suppose $X$ is a finite-type \eqref{cond:starf} derived stack over a perfect field $k$.  Then, Grothendieck duality induces
  \begin{enumerate}
    \item An isomorphism
      \[ \bHH^\bullet(\DCoh(X)) \stackrel{\sim}{\longrightarrow} \bHH^\bullet(\Perf(X)) \] of Hochschild cochain complexes.
    \item A ``Poincar\'e duality''
      \[ \bHH_\bullet(\DCoh(X)) \stackrel{\sim}{\longrightarrow} \Hom_{\QC(X)}(\Delta^* \Delta_* \O_X, \omega_X) = \RGamma\left[X, \DD\left(\ul{\bHH}_\bullet(\Perf(X))\right)\right]\]
    \item Suppose $Z \subset X$ a closed subset.  Then,
      \[ \bHH_\bullet(\DCoh_Z(X)) \stackrel{\sim}\longrightarrow \RGamma_Z \left(\ul{\bHH}_\bullet(\DCoh(X))\right)\]
  \end{enumerate}
\end{corollary}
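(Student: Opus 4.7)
The plan is to deduce all three statements as direct consequences of the functor-category description \autoref{thm:coh-fmk} (equivalently \autoref{thm:functors}(iv) at $f=g=0$), together with Grothendieck duality on $\DCoh(X^2)$ and the $(\Delta^*,\Delta_*)$ adjunction. First I would apply \autoref{thm:coh-fmk} with $X=Y$ and the support condition $Z$ on both factors to identify
\begin{align*}
\bHH^\bullet(\DCoh_Z(X)) &= \End_{\QCsh(X^2)}(\Delta_*\RGamma_Z\omega_X),\\
\bHH_\bullet(\DCoh_Z(X)) &= \Hom_{\QCsh(X^2)}(\Delta_*\O_X,\ \Delta_*\RGamma_Z\omega_X).
\end{align*}
Both $\Delta_*\O_X$ and $\Delta_*\omega_X$ lie in $\DCoh(X^2)\subset\QC(X^2)$ (since $\Delta$ is finite and $\omega_X$ is coherent under our standing hypotheses), so the Hom-complexes above, when the second argument is coherent, may equivalently be computed in $\QC(X^2)$ via the fully-faithful inclusion $\DCoh\hookrightarrow\QC$.

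For (i), I would invoke Grothendieck duality on $\DCoh(X^2)$: compatibility of $\DD$ with proper pushforward gives $\DD_{X^2}(\Delta_*\O_X) = \Delta_*\DD_X(\O_X) = \Delta_*\omega_X$, so the contravariant auto-equivalence $\DD_{X^2}$ on $\DCoh(X^2)$ yields
\[ \End_{\QC(X^2)}(\Delta_*\omega_X) \isom \End_{\QC(X^2)}(\Delta_*\O_X)^{\op} = \bHH^\bullet(\Perf(X))^{\op}, \]
with the $\op$ being irrelevant at the level of underlying chain complexes (the $E_2$-structure pins down a commutative-up-to-homotopy multiplication).

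For (ii), I would apply the $(\Delta^*,\Delta_*)$ adjunction in $\QC$ to rewrite
\[ \Hom_{\QC(X^2)}(\Delta_*\O_X,\ \Delta_*\omega_X) = \Hom_{\QC(X)}(\Delta^*\Delta_*\O_X,\ \omega_X), \]
which is exactly the right-hand side of (ii) upon recognizing $\ul{\bHH}_\bullet(\Perf X) = \Delta^*\Delta_*\O_X$ and $\DD_X=\HHom(-,\omega_X)$. For (iii), the same adjunction with $\G=\RGamma_Z\omega_X$ yields $\Hom_{\QC(X)}(\Delta^*\Delta_*\O_X,\ \RGamma_Z\omega_X)$, and it remains to commute $\HHom$ out of $\Delta^*\Delta_*\O_X$ past $\RGamma_Z$. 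This is the main point requiring care: $\Delta^*\Delta_*\O_X$ is only pseudo-coherent (unbounded above, in general), so one cannot simply invoke compactness. The argument is that $\RGamma_Z$ is (locally, or equivalently as the right adjoint to inclusion into $\QCsh_Z$) a homotopy limit along Koszul-type complexes $\HHom(\O_X/\I_Z^n,-)$, and $\HHom(\F,-)$ automatically commutes with limits in the second variable; approximating $\Delta^*\Delta_*\O_X$ by its connective truncations $\tau_{\geq -n}\Delta^*\Delta_*\O_X \in \DCoh(X)$ and passing to the limit (using that these truncations are uniform on the $t$-structure) completes the interchange. Combining this with the identification $\ul{\bHH}_\bullet(\DCoh X) := \DD(\ul{\bHH}_\bullet(\Perf X))$ (the sheafified version of (ii)) yields $\RGamma_Z\,\ul{\bHH}_\bullet(\DCoh X)$, proving (iii). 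The main obstacle, then, is not conceptual but rather the bookkeeping needed to commute $\RGamma_Z$ with internal Hom out of a pseudo-coherent (not perfect) complex; everything else is a direct chase through \autoref{thm:coh-fmk} and formal adjunctions.
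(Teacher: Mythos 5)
Your proofs of (i) and (ii) follow the paper's own route step for step (Grothendieck duality via $\DD(\Delta_*\O_X)=\Delta_*\omega_X$ for (i), the $(\Delta^*,\Delta_*)$ adjunction on coherent objects for (ii)). These parts are fine.

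For (iii), however, you diverge from the paper, and the divergence is where the gap sits. You continue to use the $(\Delta^*,\Delta_*)$ adjunction and then try to pull $\RGamma_Z$ out of $\HHom(\Delta^*\Delta_*\O_X,-)$, citing pseudo-coherence and a truncation argument. There are two problems. First, a small but telling slip: you approximate by $\tau_{\geq -n}\Delta^*\Delta_*\O_X$, but $\Delta^*\Delta_*\O_X$ is already connective (it is a derived tensor of connective objects), so those truncations are all equal to the object itself; the non-boundedness is in the $\tau_{\leq n}$-direction. Second, and more seriously, even after correcting to $\tau_{\leq n}$, the scheme does not close: $\Delta^*\Delta_*\O_X$ is the cofiltered \emph{limit} $\ilim_n\tau_{\leq n}\Delta^*\Delta_*\O_X$, and $\HHom(-,\G)$ has no useful exactness property against cofiltered limits in the first variable (it turns colimits into limits, not the reverse). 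Meanwhile $\ul{\RGamma}_Z$ is a filtered colimit of Koszul-type functors, so it commutes with $\HHom(\K,-)$ only when $\K$ is compact in $\QCsh(X)$ --- i.e.\ when $\K$ is coherent, which $\Delta^*\Delta_*\O_X$ is not for singular $X$. So the interchange you need is precisely the kind of statement that fails in general, and the approximation you propose does not supply it.

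The paper sidesteps all of this by staying entirely on the shriek side. Since $\Delta$ is finite, $(\Delta_*,\Delta^!)$ is an adjoint pair on $\QCsh$, and one computes
\[
\bHH_\bullet(\DCoh_Z X)=\Map_{\QCsh(X^2)}(\Delta_*\O_X,\Delta_*\ul{\RGamma}_Z\omega_X)=\Map_{\QCsh(X)}(\O_X,\Delta^!\Delta_*\ul{\RGamma}_Z\omega_X),
\]
and then uses the two base-change identities $\Delta_*\circ\ul{\RGamma}_Z\isom\ul{\RGamma}_{Z^2}\circ\Delta_*$ and $\Delta^!\circ\ul{\RGamma}_{Z^2}\isom\ul{\RGamma}_Z\circ\Delta^!$, each of which is immediate by inspecting the (co)adjoints (both sides are right adjoints to the same composite). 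That yields $\RGamma\bigl(\RGamma_Z\,\Delta^!\Delta_*\omega_X\bigr)=\RGamma_Z\bigl(\ul{\bHH}_\bullet(\DCoh X)\bigr)$ with no commutation of internal Hom past a filtered colimit ever appearing. I would recommend you adopt this route: replace the $(\Delta^*,\Delta_*)$ step in (iii) with the $(\Delta_*,\Delta^!)$ adjunction, and replace the truncation argument with the two adjoint-coincidence identities for $\ul{\RGamma}_Z$ against $\Delta_*$ and $\Delta^!$.
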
 
\begin{proof}\mbox{}
  \begin{enumerate} 
    \item Recall that $\id_{\QC(X)} = \Phi_{\O_\Delta}$ and $\id_{\QCsh(X)} = \Phish_{\omega_\Delta}$ (\autoref{thm:coh-fmk}).  So,
\begin{align*}
  \bHH^\bullet(\DCoh X) &= \Hom_{\Fun^L(\QCsh(X), \QCsh(X))}(\id,\id) \\
&= \Hom_{\QCsh(X^2)}(\omega_\Delta, \omega_\Delta)  \\
&= \Hom_{\DCoh(X^2)}(\DD \O_\Delta, \DD \O_\Delta)\\
&= \Hom_{\DCoh(X^2)}(\O_\Delta, \O_\Delta)\\
&= \Hom_{\QC(X^2)}(\O_\Delta, \O_\Delta)\\
&= \Hom_{\Fun^L(\QC(X), \QC(X))}(\id, \id) \\
&= \bHH^\bullet(\Perf X)
	\end{align*}
      \item
Recall that $\ul{\bHH}_\bullet(\Perf(X)) = \Delta^* \Delta_* \O_X$ is the sheafified Hochschild homology of $\Perf X$.  Then, \autoref{thm:coh-fmk} implies
\begin{align*}
  \bHH_\bullet(\DCoh X) &= \ev(\id_{\QCsh(X)}) \\
  &= \Hom_{\QCsh(X^2)}(\Delta_* \O_X, \Delta_* \omega_X) \\
  &= \Hom_{\DCoh(X^2)}(\Delta_* \O_X, \Delta_* \omega_X) \\
  &= \Hom_{\QC(X^2)}(\Delta_* \O_X, \Delta_* \omega_X) \\
  &= \Hom_{\QC(X)}(\Delta^* \Delta_* \O_X, \omega_X) \\
  &= \RGamma\left(\DD(\Delta^* \Delta_* \O_X)\right)
\end{align*}
\item Recall that
  \[ \ul{\bHH}_\bullet(\DCoh X) = (p_1)_* \HHom_{\QCsh(X^2)}\left( \O_X, \Delta^! \Delta_* \omega_X \right) =  \Delta^! \Delta_* \omega_X \] and that
  \begin{align*} \bHH_\bullet(\DCoh_Z X) &= \Map_{\QCsh(X^2)}\left(\Delta_* \O_X,\Delta_* \ul{\RGamma}_Z \omega_X \right) \\
    &= \Map_{\QCsh(X)}\left(\O_X, \Delta^! \Delta_* \ul{\RGamma}_Z\omega_X\right) \\
    &= \RGamma\left(\Delta^! \Delta_* \ul{\RGamma}_{Z^2} \omega_X\right) \\
    \intertext{Note that $\Delta^! \circ \ul{\RGamma}_{Z^2} \isom \RGamma_Z \circ \Delta^!$ (the left adjoints coincide), and that the natural map $\Delta_*\circ\ul{\RGamma}_Z \stackrel{\sim}\to \ul{\RGamma}_{Z^2}\circ \Delta_*$ is an equivalence (e.g., using the Cech-nerve description of \autoref{sec:more-general}.  So, we conclude}
    &= \RGamma\left(\Delta^! \ul{\RGamma}_{Z^2} \Delta_* \omega_X\right) \\
    &= \RGamma_Z \left(\Delta^! \Delta_* \omega_X\right)  \qedhere
  \end{align*}
\end{enumerate}
\end{proof}

\begin{remark}\label{rem:HH_*-dual} In particular, item (ii) implies that
  \[ U \mapsto \bHH_\bullet(\DCoh(U)) \] forms a sheaf of quasi-coherent complexes, which we'll denote $\ul{\bHH}_\bullet(\DCoh X)$.  (This can also be seen directly.)  Then, (ii) may be reformulated as the (more evidently a duality) assertion that
  \[ \ul{\bHH}_\bullet(\DCoh(X)) = \DD\left( \ul{\bHH}_\bullet(\Perf(X))\right) \]   If $X$ is proper, this implies a (vector space) duality on global sections.

  Note that this really is using duality: In the case that $X$ is smooth over a characteristic zero field, and identifying $\ul{\bHH}_\bullet(\Perf(X)) = \Omega_X^\bullet$ via HKR, this is a reflection of the sheaf perfect-pairing $\wedge \colon \Omega^\bullet_X \otimes \Omega^\bullet_X \to \omega_X$ (where $\Omega^\bullet_X = \oplus_i \Omega_X[i]$).
\end{remark}

\begin{remark} Meanwhile, item (i) seems somewhat bizarre.  It does, however, lead to the following observation: 
  
  Suppose $X$ is lci over a perfect field, and that (for simplicity) $X$ is affine.  Then, it is (?) known that thick subcategories of $\DCoh(X)$ may be classified by $\GG_m$-equivariant specialization-closed subsets of $\Spec \pi_* \bHH_\bullet(\Perf X)$.  Using the above, we may interpret this latter space as intrinsic to $\DCoh(X)$.
\end{remark}

\begin{remark}\label{rem:4-corners} \autoref{cor:hh} may be flushed out to the following picture:
  \[ \xymatrix{
  & \bHH^\bullet \DCoh(X) \ar@{-}[dl]_{(i)}^{\sim} \ar@{--}[dr]^{(ii)} \\ \bHH^\bullet \Perf(X) \ar@{..}[dr]_{(iv)}  & & \bHH_\bullet \DCoh(X) \ar@{--}[dl]^{(iii)} \\ & \bHH_\bullet \Perf(X) } \]
  \begin{enumerate}
    \item Are \emph{isomorphic} by the Corollary.
    \item Differ by a \emph{shift} provided $X$ is Calabi-Yau in the very weak sense that $\omega_X \isom \O_X[-d]$ for some $d$.  (For this, $X$ need not be smooth.  For instance, any Gorenstein local ring is Calabi-Yau in this sense.)  Indeed, \autoref{thm:coh-fmk} allows us to identify
      \[ \bHH^\bullet \DCoh(X) = \Hom_{\QCsh(X^2)}(\omega_\Delta, \omega_\Delta) = \Hom_{\QC(X)}(\Delta^* \Delta_* \O_X, \O_X) \]
      \[ \bHH_\bullet \DCoh(X) = \Hom_{\QCsh(X^2)}(\O_\Delta, \omega_\Delta) = \Hom_{\QC(X)}(\Delta^* \Delta_* \O_X, \omega_X) \]
    \item Are \emph{linearly dual} provided that $X$ is proper.  A sheafified (``local'') version of this duality holds always, by \autoref{rem:HH_*-dual}. 
    \item Are \emph{dual up to a shift} provided $X$ is proper and Calabi-Yau. (This is very well-known, at least when $X$ is also regular.)
  \end{enumerate}
\end{remark}

\bibliography{note}

\end{document}